\newcommand{\rset}{\mathbb{R}}
\newcommand{\R}{\mathbb{R}}
\newcommand{\ep}{\varepsilon}
\newcommand{\ind}{\mathbf{1}}
\newcommand{\fl}{\longrightarrow}
\newcommand{\e}{\mathbb{E}}
\newcommand{\E}{\mathbb{E}}
\renewcommand{\d}{\,d}
\newcommand{\p}{\mathbb{P}}
\newcommand{\lp}{\mathrm{L}}
\newcommand{\m}{\mathcal} 
\newcommand{\bx}{{\bf x}}
\newcommand{\by}{{\bf y}}
\renewcommand{\H}{{\mathcal H}}
\newcommand{\M}{{\mathcal M}}
\newcommand{\ys}[1][2]{\mathcal{S}^{#1}}
\newcommand{\zs}[1][2]{\mathrm{M}^{#1}}
\newcommand{\ld}[1][\mu]{\text{D}_{#1}}
\newcommand{\dotafter}[1]{#1.}
\titleformat{\section}[hang]
{\normalfont\large\bfseries}{\thesection.}{.5em}{\dotafter}[]
\titleformat{\subsection}[runin]
{\normalfont\bfseries}{\thesubsection.}{.4em}{}[.]
\titlespacing*{\subsection}{0pt}{3ex plus 1ex minus .2ex}{1em}
\titleformat{\paragraph}[runin]{\normalfont\bfseries}{\theparagraph.}{.4em}{}[.]
\theoremstyle{plain}
\newtheorem{thm}{Theorem}
\newtheorem{lemme}[thm]{Lemma}
\newtheorem{prop}[thm]{Proposition}
\newtheorem{cor}[thm]{Corollary}
\theoremstyle{definition}
\newtheorem{df}[thm]{Definition}
\theoremstyle{remark}
\newtheorem{rem}[thm]{Remark}
\newcommand \A[1]{{\bf (#1)}}
\def\pe{\color{black}}
\def\pee{\color{black}}
\def\yin{\color{black}}
\def\pierre{\color{black}}
\def\ph{\color{black}}
\newcommand{\TL}{{{\pe \tilde \Lambda}}}
\newcommand{\tl}{{{\pe \tilde \lambda}}}
\newcommand{\phrase}{{\pee \emph{work on the enlarged filtered probability space of Remark \ref{itoprod}}}}
\title{\bf Forward and Backward Stochastic Differential Equations with Normal Constraints in Law and associated $L$-PDEs}
\date{Work in Progress}
\begin{document}
\title[Constrained in law (b)sde]{Forward and Backward Stochastic Differential Equations with Normal Constraints in Law}
\author[Ph. Briand]{Philippe Briand}
\address{Univ. Grenoble Alpes, Univ. Savoie Mont Blanc, CNRS, LAMA, 73000 Chamb\'ery, France}
\email{philippe.briand@univ-smb.fr}

\author[P. Cardaliaguet]{Pierre Cardaliaguet}
\address{Universit\'e Paris-Dauphine, PSL Research University, CNRS, Ceremade, 75016 Paris, France.}
\email{cardaliaguet@ceremade.dauphine.fr}

\author[P.-É. Chaudru de Raynal]{Paul-\'Eric Chaudru de Raynal}
\address{Univ. Grenoble Alpes, Univ. Savoie Mont Blanc, CNRS, LAMA, 73000 Chamb\'ery, France}
\email{pe.deraynal@univ-smb.fr}

\author[Y. Hu]{Ying Hu}
\address{Univ Rennes, CNRS, IRMAR-UMR6625, F-35000, Rennes, France.}
\email{ying.hu@univ-rennes1.fr}

\thanks{{\color{black} The second  and fourth  authors have been partially supported by the ANR project ANR-16-CE40-0015-01.} For the third Author, this work has been partially supported by the ANR project ANR-15-IDEX-02 }

\date{}

\begin{abstract}
In this paper we investigate the well-posedness of backward or forward stochastic differential equations whose law is constrained to live in an a priori given (smooth enough) set and which is reflected along the corresponding ``normal'' vector.  We also study the  associated interacting particle system reflected in mean field  and asymptotically described by such equations. The case of particles submitted to a common noise as well as the asymptotic system is  studied in the forward case. Eventually, we connect the forward and backward stochastic differential equations with normal constraints in law with partial differential equations stated on the Wasserstein space and involving a Neumann condition in the forward case  and a{\yin n o}bstacle in the backward one. 
\end{abstract}

\maketitle
\thispagestyle{fancy}

\bigskip
\section{Introduction}
In this paper, we are concerned with reflected (forward or backward) Stochastic Differential Equations (SDE) in the case where the constraint is on the law of the solution rather than on its paths. This kind of equations have been introduced in their backward form in \cite{BEH1} \textcolor{black}{in the scalar case and when the constraint is {\ph of the form $\int h \d \mu \geq 0$ for some map $h : \R \to \R$ satisfying suitable assumptions and where $\mu$ denotes the law of the considering process}. Such a system being reflected according to the mean of (a functional of) the process, the authors called it a \emph{Mean Reflected Backward Stochastic Differential Equation} (MR BSDE).} In \cite{BCdRGL1}, the authors studied the forward version (hence called MR SDE) \textcolor{black}{in the same setting} as well as \textcolor{black}{its approximation by an appropriate interacting particle system} and numerical schemes. {\yin In \cite{Jab}, weak solution to related forward equations with constraint\footnote{Therein, the author consider SDE with constraint in law but without the Skorokhod condition.} are built}. In the same framework, let us also mention the work \cite{BH17} where the approximation of MR BSDE by an interacting particle system is studied, \cite{HHYHYLPLFL17} where MR BSDE with quadratic generator are investigated {\ph and \cite{BGL1} where MR SDE with jumps are considered}.

The aim of this work consists in enlarging the results of \cite{BCdRGL1, BEH1, BH17} to the multi-dimensional case and for rather general constraint sets on the law in the backward and forward cases. Mean-field interacting particles counterpart of such systems are also investigated as well as the so called \emph{common noise} setting. Eventually, we also aim at introducing the deterministic counterpart of such reflected stochastic system through Partial Differential Equation (PDE) stated on Wasserstein space of Neumann (in the forward case) or {\yin o}bstacle (in the backward case) type.\\

\subsection{Outline of the paper}
\subsubsection*{On SDE with normal constraint in law.} \textcolor{black}{Given coefficients\footnote{which could be assumed to be random, see the results below.} $(b,\sigma) : \R_+\times \R^n \to \R^n \times \R^{n\times d}$,} a MR SDE is an equation of the following form
\begin{equation}\label{eq:MRSDE-scalarCons2}
	\left\{
	\begin{split}
	& X_t  =X_0+\int_0^t b(s,X_s)\, ds + \int_0^t \sigma(s,X_s)\,dB_s + K_t,\quad t\geq 0, \\
	& \e[h(X_t)] \geq 0, \quad \int_0^t \e[h(X_s)] \, dK_s = 0, \quad t\geq 0{\yin ,}
	\end{split}
	\right.
\end{equation}
where $B$ is a Brownian motion, $h$ is a 
{\ph function} from $\rset^n$ to $\rset$, the law of the initial condition $X_0$ is such that $\e[h(X_0)] \geq 0$ {\pee and where, for the time being, $n=d=1$}. 

When focusing on this forward system, there are several ways to understand the mean reflected SDE. One striking example lies into the equation satisfied by the law of the solution $X$, which turns out to be a reflected Fokker-Planck equation. In other words, solving the above system translates into searching for solution to the Skorohod problem stated on Partial Differential Equation of Fokker-Planck type. Indeed, let $(X,K)$ be a solution of the above system (which, according to \cite{BCdRGL1}, {\ph under suitable assumptions}, exists, is unique and where the process $K$ is supposed to be a deterministic increasing process starting from 0). By It\^{o}'s formula, the law $\mu_t=[X_t]$ of $X_t$ satisfies, in the sense of distributions,  
\begin{equation*}
	\left\{
	\begin{split}
& d_t\mu_t(x)= \Big\{ D_x(\mu_t(x) b(t,x))+ \frac12 D^2_x(\mu_t (x)a(t,x))\Big\}dt + D_x\left(\mu_t(x)\right)\,dK_t,\\
& \int h\, d\mu_t \geq 0\quad \int_0^t \Big(\int h\, d\mu_s\Big) \, dK_s = 0, \quad t\geq 0 ,
	\end{split}
	\right.
\end{equation*}
where $a=\sigma^2$. Note that, as this system is deterministic, the deterministic assumption assumed on the reflexion term $K_t$ really makes sense.\\

From this perspective it is natural to look for a solution to the more general problem (allowing $d$ and $n$ to be greater than 1):
\begin{equation}\label{eq.KolmDet}
	\left\{
	\begin{split}
& d_t\mu_t(x)= \Big\{ {\rm div}(\mu_t(x) b(t,x))+ \frac12 \sum_{i,j}D^2_{i,j}(\mu_t (x)a_{ij}(t,x))\Big\}dt + {\rm div}\left(\mu_t(x)\ld H(\mu_t)(x)\right)\,dK_t,\\
& H(\mu_t)\geq 0\quad \int_0^t H(\mu_s) \, dK_s = 0, \quad t\geq 0 ,
	\end{split}
	\right.
\end{equation}
where $H$ is a map from $\m P^2 (\rset^n)$ to $\rset$ (where we denote by $\m P^2 (\rset^n)$  the set of Borel probability measures on $\R^n$ with finite second order moment) and $\ld H$ denotes the Lions' derivative (cf. \cite{CD17-1,LLperso} and the discussion at the end of the Introduction). Of course, when $H(\mu) = \int_{\rset^n} h(x)\, \mu(dx)$, $\ld H(\mu)(x)=\nabla h(x)$. The map $\ld H(\mu)(\cdot)$ can be viewed as a gradient of $H$ in the space ${\mathcal P}(\R^n)$ (see \cite{CD17-1,LLperso} and the discussion below), so that the ``outward normal'' to the set ${\mathcal O}:=\{\mu,\; H(\mu)>0\}$ at a point $\mu\in \partial {\mathcal O}$  is, at least formally,  $\ld H(\mu)(\cdot)$. For this reason \eqref{eq.KolmDet} can be understood as the Fokker-Planck equation with a {\it normal reflexion}. The probabilistic counterpart of \eqref{eq.KolmDet} then writes as the following reflected SDE:
\begin{equation}\label{eq:mains}
	\left\{
	\begin{split}
	& X_t  =X_0+\int_0^t b(s,X_s)\, ds + \int_0^t \sigma(s,X_s)\,dB_s + \int_0^t \ld H([X_s])(X_s)\,dK_s,\quad t\geq 0, \\
	& H(\left[X_t\right]) \geq 0, \quad \int_0^t H(\left[X_s\right]) \, dK_s = 0, \quad t\geq 0.
	\end{split}
	\right.
\end{equation}
Indeed, if one denotes by $(X,K)$ a solution of the above system, by It\^{o}'s formula, the law $\mu_t=[X_t]$ of $X_t$ satisfies system  \eqref{eq.KolmDet} in the sense of distributions. The above problem is actually no longer a ``mean reflected'' SDE but a SDE with \emph{normal constraint in law} since the constraint can now be written in a general form.

In comparison with \cite{BCdRGL1}, we are led to deal with general constraints and possibly multi-dimensional valued processes. Considering this more general setting explains why we now specify the direction of the reflection (which is done along the ``outward normal'') while in  \cite{BCdRGL1} the reflection is oblique along the unitary vector. Indeed, when working in this framework, the explicit formula obtained in the aforementioned work for the process $K$ is not valid anymore, so that we cannot ``just add it'' to ensure that the constraint is satisfied.\\

In \eqref{eq.KolmDet} the Fokker-Planck equation is deterministic, which explains why the reflection term $K$ is naturally deterministic. A further generalization of \eqref{eq.KolmDet} consists in considering instead a {\it  stochastic} Fokker-Planck equation reflected on the set ${\mathcal O}=\{\mu, \; H(\mu)>0\}$. Given the coefficients $(b,\sigma_0,\sigma_1) : \R_+\times \R^n \to \R^n \times \R^{n\times d}\times \R^{n\times d}$ and a $d-$dimensional Brownian motion $W$, one may look for a solution to the system
\begin{equation}\label{eq.KolmSto}
\left\{\begin{array}{l}
\displaystyle d_t\mu_t(x)= \left\{ {\rm div}(\mu_t(x) b(t,x))+ \frac12 \sum_{i,j}D^2_{i,j}(\mu_t (x)a_{ij}(t,x))\right\}dt \\
\displaystyle  \qquad \qquad \qquad \qquad \qquad \qquad \qquad + {\rm div} \left(\mu_t(x) \sigma_1(t,x)dW_t\right)
+ {\rm div}\left(\mu_t(x)\ld H(\mu_t)(x)\right)\,dK_t,\\
\displaystyle  H(\mu_t)\geq 0\quad \int_0^t H(\mu_s) \, dK_s = 0, \quad t\geq 0 {\yin,}
\end{array}\right.
\end{equation}
where $a=(\sigma_0\sigma_0^*+\sigma_1\sigma_1^*)$ and $(K_s)$ is a continuous nondecreasing process adapted to the filtration $\m F^W$ associated with $W$. The way we added the stochastic perturbation in the above reflected stochastic Fokker-Planck equation relies on the probabilistic interpretation of this problem. Indeed, given a Brownian motion $B$ (supposed to be independent of $W$), the above system precisely gives the dynamic of the \emph{conditional} law (conditioned by $\m F^W$) $\mu_t=[X_t|W]$ of the solution of the system 
\begin{equation}\label{eq:mainsCond_intro}
	\left\{
	\begin{split}
	& X_t  =X_0+\int_0^t b(s,X_s)\, ds + \int_0^t \sigma_0(s,X_s)\,dB_s +\int_0^t \sigma_1(s,X_s)dW_s+ \int_0^t \ld H([X_s|W])(X_s)\,dK_s,
	\\
	& H(\left[X_t|W\right]) \geq 0, \quad \int_0^t H(\left[X_s|W\right]) \, dK_s = 0, \quad t\geq 0.
	\end{split}
	\right.
\end{equation}
This is a  reflected SDE with \emph{normal constraint on its conditional law}. For reasons that will be clear in few lines (and relies on the mean field interacting particles counterpart of such a system), we call this case {\it the normal reflexion in law with {\pe a} common noise}.\\

Let us now discuss the interpretation of \eqref{eq:mainsCond_intro} (or \eqref{eq:mains}) at the particles level. Consider the system $(X^i_t)$ of $N$ particles, where, for $i=1, \dots, N$,
\begin{equation}\label{eq:mainps_cond_Intro}	
	\left\{
	\begin{split}
	 & X^i_t  =X^i_0+\int_0^t b(s,X^i_s)\, ds + \int_0^t \sigma_0(s,X^i_s)\,dB^i_s + \int_0^t \sigma_1(s,X^i_s)\,dW_s + \int_0^t  \ld H\left(\mu^N_s\right)(X^i_s)\, dK^N_s, \\
	 & \mu^N_t = \frac{1}{N} \sum_{i=1}^N \delta_{X^i_t}, \quad H\left(\mu^N_t\right) \geq 0, \quad  \quad \int_0^t H\left(\mu^N_s\right) \, dK^N_s = 0, \quad t\geq 0.
	\end{split}
	\right.
\end{equation}
Here the $B^i$ and $W$ are independent Brownian motions. {\yin The initial conditions of the particles $X^i_0$ are i.i.d. random variable with law $\mu_0$ and are independent of the $B^i$ and $W$. Eventually, $K^N$ is a continuous, nondecreasing process adapted to the filtration $\m F^N$ generated by the $B^i$, $X_0^i$ and $W$}.  Assuming that $H(\mu_0)>0$, this system is, conditionally to $\Omega_N=\{H(\mu_0^N) \geq 0\}$, a classical reflected SDE in $(\R^{n})^N$, with  normal reflexion on the boundary of the constraint 
\begin{equation*}
{\mathcal O}_N=\left\{(x_1,\dots, x_N)\in (\R^n)^N, \; H\left(\frac{1}{N} \sum_{i=1}^N \delta_{x_i}\right)>  0\right\}. 
\end{equation*}
Following \cite{LLperso}, we understand $W$ as {\it a common noise}, since it  affects all the players. Note that we have chosen to describe the mean field limit in the common noise case, but it reduces to the case without common noise if we let $\sigma_1\equiv 0$. It turns out that the limit system of \eqref{eq:mainps_cond_Intro} is nothing but our SDE with normal reflexion in its condition law \eqref{eq:mainsCond_intro}: as the number of players tends to  infinity, the reflection term $K^N$ in \eqref{eq:mainps_cond_Intro}   no longer  depends on the  position of the other particles, but only on their statistical distribution conditioned by the common noise.

Let us just emphasize that, above, the law of the initial condition is assumed to  satisfy the constraint in order to work conditionally to the event $\Omega_N$. This assumption relies on the fact that $H(\mu_0) \geq 0$ is not a sufficient condition to ensure that $H(\mu_0^N) \geq 0$. In order to remove such a strict inequality, one may either work conditionally to the event $\Omega_{N,\eta}=\{H(\mu_0^N) \geq -\eta_N\}$ which, provided {\yin that }$(\eta_N)_{N\geq 0}$ is chosen appropriately, becomes of full measure as $N$ tends to the infinity (so that the particle system solves the ``original'' Skorohod problem asymptotically) or either construct suitable initial conditions $\tilde X_0^i$  whose empirical measure $\tilde \mu_0^N$ satisfies (under suitable assumptions) $H(\tilde \mu_0^N)\geq 0$ and which is such that $\tilde \mu_0^N \to \mu$ as $N \to \infty$. In this last case, for any $N \geq 1$, system \eqref{eq:mainps_cond_Intro} with $\tilde X_0^i$ instead of $X_0^i$ is a solution of the Skorohod problem in mean field on ${\mathcal O}_N$. We refer to Sections \ref{subsec:A_mean_field_limit_cond} and \ref{SEC_MF_FOR_BSDE} respectively for more details.\\

We eventually conclude this outline on SDE with normal constraint in law by giving another interpretation of \eqref{eq:mainps_cond_Intro} (or of \eqref{eq:mains} when $\sigma_1\equiv 0$) through the dynamic of the family of mapping $(U_t)_{t\geq 0}$ from $\mathcal P^2(\R^n) \to \R$ defined by $\forall t_0 \geq 0,\ U_{t_0}(\mu_0) = G([X_T^{t_0,\mu_0}|W])$, for some given $G:\mathcal P^2(\R^n)\to \R$. Enlarging naively the definition of viscosity solution on finite dimensional space, one can show that the dynamic of $U$ is given (in the viscosity sense) by the the following (backward) Neumann  problem in the set ${\mathcal O}=\{\mu, \; H(\mu)>0\}$:
\begin{equation}\label{lqbzlsdCond_intro}
	\left\{ 
		\begin{split} 
		(i)\;\;\; &-\partial_t U(t,\mu)-\frac12\int_{\R^n} {\rm Tr}\left((a(t,y)) \partial_y \ld U(t,\mu)(y)\right)\mu(dy)\\ 
		& \qquad  -\int_{\R^n} \ld U(t,\mu)(y)\cdot b(t,y)\mu(dy)\\
		& \qquad - \frac12 \int_{\R^n\times \R^n} {\rm Tr}\left(D^2_{\mu\mu} U(t,\mu)(x,y)\sigma_1(t,x)\sigma_1^T(t,y)\right)\mu(\d x)\mu(\d y) =0\\		
		& \qquad \qquad \qquad {\rm in }\; (0,T)\times {\mathcal O},\\
		(ii)\;\;&\int_{\R^n} \ld U(t,\mu)(y)\cdot \ld H(\mu)(y)\mu(dy)= 0 \; {\rm in }\; (0,T)\times \partial{\mathcal O},\\
		(iii)\;& U(T,\mu)=G(\mu) \; {\rm in }\; {\mathcal O}{\yin ,}
		 \end{split}
	\right. 
\end{equation}
where $a=\sigma_0\sigma_0^*+\sigma_1\sigma_1^*$. 
Condition (ii) is exactly the  Neumann boundary condition associated with the set ${\mathcal O}$. Conversely, any smooth solution to \eqref{lqbzlsdCond_intro} can be written as ${\yin \E\left[G([X_T^{t,\mu}|W])\right]}$.\\

\subsubsection*{BSDE with normal constraint in law.} The above discussion leads to investigate whenever the original constrained in law problem stated on BSDE from \cite{BEH1} can be generalized. We here aim at showing that the generalization done in the case of forward SDE remains valid, up to the \emph{common noise} setting (and under a slightly different set of assumptions, see the discussion in the next paragraph). Concerning this last framework, it seems that the results remain valid, provided additional assumptions (insuring suitable boundedness conditions on $Z$) are satisfied. Nevertheless we do not explore it in order to short the current paper.

As underlined in the previous paragraph, we aim at extending the results of \cite{BEH1} in a multi-dimensional setting. Following \eqref{eq:mains}, we are  lead to investigate  the problem
\begin{equation}\left \lbrace \begin{array}{ll}\label{eq:mainb_intro}
&\displaystyle  Y_t  =\xi+\int_t^T f(s,Y_s,Z_s)\, ds - \int_t^T Z_s\,dB_s + \int_t^T \ld H([Y_s])(Y_s) dK_s,\quad 0\leq t\leq T, \\
&\displaystyle  H([Y_t]) \geq 0, \quad 0\leq t\leq T, \qquad \int_0^T H([Y_s])  dK_s = 0,
\end{array}\right.
\end{equation}
where $\xi \in \mathbb L_2(\mathcal{F}_T)$, where the processes $Y,Z$ are respectively of $n$ and $n \times d$ dimension and where $f:\Omega \times \rset_+\times\rset^n \times \rset^{ n \times d}  \fl \rset^n $. Here again, $H$ is a map from $\m P^2 (\rset^n)$ to $\rset$ and $\ld H$ denotes the Lions' derivative so that the ``outward normal'' to the set ${\mathcal O}:=\{\mu,\; H(\mu)>0\}$ at a point $\mu\in \partial {\mathcal O}$  is again, at least formally,  $\ld H(\mu)(\cdot)$. For this reason \eqref{eq:mainb_intro} is now called a \emph{BSDE with normal constraint in law}.

Also we aim at investigating the mean field counterpart {\pe of} such {\pe a} system, extending to the current framework the results obtained in \cite{BH17}. Namely, we consider the interacting particle system
\begin{equation}\label{eq:backward:part_intro}\left \lbrace\begin{array}{ll}
\displaystyle Y_t^i = \xi^i  + \int_t^T f(s,Y_s^i,Z_s^{i,i}) ds - \int_t^T \sum_{j=1}^N Z_s^{i,j} d B_s^j + \int_t^T D_\mu H(\mu_s^N)(Y_s^i) d K_s^N,\\
\displaystyle \forall t \in [{\yin 0},T]:\quad \mu_t^N = \frac{1}{N} \sum_{i=1}^N \delta_{Y_t^i},\quad H(\mu_t^N) \geq 0,\quad \int_0^T H(\mu_s^N) d K_s^N =0,\quad 1 \leq i \leq N,
\end{array}\right.
\end{equation}
where for each $i,j,$ $Z_s^{i,j}$ is a $n\times d$ matrix, $\{B^i\}_{1\leq i \leq N}$ are $N$ independent $d$-dimensional Brownian motions and $K^N$ is a continuous non decreasing process. Assuming that $H(\mu_T)>0$ this system is, conditionally to $\Omega_N=\{H(\mu_T^N) \geq 0\}$, a classical reflected BSDE in $(\R^{n})^N$, with  normal reflexion on the boundary of the constraint
\begin{equation*}
{\mathcal O}_N=\left\{(y_1,\dots, y_N)\in (\R^n)^N, \; H\left(\frac{1}{N} \sum_{i=1}^N \delta_{y_i}\right) > 0\right\},
\end{equation*}
and \eqref{eq:mainb_intro} is precisely the asymptotic dynamic (as $N \to +\infty$) of one of the particles in \eqref{eq:backward:part_intro}. As in the forward case, the strict inequality assumed on the law of the terminal condition comes from the the fact that $H(\mu_T) \geq 0$ is not a sufficient condition to ensure that $H(\mu_T^N) \geq 0$. We refer to the corresponding point in the above discussion on forward SDE with constraint in normal law and to Sections \ref{subsec:A_mean_field_limit_cond} and \ref{SEC_MF_FOR_BSDE} respectively for more details to handle properly such a problem.\\

Eventually, one may wonder if, {\ph for the one dimensional case when $Y$ is a real process} and in a Markovian set up (\emph{i.e.} when $\xi = \phi(X_T)$ for some $\phi :\R^{{\yin n}} \to \R$ and some diffusion process $X$ evolving according to coefficients $b,\sigma : \R_+\times \R^{{\yin n}} \to \R$ and to the scalar valued Brownian motion $B$), the solution of the system \eqref{eq:mainb_intro} can be written in term of the position $X$ and its law thanks to a \emph{decoupling field} $u:\R_+\times \R^{{\yin n}} \times \mathcal{P}^2(\R^{{\yin n}}) \to \R$ and if the dynamic of such a decoupling field is given thanks to an obstacle problem, as shown in \cite{EKKPPQ97} when the reflection is on the path. Assuming that the generator $f$ is deterministic we show that such a decoupling field exists and, when $f$ does not depend on the $Z$ argument, we obtain that $u$ solves, in the viscosity sense, the following obstacle problem on the Wasserstein space:
\begin{equation}\label{eq:obstacle_intro}\left\lbrace\begin{array}{lll}
\displaystyle \min  \bigg\{\big\{(\partial_t + \mathcal{L} )u(t,x,\mu) + f(t,x,u(t,x,\mu))\big\}; \\
\displaystyle \quad  H({\color{black}  u(t,\cdot,\mu)\sharp \mu} )\bigg\}= 0,\quad \mbox{on}\quad [0,T)\times \R^n \times \mathcal{P}^2(\R^n),\\
u(T,\cdot,\cdot) = \phi,
\end{array}\right.
\end{equation}
where {\yin for any probability measures $\nu$ and mesurable function $\varphi$, $\varphi \sharp \nu$ denotes the push-forward of the measure $\nu$ by the map $\varphi$ and where}
$\mathcal{L}$ is given by, for all smooth $\varphi : [0,T]{\pe \times \R^n} \times \mathcal P^2(\R^n) \to \R${\pe
\begin{eqnarray*}
\mathcal{L}\varphi(t,x,\mu)&=&\frac12\int_{\R^n} {\rm Tr}\left[((\sigma\sigma^*)(t,y)) \partial_y \ld \varphi(\mu)(t,x,\mu)(y)\right]\mu(dy) +\int_{\R^n} \ld \varphi(\mu)(t,x,\mu)(y)\cdot b(t,y)\mu(dy)\\
&&+ \frac12{\rm Tr}\left[((\sigma\sigma^*)(t,x)) D_x^2 \varphi(t,x,\mu)\right] +D_x\varphi(t,x,\mu)\cdot b(t,x)
\end{eqnarray*}
}

\subsubsection*{Main results.} We now describe our main results. In all the cases (forward setting with and without common noise and backward setting), the existence part differs from the ones in \cite{BEH1} or \cite{BCdRGL1}: we build the solution by a penalization technique inspired from \cite{LiSz, LiMeSz}. Also, and still in all the cases considered, our approach for the existence part heavily relies on suitable bi-Lipschitz property of the map $H$ which is assumed to hold in a neighbourhood of the {\pe boundary of the} constraint set.\\

Concerning the forward equations, we first prove in Theorem \ref{en:euss} that \eqref{eq:MRSDE-scalarCons2} is well posed {\yin when $K_t = \int_0^t \nabla h(X_s)d|K|_s$} for stochastic Lipschitz in space coefficients bounded in $\mathbb{L}_2$ at point 0 uniformly in time. We start by handling this specific case for the following reasons: firstly, this allows to introduce the main tools needed for our analysis in a simple setting; secondly, the proof is done under some concavity assumption assumed on $H(\mu) = \int h \d \mu$ (which will be no longer assumed for the rest of the forward part) that allows to work with diffusion coefficients not necessarily globally bounded in space (and then enables to sketch what will be usefull in the backward setting); thirdly, this allows to weaken a little bit the bi-{\yin L}ipschitz assumption on the constraint function $H$ (through the map $h$) and finally, thanks to a suitable transformation, this allows to connect the results obtained in this case of normal reflection with the previous results of \cite{BCdRGL1} where the reflection is oblique. 

We then enter our framework and we prove in Theorem \ref{en:eus} and Theorem \ref{thm:Cond} the well-posedness of \eqref{eq:mains} and \eqref{eq:mainsCond_intro} respectively: we show that, under suitable assumptions on the data, there exists a unique solution to these equations. {\pe We emphasize that in both cases the uniqueness is shown to hold pathwise and in law.} We present both results separately because they involve different sets of assumptions: for instance, for \eqref{eq:mains}, we can allow the diffusion $b$ to be unbounded while it is not the case for problems with common noise. The case with common noise also requires stronger assumptions on the constraint $H$. One reason for this is that, as can be seen on the Kolmogorov equations,  \eqref{eq.KolmDet} (which is associated with \eqref{eq:mains})  is a {\it deterministic} reflexion problem, while \eqref{eq.KolmSto} (associated with \eqref{eq:mainsCond_intro}) involves a diffusion term. As a consequence, for \eqref{eq:mains}, the process $(K_t)$ is deterministic and Lipschitz continuous, while it is random and merely continuous for \eqref{eq:mainsCond_intro}. 

{\yin Let us again emphasize that in a recent paper \cite{Jab}, Jabir considers process very similar to \eqref{eq:mains} (i.e. satisfying same type of constraint but not enunciated or either identified as a solution of Skorohod problem on the law). Using a different penalization approach and working under weaker regularity assumptions, provided that the constraint set is convex and the diffusion does not degenerate, he builds weak solutions to the problem  through tightness of the law of the penalization. However no uniqueness result is stated in \cite{Jab} and propagation of chaos as well as the common noise setting are not discussed.}

Next, we present in Theorem \ref{thm:chaosS_cond} a conditional propagation of chaos for the particle system \eqref{eq:mainps_cond_Intro}. In other words, we show that  system \eqref{eq:mainps_cond_Intro} converges, as $N\to+\infty$, to the reflected SDEs  with common noise \eqref{eq:mainsCond_intro}. Let us recall that \cite{BCdRGL1} already studied the propagation of chaos in the one dimensional setting. The main differences with  \cite{BCdRGL1} is that we work here with a normal reflexion (in \cite{BCdRGL1}  it was oblique) and in a common noise setup. {\pe The convergence rate} are also explicitly given.

Eventually, we make the connection with the Neumann problem on the Wasserstein space.  This connection remains at a basic level, as we only prove is that the map $U$ defined as $U(t_0,[X_0])= {\yin \E\left[G([X_T|W])\right]}$ is \emph{a} viscosity solution of \eqref{lqbzlsdCond_intro} in Theorem \ref{thm.viscsol} and that any smooth solution of \eqref{lqbzlsdCond_intro} can be written as $U(t_0,[X_0])= {\yin \E\left[G([X_T|W])\right]}$ in Proposition \ref{prop.FK.cond}. Here, the notion of viscosity solution we adopt consists in natural extension of the finite dimensional definition by testing the map $U$ again{\pe st} smooth functions from $\mathcal{P}_2(\R^n)$ to $\R$, the smoothness being understood in the Lions's sense. In that respect, we follow the definition proposed in \cite{CD17-2} and extend it to the case of Neumann condition. Nevertheless, we do not prove a uniqueness result which, in infinite dimensional metric space, appears to be involved and definitely out of the scope of this paper. Let us just mention that, in that case, it seems to us that the map $U$ could be shown to be the unique classical solution of \eqref{lqbzlsdCond_intro}, avoiding then the difficulties relying on the notion of viscosity solution. Again, such an investigation exceeds the scope of this paper.\\

Concerning the Backward equation, we first give in Theorem \ref{thm:existence_uniqueness} an existence result for \eqref{eq:mainb_intro}. This result relies on the concavity property assumed on $H$ in the current setting and is done through a three steps scheme: we first tackle the case of deterministic bounded and space homogeneous driver; we then extend the result to a stochastic generator in $L^2(\Omega,L^2([0,T])$ and eventually to a space dependent generator. As in the forward case without common noise, the process $(K_t)$ is shown to enjoy a Lipschitz property. {\pe Again, uniqueness holds pathwise{\yin ly} and in law.}

{\pe Then} we construct in Section \ref{SEC_MF_FOR_BSDE} an interacting particle system satisfying a Skorohod problem in mean field of type \eqref{eq:backward:part_intro} and prove in Theorem \ref{th:estiparticleBSDE} that the interacting particle system converges to \eqref{eq:mainb_intro} (meaning that \eqref{eq:mainb_intro} precisely gives the asymptotic dynamic of one of the interacting particles). Again, we specify the rate at which system \eqref{eq:backward:part_intro} converges to \eqref{eq:mainb_intro}.

Finally, thanks to our well-posedness result, we investigate the obstacle problem \eqref{eq:obstacle_intro} through (decoupled) Forward-Backward SDE (FBSDE) with normal constraint in law (on the backward part). When the driver $f$ no longer depends on the $Z-$variable, we show in Theorem \ref{THM:Obstacle} that the FBSDE with normal constraint in law admits a decoupling field $u$ which satisfies, in the viscosity sense, the PDE \eqref{eq:obstacle_intro}. The reason why the driver is not allowed to depend on the variable $Z$ relies to the lack of comparison principle when considering non-linear equations which makes the usual procedure to tackle this term unavailable. For the same reasons as pointed out for the Neumann problem, we do not prove any uniqueness result of viscosity solutions and let it for future considerations.

\subsection{Organization of this work} The paper is organized in the following way. We complete the introduction by describing our notations, discussing the notion of Lions's derivative and presenting the various It\^{o}'s formulas used in the text {\pe as well as the tools needed to ensure the convergence of the particle systems}. Section \ref{sec:sdes_with_normal_reflexion_in_law} is devoted to the  SDEs with normal reflexion in law: starting with the simplest setting \eqref{eq:MRSDE-scalarCons2} to illustrate the method of proof, we then show the existence and the uniqueness of the solution of \eqref{eq:mains} (Theorem \ref{en:eus}). SDEs with normal reflexion in conditional law (i.e., the ``common noise" case) are studied in 
Section \ref{sec:conditional_law}, where we prove the well-posedness of \eqref{eq:mainsCond_intro} (Theorem \ref{thm:Cond}), the propagation of chaos (Theorem \ref{thm:chaosS_cond}) and make the link with the Neumann problem (Theorem \ref{thm.viscsol} and Proposition \ref{prop.FK.cond}).  Finally, the case of backward equation is treated in Section \ref{sec:the_backward_case}, where we also prove the existence and the uniqueness of the solution to \eqref{eq:mainb_intro} (Theorem \ref{thm:existence_uniqueness}), the propagation of chaos (Theorem \ref{th:estiparticleBSDE}) and a Feynman-Kac formula (Proposition \ref{THM:Obstacle}).

\subsection{Notations and Mathematical tools}
Throughout the paper we will make an intensive use of Lions's derivative, introduced in \cite{LLperso} and later discussed in \cite{BLP09, CDLL, CD17-1, CCD14}. Let us recall that other notions of derivatives in the space of measure have been developed in the literature: see for instance \cite{AGS, Ko10, MiMo}. Related to Lions's derivatives, we also need several It\^{o}'s formulas, allowing to compute the value of a map depending on the law (or the conditional law) of a process. These formulas have been developed in various stages of generality in  \cite{BLP09, CDLL, CD17-1, CCD14}. {\pe Nevertheless, it is not yet clearly stated that this formula holds for reflected processes, especially when they are submitted to a common noise as it is the case in \eqref{eq:mainsCond_intro}. Therefore, we prove below that such  It\^{o}'s formulas can be generalized to our framework. Eventually, in order to ensure the convergence of our particle system \eqref{eq:mainps_cond_Intro} and \eqref{eq:backward:part_intro}, we provide at the end of this part a Lemma allowing to control the uniform convergence in 2-Wasserstein metric of an empirical measure of conditionally i.i.d. processes.}

\subsubsection*{Notations.} 

Throughout the paper, we work on a complete probability space $(\Omega, \m F,\p)$ with a complete and right-continuous filtration $\{\m F_t\}_{t\geq 0}$. We denote by $\m E$ the $\sigma$-algebra of progressive sets of $\Omega\times \rset_+$ and let $B$ and $W$ be two independent $d$-dimensional Brownian motion with respect to this filtration. We denote by ${\mathcal F}^W$ the completed filtration generated by $W$ and, given a random variable $X$ on $(\Omega, \m F,\p)$, let $[X]$ be its law and  $[X|W]$ be its law given ${\mathcal F}^W$. 

Let $\m P(\R^n)$ be the set of Borel probability measures on $\R^n$ and{\color{black}, given $q\geq 1$, let $\m P^q(\R^n)$ be} the subset of measures $m$ in $\m P(\R^n)$ with finite $q$ order moment: 
$$
\M_q(\mu) := \int_{\R^n} |x|^q\mu(dx)<+\infty{\yin ,} \qquad \mu\in \m P^q(\R^n).
$$
We denote by $W_2(\mu,\mu')$ the Wasserstein distance between two measures of $\m P^2(\R^n)$. Let us recall that 
$$
W_2(\mu,\mu')= \inf_{X,X'} \left(\E\left[|X-X'|^2\right]\right)^{1/2},
$$
where the infimum is taken over all random variables ${\yin (X,X')}$ such that $[X]= \mu$, $[X']=\mu'$.\\

\subsubsection*{Derivatives.} Given a map $H:\m P^2(\R^n)\to \R$, we denote by $\ld H: \m P^2(\R^n)\times \R^n\to \R^n$ its derivative, when it exists, in Lions's sense (or ``L-derivative"): see \cite{LLperso} or Definition 5.22 in \cite{CD17-1}. Let us recall that $H$ has an L-derivative at a measure $\mu_0\in \m P^2(\R^n)$ if there exists a random variable $X_0$ with $[X_0]=\mu_0$ at which the lifted map $\tilde H: L^2(\Omega)\to \R$, defined by $\tilde H(X)=H([X])$, is differentiable. If $H$ is L-differentiable on $\m P^2(\R^n)$, then the derivative of $\tilde H$  takes the form $\nabla \tilde H(X)= \ld H([X])(X)$ (\cite{LLperso} or \cite[Proposition 5.25]{CD17-1}). As explained in \cite[Remark 5.27]{CD17-1}, the derivative $\ld H$ allows to quantify the Lipschitz regularity of $H$. Namely, $H$ is Lipschitz continuous on $\m P^2(\R^n)$ if and only if {\pe 
$$
\H_p:= \sup_{\mu\in \m P^p(\R^n)} \int_{\R^n} | \ld H(\mu)(x)|^p\mu(dx)<+\infty,
$$
with $p=2$.} In this case, $\H_2$ is the Lipschitz constant of $H$. We will often require that the map $\nabla \tilde H$ itself is Lipschitz continuous on $L^2(\Omega)$: 
in order words, 
$$
\forall X,Y\in L^2(\Omega), \qquad \E\left[ \left| \ld H([X])(X)- \ld H([Y])(Y)\right|^2\right]\leq C^2 \E\left[|X-Y|^2\right].
$$
Then it is proved in \cite[Proposition 5.36]{CD17-1} that (up to redefining $\ld H$), 
$$
\forall \mu \in \m P^2(\R^n), \; \forall x,x'\in \R^n, \qquad\left| \ld H(\mu)(x)-\ld H(\mu)(x')\right|\leq C|x-x'|.
$$

\subsection*{Second order derivatives} We now discuss further regularity of $H$. We say that $H$ is {\it partially $C^2$} if the map $y\to \ld H(\mu)(y)$ is continuously differentiable with a derivative $(\mu, y)\to \partial_y \ld H(\mu)(y)$ jointly continuous on $\m P^2(\R^n)\times \R^n$ (see also \cite[Definition 5.95]{CD17-1}). We say that $H$ is {\it globally $C^2$} if it is partially $C^2$ and if the map $\mu \to \ld H(\mu)(y)$ is $L-$differentiable with $(\mu,y,y')\to \ld^2 H(\mu)(y,y')$ continuous (see also \cite[Definition 5.82]{CD17-1}).

\subsubsection*{It\^{o}'s formulas.} When $H$ is partially $C^2$, It\^{o}'s formula holds for the law of a diffusion of the form 
$$
dX_t =b_t dt +\sigma_t dB_t
$$
where $(b_t)$ and $(\sigma_t)$ are progressively measurable with values in $\R^n$ and $\R^{n\times d}$ respectively and satisfy 
$$
\E\left[ \int_0^T (|b_s|^2+|\sigma_s|^4)ds \right]<+\infty.
$$
Namely, if $H$ is partially $C^2$ with 
\begin{equation}\label{hypLdH}
\sup_{ \mu\in \m P^2(\R^n)} 
\int_{\R^n} \Bigl| 
\partial_y\ld H(\mu)(y)
\Bigr|^2
\mu(dy) <+\infty, 
\end{equation}
then 
\begin{equation}\label{I1}
H([X_t])= H([X_0]) + \int_0^t\E\left[ \ld H([X_s])(X_s)\cdot b_s\right]ds +\frac12 \int_0^t\E\left[ {\rm Tr} \left(a_s \partial_y \ld H([X_s])(X_s)\right)\right]ds, 
\end{equation}
where $a_s=\sigma_s\sigma_s^*$   \cite[Theorem 5.98]{CD17-1}.  

A similar It\^{o}'s formula holds for the {\it conditional law} of an It\^{o}'s process of the form, 
$$
dX_t= b_tdt+ \sigma^0_tdB_t+ \sigma^1_tdW_t{\yin ,}
$$
where $B$ and $W$ are independent $d-$dimensional Brownian motions living on different probability spaces $(\Omega^0, \m F^0,\p^0)$ and $(\Omega^1, \m F^1,\p^1)$ and where $b$, $\sigma^0$ and $\sigma^1$ are progressively measurable with respect to the filtration generated by $W$ and $B$, with 
$$
\E\left[ \int_0^T (|b_s|^2+|\sigma^0_s|^4+|\sigma^1_s|^4)ds \right]<+\infty.
$$
We assume that $H$ is globally $C^2$ with 
$$
\sup_{\mu\in \m P^2} \int_{\R^n} \left| \ld H(\mu)(x)\right|^2 \mu(dx) + \int_{\R^n} \left| \partial_x \ld H(\mu)(x)\right|^2 \mu(dx) + 
\int_{\R^n\times \R^n} \left|D^2_{\mu\mu} H(\mu)(x,y)\right|^2\mu(dx)\mu(dy) <+\infty.
$$
Then, letting $\mu_t(\omega^1)=[X_t|W](\omega^1)$, we have, $\p^1-$a.s.,  
\begin{align} 
H(\mu_t) & = H(\mu_0)+\int_0^t \E^0\left[ \ld H(\mu_s)(X_s)\cdot b_s\right] ds
	+ \int_0^t \E^0\left[(\sigma^1_s)^*\ld H(\mu_s)(X_s)\right]\cdot dW_s\label{I2}\\
	& \qquad + \frac12 \int_0^t  \E^0\left[ {\rm Tr}\left(a_s \partial_y\ld H(\mu_s)(X_s))\right)\right] ds \notag
	+ \frac12 \int_0^t \E^0\tilde \E^0\left[{\rm Tr}\left(D^2_{\mu\mu}H(\mu_s)(X_s,\tilde X_s)\sigma^1_s(\tilde \sigma^1_s)^*\right)\right] ds\notag
\end{align}
where $\tilde X$ and $\tilde \sigma^1$ are independent copies of $X$ and $\sigma^1$ {\yin is }defined on the space {\pe $(\tilde \Omega^0\times \tilde \Omega^1, \tilde \p^0\otimes \tilde \p^1)$}, while $a_s:= (\sigma^0_s( \sigma^0_s)^*+\sigma^1_s(\sigma^1_s)^*)$. 
See \cite[Theorem 11.13]{CD17-2}. 

{\pierre We actually need below more general versions of It\^{o}'s formulas \eqref{I1} and \eqref{I2}  in which there is an additional drift term $\beta_s dK_s$. In the generalization of \eqref{I1}, we assume that  $\beta$ is a continuous process adapted to the filtration generated by $B$ and that $K$ is a deterministic, continuous and nondecreasing process process with $K_0=0$. We are then interested in the law $(\mu_t)$ of an It\^{o} process 
$$
dX_t= b_tdt+ \sigma_tdB_t+\beta_tdK_t. 
$$
Assuming, in addition to the conditions on $b$, $\sigma$ and $H$ given above, that 
$$
\E\left[ \sup_{t\in [0,T]} |\beta_t|^2 \right]<+\infty, 
$$
we have 
\begin{align}
H([X_t]) = & H([X_0]) + \int_0^t\E\left[ \ld H([X_s])(X_s)\cdot b_s\right]ds +\int_0^t\E\left[ \ld H([X_s])(X_s)\cdot \beta_s\right]dK_s \label{I3} \\ 
& \qquad + \frac12 \int_0^t\E\left[ {\rm Tr} \left(a_s \partial_y \ld H([X_s])(X_s)\right)\right]ds, \notag
\end{align}
where $a_s=\sigma_s\sigma_s^*$.  For the generalization of \eqref{I2}, we assume instead that $\beta$ is a continuous process, adapted to the filtration generated by $B$ and $W$ and that $K$ is a continuous and nondecreasing process adapted to the filtration generated by $W$ with $K_0=0$. We also assume 
{\color{black} 
\begin{equation}\label{hypbetaK}
\E\left[ \sup_{t\in [0,T]} \E^0\left[ |\beta_t|^2\right] K_T^2 \right]<+\infty, 
\end{equation}
} We are then interested in the conditional law, given $W$, of the process 
$$
dX_t= b_tdt+ \sigma^0_tdB_t+ \sigma^1_tdW_t+\beta_tdK_t. 
$$
Under the above conditions on $b$, $\sigma^0$, $\sigma^1$, $H$, $\beta$ and $K$, we have, $\p^1-$a.s.,  
\begin{align} 
H(\mu_t) & = H(\mu_0)+\int_0^t \E^0\left[ \ld H(\mu_s)(X_s)\cdot b_s\right] ds 
	+ \int_0^t \E^0\left[(\sigma^1_s)^*\ld H(\mu_s)(X_s)\right]\cdot dW_s\notag \\
	& \qquad + \int_0^t \E^0\left[ \ld H(\mu_s)(X_s)\cdot \beta_s\right] dK_s
	+ \frac12 \int_0^t  \E^0\left[ {\rm Tr}\left(a_s \partial_y\ld H(\mu_s)(X_s))\right)\right] ds \label{I4} \\
	& \qquad + \frac12 \int_0^t \E^0\tilde \E^0\left[{\rm Tr}\left(D^2_{\mu\mu}H(\mu_s)(X_s,\tilde X_s)\sigma^1_s(\tilde \sigma^1_s)^*\right)\right] ds. \notag 
\end{align}
\begin{proof}[Proof of  \eqref{I3} and \eqref{I4}]  Formula \eqref{I3} and \eqref{I4} can  be derived from \eqref{I1} and \eqref{I2} respectively by regularizing the process $K$. We give the details for \eqref{I4} to fix the idea,  as \eqref{I3} can be proved with the same argument. We first extend $K$ to $\R$ by setting $K_t=0$ for $t\leq 0$ and then let $K^\ep:= \phi^\ep \ast K$, where $\phi^\ep(s)= \ep^{-1}\phi(s/\ep)$, $\phi$ being a smooth nonnegative kernel with compact support in $\R_+$. Then, $K^\ep$ is a smooth nondecreasing process adapted to $W$.  Let us set 
$$
X^\ep_t= X_0+\int_0^t  b_sds+ \int_0^t \sigma^0_sdB_s+ \int_0^t \sigma^1_sdW_s+\int_0^t \beta_s (K^\ep)'_sds
$$
and $\mu^\ep_t =[X^\ep_t|W]$.  By \eqref{hypbetaK}  {\color{black} and the fact that $K$ is $\m F^W$ adapted, we have  
\begin{align*}
& \E\left[ \int_0^T |\beta_s (K^\ep)'_s|^2ds \right] \leq \|(\phi^\ep)'\|_\infty \E\left[ \sup_{0\leq s\leq T}\E^0\left[  |\beta_s|^2 \right]  |K_T|^2 \right] <+\infty.
\end{align*}
} So, by It\^{o}'s formula  \eqref{I2}, we  obtain
\begin{align*} 
H(\mu^\ep_t) & = H(\mu_0)+\int_0^t \E^0\left[ \ld H(\mu^\ep_s)(X^\ep_s)\cdot b_s\right] ds 
	+ \int_0^t \E^0\left[(\sigma^1_s)^*\ld H(\mu^\ep_s)(X^\ep_s)\right]\cdot dW_s \\
	& \qquad + \int_0^t \E^0\left[ \ld H(\mu^\ep_s)(X^\ep_s)\cdot \beta_s\right] (K^\ep)'_sds 
	+ \frac12 \int_0^t  \E^0\left[ {\rm Tr}\left(a_s \partial_y\ld H(\mu^\ep_s)(X^\ep_s))\right)\right] ds \\
	& \qquad + \frac12 \int_0^t \E^0\tilde \E^0\left[{\rm Tr}\left(D^2_{\mu\mu}H(\mu^\ep_s)(X^\ep_s,\tilde X^\ep_s)\sigma^1_s(\tilde \sigma^1_s)^*\right)\right] ds.  
\end{align*}
In order to  let $\ep\to 0$, we first prove the uniform convergence of $X^\ep$ to $X$. We note that 
$$
\E\left[ \sup_{0\leq t\leq T} \left|X^\ep_t-X_t\right|^2\right] \leq \E\left[\sup_{0\leq t\leq T}  \left| \int_0^t (\beta_s- \int_s^t \beta_u \phi^\ep(u-s)du) dK_s\right|^2 \right], 
$$
where 
$$
\sup_{0\leq t\leq T}  \left| \int_0^t (\beta_s- \int_s^t \beta_u \phi^\ep(u-s)du) dK_s\right|^2 \leq  2 \sup_{0\leq s\leq T}|\beta_s|^2 K_T^2, 
$$
 the right-hand side being integrable by \eqref{hypbetaK}. Fix $\delta>0$ small. By the continuity of $\beta$,  
 we have, uniformly in $s\in (0,t-\delta)$,  
 $$
 \lim_{\ep\to 0}  \int_s^t \beta_u \phi^\ep(u-s)du = \lim_{\ep\to 0}  \int_\R \beta_u \phi^\ep(u-s)du = \beta_s.
$$
So, a.s.,  
$$
\lim_{\ep\to 0} \sup_{\delta \leq t\leq T}  \left| \int_0^{t-\delta} (\beta_s- \int_s^t \beta_u \phi^\ep(u-s)du) dK_s\right|^2 = 0.
$$
On the other hand, 
$$
 \left| \int_{(t-\delta)\vee 0}^t (\beta_s- \int_s^t \beta_u \phi^\ep(u-s)du) dK_s\right|\leq 2 \sup_{0\leq u\leq T}|\beta_u|^2 (K_t-K_{(t-\delta)\vee 0}).
 $$
So, by the continuity of $K$, 
$$
\lim_{\delta \to 0} \limsup_{\ep\to 0}  \sup_{0 \leq t\leq T}  \left| {\color{black} \int_{(t-\delta)\vee 0}^t } (\beta_s- \int_s^t \beta_u \phi^\ep(u-s)du) dK_s\right|^2=0. 
$$
We can then infer by dominate convergence  that  
\begin{equation}\label{mecznregpdfv0} 
\lim_{\ep\to0} \E\left[ \sup_{0\leq t\leq T}\left|X^\ep_t-X_t\right|^2\right] =0 \qquad \mbox{\rm and thus} \qquad 
\lim_{\ep\to 0}\E\left[ \sup_{0\leq t\leq T} {\mathcal W}_2^2(\mu^\ep_t,\mu_t) \right]=0. 
\end{equation}
By our assumptions on the data, one easily derive from this the convergence in $L_2(\Omega)$ of 
\begin{align*} 
H(\mu^\ep_t) &- H(\mu_0)-\int_0^t \E^0\left[ \ld H(\mu^\ep_s)(X^\ep_s)\cdot b_s\right] ds 
	- \int_0^t \E^0\left[(\sigma^1_s)^*\ld H(\mu^\ep_s)(X^\ep_s)\right]\cdot dW_s \\
&  	- \frac12 \int_0^t  \E^0\left[ {\rm Tr}\left(a_s \partial_y\ld H(\mu^\ep_s)(X^\ep_s))\right)\right] ds 
	 - \frac12 \int_0^t \E^0\tilde \E^0\left[{\rm Tr}\left(D^2_{\mu\mu}H(\mu^\ep_s)(X^\ep_s,\tilde X^\ep_s)\sigma^1_s(\tilde \sigma^1_s)^*\right)\right] ds  
\end{align*}
to 
\begin{align*} 
H(\mu_t) & - H(\mu_0)+\int_0^t \E^0\left[ \ld H(\mu_s)(X_s)\cdot b_s\right] ds 
	- \int_0^t \E^0\left[(\sigma^1_s)^*\ld H(\mu_s)(X_s)\right]\cdot dW_s \\
&  	- \frac12 \int_0^t  \E^0\left[ {\rm Tr}\left(a_s \partial_y\ld H(\mu_s)(X_s))\right)\right] ds 
	- \frac12 \int_0^t \E^0\tilde \E^0\left[{\rm Tr}\left(D^2_{\mu\mu}H(\mu_s)(X_s,\tilde X_s)\sigma^1_s(\tilde \sigma^1_s)^*\right)\right] ds  {\color{black}.}
\end{align*}
To complete the proof we just need to check that, in $L_2(\Omega)$,  
\begin{equation}\label{mecznregpdfv}
\lim_{\ep\to 0}  \int_0^t \E^0\left[ \ld H(\mu^\ep_s)(X^\ep_s)\cdot \beta_s\right] (K^\ep)'_sds =  \int_0^t \E^0\left[ \ld H(\mu_s)(X_s)\cdot \beta_s\right] dK_s.
\end{equation}
Indeed 
\begin{align*}
& \int_0^t \E^0\left[ \ld H(\mu^\ep_s)(X^\ep_s)\cdot \beta_s\right] (K^\ep)'_s \d s = \int_0^t \Bigl( \int_r^t \E^0\left[ \ld H(\mu^\ep_s)(X^\ep_s)\cdot \beta_s\right]  \phi_\ep(s-r)ds\Bigr) \d K_r, 
\end{align*}
where, by Cauchy-Schwartz and our assumption on  $\ld H$, 
\begin{align*}
& \left| \int_0^t \Bigl( \int_r^t \E^0\left[ \ld H(\mu^\ep_s)(X^\ep_s)\cdot \beta_s\right]  \phi_\ep(s-r)ds\Bigr) \d K_r\right| \\
& \; \leq \int_0^t  \Bigl( \int_r^t  \left(\E^0\left[ |\ld H(\mu^\ep_s)(X^\ep_s)|^2\right]\right)^{1/2} \left(\E^0[|\beta_s|^2]\right)^{1/2}   \phi_\ep(s-r)ds\Bigr) \d K_r
\leq 
C\left(\E^0\left[ \sup_{0\leq s\leq T} |\beta_s|^2\right] \right)^{1/2} K_T. 
\end{align*}
{\color{black} The right-hand side of the above inequality is in $L^2(\Omega)$ thanks to \eqref{hypbetaK}.} Using the uniform convergence of $s\to \E^0\left[ \ld H(\mu^\ep_s)(X^\ep_s)\cdot \beta_s\right]$ implied by \eqref{mecznregpdfv0}, we can conclude exactly as in the proof of \eqref{mecznregpdfv0} that \eqref{mecznregpdfv} holds. 
\end{proof}
}

{\pe

\noindent\textbf{On uniform convergence of conditionally  i.i.d. sequence of processes.} The aim of this part is to recast in the conditionally i.i.d. setting the result of \cite{RR98a} (see Theorem 10.2.7 therein) taking into account the recent results obtained in  \cite{FG15} and in Chapter 5 of \cite{CD17-1}. \\

\begin{lemme}\label{ConvPart} 
 Let $(\Omega^0,\mathcal{F}^0,\mathbb{P}^0)$ and $(\Omega^1,\mathcal{F}^1,\p^1)$ be two different probability spaces. Let $\{X^i_\cdot\}_{1 \leq i \leq N}$ be a sequence of processes defined on $(\Omega^0\times \Omega^1,\mathcal{F}^0\otimes \mathcal{F}^1,\p^0\otimes\p^1)$ satisfying that the $X^i$ are, conditionally to $\mathcal{F}^1$, i.i.d.  processes. Assume that there exists constants {\color{black} $C_0>0$,} $q>4$ and $p>2$ {\color{black} such that  
 $$
 {\ph\E^1\left[\sup_{t \in [0,1]} \e^0\left[|X_t|^q\right]^{2p/q} \right]\leq C_0}
 $$
  and}

\begin{eqnarray}
\e [ |X_s - X_r|^p |X_s-X_t|^p] &\leq & {\color{black} C_0} |t-r|^2,\quad {\rm for }\ 0 \leq r < s < t \leq 1;\notag\\
\e [|X_t-X_s|^p] &\leq &{\color{black}  C_0} |t-s|,\quad {\rm for }\ 0 \leq  s \leq  t \leq 1; \label{hypRacRu}\\
\forall 0 \leq s \leq t\leq 1,\quad \e\left[\sup_{u \in [s,t]}\left|\e^0 [|X_u - X_s |^2]\right|^p\right]& \leq& {\color{black} C_0}  |t-s|^{{\pee p/2}}. \notag 
\end{eqnarray}
Then, {\color{black} setting $\mu_s=[X_s|\mathcal{F}^1]$} and $\bar \mu^N_s = N^{-1} \sum_{i=1}^N \delta_{X_s^i}$, there exists a constant $C>0$, {\color{black} depending on $n$, $p$, $q$ 	and $C_0$ only,} such that
\begin{equation}
\E \left[\sup_{s \leq T} W_2^2(\bar \mu_{s}^N, \mu_{s}) \right]\leq C\epsilon_N:=C  \times \left\lbrace\begin{array}{ll} \displaystyle N^{{\yin -1/2+1/p}} \text{ if } d < 4,\\ \displaystyle  N^{{\yin -1/2+1/p}}(\log(1+N))^{{\yin 1-2/p}}  \text{ if } d=4,\\ \displaystyle N^{{\yin-  2(1-2/p)/d}}  \text{ if } d > 4. \end{array}\right.
\end{equation}
\end{lemme}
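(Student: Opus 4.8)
\textbf{Proof plan for Lemma \ref{ConvPart}.}

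The plan is to reduce the conditionally i.i.d. case to the unconditional (i.e. genuinely i.i.d.) case of \cite[Theorem 10.2.7]{RR98a} by freezing the randomness of $\mathcal{F}^1$, and then to integrate back. First I would work $\p^1$-almost surely: conditionally on $\mathcal{F}^1$ the processes $X^1_\cdot,\dots,X^N_\cdot$ are i.i.d. with (random) common law the process-law of $X_\cdot$ given $\mathcal{F}^1$, and $\mu_s=[X_s|\mathcal{F}^1]$ is precisely their conditional one-dimensional time marginal. Thus, conditionally, $\bar\mu^N_s$ is the empirical measure of an i.i.d. sample and the classical result applies, giving for $\p^1$-a.e.\ $\omega^1$ a bound on $\E^0\left[\sup_{s\le T}W_2^2(\bar\mu^N_s,\mu_s)\right]$ in terms of the conditional moment and conditional time-regularity quantities appearing in the hypotheses. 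Then I would take $\E^1$ of this conditional bound; the point of stating the hypotheses with the outer $\E^1$ (resp.\ the joint $\E=\E^1\E^0$) already in place is exactly so that after conditioning the constants that show up are the ones controlled by \eqref{hypRacRu} and the moment assumption, and the tower property closes the estimate.

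The core quantitative input is the control of $W_2$ of an empirical measure around its mean: by the Fournier--Guillin estimates (\cite{FG15}, see also Chapter 5 of \cite{CD17-1}), for an i.i.d.\ sample of size $N$ from a law $\nu$ on $\R^n$ with finite $q$-th moment ($q>4>2$), one has $\E[W_2^2(\nu^N,\nu)]\le C M_q(\nu)^{2/q}\,r_N$ with $r_N$ the rate $\epsilon_N$ stated (the three regimes $d<4$, $d=4$, $d>4$). Applying this pointwise in time and conditionally in $\omega^1$, then using the time-regularity moment bounds in \eqref{hypRacRu} to run a Kolmogorov/Garsia--Rodemich--Rumsey type chaining argument (this is precisely the mechanism of \cite[Theorem 10.2.7]{RR98a}) upgrades the fixed-time estimate to the uniform-in-$s$ estimate $\E^0\left[\sup_{s\le T}W_2^2(\bar\mu^N_s,\mu_s)\right]\le C\,\epsilon_N\,\big(\text{conditional moment factor}\big)$. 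The hypothesis $\E^1\big[\sup_t \E^0[|X_t|^q]^{2p/q}\big]\le C_0$ is tailored so that this conditional moment factor is integrable in $\omega^1$ with the right exponent $2/q$ versus the chaining exponent $p$, and the last line of \eqref{hypRacRu} furnishes the modulus needed to control the oscillation of $s\mapsto\mu_s$ (equivalently of $s\mapsto \E^0[|X_u-X_s|^2]$) in the chaining.

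The main obstacle I expect is bookkeeping the interplay of the two expectations through the chaining argument: the result of \cite{RR98a} is stated unconditionally, so one must check that its proof localizes cleanly in $\omega^1$ — i.e.\ that the dyadic chaining over time, the use of the fixed-time Fournier--Guillin bound, and the moment hypotheses combine with only $\E^0$ acting, producing a bound whose dependence on $\omega^1$ is exactly an $\big(\E^0[\,\cdot\,]\big)^{2p/q}$-type quantity that is then killed by $\E^1$ via the stated assumption. One has to be careful that $q>4$ and $p>2$ are used consistently (as in \cite{FG15}) so that the Wasserstein rate is summable along the dyadic scales and the resulting constant depends only on $n,p,q,C_0$; no new idea beyond \cite{RR98a,FG15} is needed, but verifying that the conditioning does not disturb any step is the bulk of the work. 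Finally I would note the reduction to $T=1$ is harmless (the hypotheses are stated on $[0,1]$), and rescaling or restricting to $[0,T]$ only changes constants.
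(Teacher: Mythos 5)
Your overall strategy --- freeze $\omega^1$, apply the uniform-in-time empirical-measure estimate of \cite[Theorem 10.2.7]{RR98a} conditionally, then integrate with $\E^1$ --- does not go through as stated, and the discrepancy shows up in the rate itself. The hypotheses \eqref{hypRacRu} are bounds for the \emph{joint} expectation $\E=\E^1\E^0$; they do not provide $\p^1$-a.s.\ control of the conditional quantities $\E^0[|X_s-X_r|^p|X_s-X_t|^p]$ and $\E^0[|X_t-X_s|^p]$ that a conditional run of the chaining in \cite{RR98a} would require, so the theorem cannot simply be "localized in $\omega^1$". Moreover, in the conditional setting the target $s\mapsto\mu_s=[X_s|\mathcal{F}^1]$ is itself a random flow of measures, and its uniform-in-time oscillation can only be reached through the $L^p(\p)$ maximal bound in the third line of \eqref{hypRacRu}; this is exactly what forces the loss of $1/p$ in the exponent ($N^{-1/2+1/p}$ rather than the i.i.d.\ rate $N^{-1/2}$, cf.\ Remark \ref{remconvpart}). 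If your plan worked as written it would deliver the unreduced i.i.d.\ rate, which is not what the lemma claims --- a sign that the full conditional localization is not available under these hypotheses.

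What the paper does instead is keep the joint expectation throughout: it discretizes $[0,1]$ with mesh $1/m$ and splits $\sup_s W_2^2(\bar\mu^N_s,\mu_s)$ into three pieces. First, the oscillation $\chi_k$ of the empirical measures between grid points is handled directly under $\E$ by the first two lines of \eqref{hypRacRu}, following \cite{RR98a} with no conditioning at all (one only needs $W_2^2(\bar\mu^N_t,\bar\mu^N_{t_k})\le N^{-1}\sum_i|X^i_t-X^i_{t_k}|^2$). Second, the grid-point errors $\max_k W_2^2(\bar\mu^N_{t_k},\mu_{t_k})$ are treated by conditioning on $\mathcal{F}^1$ \emph{only at the fixed times} $t_k$ to invoke the Fournier--Guillin bound of \cite{FG15} $\p^1$-a.s., with the random constant $(\E^0[|X_{t_k}|^q])^{2p/q}$ made integrable by the displayed moment hypothesis; the $\max_k$ is then paid for by the bound $\E[\max_k a_k]\le m^{1/p}(\max_k\E[a_k^p])^{1/p}$. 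Third, the oscillation of $\mu$ itself, $\max_k\sup_{t_k\le t\le t_{k+1}}W_2^2(\mu_{t_k},\mu_t)$, is controlled by the third line of \eqref{hypRacRu} at the same $m^{1/p}$ cost. The final rate comes from optimizing over $m$. The key ingredient missing from your proposal is this maximal-inequality/union bound over the time grid under the joint expectation: it is where the specific form of the hypotheses, the role of the exponent $p$, and the exponent $1/p$ in $\epsilon_N$ all originate.
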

\begin{proof}
Using a time discretization $(t_k)_{{\yin 0} \le k \le m}$ with constant time step $1/m$ of the interval $[0;1]$ we obtain that 
\begin{equation*}
\sup_{s \leq 1} W_2^2(\bar \mu_s^N,\mu_s) \leq {\yin 3}\left\{ \max_{k}  \chi_k+ \max_{k} W_2^2(\bar \mu_{t_k}^N, \mu_{t_k})  + \max_{k}\sup_{t_k \leq t \leq t_{k+1}} W_2^{{\yin 2}}(\mu_{t_k},\mu_t) \right\},
\end{equation*}
where 
$$\chi_k = \sup_{t_k \leq t \leq t_{k+1}}\left\{ W_2^2(\bar \mu_t^N,\bar \mu_{t_k}^N) \wedge W_2^2(\bar \mu_t^N,\bar \mu_{t_{k+1}}^N)\right\}.$$
Now we have
\begin{eqnarray*}
\E\left[\max_{k}\sup_{t_k \leq t \leq t_{k+1}} W_2^2(\mu_{t_k},\mu_t)\right] &\leq & \E\left[\max_{k}\sup_{t_k \leq t \leq t_{k+1}} \E^0\left[|X_t-X_{t_k}|^2 \right]\right]\\
&\leq & \E^{1/p}\left[\max_{k}\sup_{t_k \leq t \leq t_{k+1}} \left|\E^0\left[|X_t-X_{t_k}|^2 \right]\right|^p\right]\\
&\leq & \E^{1/p}\left[\sum_{k}\sup_{t_k \leq t \leq t_{k+1}} \left|\E^0\left[|X_t-X_{t_k}|^2 \right]\right|^p\right]\\
\end{eqnarray*}
Therefore, using third line in  {\color{black} Assumption \eqref{hypRacRu}} we obtain
\begin{eqnarray}\label{controle_discretization}
\E\left[\max_{k}\sup_{t_k \leq t \leq t_{k+1}} W_2^2(\mu_{t_k},\mu_t)\right] &\leq & {\yin C} m^{-1/2+1/p}{\yin .}
\end{eqnarray}
Following now the proof of Theorem 10.2.7 of \cite{RR98a} we have
\begin{equation}\label{tochangeincommonnoise2}
\E\left[\max_{k} \chi_k \right]\leq \frac{C}{\sqrt{m}},
\end{equation}
which will lead to a negligible contribution. 
%
Concerning the last term we write

\begin{equation}
\e \left[  \max_{k} W_2^2(\bar \mu_{t_k}^N, \mu_{t_k})  \right] \leq m^{1/p} \left(\max_k \e \left[ W_2^{2p}(\bar \mu_{t_k}^N, \mu_{t_k}) \right]\right)^{1/p}.
\end{equation}
Since $(\E^0)^{2p/q}[|X_t|^q|]<+\infty$ $\mathbb{P}^1-$a.s. for some $q>4$, we have, from Theorem 2 case (3) of \cite{FG15} (see also Theorem 5.8 and Remark 5.9 in \cite{CD17-1}) that there exists a deterministic constant $c >0$ such that $\p^1-$ a.s. 
\begin{equation*}
W_2^{2p}(\bar \mu_{t_k}^N, \mu_{t_k})\leq c(\E^0)^{2p/q}[|X_{t_k}|^q|]  \left\lbrace\begin{array}{ll} \displaystyle N^{-p/2} \text{ if } d \leq 3\\
\displaystyle N^{-p/2}{\pee (\log(1+N))^p} \text{ if } d = 4\\
 \displaystyle N^{-2p/d}  \text{ if } d > 4 \end{array}\right.
\end{equation*}
Hence,
\begin{equation}\label{tochangeincommonnoise}
\left(\E \left[\max_{k} W_2^{2p}(\bar \mu_{t_k}^N, \mu_{t_k}) \right]\right)^{1/p}\leq c \E\left[\sup_{0\leq t \leq 1}(\E^0)^{2p/q}[|X_t|^q|] \right] \left\lbrace\begin{array}{lll} \displaystyle N^{-1/2} \text{ if } d \leq 3\\ 
\displaystyle N^{-1/2}\left(\log(1+N)\right) \text{ if } d = 4\\
\displaystyle N^{-2/d}  \text{ if } d > 4 \end{array}\right.
\end{equation}
We conclude by optimizing over $m$.
\end{proof}
\begin{rem}\label{remconvpart}
We emphasize that, when working with i.i.d. sequence of processes, the above result  {\color{black} holds} assuming only that
$$\e\left[|X_s-X_t|^2\right] \leq |t-s|,\quad \text{for all } 0\leq s \leq t \leq 1,$$
instead of the third line in \eqref{hypRacRu}. In such case the control in \eqref{controle_discretization} is direct and is $ m^{-1}$ (this follows essentially from the fact that in that case we have $\E = \E^0$ so that $p$ could be, formally, $\infty$). {\pee The resulting rate of convergence are then given by the $\epsilon_N$ defined above with $p=\infty$ therein up to the ad hoc constant.}
\end{rem}

}

\section{SDEs with normal reflexion in law} 
\label{sec:sdes_with_normal_reflexion_in_law}

In this section, we are  concerned with the existence and uniqueness for the solution of~\eqref{eq:mains}. To avoid unnecessary repetitions, we postpone the discussion of the particle system and of the Feynman-Kac formula to the more general setting of the SDE with normal reflexion in  conditional law. As a warm up we start by studying{\yin
\begin{equation}\label{eq:MRSDE-scalarCons}
	\left\{
	\begin{split}
	& X_t  =X_0+\int_0^t b(s,X_s)\, ds + \int_0^t \sigma(s,X_s)\,dB_s + \int_0^t \nabla h(X_s) dK_s,\quad t\geq 0, \\
	& \e[h(X_t)] \geq 0, \quad \int_0^t \e[h(X_s)] \, dK_s = 0, \quad t\geq 0{\yin ,}
	\end{split}
	\right.
\end{equation}
which is the \emph{normally reflected} version of \eqref{eq:MRSDE-scalarCons2}. 
}

\subsection{Warm-up} 
\label{sub:warmup}
In this subsection, we study the case where $H(\mu)$ is given by $\int h d\mu$ in the spirit of the works~\cite{BEH1}, \cite{BCdRGL1}. Roughly speaking, we extend the results of the previous papers to the multidimensional setting.

In the following, we say that assumptions \A{Hwu} hold when the following assumptions are in force:
\begin{description}
	\item[(Hc)] The functions $b:\Omega\times\rset_+\times\rset^n \fl \rset^n$ and $\sigma : \Omega\times \rset_+\times\rset^{n} \fl \rset^{n\times d}$ are measurable with respect to $\m E \otimes \m B\left(\rset^n\right)$ and 
	\begin{enumerate}
		\item[(i)] For all $T>0$, there exists $L_T$ such that, $\p$-a.s., for each $t\in[0,T]$,
		\begin{equation*}
			\forall x\in\rset^n, \forall y\in\rset^n,\qquad |b(t,x)-b(t,y)| + |\sigma(t,x)-\sigma(t,y)| \leq L_T\, |x-y| ;
		\end{equation*} 
		\item[(ii)] For all $T>0$,
		\begin{equation*}
			\sup_{t\leq T} \e\left[|b(t,0)|^2 + |\sigma(t,0)|^2\right] <+\infty.
		\end{equation*}
	\end{enumerate}
	\item[(Hh)] The function $h:\rset^n \fl \rset$ is $\m C^2$ with $|\nabla^2 h|_\infty < \infty$  and 
	\begin{enumerate}
		\item[(i)] $h$ is concave;
		\item[(ii)] For all $x\in\rset^n$, $|\nabla h(x)|^2 > 0$;
		\item[(iii)] There exists $x_0\in\rset^n$ such that $h(x_0) \geq 0$.
	\end{enumerate}
	\item[(H0)] The initial condition is $\m F_0$--measurable, square integrable and $H([X_0])\geq 0${\yin .}
\end{description}
Before going further, let us recall that, in this subsection, $H([X_0])=\e[h(X_0)]$.

\begin{rem}
	Let us observe that the assumption \A{Hh}-(iii) is needed to ensure that the set
	\begin{equation*}
		\left\{\mu\in \m P^2(\rset^n) : H(\mu) = \int h\, d\mu \geq 0\right\}
	\end{equation*}
	is not empty. For example, if $h(x) = -e^{-x}$ this set is empty. Moreover, when {\bf (Hh)} holds true the set
	\begin{equation*}
		\left\{\mu\in \m P^2(\rset^n) : H(\mu) = \int h\, d\mu > 0\right\}
	\end{equation*}
	is also non empty since it contains, in particular, a Dirac mass. Indeed, let $(x_t)_{t\geq 0}$ be the solution to
	\begin{equation*}
		x'(t) = \nabla {\pe h}(x(t))\, dt, \quad t\geq 0, \qquad x(0)=x_0.
	\end{equation*}
	Then, for $t>0$,
	\begin{equation*}
		h(x(t)) = h(x_0) + \int_0^t |\nabla h(x(s))|^2 ds {\yin\, \geq} \int_0^t |\nabla h(x(s))|^2 ds >0.
	\end{equation*}
\end{rem}

\begin{rem}
	Let us point out that the concavity of $h$ allows to weaken the assumption on $\nabla h$ used in \cite{BEH1} and \cite{BCdRGL1} namely: there exists $\beta >0$ such that
	\begin{equation*}
		\forall x\in \rset^n,\qquad |\nabla h(x)|^2 \geq \beta.
	\end{equation*}
	We only assume here that $|\nabla h|^2$ does not vanish instead of being bounded from below by some positive constant. Moreover, the gradient of $h$ is only supposed to have a linear growth instead of being bounded as assumed in the two papers mentioned above.
	
	To conclude the comparison with \cite{BEH1, BCdRGL1}, let us mention that, in the one dimensional case, one can switch from the framework of the two quoted papers, namely
	\begin{equation*}
		\left\{
		\begin{split}
		& P_t  =X_0+\int_0^t d(s,P_s)\, ds + \int_0^t S(s,P_s)\,dB_s + K_t,\quad t\geq 0, \\
		& \e[l(P_t)] \geq 0, \quad \int_0^t \e[l(P_s)] \, dK_s = 0, \quad t\geq 0,
		\end{split}
		\right.
	\end{equation*}
	with $K$ deterministic to the setting of \eqref{eq:MRSDE-scalarCons} by the transformation $P_t = F(X_t)$ where
	\begin{equation*}
		F(x) = \int_0^x \frac{dx}{h'(x)}, \quad x\in\rset.
	\end{equation*}
	
\end{rem}

\begin{df}
	By a solution to~\eqref{eq:MRSDE-scalarCons}, we mean a couple of adapted and continuous processes $(X,K)$, $K$ being {\yin deterministic and} non decreasing with $K_0=0$, such that \eqref{eq:MRSDE-scalarCons} holds.
\end{df}

\begin{thm}\label{en:euss}
	Under assumptions \A{Hwu}, the SDE~\eqref{eq:MRSDE-scalarCons} has a unique square integrable solution with $K$ deterministic.
\end{thm}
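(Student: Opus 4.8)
The plan is to build the solution by a penalization procedure, following the strategy announced in the introduction and inspired by \cite{LiSz, LiMeSz}. For $k\geq 1$, introduce the penalized system without reflection
$$
X^k_t = X_0 + \int_0^t b(s,X^k_s)\,ds + \int_0^t \sigma(s,X^k_s)\,dB_s - k\int_0^t \left(\E[h(X^k_s)]\right)^{-}\nabla h(X^k_s)\,ds,
$$
which is a McKean--Vlasov type SDE with Lipschitz coefficients (the dependence on the law enters only through $\E[h(X^k_\cdot)]$, and $x\mapsto (\E[h(x')]... )^-\nabla h(x)$ is Lipschitz in $x$ since $\nabla h$ has linear growth and $\nabla^2 h$ is bounded; the map $r\mapsto r^-$ is $1$-Lipschitz). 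Hence $X^k$ exists and is unique, square integrable, with moment bounds uniform in $k$ — here assumption \A{Hh}-(i) (concavity of $h$) is used so that $-\langle \nabla h(x),x\rangle$ behaves well and the penalization term does not destroy the a priori $\mathbb L_2$ estimate even though $b,\sigma$ are only of linear growth. Set $K^k_t = k\int_0^t (\E[h(X^k_s)])^{-}\,ds$, a deterministic nondecreasing process; the reflection direction at time $s$ is $\nabla h(X^k_s)$, consistently with the prescribed form $K_t=\int_0^t\nabla h(X_s)\,d|K|_s$.

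The core of the argument is then to establish the following a priori bounds, uniformly in $k$: (a) $\sup_k \E[\sup_{t\leq T}|X^k_t|^2]<+\infty$; (b) $\sup_k K^k_T <+\infty$, obtained by applying It\^o's formula \eqref{I3} (with $\beta_s=\nabla h(X^k_s)$, $H(\mu)=\int h\,d\mu$, so $\ld H(\mu)(x)=\nabla h(x)$) to $t\mapsto \E[h(X^k_t)]$: the $dK^k_s$ term contributes $-k\int_0^t(\E[h(X^k_s)])^-\E[|\nabla h(X^k_s)|^2]\,ds$, and using \A{Hh}-(ii) together with a lower bound on $\E[|\nabla h(X^k_s)|^2]$ coming from (a) and the non-vanishing of $|\nabla h|^2$, one controls $\int_0^T(\E[h(X^k_s)])^-\,ds$, hence $K^k_T$; (c) equicontinuity of $K^k$, which follows from (b) and the same computation on subintervals. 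From (a)--(c), classical compactness/stability arguments for SDEs (Arzel\`a--Ascoli for the deterministic $K^k$, plus strong $\mathbb L_2$ estimates for the diffusion part) give a subsequence along which $(X^k,K^k)$ converges to some $(X,K)$ solving the SDE part of \eqref{eq:MRSDE-scalarCons} with $K$ deterministic nondecreasing, $K_0=0$. One must then check that the limit satisfies the Skorokhod conditions $\E[h(X_t)]\geq 0$ and $\int_0^t\E[h(X_s)]\,dK_s=0$: the inequality is obtained by passing to the limit in $-k(\E[h(X^k_t)])^-\to 0$ (forced by the uniform bound on $K^k_T$), and the complementarity condition by a standard Skorokhod-type argument using that $K^k$ increases only where $\E[h(X^k_s)]<0$.

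For uniqueness, suppose $(X,K)$ and $(X',K')$ are two square integrable solutions with deterministic $K,K'$. Apply It\^o to $|X_t-X'_t|^2$; the drift and diffusion terms are handled by the Lipschitz assumption \A{Hc}-(i) and Gronwall, while the reflection cross-term
$$
2\int_0^t \langle X_s-X'_s,\ \nabla h(X_s)\,dK_s - \nabla h(X'_s)\,dK'_s\rangle
$$
must be shown to be nonpositive (or controllable). This is where concavity of $h$ is essential: concavity of $h$ gives $\langle \nabla h(x)-\nabla h(x'), x-x'\rangle\leq 0$, and combined with the complementarity conditions — $dK$ charges only $\{\E[h(X_s)]=0\}$, where $\E[h(X'_s)]\geq 0$, and symmetrically — one shows the reflection contribution has the right sign after taking expectations; a careful bookkeeping of the two measures $dK$, $dK'$ against the signs of $\E[h(X_s)]$, $\E[h(X'_s)]$ is needed here. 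Concluding gives $\E[\sup_{t\leq T}|X_t-X'_t|^2]=0$, hence $X=X'$, and then $K=K'$ from the equation.

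\textbf{Main obstacle.} The delicate point is the pair of a priori estimates (b)--(c): controlling $K^k_T$ uniformly in $k$ when $b$ and $\sigma$ are merely of linear growth (not bounded), so that the It\^o computation on $\E[h(X^k_t)]$ must be closed using only the concavity of $h$, the non-degeneracy $|\nabla h|^2>0$, the $\mathbb L_2$ bound at $0$ in \A{Hc}-(ii), and the moment estimate (a) — and in turn (a) itself relies on the penalization term pointing "inward" thanks to concavity. Making this bootstrap rigorous, and then extracting the equicontinuity of $K^k$ needed for the Arzel\`a--Ascoli compactness, is the heart of the proof; the uniqueness sign computation is the secondary technical hurdle.
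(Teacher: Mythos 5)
Your overall strategy (penalization, concavity-based uniqueness) matches the paper's, but your penalization scheme is genuinely different and this is where the proposal develops gaps. The paper does \emph{not} use the unbounded penalty $-k(\e[h(X^k_s)])^-\nabla h(X^k_s)$; it uses a \emph{bounded} penalty $\nabla h(X^k_s)\,\psi_k(\e[h(X^k_s)])$ with $0\le\psi_k\le r$ for a fixed rate $r$ chosen so that $rm^2>C$, where $m^2$ is a lower bound on $\e[|\nabla h(X^k_u)|^2]$ obtained from the uniform $\lp^2$ bounds via Markov's inequality. This choice makes $K^k$ uniformly $r$-Lipschitz \emph{by construction}, and the real work shifts to proving $\e[h(X^k_t)]\ge -1/k$ by a contradiction argument on the first excursion below $-1/k$.

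The first genuine gap is your step (c). With the unbounded penalty, the It\^o computation on a subinterval gives only
\begin{equation*}
K^k_t-K^k_s \;\le\; m^{-2}\Bigl(\e[h(X^k_s)]-\e[h(X^k_t)]+C(t-s)\Bigr),
\end{equation*}
and the term $\e[h(X^k_s)]-\e[h(X^k_t)]$ is bounded (by the moment estimates and concavity) but is \emph{not} small as $t-s\to0$ uniformly in $k$: its modulus of continuity involves $K^k_t-K^k_s$ itself, so the argument is circular. Equicontinuity of $K^k$ therefore does not "follow from (b) and the same computation on subintervals"; in the classical penalization literature this step requires a separate H\"older estimate or, as in the paper, is made trivial by bounding the penalty. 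The second gap is the passage to the limit in $X^k$: Arzel\`a--Ascoli on the deterministic $K^k$ gives uniform convergence of $K^k$ along a subsequence, but $dK^k\to dK$ only weakly, so the stability of $X^k$ under this convergence (the term $\int_0^t\nabla h(X^k_s)\,d(K^k_s-K_s)$) is not a "classical" $\mathbb L_2$ estimate and you do not say how you would handle it. The paper avoids this entirely by proving that $(X^k)$ is Cauchy in $\ys$ directly, via the subgradient inequality $(x-y)\cdot\nabla h(x)\le h(x)-h(y)$ combined with $\e[h(X^k_s)]\ge -1/k$, which yields $\sup_t\E[|X^k_t-X^m_t|^2]\le Cr(1/k+1/m)$. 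Finally, two smaller points: in the uniqueness argument the inequality that does the work is this same subgradient inequality applied separately against $dK$ and $dK'$ (the monotonicity $\langle\nabla h(x)-\nabla h(x'),x-x'\rangle\le0$ you cite cannot be paired with two different measures); and "$K=K'$ from the equation" needs the extra step of applying It\^o to $\e[h(X_t)]$ and dividing by the continuous, non-vanishing function $u\mapsto\e[|\nabla h(X_u)|^2]$.
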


\begin{proof}
	Let us start by uniqueness. Suppose that $\left(X,K^X\right)$ and $\left(Y,K^Y\right)$ are two solutions of \eqref{eq:MRSDE-scalarCons}. Then, we deduce from Itô's formula that
\begin{eqnarray*}
\E\left[|X_t - Y_t|^2\right] &=& 2\E\left[ \int_0^t (X_s-Y_s) \cdot (b(s,X_s)-b(s,Y_s)) ds \right] +  \E \left[\int_0^t  |\sigma(s,X_s)-\sigma(s,Y_s)|^2 ds\right] \\
&& + 2\E \left[\int_0^t  (X_s-Y_s) \cdot \left(\nabla h(X_s) dK_s^X - \nabla h(Y_s) d K_s^Y\right)\right].
\end{eqnarray*} 
Since, $h$ is concave, for all $x\in\rset^n$, $y\in\rset^n$, $h(y) \leq h(x) + \nabla h(x)\cdot (y-x)$. This last inequality rewrites 
\begin{equation}\label{eq:pente}
	\forall x\in\rset^n,\quad \forall y\in\rset^n, \qquad  (x-y)\cdot \nabla h(x) \leq h(x)-h(y).
\end{equation}
Thus, using the Skorokhod condition together with the constraint, we have 
\begin{eqnarray*}
\int_0^t  \E \left[(X_s-Y_s) \cdot \nabla h(X_s)\right] d K_s^X  &\leq &  \int_0^t  \left(\e\left[h(X_s)\right]-\e\left[h(Y_s)\right] \right) d K_s^X \leq 0,
\end{eqnarray*}
and, similarly,
\begin{equation*}
	-\E \left[\int_0^t  (X_s-Y_s) \cdot  \nabla h(Y_s) d K_s^Y\right] \leq   0.
\end{equation*}
Using the fact that $b$ and $\sigma$ are Lipschitz, we get, for all $t\leq T${\yin ,}
\begin{equation*}
	\E\left[|X_t - Y_t|^2\right] \leq \left(2L_T+L_T^2\right) \int_0^t \E\left[|X_s-Y_s|^2\right] ds,
\end{equation*}
and Gronwall's lemma implies that $X$ and $Y$ are equal on $[0,T]$ for each $T>0$. 

Since $X=Y$, we get from Itô's formula, {\yin that }for any $t${\yin ,}
\begin{align*}
	A^X_t & := \int_0^t \e\left[ |\nabla h(X_u)|^2\right] \, dK^X_u   \\
	 & = \e\left[h(X_t)\right] - \e\left[h(X_0)\right]  - \int_0^t \e\left[\nabla h(X_u) b(u,X_u)\right] du - \frac{1}{2}\,\int_0^t \e\left[\text{Tr}\left(\nabla^2h(X_u)\sigma\sigma^*(u,X_u)\right)\right] du \\
	 & =  \int_0^t \e\left[ |\nabla h(Y_u)|^2\right] \, dK^Y_u = \int_0^t \e\left[ |\nabla h(X_u)|^2\right] \, dK^Y_u =:A^Y_t.
\end{align*}
Thus the nondecreasing continuous functions $A^X$ and $A^Y$ are equal. For any step function $\varphi$ on $[0,T]$,
\begin{multline}
	\label{eq:uKs}
	\int_0^T \varphi(u)\, dA^X_u = \int_0^T \varphi(u) \e\left[ |\nabla h(X_u)|^2\right] \, dK^X_u 
	= \int_0^T \varphi(u) \e\left[ |\nabla h(X_u)|^2\right] \, dK^Y_u = \int_0^T \varphi(u)\, dA^Y_u.
\end{multline}
The previous equality hold true also for any continuous function on $[0,T]$ as uniform limit of step functions. The map $u\longmapsto \e\left[ |\nabla h(X_u)|^2\right]$ is continuous and does not vanish since $|\nabla h(x)|^2>0$ for all $x\in\rset^n$. As a byproduct,
\begin{equation*}
	u\longmapsto \varphi(u) = \e\left[ |\nabla h(X_u)|^2\right]^{-1}
\end{equation*}
is continuous and we get, plugging this $\varphi$ in \eqref{eq:uKs}, $K^X_T = K^Y_T$.

\smallskip

Let us fix $T>0$ and let us now construct a solution to~\eqref{eq:MRSDE-scalarCons} on $[0,T]$. For this, let us consider, for $k\geq 1$, $X^k$ solution to the following McKean-Vlasov SDE:
\begin{equation}\label{eq:pens}
	X^k_t = X_0+\int_0^t b\left(s,X^k_s\right) ds + \int_0^t \sigma\left(s,X^k_s\right) dB_s + \int_0^t \nabla h \left(X^k_s\right)\, \psi_k\left(\e\left[h\left(X^k_s\right)\right]\right) ds,\quad t\geq 0,
\end{equation}
where the function $\psi_k$ is defined as
\begin{equation*}
	\psi_k(x)= r \text{ if } x\leq -1/k, \quad \psi_k(x) = -krx, \text{ if } -1/k\leq x\leq 0, \quad \psi_k(x) = 0, \text{ if } x \geq 0.
\end{equation*}
The function $\psi_k$ depends on the constant $r>0$ which will be chosen later.
 
Existence and uniqueness of solutions to~\eqref{eq:pens} under the assumptions \A{HWu} follows from {\yin straightforward} generalizations of results from \cite{BPS91}.
We set
\begin{equation*}
	K_{\yin t}^k = \int_0^t \psi_k\left(\e\left[h\left(X^k_s\right)\right]\right)  ds,
\end{equation*}
and rewrite \eqref{eq:pens} as
\begin{equation*}
	X^k_t = X_0+\int_0^t b\left(s,X^k_s\right) ds + \int_0^t \sigma\left(s,X^k_s\right) dB_s + \int_0^t \nabla h \left(X^k_s\right) dK^k_s,\quad t\geq 0.
\end{equation*}

Let us start with some $\lp^2$ bounds. Let us compute $|X^k_t-X_0|^2$ with the help of Itô's formula. We have, for $0\leq t\leq T$,
\begin{align*}
 \left|X_t^k - X_0\right|^2 &= 2 \int_0^t \left(X_s^k-X_0\right) \cdot b\left(s,X_s^k\right) ds +  \int_0^t  \left|\sigma\left(s,X_s^k\right)\right|^2 ds \\
& \quad + 2 \int_0^t \left(X_s^k-X_0\right) \cdot \sigma\left(s,X_s^k\right) dB_s + 2 \int_0^t  \left(X_s^k-X_0\right) \cdot \nabla h \left(X^k_s\right) d K_s^k.
\end{align*}
Since $b$ and $\sigma$ are Lipschitz in space, uniformly in time, we deduce from {\yin the above} equality that, for 
\begin{align}
	 \left|X_t^k - X_0\right|^2 & \leq \left(2L_T+2L_T^2\right) \int_0^t \left|X_s^k-X_0\right|^2 ds + 2\, \int_0^t\left( \left|b(s,X_0)\right|^2 + \left|\sigma(s,X_0)\right|^2\right) ds \notag  \\
	 \label{eq:forbdgs}
	& \quad + 2 \int_0^t \left(X_s^k-X_0\right) \cdot \sigma\left(s,X_s^k\right) dB_s + 2 \int_0^t  \left(X_s^k-X_0\right) \cdot \nabla h \left(X^k_s\right) dK_s^k .
\end{align}
Since $h$ is concave and $K^k$ is deterministic, we obtain, using~\eqref{eq:pente},
\begin{align*}
	\e\left[\int_0^t  \left(X_s^k-X_0\right) \cdot \nabla h \left(X^k_s\right) dK_s^k \right] & \leq \int_0^t \left( \e\left[h\left(X^k_s\right)\right] - \e\left[h(X_0)\right]\right) dK_s^k \\
	& = \int_0^t \left( \e\left[h\left(X^k_s\right)\right] - \e\left[h(X_0)\right]\right) \psi_k\left(\e\left[h\left(X^k_s\right)\right]\right) ds \leq 0,
\end{align*}
the last inequality coming from the fact that $x\psi_k(x) \leq 0$ and $\e\left[h(X_0)\right]\geq 0$. It follows from Gronwall's inequality, that, 
\begin{equation}\label{eq:formins}
	\sup_{k\geq 1}\sup_{t\leq T} \e\left[\left|X^k_t\right|^2\right] \leq C\left(T,L_T,\e\left[|X_0|^2\right]\right)=: \gamma.
\end{equation}
Coming back to~\eqref{eq:forbdgs}, since $\psi_k$ is bounded by $r$ and $\nabla h$ has a linear growth, BDG{\yin 's} inequality gives
\begin{equation*}
	 \sup_{k\geq 1}\e\left[\sup_{t\leq T}|X^k_t|^2\right] \leq C\left(T,L_T,\e\left[|X_0|^2\right],|\nabla^2 h|_\infty,r\right).
\end{equation*}

Let us apply Itô's formula to compute the expected value of $h\left(X^k_t\right)$. We get, for $0\leq s\leq t\leq T$,
\begin{align*}
	\e\left[h\left(X^k_t\right)\right] & = \e\left[h\left(X^k_s\right)\right] + \int_s^t \e\left[ \left|\nabla h\left(X^k_u\right)\right|^2\right] \psi_k\left(\e\left[h\left(X^k_u\right)\right]\right) du \\
	&\quad + \int_s^t \e\left[\nabla h\left(X^k_u\right)\cdot b\left(u,X^k_u\right)\right] du + \frac{1}{2}\,\int_s^t \e\left[\text{Tr}\left(\nabla^2 h\left(X^k_u\right)\sigma\sigma^*\left(u,X^k_u\right)\right)\right] du.
\end{align*}
Since $\nabla^2 h$ is bounded, $b$ and $\sigma$ are Lipschitz, we deduce, taking into account~\eqref{eq:formins} and assumption \A{Hc}-(ii) that, for some constant $C\geq 0$ independent of $k$ and $r$,
\begin{equation*}
	\e\left[h\left(X^k_t\right)\right]  \geq \e\left[h\left(X^k_s\right)\right] + \int_s^t \e\left[ \left|\nabla h\left(X^k_u\right)\right|^2\right] \psi_k\left(\e\left[h\left(X^k_u\right)\right]\right) du -C (t-s).
\end{equation*}
On the other hand, for any $k\geq 1$, $0\leq u \leq T$ and $a>0$, we have using Markov{\yin 's} inequality,
\begin{align*}
	\e\left[ \left|\nabla h\left(X^k_u\right)\right|^2 \right] & \geq \e\left[ \left|\nabla h\left(X^k_u\right)\right|^2 \ind_{|X^k_u| \leq a}\right] \geq \inf\left\{ |\nabla h|^2(x) : |x|\leq a\right\} \, \p\left(\left|X^k_u\right| \leq a\right) ,\\
	& \geq \inf\left\{ |\nabla h|^2(x) : |x|\leq a\right\} \left( 1 - a^{-2}\e\left[\left|X^k_u\right|^2\right]\right){\yin .}
\end{align*}
Having in mind the bound~\eqref{eq:formins}, we choose {\yin $a=\sqrt{2\gamma}$}, to get, since $|\nabla h|^2$ is continuous, for any $k\geq 1$, $0\leq u \leq T$,
\begin{align*}
	\e\left[ \left|\nabla h\left(X^k_u\right)\right|^2 \right] & \geq {\yin \min\left\{ |\nabla h|^2(x) : |x|\leq \sqrt{2\gamma} \right\}/2} = m^2 >0.
\end{align*}
It follows that, for any $k\geq 1$, $0\leq s\leq t \leq T$,
\begin{equation}\label{eq:givemins}
	\e\left[h\left(X^k_t\right)\right]  \geq \e\left[h\left(X^k_s\right)\right]  + m^2\, \int_s^t \psi_k\left(\e\left[h\left(X^k_u\right)\right]\right) du -C (t-s).
\end{equation}
We choose $r$ such that $rm^2 > C$. As a byproduct, we have 
\begin{equation}\label{eq:rigoles}
	\forall k\geq 1,\quad \forall 0\leq t\leq T,\qquad \e\left[h\left(X^k_t\right)\right] \geq -\frac{1}{k}.
\end{equation}
Indeed, if $\e\left[h\left(X^k_t\right)\right] < -1/k$ for some $k\geq 1$ and $t>0$, let $s=\sup\{u \leq t : \e\left[h\left(X^k_u\right)\right]\geq -1/k\}$. Since $\e\left[h(X_0)\right]\geq 0$, we have $0<s<t$ and $\e\left[h\left(X^k_u\right)\right]\leq -1/k$ for $s\leq u \leq t$. Thus, by definition of $\psi_k$, $\psi_k\left(\e\left[h\left(X^k_u\right)\right]\right)\geq r$ for $u\in [s,t]$ and it follows from \eqref{eq:givemins} that
\begin{equation*}
	\e\left[h\left(X^k_t\right)\right] \geq \e\left[h\left(X^k_s\right)\right] + (m^2r-C) (t-s) > -\frac{1}{k}.
\end{equation*}
This is a contradiction.

Let $k,m\in \mathbb{N}^*$, by Itô's formula we have, for $t\geq 0$,
\begin{align*}
 \left|X_t^k - X_t^m\right|^2 &= 2 \int_0^t \left(X_s^k-X_s^m\right) \cdot \left(b\left(s,X_s^k\right)-b\left(s,X_s^m\right)\right) \d s +  \int_0^t  \left|\sigma\left(s,X_s^k\right)-\sigma\left(s,X_s^m\right)\right|^2 ds \notag\\
& \quad + 2 \int_0^t \left(X_s^k-X_s^m\right) \cdot \left( \sigma\left(s,X_s^k\right)-\sigma\left(s,X_s^m\right) \right) d B_s\\
& \quad + 2 \int_0^t  \left(X_s^k-X_s^m\right) \cdot \left(\nabla h\left(X_s^k\right) d K_s^k - \nabla h \left(X_s^m\right) d K_s^m\right).
\end{align*}
Using the fact that $b$ and $\sigma$ are Lipschitz continuous, we get, for $0\leq t\leq T$,
\begin{align}
	 \left|X_t^k - X_t^m\right|^2 & \leq \left(2L_T+L_T^2\right) \int_0^t \left|X_s^k-X_s^m\right|^2 ds +  2\, \int_0^t \left(X_s^k-X_s^m\right) \cdot \left(\sigma\left(s,X_s^k\right)-\sigma\left(s,X_s^m\right) \right) dB_s \notag\\
	 \label{eq:interpens}
	& \quad + 2 \int_0^t  \left(X_s^k-X_s^m\right) \cdot \left(\nabla h\left(X_s^k\right) d K_s^k - \nabla h \left(X_s^m\right) d K_s^m\right).
\end{align}

Arguing as in the proof of uniqueness, taking the expectation, we have, since $h$ is concave, 
\begin{multline*}
\E\left[\int_0^t   \left(X_s^k-X_s^m\right) \cdot \left(\nabla h\left(X_s^k\right) d K_s^k - \nabla h \left(X_s^m\right) d K_s^m\right)\right] \\
\leq   \e \left[\int_0^t  \left(h\left(X_s^k\right)-h\left(X_s^m\right)\right) \left(\d K_s^k-\d K_s^m\right)\right]\\
=   \int_0^t  \left( \e \left[h\left(X_s^k\right)\right] - \e\left[h\left(X_s^m\right)\right)\right] \left(\psi_k\left(\e\left[h\left(X^k_s\right)\right]\right)-\psi_m\left(\e\left[h\left(X^m_s\right)\right]\right)\right) ds{\yin ,}
\end{multline*}
and from \eqref{eq:rigoles}, since, for any $k\geq 1$, $x\psi_k(x)\leq 0$, $\e\left[h\left(X^k_s\right)\right]\geq -1/k$ and $0\leq \psi_k\leq r$, we obtain that 
\begin{equation*}
	\E\left[\int_0^t   \left(X_s^k-X_s^m\right) \cdot \left(\nabla h\left(X_s^k\right) d K_s^k - \nabla h \left(X_s^m\right) d K_s^m\right)\right] \leq \frac{r}{k}+\frac{r}{m}.
\end{equation*}
We conclude using Gronwall's lemma that there exists $C\geq 0$ depending on $T$ and $L_T$ such that for any $k,m \in \mathbb{N}^*$ we have:
\begin{equation}\label{eq:forstestis}
\sup_{0\leq t\leq T} \E \left[|X_t^k - X_t^m|^2\right] \leq C\,r \left( \frac{1}{k} + \frac{1}{m}\right).
\end{equation}

Coming back to \eqref{eq:interpens}, since $\sup_{k\geq 1}|\psi_k|_\infty \leq r$ and $\nabla h$ has at most a linear growth, taking into account \eqref{eq:formins}, we derive, from BDG and Cauchy-Schwarz inequalities, the estimate
\begin{equation*}
	 \E\left[\sup_{0\leq t\leq T}|X_t^k - X_t^m|^2\right] \leq C \left( \frac{1}{\sqrt k} + \frac{1}{\sqrt m}\right),
\end{equation*}
where the constant $C$ does not depend on $k$ and $m$. 

Thus, $\left(X^k\right)_{k\geq 1}$ is a Cauchy sequence in $\ys$. Let us denote by $X$ its limit. Finally, since $\psi_k$ is bounded by $r$ for all $k$, $K^k$ is Lipschitz with $| K^k |_{\text{Lip}}\leq r$. By {\yin Arzel\`a-Ascoli's} theorem, up to a subsequence, $\left(K^k\right)_{k\geq 1}$ converges, uniformly on $[0,T]$, towards a non decreasing, Lipschitz continuous function $K$.

It is {\yin straightforward} to check that $(X,K)$ solves~\eqref{eq:MRSDE-scalarCons}. Indeed, since $\nabla h$ has a linear growth, we have
\begin{align*}
	\sup_{0\leq t \leq T} \e\left[\left|h\left(X^k_t\right) -h(X_t)\right|\right] 
	& \leq C \, \sup_{0\leq t \leq T} \e\left[\left|X^k_t - X_t\right|\, \left(1 + \left|X^k_t\right| + |X_t| \right) \right] \\
	& \leq C \, {\yin \sup_{0\leq t \leq T}}\e^{1/2} \left[\left|X^k_t - X_t\right|^2\right] \left(1 + \sup_{k\geq 1} \e^{1/2}\left[ \left| X^k_t \right|^2\right]\right)
\end{align*}
and thus, for $0\leq t\leq T$,
\begin{itemize}
	\item $\e\left[h(X_t)\right] = \lim_{k\to\infty} \e\left[h\left(X_t^k\right)\right]\geq 0$ by \eqref{eq:rigoles}, 
	\item the Skorokhod condition is also satisfied : since $x\psi_k(x)\leq 0$,
	\begin{equation*}
		0\leq \int_0^t \e\left[h(X_s)\right] dK_s = \lim_{k\to\infty} \int_0^t \e\left[h\left(X_s^k\right)\right] dK^k_s = \lim_{k\to\infty} \int_0^t \e\left[h\left(X_s^k\right)\right] \psi_k\left(\e\left[h\left(X_t^k\right)\right]\right) ds \leq 0.
	\end{equation*}
\end{itemize}

The proof of this result is complete.
\end{proof}

\subsection{Existence and uniqueness of the solution}
\label{subsec:Existence_and_uniqueness_of_the_solution}

In this section, we are concerned with the existence and the uniqueness of a solution to~\eqref{eq:mains} when $H(\mu)$ is not necessarily of the form $\int hd\mu$ and is not necessarily concave in $\mu$.  However, in this case,  we have to assume that the volatility $\sigma$ is bounded. More precisely we {\color{black} assume} that the following conditions \A{H} hold:
\begin{description}
	\item[(Hc)] The functions $b:\Omega\times\rset_+\times\rset^n \fl \rset^n$ and $\sigma : \Omega\times \rset_+\times\rset^{n} \fl \rset^{n\times d}$ are measurable with respect to $\m E \otimes \m B\left(\rset^n\right)$ {\ph 
	and, for all $T>0$, there exists $L_T$ such that, $\p$-a.s.,
	\begin{enumerate}
		\item[(i)] For each $t\in[0,T]$,
		\begin{equation*}
			\forall x\in\rset^n, \forall y\in\rset^n,\qquad |b(t,x)-b(t,y)| + |\sigma(t,x)-\sigma(t,y)| \leq L_T\, |x-y| ;
		\end{equation*} 
		\item[(ii)] For each $t\in[0,T]$,
		\begin{equation*}
			\e\left[|b(t,0)|^2\right]  + \sup_{x\in \R^n} |\sigma(t,x)|  \leq L_T.
		\end{equation*}
	\end{enumerate}
	}
	\item[(H0)] The initial condition is $\m F_0$--measurable, square integrable and $H([X_0])\geq 0$ ;
	\item[(HH)] The function $H: \m P^2(\rset^n)\fl \rset$ is partially $\m C^2$ and 
	\begin{itemize}
		\item[(i)] there exists $M>0$ such that 
		\begin{align}
			\forall \mu \in \m P^2(\rset^n),\qquad \int_{\rset^n} |\ld H(\mu)|^2(x)\, \mu(dx)\leq M^2,  \label{eq:bilip0} 
		\end{align}
		\item[(ii)] there exist $\beta>0$ and $\eta>0$ such that
		\begin{align}
			\forall \mu \in \m P^2(\rset^n)\; \mbox{\rm with} \; -\eta\leq H(\mu)\leq 0, \; \qquad \int_{\rset^n} |\ld H(\mu)|^2(x)\, \mu(dx)\geq  \beta^2,  \label{eq:bilip} 
		\end{align}
		\item[(iii)] there exists $C\geq 0$ such that
		\begin{equation}\label{eq:DFL}
			\forall X,Y \in L^2(\Omega), \qquad \e\left[|\ld H([X])(X) -\ld H([Y])(Y)|^2\right] \leq C\, \e\left[|X-Y|^2\right]{\yin .}
		\end{equation}
	\end{itemize}
\end{description}

{\color{black} 
\noindent {\bf Examples.} We now illustrate our assumptions through {\color{black} two} examples. In the first one we consider the case where $H$ depends on the first order moment of the measure: Let $f_1:\R^n\to \R$ be of class $C^2$ with bounded first and second derivatives, 
and let 
$$
H_1(\mu) = f_1\left( \int_{\R^n} y\mu(dy)\right){\yin ,}\qquad \forall \mu\in {\color{black} \m P^2(\rset^n)}.
$$
We also assume that the set $\{f_1=0\}$ is compact and that $0$ is a noncritical value of $f_1$: 
$\nabla f_1(x)\neq 0$ if $f_1(x)=0$. Note that this implies the existence of a constant $C>0$ such that $C^{-1}\leq |\nabla f_1(x)|\leq C$  for any $x\in \R^n$ with $f_{{\pe 1}}(x)=0$. As 
$$
\ld H_1(\mu)(x)= \nabla f_1\left( \int_{\R^n} y\mu(dy)\right)
$$
is independent of $x$, it is clear that \eqref{eq:bilip0},  \eqref{eq:bilip} and \eqref{eq:DFL} hold. \\

In the second example, we assume that $H_2$ depends on the second order moments of the measure. Let  $S^n$ be the set of $n\times n$ symmetric matrices, endowed with the usual euclidean distance: $|A|= ({\rm Tr}(A^2))^{\frac12}$, and let $S^n_+$ be the subset of nonnegative matrices. 
Let $f_2:S^n\to \R$  be of class $C^2$. We assume that the set $\{f_2\geq -\eta_0\}\cap S^n_+$ is compact for some $\eta_0>0$, that $0$ is a noncritical value of $f_2$ and that $f_2(0)\neq 0$. Then there exists $C>0$ with $C^{-1}\leq |\nabla f_2|\leq C$ on $\{|f_2|\leq \eta\}{\pe \cap S^n_+}$, for $\eta\in (0,\eta_0)$ small enough, where $\nabla f_2$ is the gradient of $f_2$ in $S^n$. Moreover, as $f_2(0)\neq 0$ and $\{f_2\geq -\eta\}\cap S^n_+$ is compact, reducing $\eta>0$ if necessary, there exists $\delta>0$ such that  $\displaystyle {\rm Tr}(A)\geq \delta$ if $|f_2(A)|\leq \eta$ and $A\in {\pe S^n_+}$. Finally, we can assume, without changing the level-set $\{f_2=0\}\cap S^n_+$, that  the restriction of $f_2$ to $S^n_+$ is bounded and that its derivatives have compact support. Let
$$
H_2(\mu)= f_2\left( \int_{\R^n} xx^* \mu(dx)\right) \qquad \forall \mu\in \mathcal P^2(\rset^n).
$$
Then 
$$
\ld H_2(\mu)(x)= \nabla f_2\left( \int_{\R^n} xx^* \mu(dx)\right) x .
$$
Hence 
$$
\int_{\R^n} |\ld H_2(\mu)(x)|^2 \mu(dx) = \left| \nabla f_2\left( \int_{\R^n} xx^* \mu(dx)\right)\right|^2 \int_{\R^n}|x|^2 \mu(dx). 
$$
Let us note that condition \eqref{eq:bilip0} holds because, for any $\mu\in{\mathcal P}^2(\R^n)$, $\int_{\R^n} xx^* \mu(dx)$ belongs to $S^n_+$ and because the restriction $\nabla f_2$ to $S^n_+$ has a compact support. We now check the bound \eqref{eq:bilip}. Let $\mu$ be such that $-\eta\leq H_2(\mu)\leq 0$ and let us set $\displaystyle A:=  \int_{\R^n} xx^* \mu(dx)$. Then $A\in S^n_+$ and, as $|f_2(A)|\leq \eta$, we have ${\rm Tr}(A)\geq \delta$ and $|\nabla f_2(A)|\geq C^{-1}$. So 
$$
\int_{\R^n} |\ld H_2(\mu)(x)|^2 \mu(dx)= \left| \nabla f_2\left(A\right)\right|^2{\rm Tr}(A) \geq C^{-2}\delta, 
$$
which proves   \eqref{eq:bilip}. Finally, we check  \eqref{eq:DFL}.  Let us recall that the restrictions to $S^n_+$ of $\nabla f_2$ and $\nabla^2f_2$ have a compact support (say in a ball of radius $R$).  Let $C_0$ be a bound on $\nabla f_2$ and $\nabla^2f_2$ on $S^n_+$. Let also $X, Y\in L^2_{\p}$, $\mu=[X]$, $\nu=[Y]$ and $A:= \int_{\R^n} xx^*\mu(dx)$, $B:=  \int_{\R^n} xx^*\nu(dx)$. We assume that $|A|, |B|\leq R$, since otherwise the result is obvious. Then 
\begin{align*}
\e\left[|\ld H_2([X])(X) -\ld H_2([Y])(Y)|^2\right] & = \e\left[|\nabla f_2(A)X -\nabla f_2(B)Y|^2\right] \\ 
& \leq C_0 ( |A-B|^2 \E[|X^2|] + \e\left[|X -Y|^2\right]) \;  \leq \; C \e\left[|X -Y|^2\right],  
\end{align*}
where the constant $C$ depends also on $R$. This shows \eqref{eq:bilip}. \\
}

{\color{black} {\bf Comments on the assumptions.}} Let us recall that, as explained in the  introduction, {\yin A}ssumption \eqref{eq:bilip0} implies that $H$ is globally Lipschitz continuous in $ \m P^2(\rset^n)$.  Since \eqref{eq:DFL} holds, $H$ is semiconcave and semiconvex, in the sense,  if $\mu=[X]$ and $\nu=[Y]$, then
		\begin{equation}\label{eq:LSC}
			\left|H(\nu)-H(\mu) - \e\left[\ld H([X])(X)\cdot(Y-X)\right]\right| \leq C \e\left[|X-Y|^2\right].
		\end{equation}
Indeed, we have
	\begin{align*}		
	& \left| H(\nu)-H(\mu) - \e\left[\ld H([X])(X)\cdot(Y-X)\right] \right| \\
	& \qquad = \left| \int_0^1  \e\left[\{\ld H\left([(1-t)X+tY]\right)((1-t)X+tY)-\ld H([X])(X)\}\cdot(Y-X)\right]\, dt \right| \\
	& \qquad \leq \int_0^1  \e^{1/2}\left[|\ld H\left([(1-t)X+tY]\right)((1-t)X+tY)-\ld H([X])(X)|^2\right] \e^{1/2}\left[|Y-X|^2\right]^{1/2}\, dt\\
	& \qquad \leq C \e\left[|Y-X|^2\right],
	\end{align*}		
where we used \eqref{eq:DFL} in the last inequality. Moreover, under \eqref{eq:DFL}, for each $\mu\in\m P^2(\rset^n)$, there exists a Lipschitz {\color{black} continuous} version of
\begin{equation*}
	\ld H(\mu)(.) : \rset^n \fl \rset^n,
\end{equation*}
with a Lipschitz constant independent of $\mu$ and such that $(\mu,x)\longmapsto \ld H(\mu)(x)$ is measurable and continuous at each point $(\mu,x)$ such that $x\in \text{Supp}(\mu)$ (Corollary 5.38 in \cite{CD17-1}). Moreover,
\begin{equation*}
	\e\left[|\ld H(\mu)(X)-\ld H(\nu)(X)|^2\right] \leq C\, \e\left[|X-Y|^2\right].
\end{equation*}
As a byproduct (inequality~(5.49) in \cite{CD17-1}), for $[X]=\mu$
\begin{equation}\label{eq:utilechaos}
	\e\left[|\ld H(\mu)(X)-\ld H(\nu)(X)|^2\right] \leq C\, W_2^2(\mu,\nu). 
\end{equation}

{\color{black} \begin{df} By a solution to~\eqref{eq:mains}, we mean a couple of adapted and continuous processes $(X,K)$, $K$ being deterministic and non decreasing, with $K_0=0$.
\end{df}
}

\begin{thm}\label{en:eus}
	Under assumptions \A{H}, the SDE~\eqref{eq:mains} has a unique square integrable solution.
\end{thm}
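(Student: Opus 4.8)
The plan is to reproduce the argument of Theorem~\ref{en:euss}, the only new point being that $H$ need not be concave. Given two solutions $(X,K^X)$ and $(Y,K^Y)$, I would apply It\^o's formula to $|X_t-Y_t|^2$ and, in place of the slope inequality~\eqref{eq:pente}, use the semiconcavity/semiconvexity estimate~\eqref{eq:LSC}, which gives $\E[\ld H([X_s])(X_s)\cdot(Y_s-X_s)]\ge H([Y_s])-H([X_s])-C\,\E[|X_s-Y_s|^2]$ and the symmetric bound with $X$ and $Y$ exchanged. Integrating these against $dK^X_s$, resp.\ $dK^Y_s$, and using the Skorokhod conditions together with $H([X_s]),H([Y_s])\ge0$, the mixed reflection term $2\E\int_0^t(X_s-Y_s)\cdot(\ld H([X_s])(X_s)\,dK^X_s-\ld H([Y_s])(Y_s)\,dK^Y_s)$ is controlled by $2C\int_0^t\E[|X_s-Y_s|^2]\,d(K^X_s+K^Y_s)$. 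Since $b,\sigma$ are Lipschitz and $ds+dK^X_s+dK^Y_s$ is a continuous (hence atomless) finite measure, Gronwall's lemma forces $X=Y$. To identify $K^X=K^Y$, I would then apply It\^o's formula~\eqref{I3} to $H([X_t])$ --- licit since $H$ is partially $C^2$, $\partial_y\ld H$ is bounded by~\eqref{eq:DFL} and $\sigma$ is bounded --- to find that $A_t:=\int_0^t\E[|\ld H([X_s])(X_s)|^2]\,dK^X_s$ depends on $X$ alone; as $dK^X$ and $dK^Y$ are carried by $\{s:H([X_s])=0\}$, on which $\beta^2\le\E[|\ld H([X_s])(X_s)|^2]\le M^2$ by~\eqref{eq:bilip0}--\eqref{eq:bilip}, both $K^X$ and $K^Y$ are recovered by the same formula $dK=\E[|\ld H([X_\cdot])(X_\cdot)|^2]^{-1}\,dA$.

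\textbf{Existence: the penalized equations.} For existence I would use the penalization scheme announced in the introduction. Fix $T>0$ and, for $k$ with $1/k<\eta$, let $X^k$ solve the McKean--Vlasov SDE obtained from~\eqref{eq:mains} by replacing $dK_s$ with $\psi_k(H([X^k_s]))\,ds$, where $\psi_k$ is the penalization function of Theorem~\ref{en:euss} (equal to $r$ on $(-\infty,-1/k]$, to $0$ on $[0,\infty)$, affine in between) and $r>0$ is a parameter to be fixed. Its well-posedness follows from classical McKean--Vlasov theory, the drift being Lipschitz in $x$ and in the law (in the lifted $L^2$ sense) thanks to~\eqref{eq:DFL} and~\eqref{eq:bilip0}. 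Set $K^k_t=\int_0^t\psi_k(H([X^k_s]))\,ds$, which is deterministic, nondecreasing, $r$-Lipschitz, with $K^k_0=0$. For the a priori $L^2$ bound I would, unlike in Theorem~\ref{en:euss}, bound the three integrals in $X^k_t-X_0$ separately rather than pair the cross terms: using that $\sigma$ is bounded and that, by~\eqref{eq:bilip0}, the penalizing drift has second moment at most $r^2M^2$ at each time, Gronwall's lemma gives $\sup_k\sup_{s\le T}\E[|X^k_s|^2]\le\gamma$ and then $\sup_k\E[\sup_{s\le T}|X^k_s|^2]<\infty$, with $\gamma$ at most quadratic in $r$.

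\textbf{The key bound $H([X^k_t])\ge-1/k$.} This is where the bi-Lipschitz lower bound~\eqref{eq:bilip} is essential. It\^o's formula~\eqref{I3} applied to $H([X^k_t])$ yields, for $s\le t$,
\[
H([X^k_t])\ \ge\ H([X^k_s])+\int_s^t\E\big[|\ld H([X^k_u])(X^k_u)|^2\big]\,\psi_k(H([X^k_u]))\,du\ -\ \bar C\,(t-s),
\]
where $\bar C$ bounds the first- and second-order terms, which are finite and satisfy $\bar C=O(\gamma^{1/2})=O(r)$ precisely because $\sigma$ and $\partial_y\ld H$ are bounded. I would then choose $r$ so large that $\beta^2r>\bar C$ and argue by contradiction: if $H([X^k_{t_0}])<-1/k$ for some $t_0$, let $s$ be the last time before $t_0$ at which $H([X^k_\cdot])=-1/k$; a continuity argument shows $H([X^k_\cdot])$ stays in $[-\eta,-1/k]$ on $[s,t_0]$, so that $\psi_k\equiv r$ and, by~\eqref{eq:bilip}, $\E[|\ld H([X^k_\cdot])(X^k_\cdot)|^2]\ge\beta^2$ there, whence $H([X^k_{t_0}])\ge-1/k+(\beta^2r-\bar C)(t_0-s)>-1/k$, a contradiction. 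When $T$ is so large that $\beta^2r>\bar C$ cannot be met, I would run the construction on a short interval and concatenate the solutions, the constraint $H([X_\cdot])\ge0$ being available at each restart time from the previous step.

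\textbf{Passage to the limit.} It remains to show $(X^k)_k$ is Cauchy in $\ys$: It\^o applied to $|X^k_t-X^m_t|^2$, together with~\eqref{eq:LSC} (handled as in the uniqueness step, using also $x\psi_\ell(x)\le0$, $0\le\psi_\ell\le r$ and the bounds $H([X^\ell_s])\ge-1/\ell$), bounds the mixed reflection term by $2rC\,\E[|X^k_s-X^m_s|^2]+r(1/k+1/m)$, so Gronwall and the Burkholder--Davis--Gundy inequality give $\E[\sup_{s\le T}|X^k_s-X^m_s|^2]\le C(1/k+1/m)$. Let $X$ be the limit; since the $K^k$ are uniformly $r$-Lipschitz with $K^k_0=0$, Arzel\`a--Ascoli yields a subsequence along which $K^k\to K$ uniformly, $K$ nondecreasing with $K_0=0$. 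Using $\sup_{s\le T}\E[|\ld H([X^k_s])(X^k_s)-\ld H([X_s])(X_s)|^2]\to0$ (by~\eqref{eq:DFL}) and $dK^k\rightharpoonup dK$ with total variation $\le rT$, I pass to the limit in the SDE; $H([X_t])\ge0$ follows from $H([X^k_t])\ge-1/k$, and the Skorokhod condition from $\int_0^tH([X^k_s])\,dK^k_s=\int_0^tH([X^k_s])\psi_k(H([X^k_s]))\,ds\le0$ letting $k\to\infty$ against $0\le\int_0^tH([X_s])\,dK_s$, so $(X,K)$ solves~\eqref{eq:mains}. The main obstacle is the lower bound $H([X^k_t])\ge-1/k$: without concavity the penalizing drift cannot be dropped from the estimates, so $r$ must be chosen large to dominate $\bar C$ while $\bar C$ itself grows with the $L^2$-bound --- it is to keep that growth under control that $\sigma$ is assumed bounded.
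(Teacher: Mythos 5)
Your argument is correct and follows the paper's strategy almost step for step: the same uniqueness proof via \eqref{eq:LSC} followed by the identification of $K$ through $A_t=\int_0^t\E[|\ld H([X_s])(X_s)|^2]\,dK_s$ and the lower bound \eqref{eq:bilip} on the support of $dK$, the same penalization scheme, the same contradiction argument for $H([X^k_t])\ge -1/k$, and the same Cauchy/Arzel\`a--Ascoli passage to the limit. The one genuine difference lies in how you resolve the circular dependence you correctly identify between the penalization rate $r$ and the constant $\bar C$, which grows like $O(r)$ through the $L^2$ bound because $b$ is only of linear growth. You keep $r$ constant, which forces the condition $\beta^2 r>\bar C$ to hold only on a short time interval (where the coefficient of $r$ inside $\bar C$, of order $M\sqrt{\delta}$, is small), and you then concatenate; this works, since the admissible interval length depends only on $\beta$, $M$ and the Lipschitz constants and not on the (growing) second moment of the restart data, so finitely many steps cover $[0,T]$, and the constraint $H\ge 0$ is available at each restart. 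The paper instead takes a time-dependent increasing rate $r(\cdot)$ chosen to solve the fixed-point ODE $r^2(u)=C^2\beta^{-4}\bigl(1+C_1(T)+M^2e^{C_0u}\int_0^ur^2(v)\,dv\bigr)$, which makes $\beta^2r(u)$ dominate the error term at every time and lets the construction run on all of $[0,T]$ in a single pass. Both devices buy the same thing; yours is more elementary, the paper's avoids gluing. One cosmetic point: your claimed rate $\E[\sup_{s\le T}|X^k_s-X^m_s|^2]\le C(1/k+1/m)$ is too strong --- in the supremum estimate the reflection term is controlled via Cauchy--Schwarz by $K^k_T\sup_s\E[|X^k_s-X^m_s|^2]^{1/2}$, so one only gets $C(1/\sqrt k+1/\sqrt m)$, which is of course still enough for the Cauchy property in $\ys$.
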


\begin{rem} The result can be easily generalized to the case where $b$ and $\sigma$ depend also on the {\yin law}. In the proof we actually show that the process $K$ is locally Lipschitz continuous. 
\end{rem}

\begin{proof} Let us start by uniqueness. Suppose that $(X,K^X)$ and $(Y,K^Y)$ are two solutions of \eqref{eq:mains}. Then, we deduce from It\^{o}'s formula that
\begin{eqnarray*}
\E\left[|X_t - Y_t|^2\right] &=& 2\E\left[ \int_0^t (X_s-Y_s) \cdot (b(s,X_s)-b(s,Y_s)) \d s \right] +  \E \left[\int_0^t  |\sigma(s,X_s)-\sigma(s,Y_s)|^2 \d s\right] \\
&& + 2\E \left[\int_0^t  (X_s-Y_s) \cdot (\ld H([X_s])(X_s)\d K_s^X - \ld H([Y_s])(Y_s)\d K_s^Y)\right].
\end{eqnarray*} 
 Since $H$ satisfies \eqref{eq:LSC}, using the Skorokhod condition together with the constraint, we have 
\begin{eqnarray*}
\E \left[\int_0^t  (X_s-Y_s) \cdot \ld H([X_s])(X_s) \d K_s^X \right] \leq   \int_0^t  \left((H([X_s])-H([Y_s]))+ C\E\left[|X_s-Y_s|^2\right]\right) \d K_s^X\\
\qquad = \int_0^t  H([X_s]) \d K_s^X -\int_0^t  H([Y_s]) \d K_s^X+ \int_0^t C\E\left[|X_s-Y_s|^2\right] \d K_s^X \\
\qquad \leq  \int_0^t C\E\left[|X_s-Y_s|^2\right] \d K_s^X .
\end{eqnarray*}
Similarly,
\begin{equation*}
	-\E \left[\int_0^t  (X_s-Y_s) \cdot  \ld H([Y_s])(Y_s)\d K_s^Y\right] \leq    \int_0^t C\E\left[|X_s-Y_s|^2\right] \d K_s^Y.
\end{equation*}
Using the fact that $b$ and $\sigma$ are Lipschitz continuous, we get, for some constant $C>0$, 
\begin{equation*}
	\E\left[|X_t - Y_t|^2\right] \leq C\,  \int_0^t \E\left[|X_s-Y_s|^2\right]  (ds+ \d K_s^X+\d K_s^Y),
\end{equation*}
and {\color{black} equality $X=Y$} follows from Gronwall's lemma 
applied to the continuous maps $s\mapsto K^X_s$, $s\mapsto K^Y_s$ (see Lemma 4 in \cite{J64} or Theorem 17.1 in \cite{BS13}) and $s\to \E\left[|X_s-Y_s|^2\right]$.

{\color{black} Next we show the equality $K^X=K^Y$. }
We get from It\^{o}'s formula \eqref{I3} and for any $t\geq0$
\begin{align*}
	A^X_t := \int_0^t \e\left[ |\ld H([X_u])(X_u)|^2\right] \, dK^X_u & =
	  H([X_t]) -H([X_0])  
	- \int_0^t \e\left[\ld H([X_u])(X_u)\cdot b(u,X_u)\right] du \\
	& \quad - \frac{1}{2}\,\int_0^t \e\left[\text{Tr}\left(\partial_x\ld H([X_u])(X_u)\sigma\sigma^*(u,X_u)\right)\right] du \\
	& = A^Y_t := \int_0^t \e\left[ |\ld H([X_u])(X_u)|^2\right] \, dK^Y_u.
\end{align*}
Thus the nondecreasing continuous functions $A^X$ and $A^Y$ are equal. For any step function $\varphi$ on $[0,T]$,
\begin{multline}
	\label{eq:uK}
	\int_0^T \varphi(u)\, dA^X_u = \int_0^T \varphi(u) \e\left[ |\ld H([X_u])(X_u)|^2\right] \, dK^X_u  \\
	= \int_0^T \varphi(u) \e\left[ |\ld H([X_u])(X_u)|^2\right] \, dK^Y_u = \int_0^T \varphi(u)\, dA^Y_u.
\end{multline}
The previous equality hold true also for any continuous function on $[0,T]$ as uniform limit of step functions. Since
 $H(\left[X_u\right]) = 0$ for $\d K^X+dK^Y-$a.e. $u\in [0,T]$, we have
\begin{equation*}
	\e\left[ |\ld H([X_u])(X_u)|^2\right] \geq \beta^2 >0\qquad \mbox{\rm for $\d K^X+dK^Y-$a.e. $u\in [0,T]$.}
\end{equation*}
As a byproduct, we can extend the map
\begin{equation*}
	u\longmapsto \varphi(u) = \e\left[ |\ld H([X_u])(X_u)|^2\right]^{-1}
\end{equation*}
from the support of $\d K^X+\d K^Y$ into a continuous map on $[0,T]$ and we get, plugging this $\varphi$ in \eqref{eq:uK}, $K^X_T = K^Y_T$.
{\color{black} This completes the proof of the uniqueness of the solution: $(X,K^X)= (Y,K^Y)$. }
\smallskip

Let us now construct a solution to~\eqref{eq:mains} on a time interval $[0,T]$ (for $T>0$ arbitrary). For this, let us consider, for $k{\yin >} 1/\eta$, the solution $X^k$ to the following McKean-Vlasov SDE:\\
\begin{equation}\label{eq:pen}
	X^k_t = X_0+\int_0^t b(s,X^k_s)\, ds + \int_0^t \sigma(s,X^k_s)\,dB_s + \int_0^t \ld H([X^k_s])(X^k_s)\, \psi_k(s,H([X^k_s])) \, ds,\quad t\geq 0,
\end{equation}
where the function $\psi_k:\R_+\times \R^n\to \R_+$ is defined as
\begin{equation*}
	\psi_k(t,x)= r(t) \text{ if } x\leq -1/k, \quad \psi_k(t,x) = -kr(t)x, \text{ if } -1/k\leq x\leq 0, \quad \psi_k(t,x) = 0, \text{ if } x \geq 0
\end{equation*}
and $t\to r(t)$ is a continuous, positive and increasing map to be chosen later. 
We set
\begin{equation*}
	K_{{\yin t}}^k = \int_0^t \psi_k(s,H([X^k_s])) \, ds.
\end{equation*}

Let us start with some $\lp^2$ bounds. In order to use condition \eqref{eq:bilip}, we introduce the deterministic time 
$$
T_k=\inf\{t\geq 0, \; H([X^k_t])\leq -\eta\},
$$
with the convention that $T_k=T$ if the right-hand side is empty. Note that $T_k>0$ since $H([X_0])\geq 0$. 
Let us  compute $|X^k_t|^2$  for $t\in [0,T_k]$ with the help of It\^{o}'s formula. We have
\begin{align}
 |X_t^k|^2 &= {\yin |X_0|^2+  } 2 \int_0^t X_s^k \cdot b(s,X_s^k) \d s +  \int_0^t  {\color{black} {\rm Tr}(\sigma\sigma^*(s,X_s^k))} \d s \notag \\
& \quad + 2 \int_0^t X_s^k \cdot \sigma(s,X_s^k) \d B_s + 2 \int_0^t  X_s^k\cdot \ld H([X_s^k])(X_s^k)\d K_s^k. \label{eq:lzeivbnld}
\end{align}
Since $b$ and $\sigma$ are Lipschitz continuous in space, uniformly in time, we deduce from this equality that there exists a constant $C=C(L_T)$ such that 
\begin{align}
	 |X_t^k|^2 & \leq {\yin |X_0|^2+ }C\, \int_0^t ({\ph 1 +}{\yin |b(s,0)|^2}+ |X_s^k|^2) ds
	 + 2 \int_0^t X_s^k\cdot \sigma(s,X_s^k) \d B_s + 2 \int_0^t  X_s^k \cdot \ld H([X_s^k])(X_s^k)\d K_s^k  .
		 \label{eq:forbdg}
\end{align}
Using Cauchy-Schwarz{\yin 's} inequality, {\yin A}ssumption \eqref{eq:bilip0} and then Young's inequality, we obtain,
\begin{align*}
	&\e\left[\int_0^t  X_s^k \cdot \ld H([X_s^k])(X_s^k)\d K_s^k\right] 	\\
	&\qquad \leq  \int_0^t \left(\e\left[|X_s^k|^2\right]\right)^{1/2}\left(\e\left[ \left|\ld H([X^k_s](X^k_s)\right|^2\right]\right)^{1/2} \psi_k(s,H([X^k_s])\, ds \\
	&\qquad \leq \int_0^t \left(\e\left[|X_s^k|^2\right]\right)^{1/2}M r(s)\, ds \;  \le   \int_0^t \left(\e\left[|X_s^k|^2\right]+M^2r^2(s)\right) \, ds.
\end{align*}
Hence 
\begin{align*}
 \e\left[ |X_t^k|^2\right] & \leq {\yin \E[|X_0|^2]+ } C\, \int_0^t ({\ph 1}+M^2r^2(s)+ \e\left[|X_s^k|^2\right]) \d s.
\end{align*}
 It follows from Gronwall's inequality that there exists $C_0>0$, and, for any $t\in [0,T]$,  $C_1(T)>0$, depending on $T$, $\M_2([X_0])$, $L_T$ {\yin and $\sup_{t \leq T}\E[|b(t,0)|^2]$} only, such that
\begin{equation}\label{eq:formin}
	\sup_{s\leq t} \e\left[|X^k_s|^2\right] \leq C_1(T)+  M^2 \int_0^t e^{C_0(t-s)} r^2(s)ds.
\end{equation}
Coming back to~\eqref{eq:forbdg}, since $\psi_k$ is bounded by $r$, the upper bound of {\yin A}ssumption~\eqref{eq:bilip0} together with BDG's inequality give
\begin{equation*}
	 \e\left[\sup_{t\leq T}|X^k_t|^2\right] \leq C\left(T,L_T,{\pee \M_2([X_0])},{\yin \sup_{t \leq T}\E[|b(t,0)|^2]},M,r(\cdot)\right).
\end{equation*}

Let us apply It\^{o}'s formula \eqref{I1} to compute $H([X^k_t])$. We get, for $0\leq s\leq t$, such that $-\eta\leq H([X^k_u])\leq -1/k$ on $[s,t]$, 
\begin{align}
	H([X^k_t]) & = H([X^k_s]) + \int_s^t \e\left[ |\ld H([X^k_u])(X^k_u)|^2\right] \psi_k({\yin u},[X^k_u])\, du \notag \\
	&\quad + \int_s^t \e\left[\ld H([X^k_u])(X^k_u)\cdot  b({\yin u},X^k_u)\right] du + \frac{1}{2}\,\int_s^t \e\left[\text{Tr}\left(\partial_x\ld H([X^k_u])(X^k_u)\sigma\sigma^*(u,X^k_u)\right)\right] du \notag\\
	&\geq H([X^k_s]) +\beta^2 \int_s^t r(u)du
	-C\,\int_s^t \left({\ph 1+} \left(\e\left[\left|X^k_u\right|^2\right]\right)^{1/2}\right) du,  \label{eq:ineqsigmabd}
\end{align}
for some constant $C\geq 0$ independent of $k$ and $r$. In the above inequality, we used the fact that $x\longmapsto \ld H(\mu)(x)$ is Lipschitz uniformly in $\mu$, {\color{black} that $\sigma$ is bounded,} that $\beta^2 \leq \e\left[|\ld H([X])(X)|^2\right]\leq M^2$ for any $X$ such that $-\eta\leq H([X])\leq 0$ and that $b$ has a linear growth while  $\sigma$ is bounded. 
We deduce from~\eqref{eq:formin} that, 
\begin{align}\label{eq:jlbaenrlzkg}
H([X^k_t]) & \geq H([X^k_s]) +\int_s^t \left(\beta^2 r(u) -C \left(1+ \left(C_1(T)+M^2 \int_0^u e^{C_0(u-v)}  r^2(v)dv\right)^{1/2}\right)\right) du  \notag \\
& \geq H([X^k_s]) +\int_s^t \left(\beta^2 r(u) -C \left(1+C_1(T)+M^2e^{C_0 u}  \int_0^u  r^2(v)dv\right)^{1/2}\right) du  .
\end{align} 
We can then choose the map $t\to r(t)$,  independent of $k$, such that $\rho(t):=\int_0^t r^2(v)dv$ satisfies the ODE
\begin{align*}
 \rho'(u) =C^2\beta^{-4} \left(1+ C_1(T)+ M^2e^{C_0 u} \rho(u)\right),\qquad \forall u\geq 0,
\end{align*}
so that 
\begin{align}\label{eq:jlbaenrlzkgBIS}
 r^2(u) =C^2\beta^{-4} \left(1+ C_1(T)+ M^2e^{C_0 u} \int_0^u   r^2(v)dv\right),\qquad \forall u\geq 0.
\end{align} 
With this choice of $r$, we claim that 
\begin{equation}\label{eq:rigole}
	\forall k\geq 1,\quad \forall t\leq T_k,\qquad H([X^k_t]) \geq -\frac{1}{k}.
\end{equation}
 Indeed, if $H([X^k_{t_0}]) < -1/k$ for some $t_0>0$, let 
\begin{equation*}
s=\sup\left\{u \leq t_0 : H([X^k_u])\geq -1/k\right\}\; {\rm and }\; t=\inf\left\{u\geq s : H([X^k_u])\leq H([X^k_{t_0}])\vee (-\eta)\right\}.
\end{equation*} 
Since $H([X_0])\geq 0$, we have $0<s<t\leq t_0$ and $-\eta\leq H([X^k_u])\leq -1/k$ for $s\leq u \leq t$. Hence, by \eqref{eq:jlbaenrlzkg} and \eqref{eq:jlbaenrlzkgBIS}, we get
\begin{equation*}
	H([X^k_{t_0}])\vee (-\eta)= H([X^k_{t}]) \geq H([X^k_s]) = -\frac{1}{k}.
\end{equation*}
This is a contradiction and \eqref{eq:rigole} holds. Note that, with this choice of $r(\cdot)$, we have $T_k=T$ and 
\begin{equation}\label{eq.boundKkt}
\sup_k K^k_t \leq C(t)\qquad \forall t\in [0,T]. 
\end{equation}

Let $k,m \in \mathbb{N}^*$, with $k,m{\yin >} 1/\eta$. By It\^{o}'s formula we have, for $t\geq 0$,
\begin{align*}
 |X_t^k - X_t^m|^2 &= 2 \int_0^t (X_s^k-X_s^m) \cdot (b(s,X_s^k)-b(s,X_s^m)) \d s \\
& \quad  +  \int_0^t {\color{black} {\rm Tr}((\sigma(s,X_s^k)-\sigma(s,X_s^m))
 (\sigma(s,X_s^k)-\sigma(s,X_s^m))^*)} \d s \notag\\
& \quad + 2 \int_0^t (X_s^k-X_s^m) \cdot (\sigma(s,X_s^k)-\sigma(s,X_s^m) ) \d B_s\\
& \quad + 2 \int_0^t  (X_s^k-X_s^m) \cdot (\ld H([X_s^k])(X_s^k)\d K_s^k - \ld H([X_s^m])(X_s^m)\d K_s^m).
\end{align*}
Using the fact that $b$ and $\sigma$ are $L_T-$Lipschitz continuous, we get
\begin{align}
	 |X_t^k - X_t^m|^2 & \leq (2L_T+L_T^2)\, \int_0^t |X_s^k-X_s^m|^2 \d s +  2 \int_0^t (X_s^k-X_s^m) \cdot (\sigma(s,X_s^k)-\sigma(s,X_s^m) ) \d B_s \notag\\
	 \label{eq:interpen}
	& \quad + 2 \int_0^t  (X_s^k-X_s^m) \cdot (\ld H([X_s^k])(X_s^k)\d K_s^k - \ld H([X_s^m])(X_s^m)\d K_s^m) .
\end{align}

Arguing as in the proof of uniqueness, taking the expectation, we have,  since $H$ satisfies \eqref{eq:LSC}, 
\begin{multline*}
\E \left[\int_0^t   (X_s^k-X_s^m) \cdot (\ld H([X_s^k])(X_s^k)\d K_s^k - \ld H([X_s^m])(X_s^m)\d K_s^m)\right] \\
\leq   \int_0^t  (H([X_s^k])-H([X_s^m])) (\d K_s^k-\d K_s^m)+C  \int_0^t  \E\left[|X_s^k-X_s^m|^2\right] (\d K_s^k+\d K_s^m)\\
=   \int_0^t  (H([X_s^k])-H([X_s^m])) (\psi_k(s,H([X_s^k])-\psi_m(s,H([X_s^m]))\d s+C  \int_0^t  \E\left[|X_s^k-X_s^m|^2\right] (\d K_s^k+\d K_s^m).
\end{multline*}
From \eqref{eq:rigole}, since $x\psi_k(t,x)\leq 0$, $H([X^k_t])\geq -1/k$ and $\psi_k({\yin s},H([X_s^k])$ and $\psi_m({\yin s},H([X_s^m])$ are in $[0, r(t)]$ for $s\in [0,t]$, we obtain that 
\begin{multline*}
	\E \left[\int_0^t   (X_s^k-X_s^m) \cdot (\ld H([X_s^k])(X_s^k)\d K_s^k - \ld H([X_s^m])(X_s^m)\d K_s^m)\right] \\
	 \leq t(\frac{r(t)}{k}+\frac{r(t)}{m})+C  \int_0^t  \E\left[|X_s^k-X_s^m|^2\right] (\d K_s^k+\d K_s^m).
\end{multline*}
Therefore 
\begin{equation*}
\E\left[|X_t^k - X_t^m|^2\right]  \leq C\, \int_0^t \E\left[|X_s^k-X_s^m|^2\right](ds+\d K_s^k+\d K_s^m) +  t(\frac{r(t)}{k}+\frac{r(t)}{m}).
\end{equation*}

We conclude using Gronwall's lemma and the bound on $(K^k)$ in \eqref{eq.boundKkt} that, for any $T>0$,  
\begin{equation*}
\sup_{t\in [0,T]}\E\left[|X_t^k - X_t^m|^2\right]  \leq T\left(\frac{r(T)}{k}+\frac{r(T)}{m}\right)\exp\{C(T+K_T^k+K_T^m)\}\leq C\, \Big( \frac{1}{k} + \frac{1}{m}\Big).
\end{equation*}
Coming back to \eqref{eq:interpen}, we derive, from {\yin \eqref{eq:forstestis}}, \eqref{eq.boundKkt}, {\yin the }BDG and Cauchy-Schwarz inequalities, the estimate
\begin{equation*}
	 \E \left[\sup_{t\leq T}|X_t^k - X_t^m|^2\right] \leq C \Big( \frac{1}{\sqrt k} + \frac{1}{\sqrt m}\Big),
\end{equation*}
where the constant $C$ does not depend on $k$ and $m$. Thus, $\{X^k\}_{k\geq 1}$ is a Cauchy sequence in $\ys$. Let us denote by $X$ its limit. Since $\psi_k$ is bounded on $[0,T]$ by $r(T)$ for all $k$, $K^k$ is Lipschitz continuous with a Lipschitz constant bounded by $r(T)$ on $[0,T]$. By Ascoli-Arzela theorem, up to a subsequence, $(K^k)$ converges locally uniformly to a non decreasing, locally Lipschitz continuous function $K$.

It is {\yin straightforward} to check that $(X,K)$ solves~\eqref{eq:mains}. Indeed, 
\begin{itemize}
	\item $H([X_t]) = \lim_{k\to\infty} H([X^k_t])\geq 0$ by \eqref{eq:rigole}, 
	\item the Skorokhod condition is also satisfied : since $x\psi_k(t,x)\leq 0$,
	\begin{equation*}
		0\leq \int_0^t H([X_s]) dK_s  = \lim_{k\to\infty} \int_0^t H([X^k_s]) \psi_k(s,H([X^k_s])) ds \leq 0.
	\end{equation*}
\end{itemize}
\end{proof}


\section{SDEs with normal reflexion in conditional law}
\label{sec:conditional_law}

In  this section, we consider the reflected SDE:
\begin{equation}\label{eq:mainsCond}
	\left\{
	\begin{split}
	& X_t  =X_0+\int_0^t b(s,X_s)\, ds + \int_0^t \sigma_0(s,X_s)\,dB_s +\int_0^t \sigma_1(s,X_s)dW_s+ \int_0^t \ld H([X_s|W])(X_s)\,dK_s,
	\\
	& H(\left[X_t|W\right]) \geq 0, \quad \int_0^t H(\left[X_s|W\right]) \, dK_s = 0, \quad t\geq 0,
	\end{split}
	\right.
\end{equation}
where now $B$ and $W$ are independent Brownian motions and $(K_s)$ is a continuous nondecreasing process adapted to the filtration $\m F^W$ associated with $W$. As before, $H$ is a map from $\m P^2 (\rset^n)$ to $\rset$. The notation $[X_t|W]$ stands for the conditional probability of $X_t$ given $W$. We assume that the initial condition of the process, $X_0$, is independent of $B$ and of $W$  and in $L^2$. As explained below, this reflected process is the limit of a reflected particle system with a common noise. 

We assume in this part that the following conditions \A{Hcl} hold:
\begin{description}
	\item[(H$\Omega$)] The probability space is $(\Omega, \p)=(\Omega^0\times \Omega^1, \p^0\otimes \p^1)$, where $\Omega^0$ supports the $X_0$ and $B$, while $\Omega^1$ supports $W$ with associated filtration $\m F^W=\m F^1$.
	\item[(Hc)] The functions $b:\Omega\times\rset_+\times\rset^n \fl \rset^n$ and {\ph $\sigma_0$, $\sigma_1 : \Omega\times \rset_+\times\rset^{n} \fl \rset^{n\times d}$} are measurable with respect to $\m E \otimes \m B\left(\rset^n\right)$ and 
	\begin{enumerate}
		\item[(i)] For all $T>0$, there exists $L_T$ such that, $\p$-a.s., for each $t\in[0,T]$,
		\begin{equation*}
			\forall x\in\rset^n, \forall y\in\rset^n,\qquad |b(t,x)-b(t,y)| + |\sigma(t,x)-\sigma(t,y)| \leq L_T\, |x-y| ;
		\end{equation*} 
		\item[(ii)] $b$, $\sigma_0$, $\sigma_1$ are globally bounded: {\ph for all $T>0$, there exists $C_T$ such that, $\p$-a.s.,
		\begin{equation*}
			\sup_{t\leq T, \ x\in \R^n} \left\{|b(t,x)| + |\sigma_0(t,x)| + |\sigma_1(t,x)| \right\} \leq C_T.
		\end{equation*}

		}
	\end{enumerate}
	\item[(H0)] The initial condition is $X_0$ is independent of $B$ and $W$, in $L^2(\Omega^0)$ and with $H([X_0])\geq 0$;
	\item[(HH)] The function $H: \m P^2(\rset^n)\fl \rset$ is globally $\m C^2$ and 
	\begin{itemize}
		\item[(i)] there exists $M>0$ such that: $\forall \mu \in \m P^2(\rset^n),$ 
		\begin{align}
			 & |H(\mu)|+ \int_{\rset^n} |\ld H(\mu)|^2(x)\, \mu(dx)+\int_{\R^n} \left|\partial_y \ld H(\mu)(y)\right|\mu(dy) \notag\\
			 & \qquad + \int_{\R^n\times \R^n} \left|D^2_{\mu\mu} H(\mu)(x,y)\right|\mu(dx)\mu(dy)
			 \leq M^2,  \label{eq:bilip0Cond} 
		\end{align}
		\item[(ii)] there exist $\beta>0$ and $\eta>0$ such that
		\begin{align}
			\forall \mu \in \m P^2(\rset^n)\; \mbox{\rm with} \; -\eta\leq H(\mu)\leq 0, \; \qquad \int_{\rset^n} |\ld H(\mu)|^2(x)\, \mu(dx)\geq  \beta^2,  \label{eq:condCond} 
		\end{align}
		\item[(iii)] there exists $C_1\geq 0$ such that
		\begin{equation}\label{eq:DFLCond}
			\forall X,Y \in L^2_{\p}, \qquad \e\left[|\ld H([X])(X) -\ld H([Y])(Y)|^2\right] \leq C_1\, \e\left[|X-Y|^2\right].
		\end{equation}
	\end{itemize}
\end{description}

\noindent {\bf Examples.} They  are  the same as in the previous section. For the first example, let $f_1:\R^n\to \R$ be of class $C^2$ and such that the set $\{f_1=0\}$ is compact and  $0$ is a noncritical value of $f_1$. We also assume, without loss of generality, that $f_1$ is bounded and that $\nabla f_1$ and $\nabla^2 f_1$ have compact support. We set 
$$
H_1(\mu) = f_1\left( \int_{\R^n} y\mu(dy)\right)\qquad \forall \mu\in \mathcal P^2(\rset^n).
$$
Then
$$
\ld H_1(\mu)(x)= \nabla f_1\left( \int_{\R^n} y\mu(dy)\right)\; {\rm and }\; D^2_{\mu\mu}H_1(\mu)(x,x') = \nabla^2 f_1\left( \int_{\R^n} y\mu(dy)\right).
$$
So conditions ${\bf (Hcl)}$ hold as in the previous section. \\

For the second example, we assume that $H_2$ is given by 
$$
H_2(\mu)= f_2\left( \int_{\R^n} xx^* \mu(dx)\right) \qquad \forall \mu\in \mathcal P^2(\rset^n),
$$
where $f_2$ is as in the previous section. Then one can show with the same argument as in the previous section that $H_2$ is bounded and that its first and second order derivatives have bounded support and that \eqref{eq:condCond} holds. This easily implies that $H_2$ satisfies conditions ${\bf (Hcl)}$.

\subsection{Existence and uniqueness of the solution}

\begin{df} We say that $(X,K)$ is a solution to~\eqref{eq:mainsCond} if $(X,K)$ is a continuous and progressively measurable process such that $X_t$ is square integrable for any $t\geq 0$ and such that $K$ is nondecreasing and adapted to the filtration ${\mathcal F}^W$ with $K_0=0$. 
\end{df}
Note that, under the above assumptions, the equation has a meaning since (denoting by $\E^0$ the conditional expectation with respect to $W$) we have, for any $t\geq 0$ and $\p^1-$a.s., 
\begin{align*}
\E^0\left[ \int_0^t \left|\ld H([X_s|W])(X_s)\right|\,dK_s\right] &  =  \int_0^t \E^0\left[ \left|\ld  H([X_s|W])(X_s)\right| \right] \,dK_s \\
& \leq \left(\sup_{\mu\in {\mathcal P}^2} \int_{\R^d} |\ld H(\mu)(x)|^2\mu(dx)\right)^{1/2}  K_t \; <\; +\infty.
\end{align*}

\begin{thm}\label{thm:Cond}
	Under assumptions \A{Hcl}, the SDE~\eqref{eq:mainsCond} has a unique square integrable solution. In addition, we have, for any $T>0$ and for any $\theta>0$, 
\begin{equation}\label{estim.Cond}
\E\left[ \sup_{0\leq s\leq T} |X_s|^2\right]  \leq C(T), \qquad \E\Bigl[ \exp\{\theta K_T\}\Bigr] \leq C_\theta(T),
\end{equation}
for some constants $C(T)$ and $C_\theta(T)$ depending on the data, $T$ and, for $C_\theta(T)$,  $\theta$.
\end{thm}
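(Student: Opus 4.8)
## Proof plan for Theorem \ref{thm:Cond}

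\textbf{Overall strategy.} The plan is to follow closely the architecture of the proof of Theorem \ref{en:eus}, adapting each step to the conditional-law setting with common noise. The three new difficulties compared with the non-common-noise case are: (i) the reflection process $K$ is now random (adapted to $\mathcal F^W$) rather than deterministic, so the ``pathwise'' comparisons used in Theorem \ref{en:eus} must be performed $\p^1$-almost surely and then integrated; (ii) the It\^o formula for $H([X_t|W])$ now carries the extra martingale term and the $D^2_{\mu\mu}H$ term, so the penalization estimate controlling $H([X^k_t|W])$ must be carried out $\p^1$-a.s. along the paths of $W$ — this is exactly why we proved \eqref{I4}; and (iii) the a priori bound $K^k_T\le C(t)$ must be replaced by an exponential moment bound $\E[\exp\{\theta K_T\}]\le C_\theta(T)$, since $K$ is no longer Lipschitz. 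I expect step (iii) — obtaining exponential integrability of $K_T$ uniformly in the penalization parameter — to be the main obstacle.

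\textbf{Uniqueness.} Given two solutions $(X,K^X)$, $(Y,K^Y)$, apply It\^o's formula to $\E^0[|X_t-Y_t|^2]$ (conditional on $W$). The drift and diffusion contributions are handled by the Lipschitz assumption \A{Hc}. For the reflection terms, use the semiconcavity/semiconvexity inequality \eqref{eq:LSC} (valid here since \eqref{eq:DFLCond} holds), exactly as in Theorem \ref{en:eus}: the Skorokhod condition $\int_0^t H([X_s|W])\,dK^X_s=0$ together with $H([Y_s|W])\ge 0$ kills the sign-definite part and leaves a term $C\int_0^t \E^0[|X_s-Y_s|^2]\,(dK^X_s+dK^Y_s)$. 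One then gets, $\p^1$-a.s.,
\begin{equation*}
\E^0\left[|X_t-Y_t|^2\right]\le C\int_0^t \E^0\left[|X_s-Y_s|^2\right]\left(ds+dK^X_s+dK^Y_s\right),
\end{equation*}
and Gronwall's lemma for measures (Lemma 4 in \cite{J64} / Theorem 17.1 in \cite{BS13}) gives $X=Y$. For $K^X=K^Y$, apply It\^o's formula \eqref{I2} to $H([X_t|W])$: the martingale term is common to both, so $\int_0^t \E^0[|\ld H([X_u|W])(X_u)|^2]\,dK^X_u=\int_0^t \E^0[|\ld H([X_u|W])(X_u)|^2]\,dK^Y_u$ as continuous nondecreasing processes; since $H([X_u|W])=0$ $dK^X+dK^Y$-a.e., condition \eqref{eq:condCond} gives $\E^0[|\ld H|^2]\ge\beta^2>0$ there, so dividing by this factor (extended continuously off the support) yields $K^X_T=K^Y_T$, $\p^1$-a.s.

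\textbf{Existence by penalization.} For $k>1/\eta$ introduce $X^k$ solving the McKean--Vlasov SDE with penalized drift $\ld H([X^k_s|W])(X^k_s)\psi_k(s,H([X^k_s|W]))$, where $\psi_k$ is as in Theorem \ref{en:eus} with a continuous increasing map $r(\cdot)$ to be chosen, and set $K^k_t=\int_0^t\psi_k(s,H([X^k_s|W]))\,ds$. Well-posedness of \eqref{eq:pen}'s conditional-law analogue follows from a fixed point argument using \eqref{eq:DFLCond} and the Lipschitz bound on $x\mapsto \ld H(\mu)(x)$ (here one uses that $b,\sigma_0,\sigma_1$ are bounded, which is \A{Hc}-(ii), to get tightness/$L^2$ bounds). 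First derive uniform $L^2$ bounds: apply It\^o to $|X^k_t|^2$; the reflection term contributes $2\int_0^t X^k_s\cdot\ld H([X^k_s|W])(X^k_s)\,dK^k_s$, bounded via Cauchy--Schwarz, \eqref{eq:bilip0Cond} and Young by $\int_0^t(|X^k_s|^2+M^2r^2(s))\,ds$ after taking $\E$; boundedness of $b,\sigma_0,\sigma_1$ then gives $\sup_{s\le t}\E[|X^k_s|^2]\le C_1(T)+M^2\int_0^t e^{C_0(t-s)}r^2(s)\,ds$ by Gronwall. Next, the key step: estimate $H([X^k_t|W])$ from below using It\^o's formula \eqref{I4}. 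On an interval where $-\eta\le H([X^k_u|W])\le -1/k$, the gain term is $\beta^2\int_s^t r(u)\,du$ (by \eqref{eq:condCond}), the drift/diffusion/$D^2_{\mu\mu}$ terms are bounded below by $-C\int_s^t(1+(\E[|X^k_u|^2])^{1/2})\,du$ using boundedness of $b,\sigma_0,\sigma_1$, the Lipschitz bound on $\partial_y\ld H$, and \eqref{eq:bilip0Cond}, \textbf{but} there is also the martingale term $\int_s^t \E^0[(\sigma^1_u)^*\ld H(\mu_u)(X^k_u)]\cdot dW_u$. Unlike the deterministic case this term has no sign — this is where the argument genuinely differs. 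I would handle it by choosing $r(\cdot)$ large enough (solving an ODE as in \eqref{eq:jlbaenrlzkgBIS}, possibly with an extra margin) and arguing $\p^1$-pathwise: one cannot conclude $H([X^k_t|W])\ge -1/k$ deterministically, so instead one shows that the \emph{stopping time} $T_k=\inf\{t:H([X^k_t|W])\le-\eta\}$ satisfies $T_k=T$ with probability tending to $1$, or more cleanly, one penalizes more aggressively and derives directly the bound on $K^k$ in exponential-moment form. Concretely, I expect to show $K^k_t=\int_0^t r(s)\mathbf 1_{\{H([X^k_s|W])\le 0\}}\,ds$-type control, and that $\E[\exp\{\theta K^k_T\}]\le C_\theta(T)$ uniformly in $k$: this uses that $K^k$ only increases when $H([X^k_s|W])\le0$, combined with the It\^o estimate and an exponential-martingale (Novikov / Bene\v s-type) argument absorbing the $dW$ term, exploiting that $(\sigma^1)^*\ld H$ is bounded by $MC_T$. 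Once this uniform exponential bound is in hand, one passes to the limit: $\{X^k\}$ is Cauchy in $\mathcal S^2$ by the same difference computation as in Theorem \ref{en:eus} (now with $\E[\exp\{C(K^k_T+K^m_T)\}]$-type factors controlled by the exponential moment bound and Hölder), $\{K^k\}$ converges (along a subsequence, locally uniformly, using tightness from the exponential bound and that $t\mapsto K^k_t$ is $r(T)$-Lipschitz on $[0,T]$) to a nondecreasing $\mathcal F^W$-adapted $K$, and $(X,K)$ solves \eqref{eq:mainsCond}; the constraint and Skorokhod conditions pass to the limit exactly as before. The estimates \eqref{estim.Cond} then follow from the uniform bounds, with $\E[\sup_{s\le T}|X_s|^2]\le C(T)$ coming from BDG applied to \eqref{eq:lzeivbnld}'s analogue and $\E[\exp\{\theta K_T\}]\le C_\theta(T)$ by Fatou from the uniform-in-$k$ exponential bound.
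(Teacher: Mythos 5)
Your overall architecture (penalization, exponential moments for $K^k$, Cauchy estimate, passage to the limit) matches the paper's, and you correctly locate the new difficulty in the martingale term of the It\^o expansion of $H([X^k_t|W])$. But at exactly that point the proposal has a genuine gap: you keep the penalization of Theorem \ref{en:eus}, i.e.\ $\psi_k$ capped by a fixed rate $r(\cdot)$ independent of $k$, and then assert that one can show either $T_k=T$ with probability tending to $1$ or a direct exponential bound on $K^k$. With a $k$-independent cap this is false. Near the level $-1/k$ the process $t\mapsto H(\mu^k_t)$ is a semimartingale whose restoring drift is at most $\beta^2 r(T)=O(1)$ while its martingale part has non-degenerate quadratic variation; the probability that such a process, started at $-1/k$, drops a further distance $1/k$ tends to $1$ as $k\to\infty$ (compare with $e^{-2\mu a}\to1$ as $a\to 0$ for Brownian motion with drift $\mu$ hitting $-a$). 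So the constraint-violation events do not become negligible, and the difference estimate for $X^k-X^m$ — which in the stochastic setting necessarily carries terms like $\p\left[\inf_u H(\mu^m_u)\leq -2/m\right]$ — cannot be closed. The paper's fix is twofold and both ingredients are missing from your plan: (i) the penalization is made $k$-dependent and aggressive, $\psi_k\leq k^2$, so that on $\{H(\mu^k)\leq -1/k\}$ the drift pushes up at rate $\beta^2k^2$ and a downward excursion of size $1/k$ over a time span $t'-s$ forces the driving martingale increment to exceed $k^{-1}(t'-s)^{-\gamma}+\beta^2k^2(t'-s)^{1-\gamma}\geq C^{-1}k^{3\gamma-1}$ for $\gamma\in(1/3,1/2)$; (ii) via Dubins--Schwarz and the a.s.\ finiteness of the $\gamma$-H\"older seminorm of Brownian motion one gets $\p\left[\inf_{u\leq T}H(\mu^k_u)\leq -2/k\right]\leq\ep_k\to0$ (the paper's Step~5, estimate \eqref{eq:estipp}). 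This quantitative probability estimate is the engine of the whole existence proof and has no counterpart in your plan.

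Two further consequences of the missing $k$-dependence. First, your passage to the limit for $K^k$ relies on uniform $r(T)$-Lipschitz bounds and Arzel\`a--Ascoli; with $\psi_k\leq k^2$ the $K^k$ are only $k^2$-Lipschitz, and the paper instead builds $K$ from the $\mathcal S^2$-limit of $L^k_t=\int_0^t\E^0\bigl[|\ld H(\mu^k_s)(X^k_s)|^2\bigr]dK^k_s$ multiplied by the bounded continuous weight $h(\mu)=\bigl[\beta^2\vee\int|\ld H(\mu)|^2d\mu\bigr]^{-1}$. Second, your Gronwall steps over $ds+dK^k_s+dK^m_s$ produce random factors $\exp\{C(K^k_T+K^m_T)\}$ multiplying the error terms; the paper avoids this throughout by working with the exponential weight $\gamma_t=\exp\{-\alpha(\delta t+H(\mu^k_t)+H(\mu^m_t))\}$, whose It\^o differential produces the compensating terms $-\alpha\gamma_t|\cdot|^2\,\E^0[|\ld H|^2]\,dK^k_t$ that absorb the reflection contributions pointwise. (A smaller omission: the paper first proves existence under $\E[|X_0|^4]<+\infty$, needed in Step~6 to remove the stopping times, and then drops this hypothesis by truncating $X_0$ and invoking the stability estimate of Proposition \ref{prop.CondInitCN}.) Your uniqueness argument and the derivation of the exponential moment \eqref{eq:expmomK} from $\beta^2K^k_{\tau^k}\leq C+\text{(martingale with bounded bracket)}$ are essentially the paper's.
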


{\color{black}
\begin{rem}\label{rem:uniquelaw}
We emphasize that weak uniqueness also holds for  \eqref{eq:mains} and for  \eqref{eq:mainsCond}. We explain why {\color{black} at the end of the proof of Theorem \ref{thm:Cond}}.
\end{rem}
}

\begin{rem}\label{momentbizarres}\
In contrast with the previous section (without ``common noise'') the process $K$ is  neither deterministic nor Lipschitz continuous in general. For this reason, the assumptions under which we work here are much stronger than the ones in Section \ref{sec:sdes_with_normal_reflexion_in_law}.
\end{rem}

{\pe From the proof of the Theorem (see estimate \eqref{iukzqesbrlfdkc4} below), we also deduce the following result.
\begin{cor}\label{cor:momentdoubleesp}  
Let $p\geq2$ and $T>0$. Assume that in addition to \A{Hcl} we have $M_p([X_0]) + \H_p < +\infty$. Then, there exists a positive constant $C_p(T) := C\left((p,T,M_p([X_0]), \H_p\right)>0$ such that
\begin{equation}\label{zebound2ouf}
\E\left[ \sup_{0\leq s\leq T} \left|\E^0\left[ |X_s|^2\right]\right|^p\right]  \leq C_p(T){\yin .}
\end{equation}
\end{cor}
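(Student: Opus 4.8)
The plan is to push the computation behind estimate \eqref{iukzqesbrlfdkc4} in the proof of Theorem \ref{thm:Cond} to the $p$-th power, and then to neutralise the (now random) reflection term by an exponential weight. First I would write $\mu_t=[X_t|W]$ and $\phi_t:=\E^0[|X_t|^2]=\int_{\R^n}|x|^2\mu_t(dx)$; note that $\phi_0=\M_2([X_0])$ is finite and deterministic and that, by the conditional It\^o formula \eqref{I4} applied to the test map $\mu\mapsto\int|x|^2\mu(dx)$ (legitimate after the usual truncation-and-passage-to-the-limit, since this map is not bounded, using $\E[\sup_{s\le T}|X_s|^2]<+\infty$ from Theorem \ref{thm:Cond}), the process $\phi$ is an $\m F^W$-adapted It\^o process with
\begin{equation*}
d\phi_t=\bigl(\E^0[2X_t\!\cdot\! b(t,X_t)]+\E^0[{\rm Tr}(a(t,X_t))]\bigr)dt+2\,\E^0[\sigma_1^*(t,X_t)X_t]\cdot dW_t+2\,\E^0[X_t\!\cdot\!\ld H(\mu_t)(X_t)]\,dK_t,
\end{equation*}
where $a=\sigma_0\sigma_0^*+\sigma_1\sigma_1^*$. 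Applying It\^o's formula to $\phi_t^q$, using the global boundedness of $b,\sigma_0,\sigma_1$ (\A{Hc}-(ii)), Cauchy--Schwarz under $\E^0$, the bound \eqref{eq:bilip0Cond} together with $\H_q<+\infty$ via $\E^0[|X_t|\,|\ld H(\mu_t)(X_t)|]\le\phi_t^{1/2}\H_q^{1/q}$ (valid since $q/(q-1)\le 2$ for $q\ge 2$), and Young's inequality, one arrives, for every $q\in[2,2p]$, at a pathwise-in-$\omega^1$ inequality $\phi_t^q\le\phi_0^q+C_q\int_0^t(1+\phi_s^q)(ds+dK_s)+N^{(q)}_t$, with $C_q<+\infty$ (depending on $q$, $T$, the coefficient bounds, $M$, $\M_p([X_0])$ and $\H_p$) and $N^{(q)}$ a continuous local $\m F^W$-martingale satisfying $d\langle N^{(q)}\rangle_s\le C_q\phi_s^{2q-1}ds\le C_q(1+\phi_s^{2q})ds$; for $q=p$ this is precisely \eqref{iukzqesbrlfdkc4}.

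The only genuine obstacle is that $K$ is random, so Gronwall's lemma does not apply directly to the inequality above. To bypass this I would fix a constant $\Lambda\ge\max_{q\in[2,2p]}C_q$, set $\beta_t:=\Lambda(t+K_t)$ (continuous, nondecreasing, $\m F^W$-adapted, with $\beta_0=0$) and $\psi^{(q)}_t:=\phi_t^q e^{-\beta_t}$. It\^o's formula for $\psi^{(q)}_t$, together with the choice of $\Lambda$, turns the previous inequality into $\psi^{(q)}_t\le\phi_0^q+1+\widetilde N^{(q)}_t$, where $\widetilde N^{(q)}_t=\int_0^t e^{-\beta_s}dN^{(q)}_s$ is a local martingale with $d\langle\widetilde N^{(q)}\rangle_s\le C(1+(\psi^{(q)}_s)^2)ds$. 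A stopping-time localization making the martingale true, followed by BDG, the splitting $\int_0^T(\psi^{(q)}_s)^2ds\le\sup_{s\le T}\psi^{(q)}_s\int_0^T\psi^{(q)}_sds$ and Young's inequality (absorbing a fraction of $\E[\sup_{s\le T}\psi^{(q)}_s]$ into the left-hand side), yields $\E[\sup_{t\le T}\psi^{(q)}_t]\le C(T)\bigl(1+\int_0^T\E[\sup_{u\le s}\psi^{(q)}_u]\,ds\bigr)$, whence $\E[\sup_{t\le T}\psi^{(q)}_t]\le C(T)$ by Gronwall's lemma, for every $q\in[2,2p]$.

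It remains to undo the exponential weight. Taking $q=2p$ in the last estimate and using $\bigl(\sup_{t\le T}\psi^{(p)}_t\bigr)^2\le\sup_{t\le T}\phi_t^{2p}e^{-2\beta_t}\le\sup_{t\le T}\psi^{(2p)}_t$ gives $\E\bigl[(\sup_{t\le T}\psi^{(p)}_t)^2\bigr]<+\infty$. Since $\phi_t^p=e^{\beta_t}\psi^{(p)}_t\le e^{\beta_T}\sup_{s\le T}\psi^{(p)}_s$, Cauchy--Schwarz gives
\begin{equation*}
\E\Bigl[\sup_{t\le T}\phi_t^p\Bigr]\le\bigl(\E[e^{2\beta_T}]\bigr)^{1/2}\bigl(\E[(\sup_{s\le T}\psi^{(p)}_s)^2]\bigr)^{1/2},
\end{equation*}
and the first factor is finite because $\E[e^{2\beta_T}]=e^{2\Lambda T}\,\E[e^{2\Lambda K_T}]<+\infty$ by the exponential moment bound in \eqref{estim.Cond}, while the second was just bounded. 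This is exactly \eqref{zebound2ouf}. Everything here is a routine moment estimate except for the exponential-weight device of the second paragraph, which is what the randomness of $K$ forces; it is the step I expect to be the crux.
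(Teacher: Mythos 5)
Your argument is correct, and it reaches \eqref{zebound2ouf} by a genuinely different device from the paper's. The paper obtains the corollary as a by-product of Step~3 of the proof of Theorem \ref{thm:Cond}: the estimate is run on the penalized processes $X^k$, and the random reflection term is neutralised by the weight $\gamma^k_t=\exp\{-\alpha(\delta t+H(\mu^k_t))\}$, which is bounded above and below because $|H|\leq M^2$ and whose It\^{o} differential produces a negative $dK^k$-term that, by the nondegeneracy \eqref{eq:condCondCons} of $\ld H$ on the support of $dK^k$, absorbs the reflection contribution for $\alpha$ large; one then takes the supremum, $\E^0$, the $p$-th power, $\E$, BDG and Gronwall (estimate \eqref{iukzqesbrlfdkc4}) and passes to the limit in $k$. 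You instead work directly on the limit process and weight by $e^{-\Lambda(t+K_t)}$, so the $dK$-terms are killed without any appeal to the lower bound \eqref{eq:condCond}, and you pay for it at the end with the exponential moment $\E[e^{2\Lambda K_T}]<+\infty$ from \eqref{estim.Cond} together with the $q=2p$ case to dominate $\bigl(\sup_t\psi^{(p)}_t\bigr)^2\le\sup_t\psi^{(2p)}_t$. Your route is self-contained given the statement of Theorem \ref{thm:Cond} and, since it only ever uses $\phi_t=\E^0[|X_t|^2]$ and the $L^2(\mu)$-bound \eqref{eq:bilip0Cond} on $\ld H$, it never actually invokes the hypothesis $\M_p([X_0])+\H_p<+\infty$ — the conclusion comes out under \A{Hcl} alone; the paper's bounded weight $e^{-\alpha H}$, by contrast, requires no a priori integrability of $K$ and can therefore be used inside the existence proof before any moment of $K$ is known. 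Two minor simplifications: $\E^0\bigl[|X_t|\,|\ld H(\mu_t)(X_t)|\bigr]\le M\phi_t^{1/2}$ follows from Cauchy--Schwarz and \eqref{eq:bilip0Cond} directly, so the detour through $\H_q^{1/q}$ is superfluous; and the dynamics of $\phi_t$ are most safely obtained by applying the ordinary It\^{o} formula to $|X_t|^2$ and then taking $\E^0$ (using that $K$ is $\m F^W$-adapted and a conditional stochastic Fubini for the $dW$-integral), which avoids invoking \eqref{I4} for the unbounded functional $\mu\mapsto\int_{\R^n}|x|^2\mu(dx)$.
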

}

One can also show that the process depends in a continuous way of the initial condition: 
\begin{prop}\label{prop.CondInitCN} Let $X_0$ and $Y_0$ be two initial conditions with $H([X_0])\geq 0$ and $H([Y_0])\geq 0$. Let $(X, K^X)$ and $(Y, K^Y)$ be solutions of~\eqref{eq:mainsCond} with initial conditions $X_0$ and $Y_0$, respectively. Then, for any $T>0$, 
$$
\E\Bigl[\sup_{0\leq t\leq T}\E^0[|X_t-Y_t|^2]\Bigr] \leq 
C \E\left[|X_0-Y_0|^2\right]
$$
while
{\ph
$$
\E\Bigl[\sup_{0\leq t\leq T}|X_t-Y_t|^2\Bigr] \leq 
C\left(\E\left[ |X_0-Y_0|^2 \right]+\E\left[ |X_0-Y_0|^2 \right]^{1/2}\right). 
$$
}

\end{prop}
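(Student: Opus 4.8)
The strategy is to mimic the uniqueness argument in the proof of Theorem \ref{thm:Cond}, but now keeping track of the discrepancy between the two initial conditions. Writing $\delta X_t = X_t - Y_t$, $\delta X_0 = X_0 - Y_0$, I would apply Itô's formula to $|\delta X_t|^2$ (or rather to the conditional expectation $\E^0[|\delta X_t|^2]$, using the conditional Itô formula), which produces three contributions: the drift terms $b(s,X_s)-b(s,Y_s)$, the martingale/quadratic-variation terms from $\sigma_0,\sigma_1$, and the reflection terms $\ld H([X_s|W])(X_s)\,dK^X_s - \ld H([Y_s|W])(Y_s)\,dK^Y_s$. The first two are handled by the Lipschitz assumption \textbf{(Hc)}(i). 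For the reflection terms, the key tool is the semiconcavity/semiconvexity inequality \eqref{eq:LSC} (valid here because \eqref{eq:DFLCond} holds), which gives, for each fixed realization of $W$,
\begin{align*}
\E^0\bigl[\delta X_s\cdot \ld H([X_s|W])(X_s)\bigr] & \leq H([X_s|W]) - H([Y_s|W]) + C\,\E^0[|\delta X_s|^2],
\end{align*}
and the symmetric bound for the $Y$-term. Using the Skorokhod conditions $\int_0^t H([X_s|W])\,dK^X_s = 0$, $\int_0^t H([Y_s|W])\,dK^Y_s = 0$ together with $H([X_s|W])\geq 0$, $H([Y_s|W])\geq 0$, the cross terms $-\int_0^t H([Y_s|W])\,dK^X_s \leq 0$ and $-\int_0^t H([X_s|W])\,dK^Y_s \leq 0$ have a favorable sign, leaving only the quadratic remainder $C\int_0^t \E^0[|\delta X_s|^2](dK^X_s + dK^Y_s)$.

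Putting this together yields, $\p^1$-a.s.,
\begin{align*}
\E^0[|\delta X_t|^2] \leq \E^0[|\delta X_0|^2] + C\int_0^t \E^0[|\delta X_s|^2]\,\bigl(ds + dK^X_s + dK^Y_s\bigr).
\end{align*}
Then I would apply the Gronwall lemma for measures driven by continuous nondecreasing functions (as invoked in the proof of Theorem \ref{en:eus}, citing \cite{J64} or \cite{BS13}) to obtain the pathwise bound
\begin{align*}
\sup_{0\leq t\leq T}\E^0[|\delta X_t|^2] \leq \E^0[|\delta X_0|^2]\,\exp\{C(T + K^X_T + K^Y_T)\}.
\end{align*}
Taking expectation over $\Omega^1$, using that $X_0$ is independent of $W$ so $\E^0[|\delta X_0|^2]$ is in fact deterministic (equal to $\E[|\delta X_0|^2]$), and applying the exponential moment bound \eqref{estim.Cond} from Theorem \ref{thm:Cond} (for $K^X_T$ and $K^Y_T$, with a Cauchy--Schwarz step to decouple the product of $\E[|\delta X_0|^2]$, which is a constant, from $\exp\{C(K^X_T+K^Y_T)\}$), gives the first assertion
\begin{align*}
\E\Bigl[\sup_{0\leq t\leq T}\E^0[|\delta X_t|^2]\Bigr] \leq C\,\E[|\delta X_0|^2].
\end{align*}

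For the second, stronger-looking estimate on $\E[\sup_t |\delta X_t|^2]$, I would go back to the Itô expansion of $|\delta X_t|^2$ (not its conditional expectation), take the supremum in $t$, and estimate the stochastic integral against $dB_s$ and $dW_s$ by the Burkholder--Davis--Gundy inequality; the quadratic variation is controlled by $\int_0^T |\delta X_s|^2\,ds$ times the (bounded) diffusion coefficients, after a Cauchy--Schwarz/Young splitting. The reflection term $\int_0^t \delta X_s\cdot(\ld H([X_s|W])(X_s)\,dK^X_s - \ldots)$ is not sign-definite pathwise (only in $\E^0$-average), so here I would bound it crudely using $|\ld H|\leq M$ (from \eqref{eq:bilip0Cond}) by $2M\int_0^T |\delta X_s|\,(dK^X_s + dK^Y_s) \leq 2M\bigl(\sup_s|\delta X_s|\bigr)(K^X_T + K^Y_T)$, then absorb $\tfrac12\sup_s|\delta X_s|^2$ into the left-hand side and control the leftover $M^2(K^X_T+K^Y_T)^2$ via the exponential moments of $K$. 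The term $M^2\,\E[(K^X_T+K^Y_T)^2]$ does not vanish as $\delta X_0\to 0$, so this route alone is too lossy; instead I would factor it: $2M\,\E[\sup_s|\delta X_s|\,(K^X_T+K^Y_T)]$, and, using the already-proved $L^2$-in-$\Omega^1$ control on $\sup_s\E^0[|\delta X_s|^2]$ is not quite enough either, so the cleanest fix is an interpolation — bound the reflection contribution by $C\,\E[\sup_s\E^0[|\delta X_s|^2]]^{1/2}\,\E[(K^X_T+K^Y_T)^2]^{1/2} \leq C\,\E[|\delta X_0|^2]^{1/2}$, which is precisely why the final bound carries the extra $\E[|X_0-Y_0|^2]^{1/2}$ term alongside the Lipschitz term $\E[|X_0-Y_0|^2]$. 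The main obstacle is exactly this last point: because $K$ is random and only merely continuous (not Lipschitz, unlike in Section \ref{sec:sdes_with_normal_reflexion_in_law}), the pathwise reflection term cannot be absorbed with a clean quadratic estimate, and one must accept the weaker (square-root) modulus of continuity for the supremum norm — combining BDG, the exponential moment bounds \eqref{estim.Cond}, and Cauchy--Schwarz to isolate the $\delta X_0$-dependence.
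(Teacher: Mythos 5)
Your overall skeleton (the semiconcavity inequality \eqref{eq:LSC} plus the Skorokhod condition to handle the reflection terms, then BDG and Cauchy--Schwarz with the moment bounds on $K$ to explain the square-root loss in the supremum estimate) matches the paper's, and your treatment of the second inequality is essentially the one used there. But your derivation of the first, foundational estimate has a genuine gap. The conditional Itô expansion of $\E^0[|X_t-Y_t|^2]$ contains the martingale increment
$$
\int_0^t \E^0\bigl[2(X_s-Y_s)\cdot(\sigma_1(s,X_s)-\sigma_1(s,Y_s))\bigr]\cdot dW_s,
$$
which is not sign-definite for a fixed realization of $W$; your claimed $\p^1$-a.s. inequality $\E^0[|\delta X_t|^2]\leq \E^0[|\delta X_0|^2]+C\int_0^t\E^0[|\delta X_s|^2](ds+dK^X_s+dK^Y_s)$ simply drops it, so the pathwise Gronwall bound $\sup_t\E^0[|\delta X_t|^2]\leq\E^0[|\delta X_0|^2]\exp\{C(T+K^X_T+K^Y_T)\}$ does not follow, and the subsequent plan (exponential moments of $K$ plus Cauchy--Schwarz) has nothing to act on. Taking expectation first does not repair this either: $\E^0[|\delta X_s|^2]$ and $dK^X_s+dK^Y_s$ are both $\m F^W$-measurable and correlated, so the expectation does not factor and no deterministic Gronwall applies.

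The paper sidesteps the issue with a different mechanism: it works with the weight $\gamma_t=\exp\{-\alpha(\delta t+H(\mu^X_t)+H(\mu^Y_t))\}$, which is bounded above and below because $H$ is bounded by \eqref{eq:bilip0Cond}. Differentiating this weight via \eqref{I4} produces the term $-\alpha\gamma_s|X_s-Y_s|^2\,\E^0[|\ld H(\mu^X_s)(X_s)|^2]\,dK^X_s$ (and its $Y$-analogue); since $H(\mu^X_s)=0$ for $dK^X$-a.e. $s$, assumption \eqref{eq:condCond} bounds $\E^0[|\ld H(\mu^X_s)(X_s)|^2]$ below by $\beta^2$ there, so for $\alpha$ large this term absorbs the quadratic remainder $C\E^0[|\delta X_s|^2](dK^X_s+dK^Y_s)$ coming from \eqref{eq:LSC}. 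After taking full expectation the $dB$ and $dW$ martingales vanish and one obtains $\sup_t\E[|X_t-Y_t|^2]\leq\E[|X_0-Y_0|^2]$ directly, with no Gronwall over a random measure and no exponential moments of $K$ at this stage; BDG then moves the supremum inside the expectation. A further small slip: \eqref{eq:bilip0Cond} controls $\int|\ld H(\mu)|^2\,d\mu$, not the pointwise quantity $|\ld H(\mu)(x)|$, so your ``crude'' intermediate bound $|\ld H|\leq M$ is not available --- though you discard that route anyway in favour of the Cauchy--Schwarz-in-$\E^0$ estimate, which is the correct one and is what the paper uses.
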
 

{\pe
Eventually, we have the following regularity estimate on the continuity of the path $K$ and $X$.
\begin{prop}\label{prop.moduleconti} 
Let $(X,K)$ be the unique solution of SDE~\eqref{eq:mainsCond}. Assume in addition to {\yin \A{Hcl}} that there exists $p\geq2$ such that $\M_p([X_0]) + \H_p <+\infty$.

Then, there exists a constant $C_p(T) := C\left((p,T,\M_p([X_0]),\H_p\right)>0$ such that
\begin{equation}\label{modulecontiK}
\E\left[|K_t - K_s|^p\right] \leq C_{p,T}|t-s|^{p/4},
\end{equation}
and if moreover $p$ is in $\mathbb{N}$
\begin{equation}\label{modulecontiX}
\E\left[|X_t - X_s|^p\right] + \E\left[\sup_{u \in [s,t]}\left|\e^0\left[|X_u - X_s|^2\right]\right|^{{\yin p/2}}\right] \leq C_{p,T}|t-s|^{p/4}.
\end{equation}
\end{prop}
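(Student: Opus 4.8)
The plan is to derive both estimates from the a priori bounds already established for the solution, together with the exponential integrability of $K_T$ in \eqref{estim.Cond} and the higher-moment bound \eqref{zebound2ouf} of Corollary \ref{cor:momentdoubleesp}. First I would handle \eqref{modulecontiK}. Applying It\^o's formula \eqref{I4} to $H(\mu_t)$ (with $\mu_t=[X_t|W]$), and using that $H(\mu_t)=0$ on the support of $dK$ together with the lower bound $\int|\ld H(\mu_t)(x)|^2\mu_t(dx)\geq \beta^2$ there (assumption \eqref{eq:condCond}), one solves for $dK$: between times $s$ and $t$,
\begin{align*}
\beta^2 (K_t-K_s) & \leq \int_s^t \E^0\left[|\ld H(\mu_u)(X_u)|^2\right]dK_u \\
& = H(\mu_t)-H(\mu_s) - \int_s^t\E^0\left[\ld H(\mu_u)(X_u)\cdot b(u,X_u)\right]du \\
& \quad - \int_s^t \E^0\left[(\sigma_1)^*\ld H(\mu_u)(X_u)\right]\cdot dW_u - \frac12\int_s^t \E^0\left[{\rm Tr}(a_u\partial_y\ld H(\mu_u)(X_u))\right]du \\
& \quad - \frac12\int_s^t \E^0\tilde\E^0\left[{\rm Tr}(D^2_{\mu\mu}H(\mu_u)(X_u,\tilde X_u)\sigma_1(u,X_u)\tilde\sigma_1(u,\tilde X_u)^*)\right]du.
\end{align*}
Here $|H(\mu_t)-H(\mu_s)|$ is controlled by $W_2(\mu_s,\mu_t)\leq (\E^0[|X_t-X_s|^2])^{1/2}$ (Lipschitz continuity of $H$), all the Lebesgue-type integrals are bounded by $C|t-s|$ using \eqref{eq:bilip0Cond} and the boundedness of $b,\sigma_0,\sigma_1$, and the stochastic integral is handled by BDG. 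Raising to the power $p$, taking expectations, and invoking BDG for the martingale term, one gets $\E[|K_t-K_s|^p]\leq C(|t-s|^{p/2} + \E[(\E^0[|X_t-X_s|^2])^{p/2}])$. So \eqref{modulecontiK} will follow once \eqref{modulecontiX} is in hand, and conversely the conditional-expectation part of \eqref{modulecontiX} feeds the $K$-estimate; the two are established together by a bootstrap on the length of the interval, or more cleanly by first proving \eqref{modulecontiX} directly.

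For \eqref{modulecontiX}, I would write, for $0\le s\le t$,
$$
X_t - X_s = \int_s^t b(u,X_u)du + \int_s^t \sigma_0(u,X_u)dB_u + \int_s^t \sigma_1(u,X_u)dW_u + \int_s^t \ld H(\mu_u)(X_u)dK_u.
$$
The drift term contributes $C|t-s|$, the two Brownian integrals contribute $C|t-s|^{1/2}$ in $L^p$ by BDG and boundedness of $\sigma_0,\sigma_1$, and the reflection term is bounded in absolute value by
$$
\left|\int_s^t \ld H(\mu_u)(X_u)dK_u\right| \leq \left(\int_s^t |\ld H(\mu_u)(X_u)|^2 dK_u\right)^{1/2}(K_t-K_s)^{1/2},
$$
where $\int_s^t|\ld H(\mu_u)(X_u)|^2dK_u\leq M^2(K_t-K_s)$ (recall $dK$ charges only $\{H(\mu_u)=0\}$, hence $\{-\eta\le H(\mu_u)\le 0\}$, where \eqref{eq:bilip0Cond} applies), so the reflection term is $\leq M(K_t-K_s)$. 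Raising to the power $p$ and taking expectations, one needs $\E[(K_t-K_s)^p]\leq C|t-s|^{p/2}$, i.e. exactly the modulus \eqref{modulecontiK} but with the better exponent $p/2$ — which is what the display above for $\beta^2(K_t-K_s)$ gives provided the $W_2(\mu_s,\mu_t)$ term is itself $O(|t-s|^{1/2})$ in $L^p$. This apparent circularity is broken as follows: from $X_t-X_s$ one reads off the crude bound $\E[|X_t-X_s|^p]\leq C(|t-s|^{p/2} + \E[(K_t-K_s)^p])$, and combined with $\beta^2(K_t-K_s)\le C|t-s|^{1/2}(\text{stuff})^{1/2} + C|t-s| + (\text{martingale})$ after squaring and using Young's inequality $ab\le \ep a^2 + C_\ep b^2$ to absorb the $|t-s|^{1/2}(\E^0[|X_t-X_s|^2])^{1/2}$ term, one closes the estimate and obtains $\E[(K_t-K_s)^p]\le C|t-s|^{p/2}$; plugging back yields $\E[|X_t-X_s|^p]\le C|t-s|^{p/2}\le C|t-s|^{p/4}$ on the bounded interval $[0,T]$.

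For the last term of \eqref{modulecontiX}, one first notes that $\E^0[|X_u-X_s|^2]$ is itself driven by an It\^o(-in-$W$) process: applying It\^o to $|X_u-X_s|^2$ and taking conditional expectation $\E^0$, the $dB$-martingale disappears and one is left with a bounded-variation-in-$u$ part plus a $dW$-martingale whose bracket involves $\E^0[(\sigma_1(u,X_u)-\sigma_1(u,X_s))^*(X_u-X_s)]$; then $\sup_{u\in[s,t]}\E^0[|X_u-X_s|^2]$ is estimated in $L^{p/2}$ by Doob/BDG against this decomposition, each piece being controlled by $|t-s|$ times the already-established moments \eqref{zebound2ouf} together with the bound on $\E^0[(K_t-K_s)^2]$, which is where the exponential integrability of $K_T$ from \eqref{estim.Cond} enters to upgrade to the required power. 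The main obstacle is precisely this intertwining: the continuity modulus of $K$ is controlled through $H(\mu_t)$, whose own modulus is the $W_2$-modulus of the conditional law, which in turn depends on the modulus of $K$; the resolution is the Young-inequality absorption argument sketched above, carried out carefully so that the constant stays finite, using \eqref{estim.Cond} and Corollary \ref{cor:momentdoubleesp} to guarantee all the moments appearing are finite on $[0,T]$.
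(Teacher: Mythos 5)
Your overall strategy (It\^o's formula \eqref{I4} applied to $H(\mu_t)$ to isolate $\beta^2(K_t-K_s)$, It\^o applied to $|X_t-X_s|^2$ followed by $\E^0$, then a combination of the two) is the same as the paper's, but two steps do not hold as written. First, the bound $\bigl|\int_s^t \ld H(\mu_u)(X_u)\,dK_u\bigr|\le M(K_t-K_s)$ is not available: assumption \eqref{eq:bilip0Cond} controls $\E^0\bigl[|\ld H(\mu_u)(X_u)|^2\bigr]\le M^2$, not the pointwise value of $|\ld H(\mu_u)(X_u)|$, so the inequality $\int_s^t|\ld H(\mu_u)(X_u)|^2dK_u\le M^2(K_t-K_s)$ is false in general. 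This is exactly why the statement assumes $\H_p<+\infty$ and $p\in\mathbb N$: the paper controls $\E^0\bigl[\bigl(\int_s^t\ld H(\mu_u)(X_u)\,dK_u\bigr)^p\bigr]$ by expanding the $p$-fold product into an iterated $dK$-integral, using that $K$ is $\m F^W$-adapted to pull $\E^0$ inside, and bounding the result by $\H_p|K_t-K_s|^p$. Your proposal states the hypothesis $\H_p<+\infty$ but never uses it, and the pointwise bound it relies on instead does not follow from \A{Hcl}.

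Second, your circularity-breaking step is not coherent. The inequality you actually obtain from the first display is $\beta^2(K_t-K_s)\le M\bigl(\E^0[|X_t-X_s|^2]\bigr)^{1/2}+C(t-s)+|\mathrm{mart}|$, with no factor $|t-s|^{1/2}$ in front of the first term, so there is nothing for Young's inequality to absorb against. Moreover $\E^0[|X_t-X_s|^2]$ itself contains a $dW$-martingale whose $L^{p/2}$-norm is only of order $|t-s|^{1/2}$; after the square root this is precisely what caps the rate at $|t-s|^{p/4}$, so the rate $|t-s|^{p/2}$ you claim for $\E[(K_t-K_s)^p]$ is not reachable by this route (harmless for the statement, since $p/4$ suffices on $[0,T]$, but it means your argument as written does not close). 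The paper breaks the circularity differently: it applies It\^o to $\gamma(u)|X_u-X_s|^2$ with $\gamma(u)=e^{-\alpha H(\mu_u)}$, uses the semi-convexity \eqref{eq:LSC} together with the Skorokhod condition to get $\E^0[(X_u-X_s)\cdot\ld H(\mu_u)(X_u)]\le C\,\E^0[|X_u-X_s|^2]$ for $dK$-a.e.\ $u$, and chooses $\alpha$ large so that these $dK$-contributions are absorbed by the term $-\alpha\,\E^0[|X_u-X_s|^2]\,\E^0[|\ld H(\mu_u)(X_u)|^2]\,dK_u\le-\alpha\beta^2\E^0[|X_u-X_s|^2]\,dK_u$ produced by the weight. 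This yields $\E^0[\gamma(t)|X_t-X_s|^2]\le C(t-s)+\text{martingales}$ with no reference to any modulus of $K$, and the $K$- and $X$-estimates then follow in that order. You should adopt this weight-plus-Skorokhod argument in place of the absorption you describe.
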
 
}

\begin{proof}[Proof of Theorem \ref{thm:Cond}: Existence under an additional moment condition] The structure of proof is  roughly the same as for Theorem \ref{en:eus}. However, because the process $K$ is no longer Lipschitz continuous nor deterministic, we have to pay extra attention in the various estimates. Here we address the existence of the solution under the additional condition that $X_0$ has a  moment of order $4$: $\E[|X_0|^{4}]<+\infty$. This extra condition is removed after the proof of Proposition \ref{prop.CondInitCN}. The uniqueness is a straightforward consequence of Proposition \ref{prop.CondInitCN}. 

\noindent{\bf Step~1: Approximate solutions.} We build the solution on the time interval $[0,T]$ (for an arbitrary $T>0$). All the constant{\pe s} below depend on this horizon $T$. As before we argue by penalization. 
We consider, for $k\geq 1/\eta$, $X^k$ solution to the following McKean-Vlasov SDE:\\
\begin{align}
	X^k_t & = X_0+\int_0^t b(s,X^k_s)\, ds + \int_0^t \sigma_0(s,X^k_s)\,dB_s + \int_0^t \sigma_1(s,X^k_s)\,dW_s\notag \\
	& \qquad  +\int_0^t \ld H([X^k_s|W])(X^k_s)\, \psi_k(H([X^k_s|W]) \, \d s, \qquad t\geq 0,  \label{eq:penCond}
\end{align}
where the function $\psi_k$ is defined as
\begin{equation*}
	\psi_k(x)= k^2 \text{ if } x\leq -1/k, \quad \psi_k(x) = -k^3x, \text{ if } -1/k\leq x\leq 0, \quad \psi_k(x) = 0, \text{ if } x \geq 0.
\end{equation*}
We set $\mu^k_t= [X^k_t|W]$ and $\displaystyle K^k_t =\int_0^t \psi_k(H([X^k_s|W])\d s= \int_0^t \psi_k(H(\mu^k_s))\d s$ and define the $\m F^W-$stopping time $\tau^k$ by
$$
\tau^k:= T \wedge \inf\{t\geq 0, \; H(\mu^k_t)\leq -\eta\}. 
$$
We note for a later use that $\mu^k_{t\wedge \tau^k}= [X^k_{t\wedge \tau^k}|W]$ since $\tau^k$ is {\yin an $\m F^W$-stopping time}. We also stress that $H(\mu^k_s)\in [-1/k,0]$ for $dK^k-$a.e. $s\in [0,\tau^k]$, so that, by \eqref{eq:condCond} and for any nonnegative process $\varphi_s$ adapted to $\m F^W$,  
\begin{equation}\label{eq:condCondCons}
\int_0^{t\wedge \tau^k} \varphi_s \E^0\left[\left|\ld H(\mu^k_s)(X^k_s)\right|^2\right]dK^k_s \geq \beta^2 \int_0^{t\wedge \tau^k} \varphi_s \d K^k_{s\wedge \tau^k}. 
\end{equation}
 We finally note that \eqref{eq:penCond} can be rewritten as
\begin{align*}
	X^k_t & = X_0+\int_0^t b(s,X^k_s)\, ds + \int_0^t \sigma_0(s,X^k_s)\,dB_s + \int_0^t \sigma_1(s,X^k_s)\,dW_s\notag \\
	& \qquad  +\int_0^t \ld H([X^k_s|W])(X^k_s)\, \d K^k_s, \qquad t\geq 0.
\end{align*}

\noindent{\bf Step~2: Estimate on $K^k$.} For $0\leq s\leq t \leq T$ and by It\^{o}'s formula \eqref{I2}, we have 
 \begin{align}
	H(\mu^k_{t\wedge \tau^k}) & = 
			H(\mu^k_{s\wedge \tau^k})+ {\yin \int_{s\wedge \tau^k}^{t\wedge \tau^k} \E^0\left[ \ld H(\mu^k_u)(X^k_u)\cdot b(u,X^k_u)\right] \d u\notag} \\
			& \qquad +\int_{s\wedge \tau^k}^{t\wedge \tau^k} \E^0\left[ \left|\ld H(\mu^k_u)(X^k_u)\right|^2\right] \d K^k_u \notag\\
			& \qquad  +\frac12 \int_{s\wedge \tau^k}^{t\wedge \tau^k} \E^0\left[  {\rm Tr}\left(\partial_x \ld H(\mu^k_u)(X^k_u)(a_1(u,X^k_u)+a_2(u,X^k_u))\right)\right]\d u \notag\\
			& \qquad + \frac12 \int_{s\wedge \tau^k}^{t\wedge \tau^k} \E^0\tilde \E^0\left[  {\rm Tr}\left(D^2_{\mu\mu} H(\mu^k_u)(X^k_u,\tilde X^k_u)\sigma_1(u,X^k_u)\sigma_1^*(u, \tilde X^k_u\right)\right]\d u \notag\\
			& \qquad + \int_{s\wedge \tau^k}^{t\wedge \tau^k} \E^0\left[  \sigma_1^*(u,X^k_u)\ld H(\mu^k_u)(X^k_u) \right]\cdot \d W_u.\label{eq:ljhebrjshdx}
\end{align} 
 We use \eqref{eq:condCondCons}  to bound below the term $\displaystyle \int_{s\wedge \tau^k}^{t\wedge \tau^k} \E^0\left[ \left|\ld H(\mu^k_u)(X^k_u)\right|^2\right] \d K^k_u$. We also use our bounds on $H$ in \eqref{eq:bilip0Cond} and the $L^\infty$ bounds on $b$ and $\sigma_i$ to estimate the other terms and obtain: 
\begin{align}\label{eq:H-HCond}
	H(\mu^k_{t\wedge \tau^k}) & \geq  H(\mu^k_{s\wedge \tau^k})- C({t\wedge \tau^k}-{s\wedge \tau^k})+ \beta^2 (K^k_{t\wedge \tau^k}-K^k_{s\wedge \tau^k}) \notag \\
		&\qquad 	+ \int_{s\wedge \tau^k}^{t\wedge \tau^k} \E^0\left[  \sigma_1^*(u,X^k_u)\ld H(\mu^k_u)(X^k_u) \right]\cdot \d W_u.
\end{align} 
Taking $s=0$ and $t=T$ and using once again the fact that $\sigma_1$ is bounded and \eqref{eq:bilip0Cond}, we infer that the $K^k$ have uniformly bounded exponential moments: 
\begin{equation}\label{eq:expmomK}
\sup_{k} \e\left[ \exp\{\theta K^k_{\tau^k}\}\right] \leq C(\theta), \qquad \forall \theta>0. 
\end{equation}

{\pe
\noindent {\bf Step~3: Some estimates on the ($p$)-moments of $X^k_t$.} We claim that:
\begin{eqnarray}
\E\left[ \sup_{0\leq t\leq \tau^k}|X^k_t|^2\right] &\leq &C_{2,T}( \E\left[|X_0|^2\right]+1);\label{iukzqesbrlfdkc}
\end{eqnarray}
and, assuming in addition to the current assumption that there exists $p \geq 2$ such that $\M_p([X_0])+\H_p<+\infty$, it holds:
\begin{eqnarray}
\E\left[ \sup_{0\leq t\leq \tau^k}\E^0\left[|X^k_t|^p\right]\right] &\leq& C_{p,T},\label{iukzqesbrlfdkcbisbis}\\
\E\left[(\E^0)^{\ph p/2}\left[ \sup_{0\leq t\leq \tau^k}|X^k_{t}|^2\right]\right] &\leq &C_{p,T}\label{iukzqesbrlfdkc4},
\end{eqnarray}
where $C_{p,T} := C_{p,T}( \M_p([X_0])+ \H_p)>0$.
}

{\color{black}
Let us set  $\gamma^k_t:= \exp\{-\alpha (\delta t+H(\mu^k_t))\}$ for $\alpha,\delta \geq 1$ to be chosen below. We  note for later use that, because of our boundedness assumption on $H$, $\gamma^k_t$ is bounded above and below by positive constants. We have, by It\^{o}'s formula \eqref{I2}, 
\begin{eqnarray}\label{unerefdeplus}
 \gamma^k_{t\wedge \tau^k} |X^k_{t\wedge \tau^k}|^p &=& \gamma^k_0|X_0|^p + \int_0^{t\wedge \tau^k} \gamma^k_s p|X^k_s|^{p-2}X^k_s\cdot b(s,X^k_s)\d s\notag\\
&& +\frac12 \int_0^{t\wedge \tau^k} \gamma^k_s {\rm Tr} [a(s,X^k_s)(p |X^k_s|^{p-2}I_d+p(p-2)|X^k_s|^{p-4} X^k_s\otimes X^k_s) ]\, ds \notag\\
&& + \int_0^{t\wedge \tau^k} p\gamma^k_s \sigma_0^*(s,X^k_s)|X^k_s|^{p-2} X^k_s\cdot dB_s +\int_0^{t\wedge \tau^k} p\gamma^k_s \sigma_1^*(s,X^k_s)|X^k_s|^{p-2}X^k_s\cdot dW_s\notag\\
&& + \int_0^{t\wedge \tau^k} p\gamma^k_s \ld H(\mu^k_s)(X^k_s)\cdot |X^k_s|^{p-2}X^k_s\,dK^k_s-\alpha\delta  \int_0^{t\wedge \tau^k} \gamma^k_s|X^k_s|^p ds  \notag\\
&& - \alpha \int_0^{t\wedge \tau^k} \gamma^k_s |X^k_s|^p \E^0\left[ \ld H(\mu^k_s)(X^k_s)\cdot b(s,X^k_s)\right] ds 
\notag\\
&& -  \alpha \int_0^{t\wedge \tau^k} \gamma^k_s|X^k_s|^p \E^0\left[\sigma_1^*(s,X^k_s)\ld H(\mu^k_s)(X^k_s)\right]\cdot dW_s\notag\\
&& - \frac\alpha2  \int_0^{t\wedge \tau^k} \gamma^k_s|X^k_s|^p  \E^0\left[ {\rm Tr}\left(a_0(s,X^k_s) \partial_y\ld H(\mu^k_s)(X^k_s))\right)\right] ds \notag\\
&& - \frac\alpha2   \int_0^{t\wedge \tau^k} \gamma^k_s|X^k_s|^p  \E^0\tilde \E^0\left[{\rm Tr}\left(D^2_{\mu\mu}H(\mu^k_s)(X^k_s,\tilde X^k_s)\sigma_1(s,X^k_s)\sigma_1^*(s,\tilde X^k_s)\right)\right] ds\notag\\
&& - p \alpha \int_0^{t\wedge \tau^k}  \gamma^k_s{\rm Tr}\left( \sigma_1^*(s, X^k_s)|X^k_s|^{p-2}X^k_s ( \E^0\left[\sigma_1^*(s,X^k_s)\ld H(\mu^k_s)(X^k_s))^*\right) \right] \d s\notag	\\
&& +\frac{\alpha^2}{2}  \int_0^{t\wedge \tau^k} \gamma^k_s |X^k_s|^p{\rm Tr} \left(\E^0\left[ \sigma_1^*(s,X^k_s)\ld H(\mu^k_s)(X^k_s)\right]
(\E^0\left[ \sigma_1^*(s,X^k_s)\ld H(\mu^k_s)(X^k_s)\right])^*\right)\d s\notag\\
&&- \alpha \int_0^{t\wedge \tau^k} \gamma^k_s |X^k_s|^p \E^0\left[|\ld H(\mu^k_s)(X^k_s)|^2\right] dK^k_s.
\end{eqnarray}
{\color{black} From Young{\yin 's} inequality, we have
\begin{eqnarray}\label{Unyoungdeplus}
&&\E^0\left[\int_0^{t\wedge \tau^k} p\gamma^k_s \ld H(\mu^k_s)(X^k_s)\cdot |X^k_s|^{p-2}X^k_s\,dK^k_s\right]\notag \\
 &\leq &\int_0^{t\wedge \tau^k} p\gamma^k_s \left( \E^0\left[\frac{p-1}{p}|X^k_s|^{p}\right]+\E^0\left[\frac{1}{p} |\ld H(\mu^k_s)(X^k_s)|^p\right]\right)\,dK^k_s.
\end{eqnarray}
}

By \eqref{eq:bilip0Cond}, the terms $\E^0\left[|\ld H(\mu^k_s)(X^k_s)|\right]$, $\E^0\left[|\partial_y \ld H(\mu^k_s)(X^k_s)|\right]$  and $\E^0\tilde \E^0\left[\left|D^2_{\mu\mu}H(\mu^k_s)(X^k_s,\tilde X^k_s)\right|\right]$ are a.s. bounded. So, by \eqref{eq:condCondCons} applied to the last term and for $\alpha$ large enough (to absorb partially the first term in $dK^k_s$) and then $\delta$  large enough (to absorb partially all the terms in $ds$), we have 
\begin{align}
 \E^0\left[\gamma^k_{t\wedge \tau^k} |X^k_{t\wedge \tau^k}|^p\right] &\leq \E^0\left[\gamma^k_0|X_0|^p\right] +C_{\alpha,\delta}T+C\H_pK^k_T\notag \\
& \qquad +\int_0^{t\wedge \tau^k} p\gamma^k_s \E^0\left[\sigma_1^*(s,X^k_s)|X^k_s|^{p-2}X^k_s\right]\cdot dW_s\notag
 \\
	&  
\qquad 	-  \alpha \int_0^{t\wedge \tau^k} \E^0\left[\gamma^k_s|X^k_s|^p\right] \E^0\left[\sigma_1^*(s,X^k_s)\ld H(\mu^k_s)(X^k_s)\right]\cdot dW_s.
\label{izakehjzrbdnfg0}
\end{align}
Taking expectation and using the bound on $K^k$ in \eqref{eq:expmomK} we find 
\begin{align}\label{izakehjzrbdnfg}
& \sup_{t\in [0,T]} \E\left[ |X^k_{t\wedge \tau^k}|^p\right] \leq C (\E\left[|X_0|^p\right] + \H_p)+C(T).
\end{align}
We come back to \eqref{izakehjzrbdnfg0} and use {\color{black} the boundedness of $K^k$ in} \eqref{eq:expmomK} and BDG inequality to find 
\begin{align}
& \E\left[ \sup_{s\leq T}\E^0\left[ \gamma^k_{t\wedge \tau^k} |X^k_{t\wedge \tau^k}|^p\right]\right]\notag\\
& \leq C\E\left[|X_0|^p\right]  +{\color{black} {\pee C_{\alpha,\delta} (T,\H_p)}}
+ C\E\left[ \left( \int_0^T (\gamma^k_{s\wedge \tau^k})^2 (\E^0)^2\left[ |X^k_{s\wedge \tau^k}|^{p}\right]ds\right)^{1/2}\right]\notag\\
&   \leq C\E\left[|X_0|^p\right]  +{\color{black} C_{\alpha,\delta} (T,\H_p)}
+ C\E\left[\left(\sup_{s\leq T} \E^0\left[\gamma^k_{t\wedge \tau^k} |X^k_{s\wedge \tau^k}|^{p}\right] \right)^{1/2} \left( \int_0^T\gamma^k_{t\wedge \tau^k} \E^0\left[ |X^k_{s\wedge \tau^k}|^{p}\right]ds\right)^{1/2}\right]\notag\\ 
&   \leq C\E\left[|X_0|^p\right]  +{\color{black} C_{\alpha,\delta} (T, \H_p)} + \frac12 \E\left[\sup_{s\leq T}\E^0\left[ \gamma^k_{t\wedge \tau^k} |X^k_{s\wedge \tau^k}|^{p}\right]\right]
+ C\E\left[ \int_0^T \E^0\left[\gamma^k_{t\wedge \tau^k}  |X^k_{s\wedge \tau^k}|^{p}\right]ds\right].\label{encoreuneref}
\end{align}
Using \eqref{izakehjzrbdnfg}, this proves \eqref{iukzqesbrlfdkcbisbis}. \\
Now, from Itô's formula on $ |X^k_{t\wedge \tau^k}|^2$ we obtain an expression similar to \eqref{unerefdeplus} (with $p=2$ therein and $\alpha=\delta=0$) without the last eight terms. Taking first the supremum, then the expectation $\E$ and using the fact that
\begin{eqnarray*}
&&\E^1\otimes \E^0\left[\sup_{s \leq t} \int_0^{s\wedge \tau^k}   |\ld H(\mu^k_s)(X^k_s)|\ |X^k_s|\,dK^k_s\right]\\
&\leq & \E^1\left[\int_0^{T\wedge \tau^k}  (\E^0)^{1/2}\left[|\ld H(\mu^k_s)(X^k_s)|^2\right] (\E^0)^{1/2}\left[|X^k_s|^2\right]\,dK^k_s\right]\\
&\leq & {\yin C(M)(\E^1)^{1/2}\left[\sup_{0\leq t \leq T\wedge \tau^k}\left\{\E^0\left[|X^k_t|^2\right]\right\}\right](\E^1)^{1/2}\left[(K_T^k)^2\right],}
\end{eqnarray*}
thanks to Cauchy-Schwarz inequality we deduce, from  \eqref{iukzqesbrlfdkcbisbis} {\pe with $p=2$}, estimates on the moment of $K$ and BDG{\pee 's} inequality that \eqref{iukzqesbrlfdkc} holds. {\pe Also, still working with the It\^o's expansion of $ |X^k_{t\wedge \tau^k}|^2$, we obtain, taking first the supremum of this expression, then the expectation $\E^0$, taking the $p^{{\rm}}$ power of this expression, taking next the expectation $\E$,  using BDG inequality, and Gronwall's lemma that \eqref{iukzqesbrlfdkc4} holds.}
}

\noindent {\bf Step~4: Estimate of the difference $X^k-X^m$, first part.} Let $\tau^{k,m}:=\tau^k\wedge \tau^m$ and let us set $\gamma_t:= \exp\{-\alpha(\delta t+H(\mu^k_t)+H(\mu^m_t))\}$ for $\alpha,\delta \geq 1$ to be chosen below. By It\^{o}'s formula \eqref{I2},
{\pe by using} the Lipschitz continuity of $b$ and $\sigma_i$ and absorbing the  terms in $dt$ as  in Step~3, using the bounds on the derivatives of $H$ and on $\sigma_1$ as well as \eqref{eq:condCondCons}, we get, for $\delta$ large enough:
\begin{align}
& \gamma_{t\wedge\tau^{k,m}}  |X^k_{t\wedge \tau^{k,m}}-X^m_{t\wedge \tau^{k,m}}|^2 \notag \\
&  \leq  \int_0^{t\wedge \tau^{k,m}} 2 \gamma_s  (X^k_s-X^m_s) \cdot \ld H(\mu^k_s)(X^k_s)\d K^k_s  
 - \int_0^{t\wedge \tau^{k,m}} 2\gamma_s  (X^k_s-X^m_s) \cdot \ld H(\mu^m_s)(X^m_s)\d K^m_s \notag  \\ 
&  + \int_0^{t\wedge \tau^{k,m}} 2\gamma_s (X^k_s-X^m_s)\cdot (\sigma_0(s,X^k_s)-\sigma_0(s, X^m_s))\,dB_s 
\notag \\
&  
+ \int_0^{t\wedge \tau^{k,m}}2\gamma_s  (X^k_s-X^m_s)\cdot  (\sigma_1(s,X^k_s)-\sigma_1(s, X^m_s))\,dW_s 
- \frac{\alpha\delta}{2} \int_0^{t\wedge \tau^{k,m}}\gamma_s |X^k_s-X^m_s|^2\d s \notag\\
	&  -  \alpha \int_0^{t\wedge \tau^{k,m}} \gamma_s|X^k_s-X^m_s|^2 (\E^0\left[\sigma_1^*(s,X^k_s)\ld H(\mu^k_s)(X^k_s) \right]+
	\E^0\left[\sigma_1^*(s,X^m_s)\ld H(\mu^m_s)(X^m_s) \right]\cdot dW_s  \notag\\
	&   - \alpha \beta^2 \int_0^{t\wedge \tau^{k,m}} \gamma_s |X^k_s-X^m_s|^2 (dK^k_s+ dK^m_s) .\label{lkjrhesrdcvCN22}
\end{align}

Let us now estimate the first two terms in the right-hand side: by \eqref{eq:LSC}, we have, for $0\leq s\leq {t\wedge \tau^{k,m}}$ and $\p^1-$a.s., 
\begin{align*}
& \E^0\left[ (X^k_s-X^m_s) \cdot \ld H(\mu^k_s)(X^k_s) \, \psi_k(H(\mu^k_s))\right] \\
 &\qquad   \leq \left(H(\mu^k_s)-H(\mu^m_s) +C\E^0\left[  |X^k_s-X^m_s|^2\right] \right)\psi_k(H(\mu^k_s))\\
 &\qquad   \leq \Bigl(-H(\mu^m_s)({\bf 1}_{\{\inf_{u\leq {t\wedge \tau^{k,m}}} H(\mu^m_u)> -2/m\}} + {\bf 1}_{\{\inf_{u\leq {t\wedge \tau^{k,m}}} H(\mu^m_u)\leq -2/m\}} ) \\
 &\qquad \qquad \qquad   +C\E^0\left[  |X^k_s-X^m_s|^2\right] \Bigr)\psi_k(H(\mu^k_s))\\
& \qquad \leq \left(\frac{2}{m} +C{\bf 1}_{\{\inf_{u\leq {t\wedge \tau^{k,m}}} H(\mu^m_u)\leq -2/m\}} +C \E^0\left[ |X^k_s-X^m_s|^2\right]\right)\, \psi_k(H(\mu^k_s)),
\end{align*} 
where we used that $z\psi_k(z)\leq 0$ and that $H$ is bounded. The symmetric inequality holds when exchanging the roles of $k$ and $m$. Therefore, 
taking the conditional expectation with respect to $\E^0$ in \eqref{lkjrhesrdcvCN22}, we obtain: 
\begin{align*}
& \gamma_{t\wedge \tau^{k,m}} \E^0\left[|X^k_{t\wedge \tau^{k,m}}-X^m_{t\wedge \tau^{k,m}}|^2\right] \notag \\
&  \leq  C\int_0^{t\wedge \tau^{k,m}} \gamma_s \left((1/m) +{\bf 1}_{\{\inf_{u\leq {t\wedge \tau^{k,m}}} H(\mu^m_u)\leq -2/m\}}
+ \E^0\left[ |X^k_s-X^m_s|^2\right]
\right)\, \d K^k_s  \notag\\ 
&   +C \int_0^{t\wedge \tau^{k,m}} \gamma_s \left((1/k) +{\bf 1}_{\{\inf_{u\leq {t\wedge \tau^{k,m}}} H(\mu^k_u)\leq -2/k\}}
+ \E^0\left[ |X^k_s-X^m_s|^2\right]
\right)\, \d K^m_s   \notag\\ 
&  + \int_0^{t\wedge \tau^{k,m}}2\gamma_s \E^0\left[(X^k_s-X^m_s) \cdot  (\sigma_1(s,X^k_s)-\sigma_1(s, X^m_s))\right]\,dW_s -\frac{\alpha\delta}{2} \int_0^{t\wedge \tau^{k,m}} \gamma_s\E^0\left[ |X^k_s-X^m_s|^2\right] ds \notag\\
	&   -  \alpha \int_0^{t\wedge \tau^{k,m}} \gamma_s\E^0\left[ |X^k_s-X^m_s|^2\right] \\
	& \qquad \qquad \times (\E^0\left[\sigma_1^*(s,X^k_s)\ld H(\mu^k_s)(X^k_s) \right]+
	\E^0\left[\sigma_1^*(s,X^m_s)\ld H(\mu^m_s)(X^m_s) \right])\cdot dW_s  \notag\\
	&  - \alpha \beta^2\int_0^{t\wedge \tau^{k,m}} \gamma_s \E^0\left[ |X^k_s-X^m_s|^2\right] \left\{  dK^k_s+ dK^m_s\right\}	. \notag 
\end{align*} 
So, for $\alpha$ large enough, we find (recalling that $\gamma_t$ is positive and bounded),	
\begin{align}	
& \gamma_{t\wedge \tau^{k,m}} \E^0\left[|X^k_{t\wedge \tau^{k,m}}-X^m_{t\wedge \tau^{k,m}}|^2\right] \notag \\
&  \leq  C \left((1/k)+ (1/m) +{\bf 1}_{\{\inf_{u\leq {t\wedge \tau^{k,m}}} H(\mu^m_u)\leq -2/m\}}+{\bf 1}_{\{\inf_{u\leq {t\wedge \tau^{k,m}}} H(\mu^k_u)\leq -2/k\}}\right)
(K^k_{t\wedge \tau^{k,m}}+K^m_{t\wedge \tau^{k,m}})   \notag\\ 
&  + \int_0^{t\wedge \tau^{k,m}}2\gamma_s \E^0\left[(X^k_s-X^m_s) \cdot  (\sigma_1(s,X^k_s)-\sigma_1(s, X^m_s))\right]\,dW_s -\frac{\alpha\delta}{2} \int_0^{t\wedge \tau^{k,m}} \gamma_s\E^0\left[ |X^k_s-X^m_s|^2\right]ds\notag\\
	&   -  \alpha \int_0^{t\wedge \tau^{k,m}} \gamma_s\E^0\left[ |X^k_s-X^m_s|^2\right]  \Bigl(\E^0\left[\sigma_1^*(s,X^k_s)\ld H(\mu^k_s)(X^k_s) \right]\notag \\
& \qquad \qquad\qquad\qquad\qquad\qquad\qquad\qquad  + 	\E^0\left[\sigma_1^*(s,X^m_s)\ld H(\mu^m_s)(X^m_s) \right]\Bigr )\cdot dW_s  .
\label{eq:liaevziud22}
\end{align} 
In order to proceed, we need to control the probability of the events $\{\inf_{u\leq {t\wedge \tau^{k,m}}} H(\mu^k_u)\leq -2/k\}$. This is the aim of the next step.\\

\noindent{\bf Step~5: Estimate of the probability $\p\left[\{ \inf_{u\leq T} H(\mu^k_u)\leq -2/k\}\right]$.} 
As $H([X_0])\geq 0$ and the map $u\to \mu^k_u$ is continuous on ${\mathcal P}^2(\R^n)$, on the event  $\Bigl\{ \inf_{u\leq T} H(\mu^k_u)\leq -2/k\Bigr\}$, there exist  $0\leq s\leq t'\leq \tau^k$ such that $H([X^k_s|W])= -1/k$, $H(\mu^k_u)\leq -1/k$ on $[s,t']$ and 
$H([X^k_{t'}|W])\leq -2/k$. Then \eqref{eq:H-HCond} (applied between $s$ and $t'$) implies that 
\begin{align*}
-(2/k)\geq -(1/k) +\beta^2 k^2(t'-s) + \int_s^{t'} \int_{\R^n} \sigma_1^T(u,x)\ld H^k(\mu^k_u)(x) \mu^k_u(\d x)\cdot \d W_u,
\end{align*} 
from which we derive that, for any $\gamma\in (1/3,1/2)$,  
\begin{align*}
& \sup_{0\leq s<t'\leq T} \left| \frac{1}{(t'-s)^\gamma} \int_s^{t'} \int_{\R^n} \sigma_1^T(u,x)\ld H^k(\mu^k_u)(x) \mu^k_u(\d x)\cdot \d W_u\right| \\
& \qquad\qquad  \geq
\inf_{0\leq s<t'\leq T} \left( \frac{1}{k(t'-s)^\gamma} +\beta^2 k^2(t'-s)^{1-\gamma}\right)  \geq C^{-1} k^{3\gamma-1}.
\end{align*} 
If we {\yin s}et 
\begin{align*}
M^k_t := \int_0^t \int_{\R^n} \sigma_1^T(u,x)\ld H^k(\mu^k_u)(x) \mu^k_u(\d x)\cdot \d W_u, 
\end{align*}
then, by Dubin-Schwarz,  
there is a standard $1-$dimensional Brownian motion $\tilde W^k$ such that 
$\displaystyle M^k_t = \tilde W^k_{{\yin \langle}M^k{\yin \rangle}_t}.$
Hence
\begin{align*}
\sup_{0\leq s<t\leq T} \left| \frac{1}{(t-s)^\gamma} \Bigl(M^k_t-M^k_s\Bigr)\right| & = \sup_{0\leq s<t\leq T} \left| \frac{1}{(t-s)^\gamma} \Bigl(\tilde W^k_{\langle M^k \rangle_t}-\tilde W^k_{\langle M^k\rangle_s}\Bigr)\right|\\
&  \leq C \sup_{0\leq s<t\leq CT}  \left| \frac{1}{(t-s)^\gamma} \Bigl(\tilde W^k_{t}-\tilde W^k_{s}\Bigr)\right|,
\end{align*}
because 
$$
\left|\langle M^k \rangle_t-\langle M^k\rangle_s\right|\leq C(t-s),
$$
since $\sigma_1$ is bounded and \eqref{eq:bilip0Cond} holds. 
This proves that 
\begin{align}
\p\left[\tau^k<T\right]\leq  \p \left[ \Bigl\{ \inf_{u\leq T} H(\mu^k_u) \leq -2/k\Bigr\}\right] \leq \p\left[  \sup_{0\leq s<t\leq CT}  \left| \frac{1}{(t-s)^\gamma} \Bigl(\tilde W^k_{t}-\tilde W^k_{s}\Bigr)\right|\geq  C^{-1} k^{3\gamma-1}\right] =: \ep_k,\label{eq:estipp}
\end{align}
where $\ep_k\to 0$ since $\gamma\in (1/3,1/2)$ by standard properties of  the Brownian motion. \\

\noindent {\bf Step~6: Estimate of the difference $X^k-X^m$, second part.}  We come back to \eqref{eq:liaevziud22}: taking expectation we obtain, by \eqref{eq:expmomK} and \eqref{eq:estipp},
\begin{align*}	
& \E\left[\gamma_{t\wedge \tau^{k,m}} |X^k_{t\wedge \tau^{k,m}}-X^m_{t\wedge \tau^{k,m}}|^2  \right] 
\notag \\
& \qquad  \leq  C \Bigl(\frac{1}{k}+ \frac{1}{m}\Bigr) +C\Bigl(\p^{1/2}\left[\{\inf_{u\leq {t\wedge \tau^{k,m}}} H(\mu^m_u)\leq -2/m\}\right] \notag \\
& \qquad \qquad \qquad + \p^{1/2}\left[\{\inf_{u\leq {t\wedge \tau^{k,m}}} H(\mu^k_u)\leq -2/k\}\right] \Bigr) 
\times \E^{1/2}\left[(K^k_{t\wedge \tau^{k,m}}+K^m_{t\wedge \tau^{k,m}})^2\right] 
 \notag\\ 
& \qquad \leq  C \left((1/k)+ (1/m) +\ep_k^{1/2}+\ep_m^{1/2}\right). 
\end{align*} 
Therefore
\begin{align}	
& \sup_{t\in [0,T]} \E\left[|X^k_{t\wedge \tau^{k,m}}-X^m_{t\wedge \tau^{k,m}}|^2\right] 
\leq  C \left((1/k)+ (1/m) +\ep_k^{1/2}+\ep_m^{1/2}\right).
 \label{oileazrendtlfgk:c}
\end{align} 
Coming back once {\yin again} to \eqref{eq:liaevziud22},  we have by the BDG inequality, \eqref{eq:expmomK}  and \eqref{eq:estipp},
\begin{align*}	
&\E\left[ \sup_{0\leq s\leq t} \gamma_{s\wedge \tau^{k,m}} \E^0\left[|X^k_{s\wedge \tau^{k,m}}-X^m_{s\wedge \tau^{k,m}}|^2\right]\right]  \\
& \qquad  \leq  C \left((1/k)+ (1/m) +\ep_k^{1/2}+\ep_m^{1/2}\right)   
+ C (\alpha+1) \E\Bigl[ \Bigl( \int_0^{t\wedge \tau^{k,m}} \left|\E^0\left[ |X^k_s-X^m_s|^2\right]\right|^2ds \Bigr)^{1/2}\Bigr]  .
\end{align*} 
Hence by Jensen's inequality 
\begin{align*}	
&\E\left[ \sup_{0\leq s\leq t} \gamma_{s\wedge \tau^{k,m}} \E^0\left[|X^k_{s\wedge \tau^{k,m}}-X^m_{s\wedge \tau^{k,m}}|^2\right]\right]  \\
& \leq 	C \left((1/k)+ (1/m) +\ep_m^{1/2}+\ep_k^{1/2} \right) \\
& \qquad \qquad +  C_\alpha\E\Bigl[ \sup_{0\leq s\leq t} (\gamma_{s\wedge \tau^{k,m}} \E^0\left[|X^k_{s\wedge \tau^{k,m}}-X^m_{s\wedge \tau^{k,m}}|^2\right])^{1/2}
\Bigl( \int_0^{t\wedge \tau^{k,m}} \E^0\left[ |X^k_s-X^m_s|^2\right] ds \Bigr)^{1/2}\Bigr] \\
& \leq 	C \left((1/k)+ (1/m) +\ep_m^{1/2}+\ep_k^{1/2} \right) 
+ \frac12 \E\Bigl[ \sup_{0\leq s\leq t} \gamma_{s\wedge \tau^{k,m}} \E^0\left[|X^k_{s\wedge \tau^{k,m}}-X^m_{s\wedge \tau^{k,m}}|^2\right]\Bigr]\\
& \qquad + C \E\left[\int_0^{t} |X^k_{s\wedge \tau^{k,m}}-X^m_{s\wedge \tau^{k,m}}|^2\right]{\yin ds},
\end{align*} 
from which we derive, by \eqref{oileazrendtlfgk:c},   
\begin{align}	
&\E\left[ \sup_{0\leq s\leq T} \E^0\left[|X^k_{s\wedge \tau^{k,m}}-X^m_{s\wedge \tau^{k,m}}|^2\right]\right]  
\leq 	C \left((1/k)+ (1/m) +\ep_m^{1/2}+\ep_k^{1/2} \right). \label{oilaersfd}
\end{align} 
We finally need to remove the $\E^0$ in the left-hand side. For this we come back to \eqref{lkjrhesrdcvCN22}.
Taking the supremum in time and the expectation therein we find, by the BDG inequality,  {\ph since $\gamma$ is bounded below and above by positive constants,}
\begin{align*}
& \E\left[ {\ph\sup_{0\leq s\leq T}  |X^k_{s\wedge \tau^{k,m}}-X^m_{s\wedge \tau^{k,m}}|^2 }\right] \notag \\
&\qquad   \leq C \E\left[ \int_0^{\tau^{k,m}}  {\ph \E^0\left[ |X^k_s-X^m_s|^2 \right]^{1/2} \, \E^0\left[ |\ld H([X^k_s|W])(X^k_s)|^2\right]^{1/2} dK^k_s  } \right] \notag \\ 
&\qquad  \qquad  + C \E\left[ \int_0^{\tau^{k,m}} {\ph \E^0\left[ |X^k_s-X^m_s|^2\right]^{1/2} \, \E^0\left[|\ld H([X^m_s|W])(X^m_s)|^2\right]^{1/2} dK^m_s } \right]  
\notag  \\ 
&\qquad\qquad   + C\E\left[ \left( \int_0^{ \tau^{k,m}}{\ph |X^k_s-X^m_s|^4\, ds}\right)^{1/2}\right]\\
&\qquad   \leq C \E\left[ \sup_{0\leq s\leq  \tau^{k,m}} {\ph \E^0\left[ |X^k_s-X^m_s|^2\right]^{1/2}  \left(K^k_{  \tau^{k,m}}+K^m_{  \tau^{k,m}}\right) }\right]\\ 
&\qquad \qquad   + C\E\left[ \sup_{0\leq s\leq  \tau^{k,m}} {\ph  |X^k_s-X^m_s| \left( \int_0^{\tau^{k,m}} |X^k_s-X^m_s|^2\, ds\right)^{1/2} }\right]\\
&\qquad   \leq C \E^{1/2}\left[ \sup_{0\leq s\leq \tau^{k,m}} \E^0\left[ |X^k_s-X^m_s|^2\right]\right] \E^{1/2}\left[(K^k_{  \tau^{k,m}}+K^m_{ \tau^{k,m}})^2\right] 
\\
&  \qquad \qquad 
+\frac12 \E\left[ {\ph \sup_{0\leq s\leq  \tau^{k,m}}  |X^k_s-X^m_s|^2 }\right] 
+ C\E\left[ \int_0^{ \tau^{k,m}}{\ph  |X^k_s-X^m_s|^2\d s}\right] ,
\end{align*} 
from which we obtain, in  view of our bounds on $K^k$ and $K^m$  in \eqref{eq:expmomK} and by \eqref{oileazrendtlfgk:c} and \eqref{oilaersfd}, 
\begin{align*}
& \E\left[ \sup_{0\leq s\leq T} |X^k_{s\wedge \tau^{k,m}}-X^m_{s\wedge \tau^{k,m}}|^2\right]  \leq 	C \left((1/k^{1/2})+ (1/m^{1/2}) +\ep_m^{1/4}+\ep_k^{1/4} \right).
\end{align*} 
We finally need to replace the stopping time $\tau^{k,m}$ by  the stopping times $\tau^k$ and $\tau^m$ in the above inequality. We have
\begin{align*}
& \E\left[ \sup_{0\leq s\leq T}  |X^k_{s\wedge \tau^{k}}-X^m_{s\wedge \tau^{m}}|^2\right]  \leq 	
\E\left[\E^0\left[\sup_{0\leq s\leq T}\gamma_s  |X^k_{s\wedge \tau^{k,m}}-X^m_{s\wedge \tau^{k,m}}|^2\right]{\bf 1}_{\{\tau^{k,m}=T\}}\right] \\
&\qquad +2 \E\left[ \left(\E^0\left[\sup_{0\leq s\leq T}  |X^k_{s\wedge \tau^{k}}|^2\right]+ \E^0\left[\sup_{0\leq s\leq t} |X^m_{s\wedge \tau^{m}}|^2\right]\right) 
({\bf 1}_{\{\tau^{k}<T\}}+{\bf 1}_{\{\tau^{m}<T\}})\right] \\
& \leq C \left((1/k^{1/2})+ (1/m^{1/2}) +\ep_m^{1/4}+\ep_k^{1/4} \right)\\
& \qquad 
+C \E^{1/2}\left[ (\E^0)^2\left[\sup_{0\leq s\leq T}  |X^k_{s\wedge \tau^{k}}|^{2}\right]+ (\E^0)^2\left[\sup_{0\leq s\leq t} |X^m_{s\wedge \tau^{m}}|^{2}\right]\right] 
\left(\p^{1/2}\left[ \{\tau^{k}<T\}\right]+\p^{1/2}\left[\{\tau^{m}<T\}\right] \right).
\end{align*} 
By the estimate \eqref{iukzqesbrlfdkc4} and  the estimate of $\p\left[ \{\tau^{k}<T\}\right]$ in \eqref{eq:estipp} we conclude that  
\begin{align*}
& \E\left[ \sup_{0\leq s\leq T}  |X^k_{s\wedge \tau^{k}}-X^m_{s\wedge \tau^{m}}|^2\right]  \leq C \left((1/k^{1/2})+ (1/m^{1/2}) +\ep_m^{1/4}+\ep_k^{1/4} \right).
\end{align*} 

\noindent {\bf Step~7: Analysis of the limit process.}  We infer from the above inequality that
$(X^k_{\cdot \wedge \tau^k})$ is a Cauchy sequence in $\mathcal S^2$. We denote by $X$ the limit of this sequence and set  $\mu_s=[X_s|W]$. We note that 
$$
\E\left[ \sup_{0\leq s\leq T}W_2^2(\mu^k_{s\wedge \tau^{k}}, \mu_s)\right]\leq \E\left[ \sup_{0\leq s\leq T} |X^k_{s\wedge  \tau^{k}}-X_s|^2\right]\to 0. 
$$
Next we remove the stopping time in the above inequality. By  \eqref{eq:estipp},  
$$
\p\left[\tau^k<T\right]\leq  \p \left[ \Bigl\{ \inf_{u\leq T} H(\mu^k_u) \leq -2/k\Bigr\}\right] \leq  \ep_k,
$$
where $(\ep_k)$ tends to zero as $k\to +\infty$. We can choose a  subsequence $(k')$, such that  $\sup_{0\leq s\leq T}W_2^2(\mu^{k'}_{s\wedge \tau^{k'}}, \mu_s)$ tends to $0$ a.s.. and, by the Borel-Cantelli Lemma, such that, a.s. and for $k'$ large enough, $H([X^{k'}_t|W]) \geq -2/k'$ on $[0,T]$.
In particular, this implies that, a.s. and for $k'$ large enough,  $\tau^{k'}=T$ and 
$$
\sup_{0\leq s\leq T}W_2^2(\mu^{k'}_{s}, \mu_s)\to 0. 
$$
Thus $H(\mu_t)\geq 0$ a.s. on $[0,T]$. Moreover, by \eqref{eq:ljhebrjshdx} and the convergence of $X^k$, there exists a continuous process $(L_t)$ such that   
$$
\E\left[ \sup_{0\leq t \leq T} \left| \int_0^t \int_{\R^n} \left|\ld H([X^{k'}_s|W])(x)\right|^2\, \psi_k(H([X^{k'}_s|W]))\d \mu^{k'}_s(\d x)\d s \, - L_t\right|^2\right] \to 0. 
$$
Note that $L$ is nondecreasing and adapted to $\m F^W$. Let us set 
$$
h(\mu):= \left[ \beta^2\vee  \int_{\R^n} \left| \ld H(\mu)(x)\right|^2 \mu(\d x) \right]^{-1}{\yin ,} \qquad \forall \mu \in  \m P^2(\rset^n)
$$
and 
$$
L^k_t= \int_0^t \int_{\R^n} \left|\ld H(\mu^k_s)(x)\right|^2\d \mu^k_s(\d x)\, \d K^k_s.
$$
Then $h$ is continuous and bounded on $\m P^2(\rset^n)$ and, as \eqref{eq:condCond} holds  and $-(1/k')\leq H(\mu^{k'}_s)\leq 0$ $dK^{k'}_s-$a.s. for $k'$ large enough,  
\begin{align*}
\lim_{k'} \int_0^t \ld H([X^{k'}_s|W])(X^{k'}_s)\, \d K^{k'}_s  & = \lim_{k'}  \int_0^t \ld H(\mu^{k'}_s)(X^{k'}_s)\, h(\mu^{k'}_s)\, \d L^{k'}_s \\
& = \int_0^t \ld H([X_s|W])(X_s)\, h([X_s|W])\, \d L_s.
\end{align*} 
Let us set 
$$
K_t = \int_0^t h([X_s|W])\, \d L_s.
$$
Then $K$ is nondecreasing and $\m F^W$ adapted and we have, passing to the limit in \eqref{eq:penCond}, 
\begin{align*}
	X_t & = X_0+\int_0^t b(s,X_s)\, ds + \int_0^t \sigma_0(s,X_s)\,dB_s + \int_0^t \sigma_1(s,X_s)\,dW_s \\
	& \qquad  +\int_0^t \ld H([X_s|W])(X_s)\, \d K_s, \qquad t\geq 0.
\end{align*}
Moreover, for any continuous and bounded map $\phi:\R\to \R$ such that $\phi(z)= 0$ on $[0,+\infty)$, one has 
\begin{align*}
& 0= \lim_{k'} \int_0^T\phi( H([X^{k'}_t|W]))  \psi_{k'}([X^{k'}_t|W]) \, dt  =  \lim_{k'} \int_0^T \phi(H([X^{k'}_t|W]))  h([X^{k'}_t|W])\, \d L^{k'}_t \\
& \qquad \qquad  =\int_0^T \phi(H([X_t|W]))  h([X_t|W])\, \d L_t =  \int_0^T \phi(H([X_t|W]))\, \d K_t.
\end{align*}
Therefore $\displaystyle \int_0^T {\bf 1}_{\{ H([X_t|W])>0\}} \, \d K_t=0.$ Note finally that  \eqref{estim.Cond} can be derived from \eqref{eq:expmomK} and \eqref{iukzqesbrlfdkc}. 
\end{proof}

\begin{proof}[Proof of Proposition \ref{prop.CondInitCN}] The proof goes along the same lines as the estimate of the difference $X^k-X^m$ in the proof of Theorem \ref{thm:Cond} (Step~4). Let $(X,K^X)$ and $(Y, K^Y)$ be two solutions, let us set $\mu^X_t=[X_t|W]$, $\mu^Y_t=[Y_t|W]$ and $\gamma_t:= \exp\{-\alpha({\delta t}+H(\mu^X_{t})+H(\mu^Y_{t}))\}$ for $\alpha, \delta \geq 1$ to be chosen below. By It\^{o}'s formula \eqref{I4} {\color{black}(which holds because \eqref{hypbetaK} is satisfied thanks to assumption \eqref{eq:bilip0Cond})}, we have,  using the Lipschitz continuity of $b$ and $\sigma_i$ and for $\delta>0$ large enough:  
\begin{align}
& \gamma_t  |X_{t}-Y_{t}|^2 \leq |X_0-Y_0|^2 \notag \\
&  +  \int_0^{t} 2 \gamma_s  (X_s-Y_s) \cdot (\ld H(\mu^X_s)(X_s)\, \d K^X_s  - \ld H(\mu^Y_s)(Y_s)\,  \d K^Y_s) \notag  \\ 
&  + \int_0^{t} 2\gamma_s (X_s-Y_s)\cdot (\sigma_0(s,X_s)-\sigma_0(s, Y_s))\,dB_s 
\notag \\
&  
+ \int_0^{t}2\gamma_s  (X_s-Y_s)\cdot  (\sigma_1(s,X_s)-\sigma_1(s, Y_s))\,dW_s 
- \frac{\alpha\delta}{2} \int_0^{t}\gamma_s |X_s-Y_s|^2\d s \notag\\
	&  -  \alpha \int_0^{t} \gamma_s|X_s-Y_s|^2 (\E^0\left[(\sigma_1(s,X_s))^*\ld H(\mu^X_s)(X_s) \right]+
	\E^0\left[(\sigma_1(s,Y_s))^*\ld H(\mu^Y_s)(Y_s) \right]\cdot dW_s  \notag\\
	&   - \alpha \int_0^{t} \gamma_s |X_s-Y_s|^2 \left\{ \E^0\left[|\ld H(\mu^X_s)(X_s)|^2\right] dK^X_s
	+ \E^0\left[|\ld H(\mu^Y_s)(Y_s)|^2\right] dK^Y_s\right\} . \label{lkjrhesrdcvCN2233}
\end{align} 
As   $H$ satisfies \eqref{eq:LSC}, we have,  for $\d K^X-$a.e. $s$, 
\begin{eqnarray*}
\E^0\left[(X_s-Y_s) \cdot \ld H(\mu^X_s)(X_s)\right] \leq H(\mu^X_s)-H(\mu^Y_s) + C\E^0\left[ |X_s-Y_s|^2\right]\leq C\E^0\left[ |X_s-Y_s|^2\right]{\yin .}
\end{eqnarray*}
The symmetric inequality holds when exchanging the roles of $X$ and $Y$. Therefore, 
taking the conditional expectation with respect to $\E^0$ in \eqref{lkjrhesrdcvCN2233}, we obtain, for $\alpha$ and $\delta$ large enough  and by \eqref{eq:condCond} 
(recalling that $\gamma_t$ is positive and bounded):	
\begin{align}	
& \gamma_{t} \E^0\left[|X_{t}-Y_{t}|^2\right] \leq   \E^0\left[|X_0-Y_0|^2\right]  \notag \\
&  + \int_0^{t}2\gamma_s \E^0\left[(X_s-Y_s) \cdot  (\sigma_1(s,X_s)-\sigma_1(s, Y_s))\right]\,dW_s -\frac{ \alpha\delta}{4} \int_0^{t} \gamma_s\E^0\left[ |X_s-Y_s|^2\right]ds\label{eq:liaevziud2233}\\
	&   -  \alpha \int_0^{t} \gamma_s\E^0\left[ |X_s-Y_s|^2\right]  \Bigl(\E^0\left[(\sigma_1(s,X_s))^*\ld H(\mu^X_s)(X_s) \right]
+ 	\E^0\left[(\sigma_1(s,Y_s))^*\ld H(\mu^Y_s)(Y_s) \right]\Bigr )\cdot dW_s  . \notag
\end{align} 
Taking expectation, we obtain: 
\begin{align}	
& \sup_{0\leq t\leq T}\E\left[ |X_{t}-Y_{t}|^2\right] \leq   \E\left[|X_0-Y_0|^2\right] . \label{eq:liaevziud2233bis}
\end{align} 
Coming back to \eqref{eq:liaevziud2233}, we have by the BDG inequality
\begin{align*}	
&\E\left[ \sup_{0\leq s\leq t} \gamma_{s} \E^0\left[|X_{s}-Y_{s}|^2\right]\right]  \leq \E\left[|X_0-Y_0|^2\right]
+ C (\alpha+1) \E\Bigl[ \Bigl( \int_0^{t} \left|\E^0\left[ |X_s-Y_s|^2\right]\right|^2ds \Bigr)^{1/2}\Bigr]  ,
\end{align*} 
from which we easily conclude using \eqref{eq:liaevziud2233bis} that
\begin{align}	
&\E\left[ \sup_{0\leq s\leq t} \E^0\left[|X_{s}-Y_{s}|^2\right]\right]  
\leq 	C_\alpha \E\left[|X_0-Y_0|^2\right]. \label{oilaersfd33}
\end{align} 
We finally remove the $\E^0$ in the left-hand side. For this we come back to \eqref{lkjrhesrdcvCN2233}, take the sup in time and the expectation to find, again for $\delta$ large enough, 
\begin{align*}
& \E\left[ \sup_{0\leq s\leq t}\gamma_s  |X_{s}-Y_{s}|^2\right]\leq \E\left[|X_0-Y_0|^2\right]  \notag \\
&  \qquad + C \E\left[ \int_0^{t} (\E^0\left[ |X_s-Y_s|^2\right)^{1/2} (\E^0\left[|\ld H(\mu^X_s)(X_s)|^2\right])^{1/2} dK^X_s  \right] \notag \\ 
&  \qquad  + C \E\left[ \int_0^{t}(\E^0\left[ |X_s-Y_s|^2\right)^{1/2} (\E^0\left[|\ld H(\mu^Y_s)(Y_s)|^2\right])^{1/2} dK^Y_s \right]  \notag  \\ 
&  \qquad  + C(\alpha+1)\E\left[ \left( \int_0^{t} \gamma_s^2 |X_s-Y_s|^4\d s\right)^{1/2}\right]\\
&  \leq \E\left[|X_0-Y_0|^2\right] + C \E\left[ \sup_{0\leq s\leq t} (\E^0\left[ |X_s-Y_s|^2\right])^{1/2}  (K^X_{ t}+K^Y_{ t})\right]  \\ 
& \qquad + C(\alpha+1)\E\left[ \sup_{0\leq s\leq t} (\gamma_s^{1/2} |X_s-Y_s|) \left( \int_0^{t} \gamma_s |X_s-Y_s|^2\d s\right)^{1/2}\right]\\
&  \leq \E\left[|X_0-Y_0|^2\right]  + C \E^{1/2}\left[ \sup_{0\leq s\leq t} \E^0\left[ |X_s-Y_s|^2\right]\right] \E^{1/2}\left[(K^X_{ t}+K^Y_{ t})^2\right] 
\\
&   \qquad  
+
\frac12 \E\left[ \sup_{0\leq s\leq t} \gamma_s |X_s-Y_s|^2\right] + C(\alpha+1) \E\left[ \int_0^{t} |X_s-Y_s|^2\d s\right].
\end{align*} 
In conclusion, by our bounds on $K^X$ and $K^Y$, \eqref{eq:liaevziud2233bis} and \eqref{oilaersfd33}, we have 
{\ph
\begin{align*}
& \E\left[ \sup_{0\leq s\leq t} |X_{s}-Y_{s}|^2\right]  \leq 	C\left(\E^{1/2}\left[|X_0-Y_0|^2\right]+\E\left[|X_0-Y_0|^2\right]\right).
\end{align*} 
}
\end{proof}

We are now ready to show the existence of a solution without the extra moment assumption on the initial condition: 

\begin{proof}[Proof of Theorem \ref{thm:Cond}: existence in the general case] Let $X_0^k:= X_0{\bf 1}_{\{|X_0|\leq k\}}$. Then $(X_0^k)$ has a moment of order $4$  and converges in $L_2(\Omega)$ to $X_0$. Let $(X^k, K^k)$ be the solution  to \eqref{eq:mainsCond} associated with the initial condition $X_0^k$. Proposition \ref{prop.CondInitCN} states that $(X^k)$ is a Cauchy sequence in $\mathcal S^2$. Then we can complete the proof exactly as before.
\end{proof}
{\pee
\begin{proof}[Proof of Theorem \ref{thm:Cond}: uniqueness] Uniqueness of the path $X$ follows from Proposition \ref{prop.CondInitCN} so that uniqueness of the process $K$ can be deduced by reproducing the proof done in Theorem \ref{en:eus}.

{\color{black} We finally explain why the weak uniqueness also holds for \eqref{eq:mainsCond}. Let $(X,K,B,W)$ and $(X',K',B',W')$ be two weak solutions of \eqref{eq:mainsCond}. Let us now built a pair of processes $(\tilde X, \tilde K)$ from the data $(B',W')$ as we did for $(X,K)$. It is hence clear that  $(X,K,B,W)$ and  $(\tilde X,\tilde K,B',W')$ have the same law. But from pathwise uniqueness we have $(X',K') = (\tilde X, \tilde K)$ so that weak uniqueness holds. }
\end{proof}
}
{\pe
\begin{proof}[Proof of Proposition \ref{prop.moduleconti}]
On the one hand we apply It\^o's formula to $H(\mu_t)$ and then proceed as in \eqref{eq:ljhebrjshdx} to obtain similarly to \eqref{eq:H-HCond} that 
\begin{eqnarray*}
\beta^{-2}\Big\{|H(\mu_{t}) - H(\mu_s)| + C(t-s) - \int_{s}^{t} \E^0\left[  \sigma_1^*(u,X_u)\ld H(\mu_u)(X_u) \right]\cdot \d W_u\Big\}
\geq  K_t-K_s,
\end{eqnarray*}
since $u \mapsto K_u$ is {\yin non-decreasing}. Also, from boundedness of $D_\mu H$ we have thanks to Cauchy-Schwartz{\yin 's} inequality that $\mathbb{P}^1$-a.s.
$$|H(\mu_{t}) - H(\mu_s)| \leq M (\E^0)^{1/2}\left[|X_t-X_s|^2\right].$$

On the other hand, applying It\^o's formula on $\gamma(t)|X_t-X_s|^2$ where $\gamma(t) = \exp\{-\alpha H(\mu_t)\}$ for some $\alpha >0$, then taking expectation $\E^0$ and using the boundedness of the coefficients and estimate \eqref{estim.Cond} we obtain that 
\begin{eqnarray*}
\E^0\left[\gamma(t)|X_t-X_s|^2\right] &\leq& C(t-s) + {\yin 2}{\color{black} \int_s^t \gamma(u)\E^0\left[(X_u-X_s)\cdot D_\mu H(\mu_u)(X_u)\right]d K_u } \\
&& - \alpha\int_s^t \gamma(u)\E^0[|X_u-X_s|^2]\E^0[| D_\mu H(\mu_u)(X_u)|^2]d K_u\\
&& +{\yin 2}\int_s^{t} \gamma_u \E^0\left[\sigma_1^*(u,X_u)(X_u-X_s)\right]\cdot dW_u\notag\\
&&-  \alpha \int_s^{t} \E^0\left[\gamma_u|X_u-X_s|^2\right] \E^0\left[\sigma_1^*(u,X_u)\ld H(\mu_u)(X_u)\right]\cdot dW_u,
\end{eqnarray*}
provided {\yin that} $\alpha$ is large enough. Since $H$ is semi-convex we have
 \begin{eqnarray*}
&& {\color{black}  \int_s^t \gamma(u)\E^0\left[(X_u-X_s)\cdot D_\mu H(\mu_u)(X_u)\right] d K_u}\\
 &\leq&  \int_s^t \gamma(u)\Big(H(\mu_u)-H(\mu_s) + {\color{black} \E^0[|X_t-X_s|^2]\Big)}d K_u \leq \int_s^t \gamma(u){\color{black}\E^0[|X_t-X_s|^2]}d K_u,
 \end{eqnarray*}
according to the Skorokhod condition. {\color{black} Hence we have, for $\alpha$ large enough,}
\begin{eqnarray}
\E^0\left[\gamma(t)|X_t-X_s|^2\right] &\leq& C(t-s) + {\yin 2} \int_s^{t} \gamma_u \E^0\left[\sigma_1^*(u,X_u)(X_u-X_s)\right]\cdot dW_u\notag\\
&&-  \alpha \int_s^{t} \E^0\left[\gamma_u|X_u-X_s|^2\right] \E^0\left[\sigma_1^*(u,X_u)\ld H(\mu_u)(X_u)\right]\cdot dW_u.\label{uneeqinterencoreunefois}
\end{eqnarray}
Recalling that $\gamma$ is bounded from below away from 0, then using the above estimate we have {\color{black}
\begin{eqnarray*}
 K_t-K_s &\leq& C\Big\{(t-s) -\int_{s}^{t}   \E^0\left[  \sigma_1^*(u,X^k_u)\ld H(\mu^k_u)(X^k_u) \right]\cdot \d W_u \notag\\
&&+ \Big( C(t-s)  -  \alpha \int_s^{t} \E^0\left[\gamma_u|X_u-X_s|^2\right] \E^0\left[\sigma_1^*(u,X_u)\ld H(\mu_u)(X_u)\right]\cdot dW_u\\
&& +   \int_s^{t} \gamma_u \E^0\left[\sigma_1^*(u,X_u)(X_u-X_s)\right]\cdot dW_u \Big)^{1/2}\Big\}.
\end{eqnarray*}
} Taking the expectation $\E$, using boundedness of the coefficients, \eqref{iukzqesbrlfdkc4} and BDG{\yin 's} inequality we deduce \eqref{modulecontiK}.

To obtain estimate on the first term in the left hand side of \eqref{modulecontiX} one just has to use standard computations and the following estimate:
\begin{eqnarray*}
\E^0\Big[\Big(\int_s^t D_\mu H(\mu_u)(X_u) d K_u\Big)^{p}\Big] &=& \int_t^s\ldots\int_t^s \E^{0}\big[D_\mu H(\mu_{u_1})(X_{u_1})\cdots D_\mu H(\mu_{u_p})(X_{u_p})\big] dK_{u_1}\cdots dK_{u_p}\\
&\leq &  \int_t^s\ldots\int_t^s \E^{0}\left[p^{-1}\sum_{\ell = 1}^p {\color{black} |D_\mu H(\mu_{u_\ell})(X_{u_\ell})|^p}\right]  dK_{u_1}\cdots dK_{u_p}\\
&\leq & \H_p |K_t-K_s|^p.
\end{eqnarray*}
To conclude, let us precise that the second term in the left hand side of \eqref{modulecontiX} is controlled starting from {\pee the difference of the processes $X_t-X_s$. Then, standards computations give
\begin{eqnarray*}
&&\E\left[\sup_{s\leq r \leq t}|\E^0\left[ |X_r-X_s|^2|^{{\yin p/2}}\right]\right] \\
&\leq &C_{T,p}\left\{ 1 +\E\left[\left(\sup_{s\leq r \leq t}\E^0\Big[\Big|\int_s^r D_\mu H(\mu_u)(X_u) d K_u\Big|^2\Big]\right)^{{\yin p/2}}\right] +  \E\left[\sup_{s\leq r \leq t}\left(\E^0\left[\left|\int_s^r \sigma^1(u,X_u) d W_u\right|^2 \right]\right)^{{\yin p/2}}\right]\right\}
\end{eqnarray*}
The second term in the above r.h.s is dealt using similar arguments as above while the third one can be estimated by using the boundedness of $\sigma^1$ and writing, provided {\pee $p/2$ is an integer,}
\begin{eqnarray*}
\left(\E^0\left[\left|\int_s^r \sigma^1(u,X_u) d W_u\right|^2 \right]\right)^{{\yin p/2}} &= &\Bigg(\bigg|\int_s^r \int_s^u \E^0\left[\sigma^1(u',X_{u'})(\sigma^1)^*(u,X_u)\right] d W_{u'}d W_u \\
& &+ \int_s^r \int_s^{u'} \E^0\left[\sigma^1(u',X_{u'})(\sigma^1)^*(u,X_u)\right] d W_{u}d W_{u'}\bigg|\Bigg)^{{\yin p/2}}.
\end{eqnarray*}
}

\end{proof}
}

\subsection{The mean field limit}
\label{subsec:A_mean_field_limit_cond}

We consider the particle system (for $i=1, \dots, N$)
\begin{equation}\label{eq:mainps_cond}	
	\left\{
	\begin{split}
	 & X^i_t  =X^i_0+\int_0^t b(s,X^i_s)\, ds + \int_0^t \sigma_0(s,X^i_s)\,dB^i_s + \int_0^t \sigma_1(s,X^i_s)\,dW_s + \int_0^t  \ld H\left(\mu^N_s\right)(X^i_s)\, dK^N_s, \\
	 & \mu^N_t = \frac{1}{N} \sum_{i=1}^N \delta_{X^i_t}, \quad H\left(\mu^N_t\right) \geq 0, \quad  \quad \int_0^t H\left(\mu^N_s\right) \, dK^N_s = 0, \quad t\geq 0.
	\end{split}
	\right.
\end{equation}
Here the data $b$, $\sigma$ and $H$ satisfy  conditions {\bf (Hcl)} at the beginning of the section and, in addition, $b$, {\yin$ \sigma_0$  and $\sigma_1$} are deterministic. The $B^i$ and $W$ are independent Brownian motions. The initial conditions of the particles $X^i_0$ are i.i.d. random variable with law $\mu_0$ and are independent of the $B^i$ and of $W$. {\yin $K^N$ is a continuous, nondecreasing process adapted to the filtration $\m F^N$ generated by the $X_0^i$, the $B^i$ and $W$.} We still assume that the probability space is $(\Omega, \p)=(\Omega^0\times \Omega^1, \p^0\otimes \p^1)$, where $\Omega^0$ supports the $X_0^i$ and the $B^i$, while $\Omega^1$ supports $W$ with associated filtration $\m F^W=\m F^1$. We  work on the set 
$$
\Omega_N:= \left\{ H\left(\frac{1}{N} \sum_{i=1}^N \delta_{X^i_0}\right) \geq 0\right\}. 
$$
Let us  remark that \eqref{eq:mainps_cond} reads as a classical reflected SDE in $(\R^{n})^N$, with  normal reflexion in the constraint 
\begin{equation*}
{\mathcal O}_N=\left\{(x_1,\dots, x_N)\in (\R^n)^N, \; H\left(\frac{1}{N} \sum_{i=1}^N \delta_{x_i}\right)> 0\right\}. 
\end{equation*}
Indeed, let us recall that, if
\begin{equation*}
	g(x_1,\ldots,x_N) = H\left(\frac{1}{N} \sum_{i=1}^N \delta_{x_i}\right),
\end{equation*}
then (see \cite{CDLL})
\begin{equation*}
	\partial_{x_i} g(x_1,\ldots,x_N) = \frac{1}{N}\, \ld H\left(\frac{1}{N} \sum_{i=1}^N \delta_{x_i}\right)(x_i).
\end{equation*}
Therefore the vector
$-(\ld H\left(\frac{1}{N} \sum_{i=1}^N \delta_{x_i}\right)(x_1),\dots, \ld H\left(\frac{1}{N} \sum_{i=1}^N \delta_{x_i}\right)(x_N))$  is an outward normal to the set ${\mathcal O}_N$ at the point $(x_1,\dots, x_N)\in \partial {\mathcal O}_N$. Existence and uniqueness of the solution are therefore immediate under our standing assumptions (cf. \cite{LiSz}). 

Our aim is to show the convergence, on $\Omega_N$, of the $X^i$ to the solution $\bar X^i$ to 
\begin{equation*}
	\left\{
	\begin{split}
	 & \bar X^i_t  =X^i_0+\int_0^t b(s,\bar X^i_s)\, ds + \int_0^t \sigma_0(s,\bar X^i_s)\,dB^i_s + \int_0^t \sigma_1(s,\bar X^i_s)\,dW_s + \int_0^t  \ld H\left(\bar \mu_s\right)(\bar X^i_s)\, \d K_s, \\
	 & \bar  \mu_t = [X^i_t|W],  \quad H\left(\bar \mu_t\right) \geq 0, \quad  \quad \int_0^t H\left(\bar \mu_s\right) \, \d K_s = 0, \quad t\geq 0.
	\end{split}
	\right.
\end{equation*}
Note that the processes $\bar \mu$ and $K$ do not depend on $i$ and $N$: this is obvious by {\pe the uniqueness in law, see Remark \ref{rem:uniquelaw}}. The $\bar X^i$ are no longer i.i.d., but they are i.i.d. given $W$. {\pe In particular, if $\bar \mu^N_s$ stands for the empirical measure of $(\bar X^i_s)_{1\leq i\leq N}$, then we will show later that
\begin{equation*}
\lim_{N\to+\infty} \E \left[\sup_{0\leq s\leq T} W_2^2(\bar\mu^N_s,\bar\mu_s)\right]=0. 
\end{equation*} 
}

As the initial conditions $X^i_0$ of the particles are i.i.d. and distributed according to $\mu_0$, $\Omega_N$ (the set of events for which the initial condition belongs to ${\mathcal O}_N$ and for which for which system \eqref{eq:mainps_cond} has a meaning) has not a full probability in general. There are several solutions to overcome this issue. We illustrate two of them in this paper. One can, for instance, modify the initial position and ``project'' it onto ${\mathcal O}^N$. We shall follow this approach in Subsection \ref{SEC_MF_FOR_BSDE} when we analyze the particle system associated with  backward SDEs with normal reflexion in law. Here we simply concentrate on the set $\Omega_N$. If one assumes, for instance, that $H(\mu_0)$ is positive, this event occurs with a probability which tends to $1$ as $N\to +\infty$.

\begin{thm} \label{thm:chaosS_cond}	For $T\geq 0$, there exists a constant $C(T)$ independent of $N$ such that
	\begin{align*} 
		\sup_{i=1,\dots, N}\sup_{t\in [0,T]} \e\left[|X^i_t-\bar X^i_t|^2{\bf 1}_{\Omega_N}\right]  \leq C(T)\E^{1/2}\left[\sup_{0\leq t\leq T} W_2^2(\bar\mu^N_s,\bar\mu_s)\right]
	\end{align*}
and, for $N$ large enough,  
	\begin{align*}
		\sup_{i=1,\dots, N} \E\left[\sup_{0\leq t\leq T}  |X^i_t-\bar X^i_t|^2{\bf 1}_{\Omega_N}\right]   \leq  C(T)\, \E^{1/4}\left[\sup_{0\leq s\leq T} W_2^2 (\bar\mu^N_s,\bar\mu_s) \right]. 
	\end{align*}
\end{thm}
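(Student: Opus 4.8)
The strategy is to reproduce the structure of the proof of Theorem~\ref{thm:Cond} (the $X^k-X^m$ estimate, Steps 4--6), but now comparing the particle $X^i$ with its mean-field limit $\bar X^i$ on the set $\Omega_N$, the extra error terms being controlled by the (assumed known) uniform convergence $\E[\sup_{t\le T}W_2^2(\bar\mu^N_t,\bar\mu_t)]\to0$, which in turn follows from Lemma~\ref{ConvPart} applied to the conditionally i.i.d. family $(\bar X^i)$ (the moment and increment hypotheses \eqref{hypRacRu} being guaranteed by Corollary~\ref{cor:momentdoubleesp} and Proposition~\ref{prop.moduleconti}, provided one chooses the initial law $\mu_0$ with enough moments; for general $\mu_0$ one first truncates as in the existence proof). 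First I would fix $T>0$, work on $\Omega_N$, and introduce the weight $\gamma_t:=\exp\{-\alpha(\delta t+H(\mu^N_t)+H(\bar\mu_t))\}$, which is bounded above and below by positive constants since $H$ is bounded. Applying It\^o's formula \eqref{I4} (legitimate because \eqref{hypbetaK} holds under \eqref{eq:bilip0Cond}, exactly as in Proposition~\ref{prop.CondInitCN}) to $\gamma_t|X^i_t-\bar X^i_t|^2$, and using the Lipschitz continuity of $b$, $\sigma_0$, $\sigma_1$ to absorb the $dt$ terms by taking $\delta$ large, one is left with the reflection cross-terms
$$
2\gamma_s(X^i_s-\bar X^i_s)\cdot\bigl(\ld H(\mu^N_s)(X^i_s)\,dK^N_s-\ld H(\bar\mu_s)(\bar X^i_s)\,dK_s\bigr).
$$

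To handle these I would use the semiconvexity bound \eqref{eq:LSC} together with the Skorokhod conditions $\int H(\mu^N_s)dK^N_s=0$ and $\int H(\bar\mu_s)dK_s=0$, as in the uniqueness argument of Theorem~\ref{en:eus}: writing $\ld H(\mu^N_s)(X^i_s)=\ld H(\bar\mu_s)(X^i_s)+(\ld H(\mu^N_s)(X^i_s)-\ld H(\bar\mu_s)(X^i_s))$ and using \eqref{eq:utilechaos} to bound the second piece by $C\,W_2^2(\mu^N_s,\bar\mu_s)$, the $dK^N_s$-term is controlled by
$$
C\int_0^{t}\gamma_s\bigl(|X^i_s-\bar X^i_s|^2+W_2^2(\mu^N_s,\bar\mu_s)\bigr)\,dK^N_s,
$$
and symmetrically for the $dK_s$-term, while the ``good'' negative term $-\alpha\beta^2\int\gamma_s|X^i_s-\bar X^i_s|^2(dK^N_s+dK_s)$ produced (via \eqref{eq:condCond}, valid $dK$-a.e. since $H=0$ there) absorbs the first summand for $\alpha$ large. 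One must then pass from $\mu^N_s$ (empirical measure of the true particles) to $\bar\mu^N_s$ (empirical measure of the limit particles), paying $W_2^2(\mu^N_s,\bar\mu_s)\le 2W_2^2(\mu^N_s,\bar\mu^N_s)+2W_2^2(\bar\mu^N_s,\bar\mu_s)\le \tfrac{2}{N}\sum_i|X^i_s-\bar X^i_s|^2+2W_2^2(\bar\mu^N_s,\bar\mu_s)$; by exchangeability, $\E[W_2^2(\mu^N_s,\bar\mu_s){\bf1}_{\Omega_N}]\le 2\E[|X^i_s-\bar X^i_s|^2{\bf1}_{\Omega_N}]+2\E[\sup_t W_2^2(\bar\mu^N_t,\bar\mu_t)]$. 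Taking $\E$, using the exponential moment bounds \eqref{eq:expmomK}/\eqref{estim.Cond} on $K^N$ and $K$ together with H\"older (exactly as in Step~6 of the proof of Theorem~\ref{thm:Cond}), and then Gronwall's lemma for the continuous functions $s\mapsto K^N_s,K_s,\int\E[\cdots]ds$, yields
$$
\sup_{t\in[0,T]}\E\bigl[|X^i_t-\bar X^i_t|^2{\bf1}_{\Omega_N}\bigr]\le C(T)\,\E^{1/2}\Bigl[\sup_{0\le t\le T}W_2^2(\bar\mu^N_t,\bar\mu_t)\Bigr],
$$
where the square root appears because of the Cauchy--Schwarz split of the product (moment of $K$)$\times$(distance), precisely the mechanism that produced $\ep_k^{1/2}$ in \eqref{oileazrendtlfgk:c}. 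This is the first assertion.

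For the second (pathwise) estimate I would return to the It\^o expansion before taking expectations, take $\sup_{0\le s\le t}$ and then $\E$, and apply the Burkholder--Davis--Gundy inequality to the two martingale terms; the quadratic variations are bounded, respectively, by $C\int_0^t\gamma_s^2\,\E^0[|X^i_s-\bar X^i_s|^2]\,|\sigma_1|^2\,ds$ and (from the $dK$-weighted stochastic integral created by It\^o on $\gamma$) by a term handled via \eqref{eq:bilip0Cond} and the moment bound on $K$. Combining BDG with the $\E^0$-estimate just obtained — exactly as in the final paragraphs of the proof of Theorem~\ref{thm:Cond}, where $\E[\sup|X^k-X^m|^2]$ was bounded in terms of $\E^{1/2}[\sup\E^0[|X^k-X^m|^2]]$ — produces another square root, giving $C(T)\E^{1/4}[\sup_{0\le t\le T}W_2^2(\bar\mu^N_t,\bar\mu_t)]$; the restriction to ``$N$ large enough'' comes, as in Step~7 of Theorem~\ref{thm:Cond}, from the need that $\p[\Omega_N]$ (and the auxiliary events where $H(\mu^N)$ or $H(\bar\mu^N)$ dips below a threshold) behave well, via the Borel--Cantelli/martingale argument of Step~5.

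\textbf{Main obstacle.} The delicate point is the treatment of the reflection cross-terms when the two reflecting measures $\mu^N_s$ and $\bar\mu_s$ differ: one cannot directly invoke \eqref{eq:LSC} because $\ld H$ is evaluated at different measures, so one must simultaneously (i) replace $\ld H(\mu^N_s)$ by $\ld H(\bar\mu_s)$ at the cost of $W_2^2(\mu^N_s,\bar\mu_s)$ via \eqref{eq:utilechaos}, (ii) convert $W_2^2(\mu^N_s,\bar\mu_s)$ into $W_2^2(\bar\mu^N_s,\bar\mu_s)$ plus $\tfrac1N\sum_i|X^i_s-\bar X^i_s|^2$ and use exchangeability, and (iii) make sure the negative $dK$-terms coming from \eqref{eq:condCond} are large enough (choice of $\alpha$) to absorb the resulting self-interaction before Gronwall. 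A secondary technical nuisance is verifying the hypotheses of Lemma~\ref{ConvPart} for $(\bar X^i)$ — i.e.\ that Proposition~\ref{prop.moduleconti} and Corollary~\ref{cor:momentdoubleesp} indeed supply the $p$-moment and $p$-increment controls needed — which forces the truncation step on $\mu_0$ in the general case, but this is routine given the results already established.
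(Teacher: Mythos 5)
Your overall architecture matches the paper's: the exponential weight $\gamma_t$, It\^o's formula, absorption of the quadratic reflection terms through \eqref{eq:condCond} for $\alpha$ large, moment bounds on the reflection processes, Gronwall, then BDG for the pathwise estimate with the extra square root, and exchangeability to pass from the averaged to the per-particle bound. But the central step — the treatment of the reflection cross-terms — does not work as you describe it, and this is exactly where the proof's one real idea sits.

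The difficulty is that you apply the random-variable-level inequalities \eqref{eq:LSC} and \eqref{eq:utilechaos} to a \emph{single} particle. Both require the random variable at which $\ld H$ is evaluated to have the law appearing as the first argument of $\ld H$: \eqref{eq:utilechaos} needs $[X]=\mu$, and in \eqref{eq:LSC} the term $H(\mu)$ that the Skorokhod condition is supposed to kill is the $H$ of the law of the base variable. Here the relevant measures are the empirical measures $\mu^N_s$ and $\bar\mu^N_s$, while the conditional law of the single particle $X^i_s$ is neither of them; in particular your use of \eqref{eq:LSC} anchored at $\bar\mu_s$ produces $H([X^i_s|W])-H(\bar\mu_s)$, and $H([X^i_s|W])$ cannot be matched with the quantity $H(\mu^N_s)$ that vanishes $dK^N$-a.e. (controlling $W_2([X^i_s|W],\mu^N_s)$ is essentially what you are trying to prove, so the argument would be circular). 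The paper's way out is to sum over $i$ \emph{first} and use the finite-dimensional versions \eqref{eq:LCD} and \eqref{eq:UC}, in which $\ld H$ is evaluated at the empirical measure of the base points; this is what lets the Skorokhod conditions $H(\mu^N_s)=0$ $dK^N$-a.e.\ and $H(\bar\mu_s)=0$ $dK$-a.e.\ enter, at the price of differences such as $H(\mu^N_s)-H(\bar\mu^N_s)$ which are then \emph{linear} in $W_2(\bar\mu^N_s,\bar\mu_s)$ by the Lipschitz continuity of $H$. A second, related issue: your error term is $W_2^2(\mu^N_s,\bar\mu_s)\,dK^N_s$, quadratic in $W_2$ against $dK^N$; after Cauchy--Schwarz with $\E^{1/2}[(K^N_T)^2]$ this would require $\E[\sup_s W_2^4(\bar\mu^N_s,\bar\mu_s)]$, which Lemma \ref{ConvPart} does not supply. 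The paper keeps every error term linear in $W_2$ (in the $B$-term it even interpolates the $\ld H$-difference down to $W_2^{1/2}$ using \eqref{eq:bilip0Cond} before applying Young), which is what yields exactly the stated $\E^{1/2}[\sup_s W_2^2]$ rate. Finally, a smaller omission: the moment bounds you invoke, \eqref{eq:expmomK} and \eqref{estim.Cond}, concern the penalized $K^k$ and the limit $K$; the uniform-in-$N$ exponential moment of $K^N$ on $\Omega_N$, i.e.\ \eqref{eq:momentestiKN_cond}, is a separate statement that must be (and in the paper is) proved via It\^o's formula applied to $H(\mu^N)$.
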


{\pe

\begin{lemme}\label{cor:corratebsdepart}
Assume that the initial data satisfies the following moment condition: there exists an integer $p\geq 8$ such that $\M_p([X_0]) + \H_p$ is finite. Then, there exists $C_T:=C\big(T,\A{Hcl},\M_p([\xi]),\H_p\big)>0 $ such that
\begin{align*} 
		\E\left[\sup_{0\leq t\leq T} W_2^2(\bar\mu^N_s,\bar\mu_s)\right]  \leq C_T  \epsilon_N,
\end{align*}
where $\epsilon_N$ is given in Lemma \ref{ConvPart}.
\end{lemme}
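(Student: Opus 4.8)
The plan is to obtain Lemma~\ref{cor:corratebsdepart} as a direct application of Lemma~\ref{ConvPart} to the family $\{\bar X^i\}_{1\le i\le N}$. The first, structural, step is to record that these processes are conditionally i.i.d. given $\mathcal F^W=\mathcal F^1$. Indeed, the limit objects $\bar\mu$ and $K$ in the McKean--Vlasov equation depend neither on $i$ nor on $N$ (this is the uniqueness in law of Remark~\ref{rem:uniquelaw}), so each $\bar X^i$ solves the same equation driven by $W$, by the idiosyncratic noise $B^i$ and with initial datum $X^i_0$, where the pairs $(B^i,X^i_0)$ are i.i.d. and independent of $W$; hence $\bar X^1,\dots,\bar X^N$ are i.i.d. conditionally on $W$, with common conditional law $\bar\mu_s=[\bar X^1_s\,|\,W]$, and in particular each of them has the same (conditional and unconditional) law as the solution of~\eqref{eq:mainsCond}. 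After rescaling $[0,T]$ to $[0,1]$, which only affects the constants, it then suffices to verify the hypotheses of Lemma~\ref{ConvPart} for $\bar X:=\bar X^1$.

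For the exponents I would use $p_{\mathrm{RR}}:=p/2$, which is $\ge 4$ since $p\ge 8$, in the rôle of the ``$p$'' of Lemma~\ref{ConvPart}, together with some fixed $q\in(4,\min\{8,p\}]$. The $q$-th conditional moment bound would come from the estimate~\eqref{iukzqesbrlfdkcbisbis} established inside the proof of Theorem~\ref{thm:Cond} (which passes to the limit $k\to\infty$ and hence holds for $\bar X$), combined with conditional Jensen: since $q\le p=2p_{\mathrm{RR}}$, one has $(\E^0[|\bar X_t|^q])^{2p_{\mathrm{RR}}/q}=(\E^0[|\bar X_t|^q])^{p/q}\le\E^0[|\bar X_t|^p]$, so that $\E^1[\sup_{t}(\E^0[|\bar X_t|^q])^{2p_{\mathrm{RR}}/q}]\le\E[\sup_t\E^0[|\bar X_t|^p]]\le C_{p,T}$. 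The three Kolmogorov-type conditions in~\eqref{hypRacRu} would be read off from Proposition~\ref{prop.moduleconti}, which applies because $\M_p([X_0])+\H_p<+\infty$ with $p\in\mathbb N$: from~\eqref{modulecontiX} with exponent $p/2$ one gets $\E[|\bar X_t-\bar X_s|^{p/2}]\le C|t-s|^{p/8}\le C|t-s|$ since $p\ge 8$; a Cauchy--Schwarz step together with~\eqref{modulecontiX} at exponent $p$ gives $\E[|\bar X_s-\bar X_r|^{p/2}|\bar X_s-\bar X_t|^{p/2}]\le C|t-r|^{p/4}\le C|t-r|^2$, again because $p\ge 8$; and the last line of~\eqref{modulecontiX} is exactly the remaining condition, with $|t-s|^{p_{\mathrm{RR}}/2}=|t-s|^{p/4}$. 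Lemma~\ref{ConvPart} then yields $\E[\sup_{s\le T}W_2^2(\bar\mu^N_s,\bar\mu_s)]\le C_T\,\epsilon_N$, with $\epsilon_N$ as in Lemma~\ref{ConvPart}, read off with the exponent $p/2$.

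I do not expect a genuine analytic difficulty here; the delicate point is the bookkeeping of exponents just carried out, namely keeping track of the factor-$2$ gap between the moduli of continuity supplied by Proposition~\ref{prop.moduleconti} (powers of $|t-s|^{p/4}$) and the time-increment conditions of Lemma~\ref{ConvPart} (which ask for $|t-r|^2$, $|t-s|$ and $|t-s|^{p_{\mathrm{RR}}/2}$), as well as invoking the $p$-th conditional moment control~\eqref{iukzqesbrlfdkcbisbis} rather than the second-order bound of Corollary~\ref{cor:momentdoubleesp}, which is too weak for this purpose. Let me also stress that, in contrast with the purely i.i.d. situation of Remark~\ref{remconvpart}, the common noise here forces the genuinely conditionally i.i.d. version of the estimate, so the rate obtained is the full $\epsilon_N$ and cannot be sharpened to the formal ``$p=\infty$'' one.
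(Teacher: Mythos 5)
Your proof is correct and follows the same route as the paper: the paper's own proof is a one-line verification that Proposition \ref{prop.moduleconti} and Corollary \ref{cor:momentdoubleesp} supply the hypotheses of Lemma \ref{ConvPart}, and your exponent bookkeeping (taking the lemma's exponent to be $p/2\geq 4$ and checking that each $|t-s|^{p/4}$ modulus dominates the required one precisely because $p\geq 8$) is exactly the computation that one-liner leaves implicit. If anything you are more careful than the paper on one point: the first hypothesis of Lemma \ref{ConvPart} requires a conditional moment of order $q>4$, which Corollary \ref{cor:momentdoubleesp} (a bound involving $\E^0[|X_s|^2]$ only) cannot supply, so your appeal to \eqref{iukzqesbrlfdkcbisbis} together with conditional Jensen, passed to the limit in $k$, is the right way to source that condition.
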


\begin{proof}
Since $p\geq 8$ we have, thanks to Proposition \ref{prop.moduleconti} and Corollary \ref{cor:momentdoubleesp},  that the assumptions of Lemma \ref{ConvPart} are satisfied.
\end{proof}

}



Before proving Theorem \ref{thm:chaosS_cond}, let us point out some consequences of \eqref{eq:LSC} and \eqref{eq:utilechaos}. Let $(x_i)_{1\leq i\leq N}$ and $(y_i)_{1\leq i\leq N}$ be two sequences of points of $\rset^n$. We set 
\begin{equation*}
	\mu^N_y = \frac{1}{N} \, \sum_{i=1}^N \delta_{y_i}, \qquad \mu^N_x = \frac{1}{N} \, \sum_{i=1}^N \delta_{x_i}.
\end{equation*}
Inequality~\eqref{eq:LSC} becomes in this case
\begin{equation*}
	H\left(\mu^N_y\right)-H\left(\mu^N_x\right) \leq \e\left[\ld H\left(\mu^N_x\right)(X)\cdot (Y-X)\right]+C\E\left[|X-Y|^2\right],
\end{equation*}
where $[X]=\mu^N_x$ and $[Y]=\mu^N_y$. Of course, this inequality is true for any pair $(X,Y)$ with $[X]=\mu^N_x$ and $[Y]=\mu^N_y$ so that we can choose a specific law for $(X,Y)$. If all $(x_i)$ are distinct, let us pick $Y=f(X)$ with $f(x_i)=y_i$. We get
\begin{equation*}
	 \e\left[\ld H\left(\mu^N_x\right)(X)\cdot (f(X)-X)\right]+C\E\left[|X-f(X)|^2\right] = \frac{1}{N}\, \sum_{i=1}^N \ld H\left(\mu^N_x\right)(x_i)\cdot (y_i-x_i)
	 +  \frac{C}{N}\, \sum_{i=1}^N |y_i-x_i|^2.
\end{equation*}
Therefore,
\begin{equation}\label{eq:LCD}
	\frac{1}{N}\, \sum_{i=1}^N \ld H(\mu^N_x)(x_i)\cdot (x_i-y_i) \leq H\left(\mu^N_x\right) - H\left(\mu^N_y\right)+ \frac{C}{N}\, \sum_{i=1}^N |y_i-x_i|^2.
\end{equation}
This inequality also holds when the  $(x_i)$ are not necessarily distinct by approximation.  Moreover,  inequality~\eqref{eq:utilechaos} becomes, with $[X]=\mu^N_x$ and for any $\nu\in {\mathcal P}_2(\R^n)$,
\begin{equation}\label{eq:UC}
	\e\left[|\ld H(\mu^N_x)(X)-\ld H(\nu)(X)|^2\right] = \frac{1}{N}\sum_{i=1}^N |\ld H(\mu^N_x)(x_i)-\ld H(\nu)(x_i)|^2|\leq C\, W_2^2(\mu^N_X,\nu).
\end{equation}
We will also need below a few points on the L-derivative. For ${\bf x}=(x_1, \dots, x_N)\in (\R^n)^N$, we set $H^N({\bf x})=H(\frac{1}{N}\sum_{i=1}^N \delta_{x_i})$. We know from \cite{CDLL} that 
	\begin{align*}
		&D_{x_i} H^N({\bf x})= \frac{1}{N} \ld H(\frac{1}{N}\sum_{i=1}^N \delta_{x_i}) (x_i), \\
		& D^2_{x_ix_i}  H^N({\bf x})= \frac{1}{N} \partial_y\ld H(\frac{1}{N}\sum_{i=1}^N \delta_{x_i})(x_i)+\frac{1}{N^2} D^2_{\mu\mu}H(\frac{1}{N}\sum_{i=1}^N \delta_{x_i})(x_i,x_i),\\
		&D^2_{x_ix_j}  H^N({\bf x})= \frac{1}{N^2} D^2_{\mu\mu}H(\frac{1}{N}\sum_{i=1}^N \delta_{x_i})(x_i,x_j)\qquad i\neq j.
	\end{align*}
	So, if $(X^i)$ and $\mu^N$ satisfies the reflected SDE \eqref{eq:mainps_cond}, we have, by It\^{o}'s formula, 
	\begin{align}
		 dH(\mu^N_s)\; = & \frac{1}{N}\sum_{i=1}^N  \ld H(\mu^N_s)(X^i_s)\cdot b(s, X^i_s) ds \notag\\
		&\qquad +\frac{1}{2N}\sum_{i=1}^N {\rm Tr}\left((\sigma_0\sigma_0^*(s,X^i_s)+\sigma_1\sigma_1^*(s,X^i_s)) \partial_y\ld H(\mu^N_s)(X^i_s)\right)ds \notag\\
		&\qquad + \frac{1}{2N^2} \sum_{i=1}^N {\rm Tr}\left(\sigma_0\sigma_0^*(s,X^i_s)D^2_{\mu\mu}H(\mu^N_s)( X^i_s, X^i_s)\right)ds\notag\\
		& \qquad + \frac{1}{2N^2} \sum_{i,j=1}^N  {\rm Tr}\left(\sigma_1(s,X^j_s)\sigma_1^*(s,X^i_s)D^2_{\mu\mu}H(\mu^N_s)( X^j_s, X^i_s)\right)ds \notag\\
		&\qquad  + \frac{1}{N}\sum_{i=1}^N \ld H(\mu^N_s)(X^i_s)\cdot \sigma_0(s, X^i_s)dB^i_s  \label{I5}\\
		&\qquad  + \frac{1}{N}\sum_{i=1}^N   \ld H(\mu^N_s)(X^i_s)\cdot \sigma_1(s, X^i_s)dW_s 
		+ \frac{1}{N}\sum_{i=1}^N  \left| \ld H(\mu^N_s)(X^i_s)\right|^2 dK^N_s.\notag
	\end{align}
We also note that, by {\yin A}ssumption \eqref{eq:bilip0Cond},  
\begin{equation}\label{dskfh1}
\frac{1}{N}\sum_{i=1}^N |\ld H(\mu^N_s)(X^i_s)| = \int_{\R^n} |\ld H(\mu^N_s)(x)|\mu^N_s(dx) \leq M{\yin ,}
\end{equation}
and 
\begin{equation}\label{dskfh2}
 \frac{1}{N^2} \sum_{i,j=1}^N  \left|D^2_{\mu\mu}H(\mu^N_s)( X^j_s, X^i_s)\right| = 
 \int_{\R^{2n}} \left|D^2_{\mu\mu}H(\mu^N_s)(x,y)\right| \mu^N_s(dx)\mu^N_s(dy) \leq M.
\end{equation}

\begin{proof}[Proof of Theorem \ref{thm:chaosS_cond}] We work on $\Omega_N$. 
	Let us set $\Delta X^i_t=X^i_t-\bar X^i_t$. 
For $\alpha,\delta \geq 1$ to be chosen below, we set $\gamma_t=\exp\{-\alpha(\delta t +H(\mu^N_s)+H(\bar \mu_s))\}$. We note that $\gamma_t$ is bounded above and below by positive constant on bounded time intervals. {\color{black} Choosing $\alpha$ and $\delta$ large enough, we} have, by It\^{o}'s formula \eqref{I4} and \eqref{I5} and by  the Lipschitz continuity of  $b$, {\yin $\sigma_0$ and $\sigma_1$} as well as the bounds on the coefficients (see also \eqref{dskfh1} and \eqref{dskfh2}):
	\begin{align}
		& \gamma_t |\Delta X^i_t|^2   \leq 
		 2\, \int_0^t \gamma_s \Delta X^i_s\cdot\ld H\left(\mu^N_s\right)(X^i_s)\, dK^N_s 
		 - 2\, \int_0^t  \gamma_s \Delta X^i_s\cdot \ld H\left(\bar \mu_s\right)(\bar X^i_s)\, dK_s \notag\\
		 & \qquad -\frac{\alpha \delta}{2} \int_0^t  \gamma_s  |\Delta X^i_s|^2\d s + 2 \, \int_0^t \gamma_s \Delta X^i_s \cdot \left(\sigma_0(s,X^i_s)-\sigma_0(s,\bar X^i_s)\right) dB^i_s\label{eq:di_Cond}
 \\		
		& \qquad + 2 \, \int_0^t \gamma_s \Delta X^i_s \cdot \left(\sigma_1(s,X^i_s)-\sigma_1(s,\bar X^i_s)\right) dW_s 
				-  \frac{\alpha}{N}\sum_{j=1}^N \int_0^t \gamma_s |\Delta X^i_s|^2\ld H(\mu^N_s)(X^j_s)\cdot \sigma_0(s, X^j_s)dB^j_s \notag \\
		&\qquad  - \alpha \int_0^t \gamma_s |\Delta X^i_s|^2 \left( \frac{1}{N}\sum_{j=1}^N  \sigma_1^*(s, X^j_s)\ld H(\mu^N_s)(X^j_s)
		+ \E^0\left[\sigma_1^*(s, \bar X^i_s) \ld H(\bar \mu_s)(\bar X^i_s)\right]\right) \cdot dW_s 
\notag		 \\
		&\qquad 
		- \alpha \int_0^t \gamma_s |\Delta X^i_s|^2 \left(\frac{1}{N}\sum_{j=1}^N   \left| \ld H(\mu^N_s)(X^j_s)\right|^2\right)  dK^N_s
		-\alpha \int_0^t \gamma_s |\Delta X^i_s|^2 \E^0\left[\left| \ld H(\bar \mu_s)(\bar X^i_s)\right|^2\right]\d K_s .\notag
	\end{align}
	Using \eqref{eq:LCD}, we have, since $H\left(\mu^N_s\right)=0$ $dK$-a.e. and $H[\bar \mu_s]= H([\bar X^i_s|W])\geq 0$,
	\begin{align*}
		& \frac{2}{N} \sum_{i=1}^N \int_0^t \gamma_s \Delta X^i_s\cdot\ld H\left(\mu^N_s\right)(X^i_s)\, dK^N_s \\ 
		& \qquad \leq 2\, \int_0^t \gamma_s\left(H\left(\mu^N_s\right) - H\left(\bar \mu^N_s\right)\right) dK^N_s +\frac{C}{N}\sum_{i=1}^N \int_0^t \gamma_s  |\Delta X^i_s|^2\, dK^N_s\\
		& \qquad \leq 2\, \int_0^t\gamma_s \left(H(\bar \mu_s)- H\left(\bar \mu^N_s\right)\right) dK^N_s+\frac{C}{N}\sum_{i=1}^N \int_0^t \gamma_s  |\Delta X^i_s|^2\, dK^N_s.
	\end{align*}
	Since $H$ is $M$-Lipschitz for $W_2$, we get
	\begin{align}\label{eq:one_cond}
		&\frac{2}{N} \sum_{i=1}^N \int_0^t \gamma_s \Delta X^i_s\cdot\ld H\left(\mu^N_s\right)(X^i_s)\, dK^N_s\notag \\ 
		&\qquad  \leq 2M \, \sup_{0\leq s\leq t} W_2\left(\bar\mu^N_s,\bar\mu_s\right) \, K^N_t + \frac{C}{N}\sum_{i=1}^N \int_0^t  \gamma_s  |\Delta X^i_s|^2\, dK^N_s.
	\end{align}	
	We split the term $\displaystyle
		 - \frac{2}{N}\sum_{i=1}^N \int_0^t \gamma_s \Delta X^i_s\cdot \ld H\left(\bar\mu_s\right)(\bar X^i_s)\, dK_s
	$
	into two parts:
	\begin{gather*}
		A:=- \frac{2}{N}\sum_{i=1}^N \int_0^t \gamma_s \Delta X^i_s\cdot \ld H\left(\bar\mu^N_s\right)(\bar X^i_s)\, dK_s,\\
		B:=- \frac{2}{N}\sum_{i=1}^N \int_0^t \gamma_s \Delta X^i_s\cdot \left\{\ld H\left(\bar\mu^N_s\right)(\bar X^i_s)-\ld H\left(\bar\mu_s\right)(\bar X^i_s)\right\} dK_s.
	\end{gather*}
	We use \eqref{eq:LCD} for the first one: since $H(\mu^N_s)\geq 0$ and $H(\bar \mu_s)=0$ $dK_s$-a.e.	
	\begin{align*}
		& A = \frac{2}{N}\sum_{i=1}^N \int_0^t \gamma_s \left(\bar X^i_s-X^i_s\right)\cdot \ld H\left(\bar\mu^N_s\right)(\bar X^i_s)\, dK_s \\
		& \qquad  \leq  2\, \int_0^t \gamma_s \left(H\left(\bar \mu^N_s\right)-H\left(\bar\mu_s\right)\right) dK_s+\frac{C}{N}\sum_{i=1}^N \int_0^t \gamma_s  |\Delta X^i_s|^2\, dK_s,
	\end{align*}
	and using the Lipschitz continuity of $H$, we get
	\begin{equation}\label{eq:two_cond}
		A \leq 2M\, \sup_{0\leq s\leq t} W_2(\bar\mu^N_s,\bar\mu_s)\, K_t+  \frac{C}{N}\sum_{i=1}^N \int_0^t \gamma_s   |\Delta X^i_s|^2\, dK_s.
	\end{equation}
	Finally, we have, using Cauchy-Schwarz inequality, the global bound on $\int_{\R^n}|\ld H(\mu)|d\mu$ (given by assumption \eqref{eq:bilip0Cond}) and \eqref{eq:UC}, 
	\begin{align*}
		B & \leq 2\, \int_0^t\gamma_s  \left(\frac{1}{N} \sum_{i=1}^N |\Delta X^i_s|^2\right)^{1/2} \left(\frac{1}{N} \sum_{i=1}^N \left|\ld H\left(\bar\mu^N_s\right)(\bar X^i_s)-\ld H\left(\bar\mu_s\right)(\bar X^i_s)\right|^2\right)^{1/2} dK_s \\
		 & \leq C\, \int_0^t\gamma_s  \left(\frac{1}{N} \sum_{i=1}^N |\Delta X^i_s|^2\right)^{1/2} \left(\frac{1}{N} \sum_{i=1}^N \left|\ld H\left(\bar\mu^N_s\right)(\bar X^i_s)-\ld H\left(\bar\mu_s\right)(\bar X^i_s)\right|^2\right)^{1/4} dK_s \\
		& \leq C\, \int_0^t\gamma_s  \left(\frac{1}{N} \sum_{i=1}^N |\Delta X^i_s|^2\right)^{1/2} W_2^{1/2}\left(\bar\mu^N_s,\bar\mu_s\right) dK_s,
	\end{align*}
	from which we deduce
	\begin{equation}\label{eq:three_cond}
		B \leq \frac{C}{N} \sum_{i=1}^N\int_0^t \gamma_s  |\Delta X^i_s|^2\, dK_s + C \, \sup_{0\leq s\leq t} W_2(\bar\mu^N_s,\bar\mu_s)\, K_t.
	\end{equation}

	Inserting {\yin A}ssumption \eqref{eq:condCond}, estimates~\eqref{eq:one_cond}, \eqref{eq:two_cond} and \eqref{eq:three_cond} into \eqref{eq:di_Cond}, we get, for $\alpha$ and $\delta$ large enough,  
	\begin{align}
		& \frac{\gamma_t}{N} \, \sum_{i=1}^N |\Delta X^i_t|^2   
		 \leq C\, \left(K^N_t + K_t\right)\,\sup_{0\leq s\leq t} W_2(\bar\mu^N_s,\bar\mu_s) 
		+ \frac{2}{N} \sum_{i=1}^N \, \int_0^t \gamma_s \Delta X^i_s \cdot \left(\sigma_0(X^i_s)-\sigma_0(\bar X^i_s)\right) dB^i_s \notag\\
		&  + \frac{2}{N} \sum_{i=1}^N\, \int_0^t \gamma_s \Delta X^i_s \cdot \left(\sigma_1(s,X^i_s)-\sigma_1(s,\bar X^i_s)\right) dW_s 
				-  \frac{\alpha}{N^2}\sum_{i,j=1}^N \int_0^t \gamma_s |\Delta X^i_s|^2\ld H(\mu^N_s)(X^j_s)\cdot \sigma_0(s, X^j_s)dB^j_s \notag \\
		&  - \frac{\alpha}{N}\sum_{i=1}^N \int_0^t \gamma_s |\Delta X^i_s|^2 \left( \frac{1}{N}\sum_{j=1}^N  \sigma_1^*(s, X^j_s)\ld H(\mu^N_s)(X^j_s)
		+ \E^0\left[\sigma_1^*(s, \bar X^i_s) \ld H(\bar \mu_s)(\bar X^i_s)\right]\right) \cdot dW_s.  \label{redred}
	\end{align}
		We prove below the following exponential moment estimate on $K^N$: 
	\begin{equation}\label{eq:momentestiKN_cond}
		\sup_{N\geq 1}\e\left[ \exp\{\theta K^N_t\}{\bf 1}_{\Omega_N}\right]\leq C_\theta(t){\yin ,} \qquad \forall \theta>0.
	\end{equation}
	Then, taking expectation in \eqref{redred} and using {\yin the} Cauchy-Schwarz inequality, \eqref{eq:momentestiKN_cond} and the fact that $(K_t)$ has also exponential moments (Theorem \ref{thm:Cond}),
	we get, as $\Omega_N$ is independent of $B^i$ and $W$, 
	\begin{align}
		\E\left[\frac{1}{N} \, \sum_{i=1}^N |\Delta X^i_t|^2 {\bf 1}_{\Omega_N} \right] &
		\leq C\, 
		(\E^{1/2}\left[(K_t^N)^2{\bf 1}_{\Omega_N}\right]+\E^{1/2}\left[K_t^2\right])\,\E^{1/2}\left[\sup_{0\leq s\leq t} W_2^{2}(\bar\mu^N_s,\bar\mu_s)\right]\notag \\		
		&   \leq C(t)\,
		\E^{1/2}\left[\sup_{0\leq s\leq t} W_2^{2}(\bar\mu^N_s,\bar\mu_s)\right]. \label{kqbshdfjhb}
	\end{align}
Note that, by the exchangeability of the $(X^i)$, this implies that, for any $T>0$,  
	\begin{align}
		\sup_{t\in [0,T]} \sup_{i=1,\dots, N} \E\left[ |\Delta X^i_t|^2 {\bf 1}_{\Omega_N} \right]  \leq C(T)\,
		\E^{1/2}\left[\sup_{0\leq s\leq {\yin T}} W_2^{2}(\bar\mu^N_s,\bar\mu_s)\right]. \label{kqbshdfjhb1}
	\end{align}
In order to improve this inequality and have the sup in time into the expectation, we come back to \eqref{redred} and obtain, by the BDG inequality,
	\begin{align*}
		& \E\left[ \sup_{0\leq t\leq T}  \frac{1}{N} \, \sum_{i=1}^N \E^0\left[|X^i_t-\bar X^i_t|^2   \right]{\bf 1}_{\Omega_N} \right] \\
		& \qquad \leq  C\, \E\left[ \left(K^N_T{\bf 1}_{\Omega_N} + K_T\right)\,\sup_{0\leq s\leq T} W_2(\bar\mu^N_s,\bar\mu_s) \right] 
		+ C \E\left[\left(\int_0^T \frac{1}{N} \sum_{i=1}^N\left(\E^0\left[|\Delta X^i_s|^2 \right]\right)^2\d s\right)^{1/2}{\bf 1}_{\Omega_N}\right] ,
	\end{align*}
from which we infer by the estimate on $K$ in Theorem \ref{thm:Cond} and on $K^N$ in \eqref{eq:momentestiKN_cond}, by \eqref{kqbshdfjhb} and by the usual argument, that 
	\begin{align}
		& \E\left[ \sup_{0\leq {\yin t}\leq T}  \frac{1}{N} \, \sum_{i=1}^N \E^0\left[|X^i_t-\bar X^i_t|^2   \right]{\bf 1}_{\Omega_N} \right]  \leq  C(T)\, \E^{1/2}\left[ \sup_{0\leq s\leq T} W_2^2(\bar\mu^N_s,\bar\mu_s) \right] . \label{kqbshdfjhb2}
	\end{align}
		
Then we come back to {\pe the corresponding version of \eqref{eq:di_Cond} with $\gamma_t = \exp(-\delta t)$ therein}, take the sup in time and use the BDG inequality, the bound on $\ld H$ (through \eqref{dskfh1} in particular) to get 
	\begin{align*}
		 \E\left[\sup_{0\leq s\leq t} \gamma_t |\Delta X^i_s|^2{\bf 1}_{\Omega_N} \right]   \leq   C
		 \E\left[ \int_0^t (\E^0\left[|\Delta X^i_s|^2\right])^{1/2}  \{ \d K^N_s + \d K_s\}{\bf 1}_{\Omega_N}\right] 
		 + C \, \E\left[ \left(\int_0^t \gamma_s |\Delta X^i_s|^{{\color{black} 2}}\d s\right)^{1/2}{\bf 1}_{\Omega_N}\right]. 
	\end{align*}
By exchangeability, the first term in the right-hand side does not depend on $i$. Thus 	
\begin{align*}
& \E\left[ \int_0^t (\E^0\left[|\Delta X^i_s|^2\right])^{1/2}  \{ \d K^N_s + \d K_s\}{\bf 1}_{\Omega_N}\right] =
\E\left[ \int_0^t \frac{1}{N} \sum_{{\yin i}=1}^N (\E^0\left[|\Delta X^i_s|^2\right])^{1/2}  \{ \d K^N_s + \d K_s\}{\bf 1}_{\Omega_N}\right] \\
&\qquad  \leq \E\left[ \int_0^t \left(\frac{1}{N} \sum_{{\yin i}=1}^N \E^0\left[|\Delta X^i_s|^2\right]\right)^{1/2}  \{ \d K^N_s + \d K_s\}{\bf 1}_{\Omega_N}\right] \\
&\qquad  \leq \E^{1/2}\left[\sup_{0\leq s\leq t}\frac{1}{N} \sum_{{\yin i}=1}^N \E^0\left[|\Delta X^i_s|^2\right]{\bf 1}_{\Omega_N}\right]  \E^{1/2}\left[(K^N_t {\bf 1}_{\Omega_N}+  K_t)^2\right] .
\end{align*}
By \eqref{kqbshdfjhb1}, \eqref{kqbshdfjhb2} and the bounds on $K^N$ and $K$, this finally implies that 
	\begin{align*}
		 \E\left[\sup_{0\leq s\leq T}  |\Delta X^i_s|^2{\bf 1}_{\Omega_N}\right]   \leq  C(T)\, \left(\E^{1/4}\left[\sup_{0\leq s\leq T} W_2^2 (\bar\mu^N_s,\bar\mu_s) \right]
		 +\E^{1/2}\left[\sup_{0\leq s\leq T} W_2^2 (\bar\mu^N_s,\bar\mu_s) \right]\right). 
	\end{align*}

	To complete the proof, it remains to show that \eqref{eq:momentestiKN_cond} holds. We have, by It\^{o}'s formula \eqref{I5}, 
	\begin{align*}
		 H(\mu^N_T)\; = & H(\mu^N_0)+ \frac{1}{N}\sum_{i=1}^N \int_0^T  \ld H(\mu^N_s)(X^i_s)\cdot b(s, X^i_s) ds \\
		&\qquad +\frac{1}{2N}\sum_{i=1}^N \int_0^T {\rm Tr}\left((\sigma_0\sigma_0^*(s,X^i_s)+\sigma_1\sigma_1^*(s,X^i_s)) \partial_y\ld H(\mu^N_s)(X^i_s)\right)ds \\
		&  \qquad + \frac{1}{2N^2} \sum_{i=1}^N \int_0^T {\rm Tr}\left(\sigma_0\sigma_0^*(s,X^i_s)D^2_{\mu\mu}H(\mu^N_s)( X^i_s, X^i_s)\right)ds\\
		& \qquad + \frac{1}{2N^2} \sum_{i,j=1}^N \int_0^T {\rm Tr}\left(\sigma_1(s,X^j_s)\sigma_1^*(s,X^i_s)D^2_{\mu\mu}H(\mu^N_s)( X^j_s, X^i_s)\right)ds  \\
		&\qquad  + \frac{1}{N}\sum_{i=1}^N \int_0^T  \ld H(\mu^N_s)(X^i_s)\cdot \sigma_0(s, X^i_s)dB^i_s  \\
		&\qquad  + \frac{1}{N}\sum_{i=1}^N \int_0^T  \ld H(\mu^N_s)(X^i_s)\cdot \sigma_1(s, X^i_s)dW_s 
		+ \frac{1}{N}\sum_{i=1}^N \int_0^T \left| \ld H(\mu^N_s)(X^i_s)\right|^2 dK^N_s.
	\end{align*}
	Note that, by \eqref{eq:condCond} and on $\{H(\mu^N_s)=0\}$, one has 
	\begin{equation*}
	\frac{1}{N}\sum_{i=1}^N \left| \ld H(\mu^N_s(X^i_s)\right|^2=\int_{\R^n} \left| \ld H(\mu^N_s)(x)\right|^2 \mu^N_s(dx) \geq \beta^2.
	\end{equation*}
On the other hand, by \eqref{eq:bilip0Cond} and the $L^\infty$ bound on $b$,  
	\begin{align*}
		\left| \frac{1}{N}\sum_{i=1}^N \ld H(\mu^N_s)(X^i_s)\cdot b(s, X^i_s)\right| 
		&  \leq \frac{C}{N}\sum_{i=1}^N \left| \ld H(\mu^N_s)(X^i_s)\right|\leq C.
	\end{align*}
As {\yin $\sigma_0$ and $\sigma_1$} are bounded and  \eqref{eq:bilip0Cond} holds, we obtain in the same way {\yin
	\begin{align*}
	& \left| \frac{1}{N}\sum_{i=1}^N  \ld H(\mu^N_s)(X^i_s)\cdot b(s, X^i_s)\right| \\
		+& \left| \frac{1}{2N}\sum_{i=1}^N  {\rm Tr}\left(\sigma_0\sigma_0^*(s,X^i_s)+\sigma_1\sigma_1^*(s,X^i_s))\partial_y\ld H(\mu^N_s)(X^i_s)\right)\right|\\+ 
		&  \left| \frac{1}{2N^2} \sum_{i=1}^N {\rm Tr}\left(\sigma_0\sigma_0^*(s,X^i_s)D^2_{\mu\mu}H(\mu^N_s)( X^i_s, X^i_s)\right)\right| \\
		+ &  \left| \frac{1}{2N^2} \sum_{i, j=1}^N \int_0^T {\rm Tr}\left(\sigma_1(s,X^j_s)\sigma_1^*(s,X^i_s)D^2_{\mu\mu}H(\mu^N_s)( X^j_s, X^i_s)\right)ds \right|  \leq C. \\ 		
	\end{align*}
	}
As, in addition, $H$ is bounded, we finally have 
	\begin{align*}
		\beta^2 K^N_T \leq  C-   \frac{1}{N}\sum_{i=1}^N \int_0^T  \ld H(\mu^N_s)(X^i_s)\cdot \sigma_0(s, X^i_s)dB^i_s
		 - \frac{1}{N}\sum_{i=1}^N \int_0^T  \ld H(\mu^N_s)(X^i_s)\cdot \sigma_1(s, X^i_s)dW_s  , 
	\end{align*}
	which yield \eqref{eq:momentestiKN_cond}. 	
	\end{proof}

\subsection{{\pe Neumann Problem on Wasserstein space and }Feynman-Kac formula}
We connect here our reflected process with a PDE on the Wasserstein space. For this we assume {\pee in addition to \A{Hcl}} that  $b$ and $\sigma$ are deterministic {\pee, continuous in time} and let  ${\mathcal O}:= \{\mu\in \m P^2(\R^n), \; H(\mu)> 0\}$. 
Given a bounded, continuous map $G:\overline{\m O}\to \R$ we consider the map $u:[0,T]\times \overline{\mathcal O}\to \R$ defined by
\begin{equation}\label{eq.FKbis}
u(t_0,\mu_0)=\E\left[ G([X^{t_0,X_0}_T|W])\right],
\end{equation}
where $(X^{t_0,X_0}_s,K^{t_0,X_0}_s)$ solves the reflected SDE  \eqref{eq:mainsCond} on $[t_0,T]$ with initial condition $X^{t_0,X_0}_{t_0}=X_0$ and $[X_0]=\mu_0$. {\color{black} Thanks to Remark \ref{rem:uniquelaw}, the uniqueness in law holds for \eqref{eq:mainsCond} so that \eqref{eq.FKbis} is defined without ambiguity.} By the semi-group property, we have, for any {\yin ${\mathcal F}^W$-}stopping time $\tau\geq t_0$, 
\begin{equation}\label{eq.DDP}
u(t_0,\mu_0)=\E\left[ u(\tau, [X^{t_0,X_0}_{\tau}|W])\right]. 
\end{equation}
So we can expect that $u$ is, in a suitable sense, a solution to the following Neumann problem on the Wasserstein space:
\begin{equation}\label{lqbzlsdCond}
	\left\{ 
		\begin{split} 
		(i)\;\;\; &(\partial_t +\mathcal{A})u(t,\mu)  =0\qquad		{\rm in }\; (0,T)\times {\mathcal O},\\
		(ii)\;\;&\int_{\R^n} \ld u(t,\mu)(y)\cdot \ld H(\mu)(y)\mu(dy)= 0, \qquad  {\rm in }\; (0,T)\times \partial{\mathcal O},\\
		(iii)\;& u(T,\mu)=G(\mu), \qquad {\rm in }\; {\mathcal O},
		 \end{split}
	\right. 
\end{equation}
where the operator $\mathcal{A}$ is given, for any smooth function $\phi: [0,T]\times \mathcal{P}^2(\R^n) \to \R$, by
\begin{equation}\label{def:def_de_A}
\begin{array}{rl}
\displaystyle \mathcal{A}\phi(t,\mu)  := & \displaystyle \int_{\R^n}  b(t,z)\cdot \ld \phi(t,\mu)(z)d\mu(z) + \frac 12 \int_{\R^n} {\rm Tr}[(a_{{\yin 0}}(t,z)+a_{{\yin 1}}(t,z))\partial_z \ld \phi(t,\mu)(z)]d \mu(z)\\
&\displaystyle  \qquad +  \frac12 \int_{\R^n\times \R^n} {\rm Tr}\left(D^2_{\mu\mu} u(t,\mu)(z,z')\sigma_1(t,z)\sigma_1^{{\yin *}}(t,z')\right)\mu(\d z)\mu(\d z'),
\end{array}
\end{equation}
with  $a_i=\sigma_i\sigma_i^{{\yin *}}$ {\yin ($i=0,1$)}. 

Our  results are the following : we first show that the map $u$ given by \eqref{eq.FKbis} satisfies \eqref{lqbzlsdCond} in the viscosity sense.
Then we provide a Feynman-Kac formula showing that any classical solution of \eqref{lqbzlsdCond}, if it exists, is given by \eqref{eq.FKbis}. Note that the  question of the uniqueness of the viscosity solution of \eqref{lqbzlsdCond} is not considered in this work.

In order to show that the map $u$ given by \eqref{eq.FKbis} satisfies \eqref{lqbzlsdCond} in a viscosity sense, let us introduce the following definition (inspired from Definition 11.17 of \cite{CD17-2}).
\begin{df}\label{def:viscoBis}
We say that a continuous function $u$ is a viscosity solution of \eqref{lqbzlsdCond} if
\begin{enumerate}
\item[(i)] $u$ is continuous on $[0,T]\times \overline{ {\mathcal O}}$;
\item[(ii)] for any $(t,\mu)$ in $(0,T) \times \mathcal{P}^2(\R^n)$, for any test function $\varphi : [0,T] \times \mathcal{P}^2(\R^n) \to \R$ (see Definition 11.17 of \cite{CD17-2} for the class of test functions) such that $u-\varphi$ has a local minimum (resp. max) in $(t,\mu)$ we have
\begin{equation}\label{eq:linPDE_viscoBis}\left\lbrace\begin{array}{llll}
\displaystyle (\partial_t + \mathcal{A})\varphi(t,\mu) \leq  0,\ ({\rm resp.} \geq0) \text{ in } \mathcal{O},\\

\displaystyle \min\bigg\{(\partial_t +  \mathcal{A})\varphi(t,\mu)\ ;\  \int_{\R^n}  \ld H(\mu)(z) \cdot \ld\varphi(t,\mu)(z) \d \mu(z)\bigg\} \leq  0,\ ({\rm resp.} \geq0)  \text{ in } \partial \mathcal{O};\\
\end{array}\right.
\end{equation}
\item[(iii)] $\displaystyle u(T,\mu)= G(\mu)$ in $\mathcal{O}$.
\end{enumerate}
\end{df}

\begin{thm}\label{thm.viscsol}
If \A{Hcl} hold{\yin s} and if, in addition,  the coefficients $b$ and $\sigma$ are continuous in time and deterministic, then the map $u$ defined by \eqref{eq.FKbis} is a viscosity solution of \eqref{lqbzlsdCond} in the sense of Definition~\ref{def:viscoBis}.
\end{thm}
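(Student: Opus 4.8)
The goal is to show that $u$ defined by \eqref{eq.FKbis} is a viscosity solution of the Neumann problem \eqref{lqbzlsdCond} in the sense of Definition~\ref{def:viscoBis}. The three items (i), (ii), (iii) must be checked. Item (iii) is immediate: taking $t_0=T$ in \eqref{eq.FKbis} gives $u(T,\mu)=G(\mu)$ since $X^{T,X_0}_T=X_0$. For item (i), the continuity of $u$ on $[0,T]\times\overline{\mathcal O}$ follows from the stability estimates already established: Proposition \ref{prop.CondInitCN} controls $\E[\sup_{t}\E^0[|X_t-Y_t|^2]]$ by $\E[|X_0-Y_0|^2]$, which in turn controls $W_2^2([X^{t_0,X_0}_T|W],[X^{t_0,Y_0}_T|W])$, hence, together with the continuity of $G$ and a dominated convergence argument, the continuity of $\mu\mapsto u(t_0,\mu)$; continuity in $t_0$ comes from the moment bounds \eqref{estim.Cond} and the regularity estimates in Proposition \ref{prop.moduleconti} applied to the flow started at different times. (Here one needs $G$ bounded to pass the limits, which is assumed.)

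The heart of the argument is item (ii), and here the natural tool is the dynamic programming principle \eqref{eq.DDP} combined with It\^o's formula \eqref{I4} (or \eqref{I2}) for functionals of the conditional law. Fix $(t,\mu)\in(0,T)\times\mathcal P^2(\R^n)$ and a test function $\varphi$ such that $u-\varphi$ has a local minimum at $(t,\mu)$ (the maximum case being symmetric). Take $X_0$ with $[X_0]=\mu$ and run the reflected SDE \eqref{eq:mainsCond} from time $t$. Applying \eqref{I4} to $s\mapsto\varphi(s,[X^{t,X_0}_s|W])$ between $t$ and a small stopping time $\tau=t+h$ (or the first exit time from a neighbourhood), taking expectations, and using \eqref{eq.DDP} together with $u\geq\varphi$ near $(t,\mu)$ and $u(t,\mu)=\varphi(t,\mu)$, one gets
$$
0\;\leq\;\E\Big[\int_t^\tau (\partial_s+\mathcal A)\varphi(s,\mu_s)\,ds\;+\;\int_t^\tau \E^0\big[\ld\varphi(s,\mu_s)(X_s)\cdot\ld H(\mu_s)(X_s)\big]\,dK_s\Big],
$$
the stochastic integral terms vanishing in expectation. \textbf{Case 1: $\mu\in\mathcal O$.} Since $H(\mu)>0$ and $s\mapsto\mu_s$ is continuous, for $h$ small enough $H(\mu_s)>0$ on $[t,\tau]$, so the Skorokhod condition forces $K$ to be constant there, hence the $dK_s$ term drops out. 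Dividing by $h$ and letting $h\to0$ yields $(\partial_t+\mathcal A)\varphi(t,\mu)\leq0$. \textbf{Case 2: $\mu\in\partial\mathcal O$.} If $(\partial_t+\mathcal A)\varphi(t,\mu)\leq0$ we are done; otherwise $(\partial_t+\mathcal A)\varphi>0$ in a neighbourhood, and then for $h$ small the first integral above is nonnegative, forcing $\E[\int_t^\tau\E^0[\ld\varphi(s,\mu_s)(X_s)\cdot\ld H(\mu_s)(X_s)]dK_s]\geq -o(h)$; one then needs to show that $\int_{\R^n}\ld\varphi(t,\mu)(z)\cdot\ld H(\mu)(z)\mu(dz)\leq0$. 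This uses that $K$ increases only on $\{H(\mu_s)=0\}$, where by \eqref{eq:condCond} the reflection is genuinely active ($\E^0[|\ld H(\mu_s)(X_s)|^2]\geq\beta^2$), and a localization/normalization argument showing $K$ actually charges a set of positive measure near $t$ with positive probability (this is where the lower bound \eqref{eq:condCond} and the continuity of all the integrands enter, giving the sign of the boundary term in the limit).

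\textbf{Main obstacle.} The delicate point is Case 2 on the boundary: making rigorous the claim that the Neumann term has the correct sign in the limit $h\to0$. The difficulty is that $K$ is random, merely continuous (not Lipschitz), and may a priori be constant on $[t,t+h]$ with positive probability even when $\mu\in\partial\mathcal O$; one must argue, using \eqref{eq:condCond}, the continuity of the coefficients, and the structure of the reflected dynamics (essentially an argument of the type "if the process is pushed inward the drift $\mathcal A\varphi>0$ would force $H$ to become positive, contradicting that we are at the boundary, unless $K$ charges time"), that the ratio $\E[K_{t+h}-K_t]/h$ stays bounded away from $0$ along a subsequence, so that dividing the inequality by $\E[K_{t+h}-K_t]$ and passing to the limit isolates the boundary condition. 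This is the analogue, in the Wasserstein setting, of the classical argument for Neumann viscosity solutions of finite-dimensional reflected diffusions (as in \cite{CD17-2}); the adaptation is mostly a matter of carefully combining the It\^o expansion \eqref{I4}, the exponential moment bounds \eqref{estim.Cond} on $K$, and the non-degeneracy \eqref{eq:condCond}, but it requires care because of the conditional-law structure and the presence of the common noise $W$.
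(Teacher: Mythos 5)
Your overall strategy — continuity via Proposition \ref{prop.CondInitCN} and the dynamic programming principle \eqref{eq.DDP}, then the It\^o expansion \eqref{I4} of $\varphi(s,\mu_s)$ up to a localized stopping time $\tau_h$ — is exactly the paper's. Two issues, one cosmetic and one substantive. The cosmetic one: your displayed inequality has the wrong sign. From $\varphi(t,\mu)=u(t,\mu)=\E[u(\tau_h,\mu_{\tau_h})]\geq\E[\varphi(\tau_h,\mu_{\tau_h})]$ one gets $0\geq\E[\int_t^{\tau_h}(\partial_s+\mathcal A)\varphi\,ds+\int_t^{\tau_h}(\int\ld H\cdot\ld\varphi\,d\mu_s)\,dK_s]$, not $0\leq\cdots$; with your sign, Case 1 would yield $(\partial_t+\mathcal A)\varphi\geq0$, contradicting the conclusion you then (correctly) state. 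This reads as a typo since your final inequalities match Definition~\ref{def:viscoBis}.

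The substantive gap is in Case 2. The boundary condition in Definition~\ref{def:viscoBis} only asks that the \emph{minimum} of $(\partial_t+\mathcal A)\varphi(t,\mu)$ and $\int\ld H(\mu)(z)\cdot\ld\varphi(t,\mu)(z)\mu(dz)$ be $\leq0$. You instead try to prove that whenever $(\partial_t+\mathcal A)\varphi>0$ the Neumann term itself is $\leq0$, and you identify as the "main obstacle" the need to show $\E[K_{t+h}-K_t]/h$ stays bounded away from $0$ so as to divide by it and isolate the boundary term. That step is not needed, and is in fact doubtful: nothing forces $K$ to charge $[t,t+h]$ with a rate bounded below (the uncontrolled dynamics may immediately push the conditional law into the interior). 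The paper's argument is a direct contradiction that avoids this entirely: suppose both quantities are $\geq\alpha>0$ on a $W_2$-neighbourhood of $(t,\mu)$ (possible by continuity of the test function's derivatives). Then in the fundamental inequality the $ds$-integrand is $\geq\alpha$ and the $dK$-integrand is $\geq\alpha$ with $dK\geq0$, so $0\geq\alpha\,\E[\tau_h-t]$, which contradicts $\E[(\tau_h-t)/h]\to1$. Only the nonnegativity of $dK$ is used, never a lower bound on its growth. You should restructure Case 2 as this contradiction argument; as written, your plan hinges on a claim that is both unnecessary and unlikely to hold.
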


\begin{proof} Let us first check that $u$ is continuous. 
Let $\mu_0,\, \nu_0\in \mathcal{P}_2(\R^n)$ and let $[X_0]=\mu_0$ and $[Y_0]=\nu_0$. By Proposition \ref{prop.CondInitCN}, there exists a $C>0$ such that:
$$
\E[|X_t^{t_0,X_0} - X_{t}^{t_0,Y_0}|^2] \leq C \E\left[|X_0-Y_0|^2\right].
$$
Combining this inequality with the dynamic programming principle in \eqref{eq.DDP}, it is easy to see that the map $u$ is a continuous function. 

Let us now prove that $u$ solves  \eqref{lqbzlsdCond} in the viscosity sense.  Suppose that $\varphi : [0,T] \times \mathcal{P}^2(\R^n) \to \R$ is a test function such that $u-\varphi$ have a local minimum in $(t,\mu)$. We can assume, without loss of generality, that: $\varphi(t,\mu) = u(t,\mu)$ (which can be done by translating $\varphi$). Let $r>0$ be such that $u(s,\nu)\geq \varphi(s,\nu)$ for $s\in [t,t+r]$ and $W_2(\nu,\mu)\leq r$. Let us set $\mu_s:= [X_s^{t,\mu}|W]$ and let $\tau$ be the {\yin $\mathcal{F}^W$-}stopping time 
$$
\tau = \inf\{s\geq t, \; W_2(\mu_s,\mu)\leq r\}.
$$
Finally, for $h\in (0, r]$, we set $\tau_h= \tau\wedge (t+h)$. Note that, in view of the exponential estimate in Theorem \ref{thm:Cond} and our assumptions on the coefficients of the reflected SDE, it is not difficult to check that $\E[\tau_h-t)/h]\to 1$. On the one hand we have from \eqref{eq.DDP}:
\begin{equation}\label{kjnarezrned}
\varphi(t,\mu) = u(t,\mu)=\E\left[ u(\tau_h, \mu_{\tau_h})\right] \geq \E\left[ \varphi(\tau_h, \mu_{\tau_h})\right]. 
\end{equation}

On the other hand, by applying It\^{o}'s formula \eqref{I4}, 
we get:
\begin{eqnarray*}
\varphi(\tau_h,\mu_{\tau_h})  &= & {\yin \varphi(t,\mu) + } \int_t^{\tau_h} (\partial_t + \mathcal{A})\varphi(r,\mu_r) d r 
\\
&& 
+  \int_t^{\tau_h} \int_{\R^n} \ld H(\mu_r)(z) \cdot \ld \varphi(r,\mu_r)(z) \mu_r(\d z) d K_r  
\\
&& 
+ \int_{t_0}^{\tau_h} \int_{\R^n} \sigma_1^T(s,z)\ld \varphi(r,\mu_r)(z) \mu_r(\d z)\cdot \d W_r.
\end{eqnarray*}
Plugging this {\color{black} equality} into \eqref{kjnarezrned} and taking expectation we find: 
\begin{eqnarray*}
0  &\geq &  \E\Bigl[ \int_t^{\tau_h} (\partial_t + \mathcal{A})\varphi(r,\mu_r) d r 
+  \int_t^{\tau_h} \int_{\R^n} \ld H(\mu_r)(z) \cdot \ld \varphi(r,\mu_r)(z) \mu_r(\d z) d K_r  \Bigr] .
\end{eqnarray*}

Assume now that $\mu$ is such that $H(\mu) > 0$. Then choosing $r>0$ small enough we have $H(\mu_s) >0$ a.s. in $[t, \tau]$, so that $d K([t,\tau_h])=0$. Hence
\begin{eqnarray*}
&& \E\Bigl[ \int_t^{\tau_h} (\partial_t + \mathcal{A})\varphi(r,\mu_r) d r\Bigr] \leq 0.
\end{eqnarray*}
Dividing by $h>0$,    letting $h\to 0$ and using that $\E[\tau_h-t)/h]\to 1$, we eventually have 
\begin{eqnarray}
(\partial_t +\mathcal{A})\varphi(t,\mu)   \leq 0.
\end{eqnarray}

Assume now that $\mu$ is such that $H(\mu)=0$ and that
$$\min\bigg\{(\partial_t +  \mathcal{A})\varphi(t,\mu)\ ;\   \int \ld\varphi(t,\mu)(z) d \mu(z)\bigg\} >0.$$
Changing $r>0$ if necessary, there exists $\alpha>0$ such that, for any $s\in [t,t+r]$ and $W_2(\nu,\mu)\leq r$, we have 
$$
\min\bigg\{(\partial_t +  \mathcal{A})\varphi(s,\nu)\ ;\   \int \ld\varphi(s,\nu)(z) d \nu(z)\bigg\} \geq \alpha. 
$$
Then 
\begin{eqnarray*}
0  &\geq &  \E\Bigl[ \int_t^{\tau_h} (\partial_t + \mathcal{A})\varphi(r,\mu_r) d r 
+  \int_t^{\tau_h} \int_{\R^n} \ld H(\mu_r)(z) \cdot \ld \varphi(r,\mu_r)(z)\d \mu_r(z) d K_r  \Bigr] \geq \alpha \E\left[(\tau_h-t)\right].
\end{eqnarray*}
As $\E[\tau_h-t)/h]\to 1$, we find a contradiction.  
\end{proof}

In order to establish a kind of reverse statement, we now prove a Feynman-Kac formula.

\begin{prop}\label{prop.FK.cond} Assume that $u$ is a classical solution to \eqref{lqbzlsdCond}. Then $u$ is given by \eqref{eq.FKbis}. 
\end{prop}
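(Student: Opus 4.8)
The plan is to apply the It\^o formula \eqref{I4} to the classical solution $u$ along the reflected process, and then use the PDE together with the Skorokhod condition to kill all the extra terms. Concretely, fix $(t_0,\mu_0)\in [0,T]\times \overline{\mathcal O}$, pick $X_0$ with $[X_0]=\mu_0$, let $(X^{t_0,X_0}_s,K^{t_0,X_0}_s)$ be the unique solution of \eqref{eq:mainsCond} on $[t_0,T]$ with $X^{t_0,X_0}_{t_0}=X_0$ (whose existence and uniqueness, and in particular uniqueness in law, are guaranteed by Theorem \ref{thm:Cond} and Remark \ref{rem:uniquelaw}, so that the right-hand side of \eqref{eq.FKbis} is unambiguous), and set $\mu_s:= [X^{t_0,X_0}_s|W]$. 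Since $u$ is a classical solution, it is in particular globally $C^2$ in the measure argument and $C^1$ in time with the regularity needed for \eqref{I4}; note also that \eqref{hypbetaK} holds here because the drift coefficient $\beta_s = \ld H(\mu_s)(X_s)$ satisfies $\E^0[|\beta_s|^2]\leq M^2$ by \eqref{eq:bilip0Cond} and $K$ has exponential, hence square-integrable, moments by \eqref{estim.Cond}.

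First I would write, using \eqref{I4} applied to $s\mapsto u(s,\mu_s)$ between $t_0$ and $T$,
\begin{align*}
u(T,\mu_T) = {} & u(t_0,\mu_0) + \int_{t_0}^T (\partial_t + \mathcal A) u(s,\mu_s)\, ds + \int_{t_0}^T \int_{\R^n} \ld H(\mu_s)(z)\cdot \ld u(s,\mu_s)(z)\, \mu_s(dz)\, dK_s \\
& + \int_{t_0}^T \int_{\R^n} \sigma_1^*(s,z)\ld u(s,\mu_s)(z)\,\mu_s(dz)\cdot dW_s.
\end{align*}
Then I would argue termwise. For the $ds$-integral: on $[t_0,T]$ one has $H(\mu_s)\geq 0$ always; on the (random) set $\{s : H(\mu_s)>0\}$ the interior equation \eqref{lqbzlsdCond}(i) gives $(\partial_t+\mathcal A)u(s,\mu_s)=0$, while on $\{s: H(\mu_s)=0\}$ we use \eqref{lqbzlsdCond}(i) again — it is stated to hold in $\mathcal O$, but a classical solution is $C^1$ up to $\overline{\mathcal O}$, so by continuity $(\partial_t+\mathcal A)u(s,\mu_s)=0$ there too. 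Hence the whole $ds$-integral vanishes. For the $dK_s$-integral: by the Skorokhod condition $\int_{t_0}^T H(\mu_s)\,dK_s=0$, so $dK_s$-a.e. $s$ satisfies $H(\mu_s)=0$, i.e. $\mu_s\in\partial\mathcal O$; at such $s$ the Neumann condition \eqref{lqbzlsdCond}(ii) gives $\int_{\R^n}\ld u(s,\mu_s)(z)\cdot\ld H(\mu_s)(z)\,\mu_s(dz)=0$, so this integral vanishes as well. For the stochastic integral: its integrand is bounded, since $\sigma_1$ is bounded by \A{Hcl} and $\int_{\R^n}|\ld u(s,\mu_s)(z)|^2\mu_s(dz)$ is bounded on the compact time interval by the (classical) regularity of $u$ and the moment bound $\E[\sup_{s\leq T}\E^0[|X_s|^2]]\leq C(T)$ from \eqref{estim.Cond}; hence it is a true martingale with zero expectation.

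Taking expectations and using the terminal condition \eqref{lqbzlsdCond}(iii), $u(T,\mu_T)=G(\mu_T)=G([X^{t_0,X_0}_T|W])$, I obtain
$$
u(t_0,\mu_0) = \E\big[ u(T,\mu_T)\big] = \E\big[ G([X^{t_0,X_0}_T|W])\big],
$$
which is exactly \eqref{eq.FKbis}. I do not expect a serious obstacle here: the only delicate points are (a) checking that the integrability hypotheses of the It\^o formula \eqref{I4} are met, which follows from \A{Hcl} and the exponential moment bounds in Theorem \ref{thm:Cond}, and (b) justifying that the interior PDE and Neumann identity, stated on $\mathcal O$ and $\partial\mathcal O$ respectively, can be invoked precisely where the corresponding measure $\mu_s$ lies — the former being handled by continuity of a classical solution up to the boundary, the latter being exactly the set picked out by the Skorokhod condition. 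If one is uneasy about the boundary behavior of $(\partial_t+\mathcal A)u$, an alternative is to run the computation up to the stopping time $\tau_\epsilon=\inf\{s: H(\mu_s)\leq\epsilon\}\wedge T$, where the interior equation applies cleanly and $dK=0$, and then let $\epsilon\to 0$ using continuity of $u$ and dominated convergence; but the direct argument above should suffice.
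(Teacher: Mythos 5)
Your proof is correct and follows essentially the same route as the paper: apply It\^{o}'s formula \eqref{I4} to $s\mapsto u(s,\mu_s)$, use the Skorokhod condition together with the Neumann condition \eqref{lqbzlsdCond}-(ii) to annihilate the $dK_s$-term, the interior equation \eqref{lqbzlsdCond}-(i) for the $ds$-term, and take expectations. Your additional care about the integrability hypotheses of \eqref{I4} and about invoking the interior equation on the boundary set $\{H(\mu_s)=0\}$ only makes explicit what the paper leaves implicit.
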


\begin{proof} Let us set $\mu_s:= [X_s^{t_0,X_0}|W]$. Then, by It\^{o}'s formula \eqref{I4}: 
\begin{align*} 
 u(T,\mu_T)& = u(t_0,\mu_{t_0})+\int_{t_0}^T ( \partial_t +{\mathcal A}) u(s,\mu_s)  
+ \int_{t_0}^T \int_{\R^n}  \ld u(s,\mu_s)(y)\cdot \ld H(\mu_s)(y)\mu_s(dy) dK_s\\
& \quad + \int_{t_0}^T \int_{\R^n} \sigma_1^T(s,x)\ld H(\mu_s)(x) \mu_s(\d x)\cdot \d W_s, 
\end{align*}
where, by \eqref{lqbzlsdCond}-(ii),  
\begin{align*} 
& \int_{t_0}^T \int_{\R^n}  \ld u(s,\mu_s)(y)\cdot \ld H(\mu_s)(y)\mu_s(dy) dK_s\\
& \qquad = 
\int_{t_0}^T {\bf 1}_{\{\mu_s\in\partial {\m O} \}}  \left(\int_{\R^n}  \ld u(s,\mu_s)(y)\cdot \ld H(\mu_s)(y)\mu_s(dy)\right) dK_s=0. 
\end{align*}
From the equation satisfied by $u$, we obtain 
\begin{align*} 
& u(t_0,\mu_0)= u(t_0,\mu_{t_0})= \E\left[u(T,\mu_T)\right]= \E\left[G(\mu_T)\right]. 
\end{align*} 
\end{proof}

\section{Backward SDEs with normal reflexion in law} \label{sec:the_backward_case}

In this part, we are interested in Backward SDE constrained in law with normal reflexion. Namely, on $[0,T]$, $T>0$ we consider the following problem:
\begin{equation}\left \lbrace \begin{array}{ll}\label{eq:mainb}	
&\displaystyle  Y_t  =\xi+\int_t^T f(s,Y_s,Z_s)\, ds - \int_t^T Z_s\,dB_s + \int_t^T \ld H([Y_s])(Y_s) dK_s,\quad 0\leq t\leq T, \\
&\displaystyle  H([Y_t]) \geq 0, \quad 0\leq t\leq T, \qquad \int_0^T H([Y_s])  dK_s = 0,
\end{array}\right.
\end{equation}
{\pee where $B$ is a Brownian motion on a given probability space $(\Omega, \mathcal{F},\mathbb{P})$ endowed with the natural filtration $\mathcal F^B$,} the processes $Y,Z$ are {\color{black} of dimension $n$ and $n \times d$ respectively}, $f:\Omega \times \rset_+\times\rset^n \times \rset^{ n \times d}  \fl \rset^n $ and $H : \mathcal{P}^2(\R^n) \fl \R$.\\

We call solution of \eqref{eq:mainb} a triple of progressively measurable processes $(Y,Z,K)$ taking values in $\rset^n\times\rset^{n\times d}\times\rset$ such that $K$ is deterministic, continuous and {\yin nondecreasing} with $K_0=0$. We study the system under the following assumptions, referred as assumptions \A{A} in the following:

\begin{trivlist} 
	\item[\A{A1}] The functions $f:\Omega \times \rset_+\times\rset^n \times \rset^{ n \times d}  \fl \rset^n $ is Lipschitz {\yin w.r.t. $(y,z)$} uniformly in time and $\omega$ and adapted to the filtration $\m F^B$ {\yin for fixed $(y,z)$} and:
	$$\e \left[ \int_0^T |f(s,0,0)|^2 ds \right] <+\infty.$$
	\item[\A{A2}] The function $H: \m P(\rset^n)\fl \rset$ is fully $\m C^2$ (see {\color{black} the introduction for the definition}) and 
	\begin{itemize}
		\item there exist $0<\beta\leq M$ and $\eta>0$ such that
				\begin{align}\label{boundHback}
			{\color{black}\forall \mu \in \m P^2(\rset^n){\yin ,}\quad  \int_{\rset^n} |\ld H(\mu)|^2(x)\, \mu(dx)\leq M^2, } 
		\end{align}
		and 
		\begin{align}
			\forall \mu \in \m P^2(\rset^n)\; \mbox{\rm with} \; -\eta\leq H(\mu)\leq 0, \; \qquad \beta^2\leq  \int_{\rset^n} |\ld H(\mu)|^2(x)\, \mu(dx),  \label{eq:bilip_CV} 
		\end{align}
		\item there exists $C\geq 0$ such that
		\begin{equation}\label{eq:DFL_CV}
			\forall X \in L^2, \qquad \e\left[|\ld H([X])(X) -\ld H([Y])(Y)|^2\right] \leq C\, \e\left[|X-Y|^2\right].
		\end{equation}
	\end{itemize}
	\item[\A{A3}] The function $H$ is concave: for all $\mu, \nu$ in $\mathcal{P}_2(\R^n)$ 
		\begin{equation}\label{eq:LC_CV}
		\forall X\sim \mu,\ Y\sim \nu,\quad	H(\nu)-H(\mu) - \e\left[\ld H([X])(X)\cdot(Y-X)\right] \leq 0.
		\end{equation}
\item[\A{A4}] The terminal value $\xi$ is $\mathcal F_T$-measurable and 
\begin{equation*}
		H([\xi]) \geq 0, \quad \M_2([\xi]) < +\infty.
\end{equation*}
\end{trivlist}


{\pe 
\begin{rem}\label{point_inside_O}
{\color{black} Let us note that:}
\begin{trivlist} 
\item[(i)] It follows from \eqref{eq:bilip_CV} and \A{A4} that there exists $\tl$ in $\mathcal{P}^2(\R^n)$ such that $H(\tl) >0 $; 
\item[(ii)] ``Conversely'', if $\{H\geq -\eta\}$ is bounded and if there exists $\tl \in {\mathcal P}^2(\R^n)$ such that $H(\tl)>0$ then \eqref{eq:bilip_CV} holds.
\end{trivlist}
\end{rem}
{\pee
\begin{rem}\label{itoprod} In the following, we often use a r.v. $\TL$ with law $\tl$ as a pivotal point to obtain certain estimates. This leads us to use several times Itô's formula on quantities like $|Y_t-\TL|^2$. To do so, we work as follows:  we pick a r.v. $\tilde \Lambda \sim \tl$ independent of $\mathcal F^B$ on an atomless probability space $(\tilde \Omega,\tilde{\mathcal{F}},(\tilde{\mathcal{F}}_t)_{t\geq 0},\tilde{\mathbb{P}})$. Then, we apply Itô's formula on the space $(\bar \Omega,\bar{\mathcal{F}},(\bar{\mathcal{F}}_t)_{t\geq 0},\bar{\mathbb{P}}) = (\Omega\times \tilde \Omega, \mathcal{F}\otimes \tilde{\mathcal{F}},(\mathcal{F}_t\otimes \tilde{\mathcal{F}}_t)_{t\geq 0}, \mathbb{P} \otimes \tilde{\mathbb{P}})$ and denote by $\tilde \E$ and $\bar \E$ the associated expectations.

In the following, this framework will be referred as \emph{we work on the enlarged filtered probability space of Remark \ref{itoprod}}.
\end{rem}
}
\begin{proof}[Proof of Remark \ref{point_inside_O}] {\pe The first assertion is obvious pushing forward the measure $[\xi]$ along $\ld H$. Concerning the second assertion,} let $\lambda$ be such that $-\eta\leq H(\lambda)\leq 0$. Let $X,X_0\in L^2$ be such that $[X]=\lambda$, $[X_0]=\tl$. Then, {\color{black} by the concavity} of $H$, 
\begin{align*}
H(\tl)& \leq H(\lambda)+\E\left[ \partial_\mu H(\lambda)(X)\cdot (X_0-X)\right] \\ 
& \leq \E^{1/2}\left[ \left|\partial_\mu H(\lambda)(X)\right|^2\right] \E^{1/2}\left[ |X_0-X|^2\right].
\end{align*}
As $X$ is bounded in $L^2$ by some constant $C$ and $H(\tl)>0$, this proves \eqref{eq:bilip_CV} with $\beta=H(\tl) /C>0$.\\

Next, let us come back to the concavity assumption \A{A3}. Let now $\bx = (x_1, \ldots, x_N)\in (\R^{n})^N$ and $H^N$ be the finite dimensional projection of $H$:  
$$H^N(\bx):=H(\frac{1}{N}\sum_{i=1}^N \delta_{x_i}).$$
 We have that for all $\bx,\by$ in $(\R^{n})^N$:
\begin{eqnarray}
H^N(\by) - H^N(\bx) - \frac{1}{N} \sum_{i=1}^N D_\mu H(\mu_\bx^N)(x_i) (x_i-y_i) \leq 0,
\end{eqnarray}
since 
\begin{eqnarray}
\partial_{x_i} H^N(\bx)= \frac{1}{N} D_\mu H\left(\frac{1}{N}\sum_{\ell=1}^N \delta_{x_i}\right) (x_i), 
\end{eqnarray}
this means that the mapping $\bx \mapsto H^N(\bx)$ is concave in the classical sense. Hence, $D^2_\bx  H^N({\bx})$ is non-positive, where 
\begin{eqnarray}
\partial^2_{x_ix_j}  H^N({\bx})= \frac{1}{N} \partial_z D_\mu H\left(\frac{1}{N}\sum_{\ell=1}^N \delta_{x_i}\right)(x_i)\delta_{i,j}+\frac{1}{N^2} D^2_{\mu}H(\frac{1}{N}\sum_{\ell=1}^N \delta_{x_i})(x_i,x_j). \label{concecava}
\end{eqnarray}
\end{proof}

\subsection{Existence and uniqueness of the solution}

\begin{thm}\label{thm:existence_uniqueness}
Under \A{A} the BSDE with normal reflexion in law \eqref{eq:mainb} has a unique square integrable solution such that $K$ is deterministic.
\end{thm}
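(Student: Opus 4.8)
The strategy mirrors that of Theorem~\ref{en:euss} and Theorem~\ref{en:eus}, but one has to deal with the genuinely backward nature of the equation. I would organize the argument in three layers of generality, exactly as announced in the introduction: (1) the case of a \emph{deterministic, bounded, space-homogeneous} driver $f=f(s)$; (2) a \emph{stochastic} driver $f\in L^2(\Omega;L^2([0,T]))$ still independent of $(y,z)$; (3) the general Lipschitz driver $f(s,y,z)$ via a Picard iteration/fixed point. In each of the first two layers the core is a penalization scheme: for $k>1/\eta$ let $(Y^k,Z^k)$ solve the (classical, unconstrained) BSDE
\begin{equation*}
Y^k_t=\xi+\int_t^T f(s)\,ds-\int_t^T Z^k_s\,dB_s+\int_t^T \ld H([Y^k_s])(Y^k_s)\,\psi_k(H([Y^k_s]))\,ds,
\end{equation*}
with $\psi_k$ the one-sided penalization used in the forward case, and set $K^k_t=\int_0^t\psi_k(H([Y^k_s]))\,ds$. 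Existence/uniqueness of $(Y^k,Z^k)$ for fixed $k$ follows from standard McKean--Vlasov BSDE theory since $y\mapsto \ld H([\,\cdot\,])(y)\psi_k(H([\,\cdot\,]))$ is Lipschitz (using \eqref{eq:DFL_CV}, \eqref{boundHback} and the Lipschitz-in-space version of $\ld H$ recorded after \eqref{eq:DFL}).

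\textbf{Uniqueness.} I would do uniqueness first, as in Theorem~\ref{en:eus}. Given two solutions $(Y,Z,K^Y)$ and $(Y',Z',K^{Y'})$, apply It\^o to $\E[|Y_t-Y'_t|^2]$; the martingale term disappears in expectation, the driver term is handled by the Lipschitz property of $f$, and the key reflection term
\begin{equation*}
2\E\Big[\int_t^T (Y_s-Y'_s)\cdot\big(\ld H([Y_s])(Y_s)\,dK^Y_s-\ld H([Y'_s])(Y'_s)\,dK^{Y'}_s\big)\Big]
\end{equation*}
is controlled using the concavity inequality \A{A3} (the analogue of \eqref{eq:pente}/\eqref{eq:LSC}): on $dK^Y$ one has $H([Y_s])=0$ and $H([Y'_s])\geq0$, so $(Y_s-Y'_s)\cdot\ld H([Y_s])(Y_s)\le H([Y_s])-H([Y'_s])\le 0$, and symmetrically for $dK^{Y'}$; the sign works \emph{in the backward direction} because $K$ is nondecreasing so $-dK$ plays against the sign, i.e. one gets $+\,\E[\int_t^T\ldots]\le 0$. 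A Gronwall argument (backward, using $K$ deterministic so it merely reparametrizes time) gives $Y=Y'$, then $Z=Z'$ by the martingale isometry, then $K^Y=K^{Y'}$ by the argument of Theorem~\ref{en:euss}: using It\^o's formula \eqref{I1} (or its BSDE analogue) for $H([Y_t])$ one expresses $\int_0^t\E[|\ld H([Y_u])(Y_u)|^2]\,dK^Y_u$ in terms of quantities depending only on $Y$, so the two Stieltjes measures coincide; then \eqref{eq:bilip_CV} shows $\E[|\ld H([Y_u])(Y_u)|^2]\ge\beta^2$ on the (closed) support of $dK^Y+dK^{Y'}$, allowing one to invert and recover $K^Y=K^{Y'}$.

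\textbf{Existence (layers 1--2).} The a priori estimates are the crux. Using concavity \A{A3} one shows $\sup_k\sup_{t\le T}\E[|Y^k_t|^2]\le C$ and $\E[\sup_t|Y^k_t|^2]+\E[\int_0^T|Z^k_s|^2ds]\le C$ by a standard BSDE estimate combined with the sign of the penalization term (pairing with a fixed pivot point $\TL\sim\tl$ with $H(\tl)>0$ as in Remark~\ref{point_inside_O}, using $z\psi_k(z)\le 0$ and $H(\tl)>0$). The essential new point, as in the forward case, is a \emph{uniform Lipschitz-in-$k$ bound on $K^k$}: applying It\^o to $H([Y^k_t])$ via \eqref{I1} and using $|\nabla^2$-type bounds on $H$ (the derivatives of $H$ being bounded through \A{A2}, globally $C^2$) together with \eqref{eq:bilip_CV}, one obtains, on the deterministic time set where $-\eta\le H([Y^k_t])\le -1/k$, a differential inequality forcing $H([Y^k_t])\ge -1/k$ for all $t$, hence $K^k$ is Lipschitz with constant $r(T)$ chosen (as in \eqref{eq:jlbaenrlzkgBIS}) large enough relative to the bound on the remaining terms — here one exploits that $f$ is bounded (layer 1) or $L^2$ (layer 2, where the extra $\int|f(s)|^2$ term is absorbed with a slightly larger $r$, using $\E[\sup_t|Y^k_t|^2]$ already controlled). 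Then the difference $Y^k-Y^m$: It\^o plus concavity plus $H([Y^k_s])\ge -1/k$, $0\le\psi_k\le r$ gives $\sup_t\E[|Y^k_t-Y^m_t|^2]+\E[\int_0^T|Z^k_s-Z^m_s|^2ds]\le Cr(1/k+1/m)$, then BDG upgrades to $\E[\sup_t|Y^k_t-Y^m_t|^2]\le C(k^{-1/2}+m^{-1/2})$. Hence $(Y^k,Z^k)$ is Cauchy in $\mathcal S^2\times L^2$, and by Arzel\`a--Ascoli $(K^k)$ converges uniformly to a nondecreasing Lipschitz $K$; passing to the limit, $H([Y_t])\ge0$ and the Skorokhod condition $\int_0^T H([Y_s])\,dK_s=0$ follow from $x\psi_k(x)\le0$ exactly as in Theorem~\ref{en:euss}.

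\textbf{Layer 3 and the main obstacle.} To pass from $z$-independent to general $f(s,y,z)$ one sets up a map $\Phi:(U,V)\mapsto(Y,Z)$ where $(Y,Z,K)$ is the (layer-2) solution with frozen driver $s\mapsto f(s,U_s,V_s)$, and shows $\Phi$ is a contraction on $\mathcal S^2\times L^2$ equipped with a weighted norm $\E[\int_0^T e^{\beta s}(|\cdot|^2)ds]$ — the reflection term only helps (its contribution to $\E[|\Delta Y_t|^2]$ has the good sign by \A{A3} since $K$ is deterministic, so it drops out of the contraction estimate), and the Lipschitz constant of $f$ in $(y,z)$ is absorbed by choosing $\beta$ large, as in the classical BSDE argument. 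The fixed point gives the solution; uniqueness was already handled. \emph{The step I expect to be the main obstacle} is the uniform Lipschitz/equicontinuity control of $K^k$ under the weakest possible assumptions on $f$ in layer~2: since $f$ is only $L^2$ in $(\omega,t)$ (not bounded), the It\^o expansion of $H([Y^k_t])$ contains a term $\int\E[\ld H([Y^k_u])(Y^k_u)\cdot f(u)]du$ which is not pointwise bounded in $u$, so one cannot run the clean ODE comparison of \eqref{eq:jlbaenrlzkg} directly; one must instead work with $\rho(t)=\int_0^t(\ldots)$ and an integrated Gronwall-type inequality, and carefully check that the resulting $r(\cdot)$ is still finite on $[0,T]$ and independent of $k$ — this is precisely why the authors split the existence proof into three steps (bounded driver first, then $L^2$), and getting the bookkeeping right there, together with the backward Gronwall lemma for Stieltjes measures (Lemma~4 in \cite{J64} / Theorem~17.1 in \cite{BS13}) applied with $K$ deterministic, is the technical heart of the argument.
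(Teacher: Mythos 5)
Your overall architecture (penalization, three layers of generality for the driver, concavity \A{A3} to control the reflection terms against a pivot $\TL$ with $H(\tl)>0$, Arzel\`a--Ascoli for $K^k$) matches the paper's proof. But there are two concrete gaps.

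First, at several crucial points you invoke It\^o's formula \eqref{I1} (``or its BSDE analogue'') for $t\mapsto H([Y_t])$ or $H([Y^k_t])$: to identify the Stieltjes measures $dK^Y$ and $dK^{Y'}$ in the uniqueness step, and to derive the key lower bound $H([Y^k_t])\ge -1/k$. This is not licit as stated: \eqref{I1} requires $\E[\int_0^T|\sigma_s|^4\,ds]<+\infty$, i.e.\ here $Z\in L^4$, and the penalized or limiting $Z$ is only known to be square integrable. The paper circumvents this by applying the \emph{classical finite-dimensional} It\^o formula to $H^N(\mathbf{\bar Y})=H(\frac1N\sum_i\delta_{\bar Y^i})$ for $N$ i.i.d.\ copies of $(Y,Z)$, and then letting $N\to\infty$. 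The second-order term $\frac12\int{\rm Tr}[D^2_{\bx}H^N\,\mathbf{\bar Z}\mathbf{\bar Z}^*]$ — precisely the one whose integrability is problematic — is discarded by a \emph{sign} argument: concavity \A{A3} makes $D^2_{\bx}H^N\le 0$, so this term only helps in the inequality $H([Y^k_s])\ge H([Y^k_t])-C(t-s)+\beta^2 r(t-s)$. Your plan of ``using $|\nabla^2$-type bounds on $H$'' to estimate that term misses both the justification issue and the fact that concavity is what makes the term disposable; this is the real reason \A{A3} is assumed in the backward setting while it is not needed for Theorem \ref{en:eus}.

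Second, your layer 2 proposes to run the penalization directly with $f\in L^2(\Omega\times[0,T])$ and absorb the extra drift into a larger, still finite, penalization intensity $r(\cdot)$ via an ``integrated Gronwall''. This cannot work as described: the comparison argument that forces $H([Y^k_t])\ge -1/k$ needs, on every short interval $[s,t]$ on which $-\eta\le H([Y^k_u])\le -1/k$, the penalization contribution $\beta^2\int_s^t r(u)\,du$ to dominate the drift contribution $\int_s^t\E[|\ld H\cdot f(u)|]\,du$; with $f$ only square integrable in $(\omega,t)$ the latter is not $O(t-s)$ and no locally bounded $r$ dominates it (the argument does survive under $\sup_{t\le T}\E[|f(t)|^2]<\infty$, cf.\ Remark \ref{constructKpart}, but not under the bare \A{A1} integrability). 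The paper instead truncates, solving with driver $f\mathbf{1}_{|f|\le m}$ by layer 1 and showing the resulting $(Y^m,Z^m,K^m)$ is Cauchy as $m\to\infty$ — and the convergence of $K^m$ there requires a separate argument (via the auxiliary process $L^m_t=\int_0^t\ld H(\mu^m_s)(Y^m_s)\,dK^m_s$ and the normalization $h(\mu)=(\int|\ld H(\mu)|^2d\mu\vee\beta^2)^{-1}$), since the uniform Lipschitz bound on $K^m$ that Arzel\`a--Ascoli needs is no longer uniform in the relevant parameter. Your layer 3 (Picard/contraction with the reflection term dropping out by sign) is consistent with the paper.
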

{\pe
\begin{prop}\label{lemmemomenY} Let $p\geq 2$. Assume that assumptions \A{A} hold true and assume in addition that $\M_p([\xi]) + \H_p$ is finite. {\pee Suppose further that $\E\left[\int_0^T|f({\yin s},0,0)|^pds\right] < + \infty$.}  Then, there exists a {\yin constant} $C_{p,T}:=C_{p,T}\left(\M_p([\xi]),\H_p,\E\left[\int_0^T|f({\yin s},0,0)|^pds\right] \right)>0$ such that
\begin{equation}
\E\left[ \sup_{0\leq t\leq T}|Y_t|^p\right] \leq C_{p,T}. 
\end{equation}
\end{prop}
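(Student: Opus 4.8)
The statement is an a priori $\lp^p$-bound on $\sup_t |Y_t|$ for the solution of the reflected BSDE \eqref{eq:mainb}, under the extra integrability $\M_p([\xi])+\H_p+\E[\int_0^T|f(s,0,0)|^p\,ds]<+\infty$. The natural strategy is to apply It\^{o}'s formula to a suitable function of $|Y_t-\TL|^2$, where $\TL\sim\tl$ is the pivotal point of Remark \ref{itoprod} with $H(\tl)>0$ (so we work on the enlarged probability space of Remark \ref{itoprod}). The reason for subtracting $\TL$ rather than working with $Y_t$ directly is that the reflection term carries the wrong sign in general, and only the concavity inequality \eqref{eq:LC_CV} \emph{evaluated against a point strictly inside the constraint set} produces a useful negative contribution: indeed, by \A{A3}, $\E[(Y_s-\TL)\cdot\ld H([Y_s])(Y_s)]\le H([Y_s])-H(\tl)\le -H(\tl)<0$ for $dK_s$-a.e.\ $s$ (using $H([Y_s])=0$ on the support of $dK$), so $\int_t^T\E[(Y_s-\TL)\cdot\ld H([Y_s])(Y_s)]\,dK_s\le -H(\tl)(K_T-K_t)$.

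First I would establish an auxiliary bound on $K_T$ in $\lp^p$. Writing It\^{o}'s formula for $|Y_t-\TL|^2$ (on the enlarged space, with $\bar\E$), the martingale part integrates to zero in expectation, the driver contributes $2\bar\E[\int_t^T(Y_s-\TL)\cdot f(s,Y_s,Z_s)\,ds]$ which is controlled by $\A{A1}$ and Young's inequality, the quadratic variation contributes $\bar\E[\int_t^T|Z_s|^2\,ds]$ (with the right sign on the left-hand side after moving it), and the reflection term contributes at most $-2H(\tl)(K_T-K_t)$ by the paragraph above. Rearranging, $2H(\tl)K_T+\bar\E[\int_0^T|Z_s|^2ds]\le \bar\E[|\xi-\TL|^2]+C\int_0^T\E[|Y_s-\TL|^2]ds+C\E[\int_0^T|f(s,0,0)|^2ds]$; combined with the classical $\lp^2$ theory for \eqref{eq:mainb} (Theorem \ref{thm:existence_uniqueness}) this already gives $\E[K_T^2]<+\infty$, and the same computation with $x\mapsto|x|^p$ in place of $x\mapsto|x|^2$, after absorbing the extra terms, upgrades this to $\E[K_T^p]\le C_{p,T}$ using $\M_p([\xi])$, $\H_p$ (which bounds $\E[|\ld H([Y_s])(Y_s)|^p]$ via $\int|\ld H(\mu)|^p d\mu\le\H_p$ — note the reflection drift $\ld H([Y_s])(Y_s)\,dK_s$ is the only new term and its $\lp^p$-norm is controlled by $\H_p$ times a power of $K_T$), and $\E[\int_0^T|f(s,0,0)|^pds]$.

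With $\E[K_T^p]$ under control, I would then run the standard $\lp^p$ a priori estimate for BSDEs on $Y_t-\TL$, now treating $\ld H([Y_s])(Y_s)\,dK_s$ as an exogenous bounded-in-$\lp^p$ drift. Concretely, apply It\^{o} to $|Y_t-\TL|^p$ (or $e^{\lambda t}|Y_t-\TL|^p$ for a convexification parameter $\lambda$), take the supremum over $t$, take expectations, and use the Burkholder–Davis–Gundy inequality on the stochastic integral $\int_t^T p|Y_s-\TL|^{p-2}(Y_s-\TL)^\ast Z_s\,dB_s$, bounding its bracket by $\sup_s|Y_s-\TL|^p$ times $\int_0^T|Z_s|^2ds$ and splitting via Young. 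The driver terms are handled by $\A{A1}$ and Gronwall; the reflection term is bounded in $\lp^1$ by $\E[\sup_s|Y_s-\TL|^{p-1}\cdot\H_p^{1/p}K_T]$, again split by Young so that $\tfrac12\E[\sup_s|Y_s-\TL|^p]$ is absorbed on the left, leaving $C\,\E[(\H_p^{1/p}K_T)^p]=C\,\H_p\,\E[K_T^p]\le C_{p,T}$. Since $\TL$ is a fixed $\lp^2$ (indeed arbitrarily integrable, as $\tl$ may be taken with compact support via Remark \ref{point_inside_O}) random variable, $\E[\sup_t|Y_t|^p]\le C(\E[\sup_t|Y_t-\TL|^p]+\E[|\TL|^p])\le C_{p,T}$, which is the claim.

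The main obstacle is the bookkeeping around the reflection term: one must be careful that the concavity bound \eqref{eq:LC_CV} is only available \emph{after taking the (conditional/full) expectation} — it is an inequality between real numbers $H([Y_s])$, $H(\tl)$ and $\E[(Y_s-\TL)\cdot\ld H([Y_s])(Y_s)]$, not a pathwise one — so the argument has to be organized so that expectations are taken before invoking it, which is why the $\lp^p$-moment of $K_T$ must be extracted first from the $p=2$ (or the expectation-level) computation and only then fed back into the pathwise-then-BDG estimate; a secondary technical point is to check that $\int|\ld H(\mu)(x)|^pd\mu(x)\le\H_p$ is exactly the quantity $\H_p$ defined in the introduction, so that $\A{A2}$'s boundedness \eqref{boundHback} is not enough and the extra hypothesis $\H_p<+\infty$ is genuinely used here.
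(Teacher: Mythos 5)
Your argument is correct and lands on the same core computation as the paper, but it is organised differently, so a short comparison is worth making. The paper applies It\^o's formula directly to $e^{\alpha t}|Y_t|^p$, splits the reflection term by Young's inequality into $\frac{p}{p-1}|Y_s|^p\,dK_s+\frac1p|\ld H([Y_s])(Y_s)|^p\,dK_s$, bounds the expectation of the second piece by $\H_p K_T$, and absorbs the first piece with a Gronwall lemma for the measure $ds+dK_s$ (valid because $K$ is deterministic, continuous and nondecreasing); it never needs the pivotal point $\TL$ nor a separate moment estimate on $K_T$, since $K_T$ is a deterministic finite number already controlled in the existence proof (cf.\ \eqref{eq:Kmisbounded}). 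You instead re-derive the bound on $K_T$ as a preliminary step via the concavity inequality \eqref{eq:LC_CV} tested against $\TL$ — which is precisely the paper's mechanism for bounding $K_T$ elsewhere, so this step is sound, though phrasing it as ``$\E[K_T^p]\le C_{p,T}$'' is redundant: $K$ is deterministic by the very definition of a solution of \eqref{eq:mainb}, so the $p=2$ computation already gives $K_T\le C$ and hence $K_T^p\le C^p$ for free. Two small points to tighten. First, Remark \ref{point_inside_O} does not let you take $\tl$ compactly supported; to conclude $\E[\sup_t|Y_t|^p]\le C(\E[\sup_t|Y_t-\TL|^p]+\E[|\TL|^p])$ you must check $\E[|\TL|^p]<\infty$, which does hold because $\tl$ is obtained by flowing $[\xi]$ along $\ld H$ for a bounded time and $\M_p([\xi])+\H_p<\infty$ — or you can simply drop the centering at $\TL$ in the second stage, as the paper does, since the sign of the reflection term is no longer needed once $K_T$ is bounded. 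Second, your $L^1$ bound on the reflection term, $\E[\sup_s|Y_s-\TL|^{p-1}\cdot\H_p^{1/p}K_T]$, needs a H\"older/Jensen step with respect to the deterministic measure $dK_s/K_T$ to legitimately produce the factor $\H_p^{1/p}$; this is routine but should be stated. With these minor repairs your proof is complete and equivalent in substance to the paper's.
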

}

\begin{proof} Let us start with the uniqueness result.
 Set $(Y,Z,K)$ and $(Y',Z',K')$ two solutions of \eqref{eq:mainb} and let us denote $\Delta Y_t = Y_t-Y'_t$, $\Delta Z_t = Z_t-Z'_t$ and $\Delta K_t = K_t-K'_t$. Let $\alpha \in \R$, applying It\^o's formula on $e^{\alpha t}|\Delta Y_t|^2$ one obtains
\begin{eqnarray*}
e^{\alpha t}|\Delta Y_t|^2 &=& \int_t^T \left(-\alpha e^{\alpha s}|\Delta Y_s|^2 +2  e^{\alpha s} (\Delta Y_s) \cdot \Delta f(s,Y_s,Z_s)\right) ds - 2 \int_t^{T} e^{\alpha s} (\Delta Y_s) \cdot (\Delta Z_s d B_s)\\
&&- \int_t^T e^{\alpha s} |\Delta Z_s|^2 ds + 2 \int_t^T e^{\alpha s}  (\Delta Y_s) \cdot (\Delta [D_\mu H([Y_s])(Y_s) d K_s]),
\end{eqnarray*}
where $\Delta[D_\mu H([Y_s])(Y_s)d K_s] =D_\mu H([Y_s])(Y_s)d K_s-D_\mu H([Y'_s])(Y'_s)d K_s'$.  By using classical arguments and assuming that $\alpha \geq 2||f||_{\rm Lip} + 2 ||f||_{\rm Lip}^2$, we derive:

\begin{eqnarray*}
e^{\alpha t}|\Delta Y_t|^2 + \frac{1}{2} \int_t^T e^{\alpha s} |\Delta Z_s|^2 ds &\leq & - 2 \int_t^{T} e^{\alpha s} (\Delta Y_s) \cdot (\Delta Z_s d B_s)\\
&& +2 \int_t^T e^{\alpha s}  (\Delta Y_s) \cdot (\Delta [D_\mu H([Y_s])(Y_s) d K_s]).
\end{eqnarray*}
From the concavity on $H$, thanks to the Skorokhod condition and since $\forall s \in [0,T],\ H([Y_s']) \geq 0$ we have 
\begin{eqnarray*}
 \int_t^T e^{\alpha s} \E  [ (\Delta Y_s) \cdot D_\mu H([Y_s])(Y_s) ] d K_s \leq   \int_t^T e^{\alpha s} ( H([Y_s]) - H([Y_s'])) d K_s \leq 0,
\end{eqnarray*}
and the same arguments lead to 
\begin{eqnarray*}
- \int_t^T e^{\alpha s} \E  [ (\Delta Y_s) \cdot D_\mu H([Y_s'])(Y_s') ] d K_s' \leq 0.
\end{eqnarray*}
Hence,
\begin{eqnarray*}
\E \left[ e^{\alpha t}|\Delta Y_t|^2 + \frac{1}{2} \int_t^T e^{\alpha s} |\Delta Z_s|^2 ds  \right]&\leq & 0,
\end{eqnarray*}
and uniqueness of $\{Y,Z\}$ follows. 

{\pee Let us now deal with the uniqueness of the processes $K$, $K'$. We aim at reproducing the approach implemented in the proof of Theorem \ref{en:eus}. {\color{black} However, we cannot use the chain rule on the Wasserstein space because of the lack of needed integrability of the processes $Z$ and $Z'$}. To overcome this problem, we are lead to apply classical It\^o's formula on i.i.d. copies of $(Y,Z)$.
Define $\{(\bar Y)^i, (\bar Z)^i\}_{1 \leq i \leq N}$ as $N$ copies of $(Y,Z)$. {\color{black} Writing $\mathbf{\bar Y} = (\bar Y^1,\ldots,\bar Y^N)^*$ {\color{black}, ${\bf\bar Z}  = (\bar Z^1,\ldots,\bar Z^N)^*$ }} and $\bar\mu_t^{N} = N^{-1}\sum_{i=1}^N \delta_{ (\bar Y_t)^i}$ we have, from classical It\^o's formula
\begin{eqnarray*}
 \frac{1}{N}\sum_{i=1}^N \int_s^t \left| D_\mu H (\bar \mu^{N}_u)(\bar Y_u^i)\right|^2 dK_u &= & H(\bar \mu^{N}_s)  - H(\bar \mu^{N}_t)- \frac{1}{N}\sum_{i=1}^N \int_s^t  D_\mu H(\bar \mu^{N}_u)(\bar Y_u^i)\cdot f(u,Y_u,Z_u) du\notag \\
&&+\frac{1}{2}\int_s^t {\rm Tr}\left[D^2_\bx H^N({\bf \bar Y_u}){\bf\bar Z_u  (\bar Z_u)^*}\right]du \notag\\
&&  + \frac{1}{N}\sum_{i=1}^N \int_s^t  D_\mu H(\bar \mu^{N}_u)(\bar Y_u^i)\cdot (\bar Z_u)^i dB^i_u  \notag\\
&& = \frac{1}{N}\sum_{i=1}^N \int_s^t \left| D_\mu H (\bar \mu^{N}_u)(\bar Y_u^i)\right|^2 dK_u'.
\end{eqnarray*}
We have, passing to the limit $N\to +\infty$, that
\begin{equation*}
\int_s^t \E\left[\left| D_\mu H ( \mu_u)(Y_u)\right|^2\right] dK_u = \int_s^t \left[\left| D_\mu H ( \mu_u)(Y_u)\right|^2\right] dK_u'.
\end{equation*}
We can thus repeat the end of the proof of uniqueness in Theorem \ref{en:eus} to deduce that $K=K'$. The result follows.
}

Let us now handle the existence part. The proof is divided into three parts: we first assume that the driver $f$ is space independent and  bounded and construct a solution thanks to a penalization approximation; we then extend the result thanks to a truncation argument when it is in $\mathbb{L}^2(\Omega, \mathbb{L}^2\left([0,T],\R^{n})\right)$; we finally show, thanks to a Picard iteration, that the result holds true under \A{A1}.\\

\emph{Step 1 : Existence for bounded and space independent generator.}  In what follows we start by assuming that 
$$\A{T1},\quad \forall (\omega,s,y,z) \in \Omega \times [0,T] \times \R^n \times \R^{n\times d},\, f(\omega,s,y,z) = f(\omega,s),\qquad \exists \kappa>0,\, |f(\omega,s)| \leq \kappa,\,\mathbb{P}-a.s.. $$

In this case, we construct a solution through a penalization approach.  For $k\geq 1$, let $\psi_k : \rset \fl \rset_+$ be the function defined by
\begin{equation*}
\psi_k(x)= r \text{ if } x\leq -1/k, \quad \psi_k(x) = -krx, \text{ if } -1/k\leq x\leq 0, \quad \psi_k(x) = 0, \text{ if } x \geq 0.
\end{equation*}
Note that the function $\psi_k$ depends on the constant $r>0$ which will be chosen later. Let $\left(Y^k,Z^k\right)$ be the solution to the following BSDE:

\begin{equation}\label{eq:bsdem}
Y^k_t = \xi + \int_t^T f(s) ds - \int_t^T Z^k_s dB_s + \int_t^T D_\mu H([Y^k_s])(Y_s^k) \psi_k(H([Y^k_s])) ds, \quad 0\leq t\leq T.
\end{equation}
We have {\yin the following Proposition whose proof will be given at the end of this Step.}

\begin{prop}\label{prop:penalized} Under \A{A}, for any $k \geq 1$ there exists a unique solution of \eqref{eq:bsdem} satisfying
\begin{equation*}
\e\left[\sup_{0\leq t\leq T}|Y^k_t|^2 + \int_0^T |Z^k_s|^2\, ds \right] \leq C_{r}.
\end{equation*}
Moreover, under \A{T1}, {\color{black} there exists $r>0$ such that} for all $k \geq 1$ and $0\leq t \leq T$ we have
\begin{equation}\label{esti:Hpourpenalisee}
H([Y_t^k]) \geq -1/k.
\end{equation}
\end{prop}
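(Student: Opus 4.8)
\emph{Well-posedness and the $L^2$-bound.} Equation~\eqref{eq:bsdem} is a McKean--Vlasov BSDE whose (random) driver $g(s,y,\mu):= f(s)+D_\mu H(\mu)(y)\,\psi_k(H(\mu))$ depends on the unknown only through its law. I would first record the Lipschitz features of $g$: $y\mapsto D_\mu H(\mu)(y)$ is $C$-Lipschitz uniformly in $\mu$ (apply Proposition~5.36 of \cite{CD17-1} to \eqref{eq:DFL_CV}); $\psi_k$ is bounded by $r$ and $(kr)$-Lipschitz; $H$ is $M$-Lipschitz for $W_2$ by \eqref{boundHback}; and, combining these with \eqref{eq:DFL_CV}, the map $X\mapsto D_\mu H([X])(X)\,\psi_k(H([X]))$ is Lipschitz on $L^2(\Omega)$ with a constant depending only on $k$, $r$, $C$ and $M$. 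Existence and uniqueness of a solution $(Y^k,Z^k)$ in $\ys\times\m H^2$ then follow from the standard Picard scheme (at each step one solves a classical BSDE with Lipschitz generator, the law being frozen in $g$; contraction on a small time interval, then concatenation), or by quoting directly the well-posedness theory of mean-field BSDEs with Lipschitz data. For the a priori estimate, apply Itô's formula to $|Y^k_t|^2$ and take expectations: the contribution of the penalization term is controlled by
\begin{multline*}
2\,\E\left[\int_t^T Y^k_s\cdot D_\mu H([Y^k_s])(Y^k_s)\,\psi_k(H([Y^k_s]))\,ds\right]\\
\le \int_t^T\Big(\E[|Y^k_s|^2]+r^2\,\E[|D_\mu H([Y^k_s])(Y^k_s)|^2]\Big)ds\le \int_t^T\Big(\E[|Y^k_s|^2]+r^2M^2\Big)ds,
\end{multline*}
the last step using \eqref{boundHback}; together with the Lipschitz property of $f$ in $(y,z)$ and Gronwall's lemma this gives $\sup_{t\le T}\E[|Y^k_t|^2]\le C_r$ and then $\E[\int_0^T|Z^k_s|^2ds]\le C_r$, with $C_r$ depending only on $r$, $T$, $M$, $\M_2([\xi])$ and $\E[\int_0^T|f(s)|^2ds]$; a final use of the BDG inequality upgrades this to the bound on $\E[\sup_{t\le T}|Y^k_t|^2+\int_0^T|Z^k_s|^2ds]$ stated in the Proposition.

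\emph{The uniform lower bound $H([Y^k_t])\ge -1/k$ under \A{T1}.} The plan is to transpose the contradiction argument of Theorems~\ref{en:euss}--\ref{en:eus}, the anchor being now the terminal condition $H([\xi])\ge0$ (\A{A4}) instead of the initial one. First I would write the dynamics of $u\mapsto H([Y^k_u])$ by the chain rule on Wasserstein space~\eqref{I1}: $H$ is partially $C^2$ and, since $y\mapsto D_\mu H(\mu)(y)$ is $C$-Lipschitz uniformly in $\mu$, condition~\eqref{hypLdH} holds; the only missing integrability is a fourth moment of $Z^k$, which I would recover either by a localization along $\int_0^{\cdot}|Z^k_s|^2ds$, or equivalently by applying the classical Itô formula to $H^N$ for $N$ i.i.d.\ copies of $(Y^k,Z^k)$ and letting $N\to\infty$, exactly as in the uniqueness proof for $K$. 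Since $dY^k_u=-\big(f(u)+D_\mu H([Y^k_u])(Y^k_u)\psi_k(H([Y^k_u]))\big)du+Z^k_u\,dB_u$, one obtains, for $0\le s\le t\le T$,
\begin{multline*}
H([Y^k_s])=H([Y^k_t])+\int_s^t\E\big[D_\mu H([Y^k_u])(Y^k_u)\cdot f(u)\big]\,du+\int_s^t\E\big[|D_\mu H([Y^k_u])(Y^k_u)|^2\big]\,\psi_k(H([Y^k_u]))\,du\\
-\tfrac12\int_s^t\E\big[{\rm Tr}\big(Z^k_u(Z^k_u)^*\,\partial_y D_\mu H([Y^k_u])(Y^k_u)\big)\big]\,du .
\end{multline*}
Here is the one new structural observation: by \A{A3} the map $H^N$ is concave in the classical sense, so (see \eqref{concecava}) its Hessian is nonpositive, and letting $N\to\infty$ shows that $\partial_y D_\mu H(\mu)(y)$ is negative semidefinite; hence ${\rm Tr}\big(Z^k_u(Z^k_u)^*\partial_y D_\mu H([Y^k_u])(Y^k_u)\big)\le 0$ and the last integral above is nonnegative and may be discarded. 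Consequently, on any interval $[s,t]$ on which $-\eta\le H([Y^k_u])\le -1/k$ — so that $\psi_k(H([Y^k_u]))=r$ and, by \eqref{eq:bilip_CV}, $\E[|D_\mu H([Y^k_u])(Y^k_u)|^2]\ge\beta^2$ — using $|f|\le\kappa$ together with $\int_{\R^n}|D_\mu H(\mu)|\,d\mu\le M$ from \eqref{boundHback}, we get
\[
H([Y^k_s])\ \ge\ H([Y^k_t])+\big(r\beta^2-\kappa M\big)(t-s).
\]

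It then remains to fix $r>0$ with $r\beta^2>\kappa M$ (just as $r m^2>C$ is chosen in Theorem~\ref{en:euss}) and run the contradiction. For $k>1/\eta$, if $H([Y^k_{u^*}])=\inf_{[0,T]}H([Y^k_\cdot])<-1/k$, then since $H([Y^k_T])\ge0$ the level $-1/k$ is crossed to the right of $u^*$, and one extracts, exactly as in the proof of Theorem~\ref{en:eus} (now with signs reversed because the equation runs backward), an interval $[s,t]$ with $s<t$, $-\eta\le H([Y^k_u])\le-1/k$ on $[s,t]$, $H([Y^k_t])=-1/k$ and $H([Y^k_s])\in\{-1/k,-\eta\}$; the displayed inequality on $[s,t]$ then forces $-1/k\ge H([Y^k_s])\ge -1/k+(r\beta^2-\kappa M)(t-s)>-1/k$ (resp.\ $-\eta>-1/k$, impossible since $k>1/\eta$), a contradiction. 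Having the bound for $k>1/\eta$ is all that is needed for the subsequent passage to the limit $k\to\infty$.

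\emph{Main obstacle.} The only genuinely technical point is the rigorous use of the chain rule~\eqref{I1} for $[Y^k_t]$, because $Z^k$ lies only in $\m H^2$; I expect to dispatch it by localization or by the $N$-particle approximation already employed for the uniqueness of $K$. Everything else is a faithful transcription of the forward arguments of Section~\ref{sec:sdes_with_normal_reflexion_in_law}, the decisive new ingredient being that the concavity of $H$ gives the second-order term in the expansion of $H([Y^k_t])$ the favorable sign, so that no control of $\int|Z^k|^2$ over short intervals is required.
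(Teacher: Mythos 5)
Your proof is correct and follows essentially the same route as the paper's: the $-1/k$ bound is obtained from the same backward differential inequality for $u\mapsto H([Y^k_u])$, with the second-order term discarded by the concavity of $H^N$ (the paper keeps the particle-level Hessian $D^2_{\bx}H^N\le 0$ rather than passing to $\partial_y\ld H\le 0$, precisely to sidestep the $\m H^2$-only integrability of $Z^k$ that you flag), followed by the identical stopping-time contradiction with $r\beta^2>\kappa M$. The only real divergence is the a priori estimate: you bound the penalization term by Young's inequality using $\psi_k\le r$ and \eqref{boundHback}, which yields an $r$-dependent constant, whereas the paper tests against a pivot $\TL$ with $H(\tl)>0$ and uses concavity plus the sign of $x\psi_k(x)$ to get a bound independent of $k$ and $r$ (see \eqref{eq:bound}); since the statement only claims $C_r$ and $r$ is fixed afterwards, your more elementary estimate suffices.
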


Introducing 
\begin{equation*}
K^k_t = \int_0^t \psi_k(H([Y^k_s])) ds,
\end{equation*}
we rewrite the previous BSDE as

\begin{equation*}
Y^k_t = \xi + \int_t^T f(s) ds - \int_t^T Z^k_s dB_s + \int_t^T  D_\mu H([Y^k_s])(Y_s^k) dK^k_s, \quad 0\leq t\leq T.
\end{equation*}

Let $ k, \ell $ in $\mathbb{N}^*$ be fixed and set $\Delta Y_t : = Y^k_t - Y^\ell_t$ and $\Delta Z_t = Z^k_t-Z^\ell_t$. {\pee Applying Itô's formula to $|\Delta Y_t|^2$, and using Young's inequality, we obtain:}

\begin{eqnarray}\label{eq:cauchy}
 |\Delta Y_t|^2 + \int_t^T  |\Delta Z_s|^2 ds &\leq  &2 \int_t^T  \Delta Y_s \cdot D_\mu H([Y_s^k])(Y_s^k) dK^k_s \\
&&- 2 \int_t^T \Delta Y_s \cdot D_\mu H([Y_s^\ell])(Y_s^\ell) dK^\ell_s - 2 \int_t^T \Delta Y_s \cdot \Delta Z_s dB_s. \notag
\end{eqnarray}
From the $L$-concavity of $H$, Proposition \ref{prop:penalized} and the definition of $K^k$ we have
\begin{eqnarray*}
\e \left[\int_t^T \Delta Y_s \cdot D_\mu H([Y_s^k])(Y_s^k) dK^k_s \right] &\leq& \e \left[\int_t^T (H([Y_s^k]) - H([Y_s^\ell]) )\psi_k(H([Y_s^k]) ds\right]\\
&\leq & 0 + \frac{r}{\ell} (T-t),
\end{eqnarray*}
since $x\psi_k(x)\leq 0$ and $\psi_k$ is bounded by $r$. Arguing similarly we have
\begin{equation*}
-\e\left[\int_t^T  \Delta Y_s \cdot D_\mu H([Y_s^\ell])(Y_s^\ell) dK^\ell_s\right] \leq  \frac{r}{k} (T-t).
\end{equation*}
Hence we deduce that {\pee for some $C_{T,r}>0$:}
\begin{equation*}
	\sup_{0\leq t\leq T} \e\left[|\Delta Y_t|^2 + \int_0^T |\Delta Z_s|^2 ds \right] \leq \pee {C_{T,r}}\left(\frac{1}{k}+\frac{1}{\ell}\right),
\end{equation*}
and coming back to~\eqref{eq:cauchy}, we get from BDG's inequality together with the fact that $\sup_k \psi_k \leq r $ that {\pee there exists $C_{T,r}'>0$ such that}
\begin{equation*}
\e\left[\sup_{0\leq t\leq T} |\Delta Y_t|^2 + \int_0^T |\Delta Z_s|^2 ds\right] \leq {\pee C_{T,r}' }\left(\frac{1}{\sqrt k} + \frac{1}{\sqrt \ell}\right).
\end{equation*}
Thus, $\{(Y^k,Z^k)\}_{k\geq 1}$ is a Cauchy sequence in $\ys\times\zs$. Let us denote by $(Y,Z)$ its limit. Since $\psi_k$ is bounded by $r$ for all $k$, $K^k$ is Lipschitz with $| K^k|_{\text{Lip}}\leq r$. Hence, by Ascoli-Arzela theorem, up to a subsequence, $(K^k)$ converges towards a non decreasing, Lipschitz continuous function $K$ in $\mathcal{C}([0,T],\R)$.

It is straightforward to check that $(Y,Z,K)$ solves~\eqref{eq:mainb}. Indeed, 
\begin{itemize}
	\item $H([Y_t])= \lim_{k\to\infty} H([Y_t^k]) \geq 0$ by Proposition \ref{prop:penalized}.
	\item the Skorokhod condition is also satisfied : since $x\psi_k(x)\leq 0$,
	\begin{equation*}
		0\leq \int_0^T H([Y_s])dK_s = \lim_{k\to\infty} \int_0^T H([Y_s^k]) dK^k_s = \lim_{k\to\infty} \int_0^TH([Y_s^k]) \psi_k(H([Y_s^k])) ds \leq 0.
	\end{equation*}
\end{itemize}

\begin{proof}[Proof of Proposition \ref{prop:penalized}]
First note that existence and uniqueness of a solution follows from \cite{BLP09}. We \phrase. Arguing as in the proof of uniqueness, for $\alpha$ large enough, we have, for $0\leq t\leq T$,
	\begin{multline}\label{eq:borne}
		e^{\alpha t} |Y^k_t-\TL|^2 + \frac 12 \int_t^T e^{\alpha s} |Z^k_s|^2 ds  \\ \leq  e^{\alpha T}|\xi-\TL|^2 + \int_t^T e^{\alpha s}|f(s,\TL,0)|^2\, ds + 2 \int_t^T e^{\alpha s}(Y^k_s-\TL) \cdot D_\mu H([Y^k_s])(Y_s^k) dK^k_s  - 2 \int_t^T e^{\alpha s} (Y^k_s-\TL) \cdot Z^k_s dB_s.
	\end{multline}
	
Using the $L$-concavity of $H$ together with the fact that $H({\pe \tilde \lambda})\geq 0$ we have,
\begin{align*}
	{\pee \bar \e}\left[\int_t^T e^{\alpha s} (Y^k_s-\TL) \cdot D_\mu H([Y^k_s])(Y_s^k) dK^k_s\right] \leq \int_t^T e^{\alpha s}\left(H([Y^k_s])-H(\tl)\right) \psi_k(H([Y^k_s])) ds \leq 0,
\end{align*}
since $x\psi_k(x)\leq 0$. It follows that there exists $C_{\tl} : =C(\M_2(\tl),T,\A{A}) >0$ independent of $k$ and $r$ such that

\begin{equation}\label{eq:bound}
		\sup_{0\leq t\leq T} \e\left[|Y^k_t|^2 + \int_0^T |Z^k_s|^2\, ds\right] \leq C_{\tl}. 
\end{equation}

Coming back to the estimate~\eqref{eq:borne} and using BDG's inequality, we deduce that, for some constant $C_{a,r}: =C(a,T,r,\A{A}) >0$ independent of $k$,
\begin{equation*}
\e\left[\sup_{0\leq t\leq T}|Y^k_t|^2 + \int_0^T |Z^k_s|^2\, ds \right] \leq C_{a,r},
\end{equation*}
in other words, the sequence $\{(Y^k,Z^k)\}_{k\geq 1}$ is bounded in $\ys\times {\yin H^2}$.\\

{\color{black}Let us now prove \eqref{esti:Hpourpenalisee}. Our approach here is in the same spirit as the proof of estimate \eqref{eq:rigole} in paragraph \ref{subsec:Existence_and_uniqueness_of_the_solution}. {\color{black} Again, due to the lack of needed integrability of the process $Z^k$, we apply classical It\^o's formula and then use the concavity property of the function $H$, see the computations below.}\\}

Assume that there exists $0<t_0<T$ such that $H([Y_{t_0}^k]) < -1/k$. Define $t=\inf\{ u \geq t_0:\ H([Y_u^k]) \geq -1/k\}$ and $s=\sup\{ u \leq t:\ H([Y_u^k]) \leq H([Y_{t_0}^k]) \vee -\eta\}$. Since $H([\xi]) \geq 0$, and $ -\eta <-1/k$ we have that $0<s<t<T$ and $-\eta \leq H([Y_u^k]) \leq -1/k$ on $[s,t]$. 

{\pe Let $\{(\bar Y^k)^i, (\bar Z^k)^i, \bar f^i\}_{1 \leq i \leq N}$ be $N$ copies of $(Y^k,Z^k,f)$. {\color{black} Writing $\mathbf{\bar Y^k} = ((\bar Y^k)^1,\ldots,(\bar Y^k)^N)^*$ {\color{black} and ${\bf\bar Z^k}  = ((\bar Z^k)^1,\ldots,(\bar Z^k)^N)^*$ }} and $\bar\mu_t^{N,k} = N^{-1}\sum_{i=1}^N \delta_{ (\bar Y^k_t)^i}$} we have, from classical It\^o's formula
\begin{eqnarray*}
 H(\bar \mu^{N,k}_s) &= & H(\bar \mu^{N,k}_t)+ \frac{1}{N}\sum_{i=1}^N \int_s^t  D_\mu H(\bar \mu^{N,k}_u)((\bar Y^k_u)^i)\cdot f^i(u) du\notag \\
&&-\frac{1}{2}\int_s^t {\rm Tr}\left[D^2_\bx H^N({\bf \bar Y_u^k}){\bf\bar Z_u^k  (\bar Z_u^k)^*}\right]du \notag\\
&&  - \frac{1}{N}\sum_{i=1}^N \int_s^t  D_\mu H(\bar \mu^{N,k}_u)((\bar Y^k_u)^i)\cdot (\bar Z_u^k)^i dB^i_u  \notag\\
&& + \frac{1}{N}\sum_{i=1}^N \int_s^t \left| D_\mu H (\bar \mu^{N,k}_u)((\bar Y^k_u)^i)\right|^2 dK_u.
\end{eqnarray*}
Hence we have from \eqref{concecava}:

\begin{eqnarray}\label{ressertpourmoments}
 H(\bar \mu^{N,k}_s) &\geq&  H(\bar \mu^{N,k}_s) +\frac{1}{2}\int_s^t {\rm Tr}\left[D^2_\bx H^{N,k}({\bf \bar Y_u^k}){\bf\bar Z_u^k  (\bar Z_u^k)^*}\right]du \notag\\ 
 &= & H(\bar \mu^{N,k}_t)+ \frac{1}{N}\sum_{i=1}^N \int_s^t  D_\mu H(\bar \mu^{N,k}_u)((\bar Y^k_u)^i)\cdot f^i(u) du\notag \\
&&  - \frac{1}{N}\sum_{i=1}^N \int_s^t  D_\mu H(\bar \mu^{N,k}_u)((\bar Y^k_u)^i)\cdot (\bar Z_u^k)^i dB^i_u  \notag\\
&& + \frac{1}{N}\sum_{i=1}^N \int_s^t \left| D_\mu H (\bar \mu^{N,k}_u)((\bar Y^k_u)^i)\right|^2 dK_u^k.
\end{eqnarray}

Taking the expectation and then the limit over $N$ on both sides leads, see the proof of Theorem 5.98 of \cite{CD17-1}, to 

\begin{eqnarray*}
H([Y_s^k]) &\geq & H([Y_t^k]) + \int_s^t \e\left[D_\mu H([Y_u^k])(Y_u^k) \cdot f(u) \right]{\color{black} du} \\
&&+\int_s^t \e {\yin [}|D_\mu H([Y_u^k])(Y_u^k) |^2{\yin ]} \psi_k(H([Y_u^k])) du.
\end{eqnarray*}

From \A{A} we get that there exi{\yin s}ts a constant $C:=C(a,T,\A{A}) >0$ such that

\begin{eqnarray*}
H([Y_s^k])  &\geq & H([Y_t^k]) - C(t-s)+ \int_s^t \e \left[|D_\mu H([Y_u^k])(Y_u^k) |^2\right] \psi_k(H([Y_u^k])) du.
\end{eqnarray*}
Using now \A{T1} and the definition of $\psi_k$ we get 

\begin{eqnarray*}
H([Y_s^k])  &\geq & H([Y_t^k]) - C(t-s) + \beta^2 (t-s) r.
\end{eqnarray*}
Since $H([Y_s^k])\leq H([Y_{t_0}^k]) \vee -\eta < -1/k \leq H([Y_t^k])$ this implies $- C + \beta^2 r < 0$ which {\color{black} cannot hold for all $r$ large enough and then leads} to a contradiction. 
\end{proof}

{\pee 
\begin{rem}\label{constructKpart} Note that from the proof of the above Proposition and Step1 remain true for a generator $f$ satisying $\sup_{t \leq T}\E[|f(t)|^2] <+\infty$.

\end{rem}

}

\emph{Step 2 : from space independent generator in $\mathbb{L}^\infty$ to {\rm {\yin H}}$^2$.} We now prove, using a truncation argument, that the BSDE \eqref{eq:mainb} admits a solution when 

$$\A{T2},\quad \forall (\omega,s,y,z) \in \Omega \times [0,T] \times \R^n \times \R^{n\times d},\quad f(\omega,s,y,z) = f(\omega,s),\qquad \E \left[ \int_0^T| f|^2(s) ds \right] < + \infty.$$

Let $Y_t^m$ be given by
\begin{eqnarray*}
Y_t^m = \xi + \int_t^T f(s)\mathbf{1}_{|f(s)| \leq m} ds -\int_t^T Z_s^m dB_s +{\color{black}  \int_t^T D_\mu H([Y_s^m])(Y_s^m) d K_s^m}.
\end{eqnarray*}
For each $m$ in $\mathbb{N}$, this BSDE admits a unique solution from \emph{step 1}. Moreover we have for any $m,\ell \geq 0$, {\pee by applying Itô's formula on $e^{\alpha t}|Y_t^\ell - Y_t^m|^2$, $\alpha \geq 1$, that}:
\begin{eqnarray}
&& e^{\alpha t}|Y_t^\ell-Y_t^m|^2 + \int_t^T e^{\alpha s} |Z_s^\ell-Z_s^m|^2 ds \label{eq:bsde:fdansM2}\\
&{\yin =} &  {\yin 2\int_t^T  e^{\alpha s}(Y_s^\ell - Y_s^m)\cdot (f(s)\mathbf{1}_{|f(s)| \leq \ell} - f(s)\mathbf{1}_{|f(s)| \leq m})} ds -2\int_t^T  e^{\alpha s}(Y_s^\ell-Y_s^m) \cdot (Z_s^\ell-Z_s^m) d B_s\notag\\
&& + 2\int_t^T e^{\alpha s} (Y_s^\ell-Y_s^m)  \cdot D_\mu H([Y_s^\ell])(Y_s^\ell) d K_s^\ell -  2\int_t^T e^{\alpha s} (Y_s^\ell-Y_s^m)\cdot D_\mu H([Y_s^m])(Y_s^m) d K_s^m\notag\\
&& - \alpha \int_t^T e^{\alpha s} |Y_s^\ell-Y_s^m|^2 ds.\notag
\end{eqnarray}
Taking now the expectation, using $L$-concavity, Skorokhod property and the fact that for each $k$ the marginals $\{[Y_t^k]\}_{0 \leq t \leq T}$ satisfy the constraint, we deduce that:
\begin{eqnarray}\label{eq:l-concplusskoro}
&&\int_t^T  \E\left[e^{\alpha s}(Y_s^\ell-Y_s^m)  \cdot D_\mu H([Y_s^\ell])(Y_s^\ell) \right] d K_s^\ell -\int_t^T e^{\alpha s} \e \left[(Y_t^\ell-Y_t^m) \cdot D_\mu H([Y_s^m])(Y_s^m) \right]d K_s^m\notag\\
&\leq & 2 \E\left[\int_t^T e^{\alpha s} \left(H([Y_s^\ell]) - H([Y_s^m])\right) (d K_s^\ell-d K_s^m) \right]\leq 0,
\end{eqnarray}
so that, 
\begin{eqnarray*}
&& \sup_{t \leq T}\e\left[ e^{\alpha t} |Y_t^\ell-Y_t^m|^2 + \int_t^T e^{\alpha s} |Z_s^\ell-Z_s^m|^2 ds \right] \\
&\leq & {\yin C}\int_0^T \e\left[ |f(s)\mathbf{1}_{|f(s)| \leq \ell} - f(s)\mathbf{1}_{|f(s)| \leq m}|^2 \right] ds.
\end{eqnarray*}

Let us now show that  $K_T^m$ is bounded, uniformly in $m$. To do so, we \phrase. One has{\yin ,}
 \begin{eqnarray}\label{eqKKKK}
 e^{\alpha t}|Y_t^m-\TL|^2 +  \int_t^T e^{\alpha s} |Z_s^m|^2 ds &=& e^{\alpha T}|\xi-\TL|^2 +  2\int_t^T e^{\alpha s}(Y_s^m-\TL) \cdot f(s)\mathbf{1}_{|f(s)| \leq m}  ds\\
&&   -2\int_t^T e^{\alpha s}(Y_s^m-\TL) \cdot Z_s^m d B_s  + 2\int_t^T  e^{\alpha s}(Y_s^m-\TL) \cdot D_\mu H([Y_s^m])(Y_s^m) d K_s^m\notag\\
&& - \alpha \int_t^T e^{\alpha s} |Y_s^m-\TL|^2 ds.\notag
\end{eqnarray}
Taking the expectation on both side, using Young's inequality and $L$-concavity together with the Skorokhod condition, we obtain for $\alpha$ large enough that
 \begin{eqnarray*}
\sup_{t\leq T} {\pee \bar \e} \left[ e^{\alpha t} |Y_t^m-\TL|^2\right] + {\yin \e}\left[ \int_0^T e^{\alpha s} |Z_s^m|^2 ds \right] + e^{\alpha T}H(\tl)(K_T^m-K_t^m)  \leq   {\yin C\Bigg(}{\pee \bar \e} \left[ |\xi-\TL||^2 \right] +\e \left[ \int_0^T |f(s) |^2ds \right] {\yin \Bigg)}.
\end{eqnarray*}

Hence, coming back to \eqref{eq:bsde:fdansM2}, taking the supremum in time, the expectation and choosing $\alpha$ large enough lead to 
\begin{eqnarray}\label{eq:bsdegen1}
&& \e \left[ \sup_{t\leq T } e^{\alpha t}|Y_t^\ell-Y_t^m|^2 + \int_0^Te^{\alpha s} |Z_s^\ell-Z_s^m|^2 ds \right] \\
&\leq & {\yin C \Bigg(}  \e \left[\int_0^T |f(s)\mathbf{1}_{|f(s)| \leq \ell} - f(s)\mathbf{1}_{|f(s)| \leq m}|^2 ds \right]+ \e\left[ \sup_{t\leq T}\left| \int_t^T e^{\alpha s} (Y_t^\ell-Y_t^m) \cdot (Z_s^\ell-Z_s^m) d B_s \right| \right]\notag\\
&& + \int_0^T  \e^{1/2}\left[\left| Y_s^\ell-Y_s^m\right|^2\right] \e^{1/2}\left[\left| D_\mu H([Y_s^\ell])(Y_s^\ell) \right|^2 \right]d K_s^\ell \notag\\
&&+  \int_t^T  \e^{1/2} \left[|Y_t^\ell-Y_t^m|^2\right] \e^{1/2}\left[ |D_\mu H([Y_s^m])(Y_s^m)|^2\right] d K_s^m{\yin \Bigg)},\notag
\end{eqnarray}
thanks to Cauchy-Schwarz's inequality and since $K^m, K^\ell$ are deterministic. Since from BDG's inequality and Young's inequality one has
\begin{eqnarray}
&&\e\left[ \sup_{t\leq T}\left| \int_t^T e^{\alpha s} (Y_t^\ell-Y_t^m) \cdot (Z_s^\ell-Z_s^m) d B_s \right| \right]\label{eq:bsdegen2}\\
 & \leq & \epsilon  \e\left[ \sup_{t\leq T}e^{\alpha t} |Y_t^\ell-Y_t^m|^2 \right] + \frac{C}{\epsilon} \e \left[ \int_0^T e^{\alpha s}|Z_s^\ell-Z_s^m|^2 ds\right],\notag
\end{eqnarray}
we deduce that $\{Y^m,Z^m\}$ is a Cauchy sequence in $\mathcal{S}^2 \times {\rm {\yin H}}^2$ that converges to some limit $\{Y,Z\}$.

{\pee Let us now deal with the convergence of the process $K^m$. By the convergence of $\{Y_t^m,Z_t^m\}$ it is clear that the sequence of processes $(L^m)_{m\geq 0}$ defined by
\begin{equation*}
\forall t \geq 0, \qquad L_t^m := \int_0^t \ld H(\mu_s^m)(Y_s^m) dK_s^m,
\end{equation*}
converges in $\mathcal{S}^2$ to some process $L$. Set $h(\mu) := \left(\int | \ld H(\mu)(\cdot)|^2 d\mu \vee \beta^2\right)^{-1}$, for all $t \geq 0$, $\varphi_t := [\ld H(\mu_t)(Y_t)]^*$. Let us denote by $\big(\varphi^M_t,h^M(\mu_t)\big)_{t \geq 0}$ the discretized version of $\big(\varphi_t,h(\mu_t)\big)_{t \geq 0}$ along a subdivision $(t_k)_{0\leq k \leq M}$ of $[0,T]$ of stepsize $1/M$. Let $M>0$, for $l,m\geq 0$ we have
\begin{eqnarray*}
|K_t^m - K_t^l| &=& \left|\E\left[\int_0^t h(\mu_s^m)\varphi_s^m  dL_s^m\right] - \E\left[\int_0^t h(\mu_s^l)\varphi_s^l dL_s^l\right]\right|\\
&= & \Bigg|\E\left[\int_0^t \{ h(\mu_s^m)\varphi_s^m - h(\mu_s^l)\varphi_s^l\} dL_s^m\right] + \E\left[\int_0^t \{h(\mu_s^l)\varphi_s^l - h(\mu_s)\varphi_{s}\} dL_s^m\right] \\
&&+ \E\left[\int_0^t \{h(\mu_s)\varphi_s - h^M(\mu_s)\varphi_{s}^M\} dL_s^m\right] \\
&&+ \E\left[\int_0^t \{h^M(\mu_s)\varphi_s^M\} (dL_s^m-dL_s^l)\right] + \E\left[\int_0^t\{h^M(\mu_s)\varphi_s^M - h(\mu_s^l)\varphi_s^l\} dL_s^l\right]\Bigg|.
\end{eqnarray*}
Note now that for all $k\geq 0$, we have $dL_s^k = \ld H(\mu_s^k)(Y_s^k) dK_s^k$ with $K^k$ deterministic. Writing $L^m$ and $L^l$ as this, one can inverse the integration and expectation operators in the first, second, third and fifth terms of the above r.h.s. We can thus deduce, taking first the limit superior in $m,l$ and then the limit superior over $M$, that the process $K^m$ converges to some deterministic continuous process $K$.

 It is not hard to see that this limit $(Y,Z,K)$ is a solution of \eqref{eq:mainb} under the standing assumption on the generator $f$ assumed in this part.
}

\emph{Step 3: from space independent generator to space dependent generator.} We now only assume that the generator $f$ satisfies our assumptions \A{A}. We are now going to construct a solution to BSDE \eqref{eq:mainb} by using a Picard iteration. Set $(Y^0,Z^0)=(0,0)$ and define recursively $(Y^m,Z^m)$ as the unique solution of
\begin{eqnarray*}
&&Y_t^m = \xi + \int_t^T f(s,Y_s^{m-1},Z_s^{m-1}) ds - \int_t^T Z_s^m d B_s + \int_t^T D_\mu H([Y_s^m])(Y_s^m) dK_s^m,\\
&&\forall t \geq 0,\quad H([Y_t^m])\geq 0, \quad \int_t^T H([Y_s^m])d K_s^m =0, \qquad \quad m\geq 1.
\end{eqnarray*}
Let us denote for any $m\geq 0$: $\Delta Y^{m+1} = Y^{m+1}-Y^m$ and $\Delta Z^{m+1} = Z^{m+1}-Z^{m}$.

On the one hand, applying Itô's formula on $e^{\alpha t} |\Delta Y_t^{m+1}|^2$, $\alpha \geq 1$, we have
\begin{eqnarray}
&& e^{\alpha t}|\Delta Y_t^{m+1}|^2 + \int_t^T e^{\alpha s} |\Delta Z_s^{m+1}|^2 ds \notag\\
& = & -\alpha \int_t^T e^{\alpha s} |\Delta Y_s^{m+1}|^2 ds  + 2 \int_t^Te^{\alpha s} (\Delta Y_s^{m+1})\cdot(f(s,Y_s^{m},Z_s^{m}) - f(s,Y_s^{m-1},Z_s^{m-1}))ds\notag\\
&& -2  \int_t^T e^{\alpha s} (\Delta Y_s^{m+1}) \cdot (\Delta Z_s^{m+1}) d B_s
+ \int_t^T e^{\alpha s} (\Delta Y_s^{m+1}) D_\mu H([Y_s^{m+1}])(Y_s^{m+1}) d K_s^{m+1}\notag\\
&&  - \int_t^T e^{\alpha s} (\Delta Y_s^{m+1}) D_\mu H([Y_s^{m}])(Y_s^{m}) d K_s^{m}.\label{eq:dvpIto}
\end{eqnarray}
Then, using Young's inequality and arguing as in \eqref{eq:l-concplusskoro} it can be deduced that for a suitable choice of $\alpha$ that
\begin{eqnarray}\label{eq:prep:cauchy}
\e\left[e^{\alpha t} |\Delta Y_t^{m+1}|^2 +  \int_t^T e^{\alpha s} |\Delta Z_s^{m+1}|^2 ds\right] &\leq &\frac{1}{4} \e \left[ \int_t^T e^{\alpha s} \left(|\Delta Y_t^{m}|^2 +|\Delta Z_s^{m}|^2\right) ds \right].
\end{eqnarray}

Let us now \phrase. We get that
 \begin{eqnarray*}
e^{\alpha t} |Y_t^m-\TL|^2 +  \int_t^T  e^{\alpha s} |Z_s^m|^2 ds &=& e^{\alpha T}|\xi-\TL|^2 +  2\int_t^Te^{\alpha s}  (Y_s^m-\TL) \cdot  f(s,Y_s^{m-1},Z_{s}^{m-1})  ds\\
&&-\alpha \int_t^T e^{\alpha s} |Y_s^m-\TL|^2 ds-2\int_t^T e^{\alpha s}(Y_s^m-\TL) \cdot  Z_s^m d B_s \\
&&    + 2\int_t^Te^{\alpha s}  (Y_s^m-\TL)\cdot  D_\mu H([Y_s^m])(Y_s^m) d K_s^m.
\end{eqnarray*}
By using the Lipschitz property of $f$ together with Young's inequality and the Skorokhod condition, we derive, for $\alpha$ large enough:

\begin{eqnarray}\label{eq:Kmisbounded}
&&\sup_{t\leq T} \bar \e \left[ e^{\alpha t}|Y_t^m-\TL|^2\right] + \e\left[ \int_0^T e^{\alpha s} |Z_s^m|^2 ds \right] + e^{\alpha T}H(\tl)(K_T^m-K_t^m) \notag \\
&\leq &   C\left(\bar \e \left[ |\xi-\TL|^2 \right] + \bar \E\left[\int_0^T |f(s,\TL,0) |^2 ds\right]\right),
\end{eqnarray}
so that $K^m_T$ is uniformly bounded. 

Finally, coming back to \eqref{eq:dvpIto}, we can argue as in \eqref{eq:l-concplusskoro} \eqref{eq:bsdegen1} to deduce from \eqref{eq:prep:cauchy} and \eqref{eq:Kmisbounded} that $\{Y_t^m,Z_t^m\}$ is a converging sequence in $\mathcal{S}^2 \times {\rm H}^2$ that converges to some limit $\{Y,Z\}$. {\pee We obtain that $K^m$ converges to some process $K$ by using the same scheme as we did at the end of \emph{step 2}. Again, it follows from standard computations that the limit  $\{Y,Z,K\}$ is a solution \eqref{eq:mainb}.  

}
\end{proof} 

{\color{black}

\begin{proof}[Proof of {\color{black} Proposition \ref{lemmemomenY}}]
Applying It\^o's formula to $e^{\alpha t}|Y_t|^p$ for some $p\geq 2$ we have
\begin{eqnarray*}
&&e^{\alpha t} |Y_t|^p +   \frac{p(p-1)}{2}\int_t^T  e^{\alpha s} |Y_s|^{p-2}  |Z_s|^2 ds \\
&= &   | \xi|^p +  p \int_t^T e^{\alpha s}  |Y_s|^{p-2}(Y_s) \cdot  f(s,Y_s,Z_s) ds -p \int_t^T  e^{\alpha s} |Y_s|^{p-2}(Y_s) \cdot  Z_s d B_s\\
&&     +p \int_t^T e^{\alpha s}  |Y_s|^{p-2}(Y_s) \cdot D_\mu H([Y_s])(Y_s) d K_s - \alpha \int_t^T e^{\alpha s }|Y_s|^p ds.\\
\end{eqnarray*}
{\pee On the one hand,} Lipschitz continuity assumption on $f$ together with Young's inequality yields
\begin{eqnarray*}
&& \int_t^T e^{\alpha s} |Y_s|^{p-2}(Y_s)\cdot f(s,Y_s,Z_s) ds\notag\\ 
 &\leq& {\pee C_{p} }\left\{ {\pee \int_t^T  |f(s,0,0)|^p ds } +  \int_t^T \left(1+\frac{1}{\epsilon} \right) e^{\alpha s}|Y_s|^{p}  + \epsilon e^{\alpha s}|Y_s|^{p-2}  |Z_s|^2ds\right\},
\end{eqnarray*}
for any $\epsilon>0$. {\pee On the other hand, still from Young's inequality we have
\begin{eqnarray*}
\sup_{t\leq T}\int_t^T e^{\alpha s}|Y_s|^{p-2}(Y_s) \cdot D_\mu H([Y_s])(Y_s) d K_s
&\leq & \int_0^T  e^{\alpha s} \frac{p}{p-1} |Y_s|^{p}+ \frac{1}{p}|D_\mu H([Y_s])(Y_s)|^p d K_s.
\end{eqnarray*}
Choosing $\epsilon$ small enough and {\pee then} $\alpha$ large enough gives
\begin{eqnarray}\label{uneqinterdepluspourmomentp}
& & e^{\alpha t} |Y_t|^p +   \frac{p(p-1)}{{\pee 4}} \int_t^T  e^{\alpha s} |Y_s|^{p-2} |Z_s|^2 ds\notag\\
& \leq &  | \xi|^p+   C_p \bigg\{ \int_t^T e^{\alpha s}  |Y_s|^{p} + |D_\mu H([Y_s])(Y_s)|^p d K_s+ {\pee \int_t^T  |f(s,0,0)|^p ds }\bigg\}\\\notag
&&-p\int_t^T  e^{\alpha s} |Y_s|^{p-2}(Y_s) \cdot \ Z_s d B_s.
\end{eqnarray}
Taking the expectation in the above inequality and using Gronwall's lemma applied to the continuous maps $s\mapsto K_s$ (see Lemma 4 in \cite{J64} or Theorem 17.1 in \cite{BS13}) eventually lead to
\begin{eqnarray*}
& &\sup_{t\leq T}\E\left[ e^{\alpha t} |Y_t|^p +   \frac{p(p-1)}{{\pee 4}} \int_t^T  e^{\alpha s} |Y_s|^{p-2} |Z_s|^2 ds\right]\notag\\
& \leq &C_{p,T} \bigg\{ \E[ | \xi|^p]+   \H_pK_T+  \E\left[\int_0^T |f(s,0,0)|^p ds\right]  \bigg\}.
\end{eqnarray*}
Coming back to \eqref{uneqinterdepluspourmomentp}, taking the supremum in time, the expectation and using then BDG' inequality and Young's inequality yield, together with the above estimate, to the result.
}

\end{proof}

}

\subsection{Interacting particle system constrained in mean field}\label{SEC_MF_FOR_BSDE}

Let us consider the following Skorokhod problem in mean field:
\begin{equation}\label{eq:backward:part}\left \lbrace\begin{array}{ll}
\displaystyle Y_t^i = \tilde \xi^i  + \int_t^T f(s,Y_s^i,Z_s^{i,i}) ds - \int_t^T \sum_{j=1}^N Z_s^{i,j} d B_s^j + \int_t^T D_\mu H(\mu_s^N)(Y_s^i) d K_s^N,\\
\displaystyle \forall t \in {\yin [0,T]}:\quad \mu_t^N = \frac{1}{N} \sum_{i=1}^N \delta_{Y_t^i},\quad H(\mu_t^N) \geq 0,\quad \int_0^T H(\mu_s^N) d K_s^N =0,\quad 1 \leq i \leq N,
\end{array}\right.
\end{equation}
where for each $i,j,$ $Z_s^{i,j}$ is a $n\times d$ matrix, $\{B^i\}_{1\leq i \leq N}$ are $N$ independent $d$-dimensional Brownian motions and $K^N$ is a continuous non decreasing process. The terminal conditions $\{\tilde \xi^i\}_{1 \leq i \leq N}$ are $\mathcal{F}_T^i$-measurable ($\{\mathcal{F}_t^i\}$ being the augmented natural filtration of $B^i$) independent r.v. having second order moment and satisfying $H\left( N^{-1} \sum_{i=1}^N \delta_{\tilde \xi^i}\right) \geq 0$.

\begin{rem}\quad
Making the analogy with classical mean field approximation of McKean-Vlasov processes, a natural choice for the family of terminal conditions $\{\tilde \xi^i\}$ is the family $\{\xi^i\}$ of copies of the r.v. $\xi$ in \eqref{eq:mainb}. Nevertheless, the condition $H([\xi]) \geq 0$ is not sufficient to ensure that \eqref{eq:backward:part} is indeed a solution of the Skorokhod problem in mean field since it does not imply $H\left( N^{-1} \sum_{i=1}^N \delta_{\xi^i}\right) \geq 0$. To overcome the problem, we may work only on the set $\Omega_N = \left\{H\left( N^{-1} \sum_{i=1}^N \delta_{\xi^i}\right) \geq -\eta_N\right\}$ which becomes, for a suitable choice of $\eta_N$, of full measure when $N\to + \infty$. This hence leads to obtain an asymptotic solution to the Skorokhod problem in mean field, but not for any $N$ {\color{black}(see the approach in paragraph \ref{subsec:A_mean_field_limit_cond})}. In this part, we decide to tackle the problem from a different point of view: we show how to construct, from the family of $\{\xi^i\}$, a family $\{\tilde \xi^i\}$ satisfying the Skorokhod condition and whose empirical measure tends to $[\xi]$, {\pee provided some additional integrability conditions on the data}. 

{\pe By ``some additional integrability conditions'', we mean that we assume fourth-order moment on the data $\xi$ and $\ld H$. In comparison with the assumptions needed in the forward case (see paragraph \ref{subsec:A_mean_field_limit_cond}), such conditions may appear to be the price to pay to solve the Skorokhod problem for each $N$. It seems to us that this is not the case, but something more related to the backward setting. For instance, still in comparison with the result obtained in the forward case, the set $\Omega_N$ is not independent of the $B^i$ and it is \emph{a priori} quite intricate to obtain higher order moment on the process $K^N$ without any additional assumptions on $\ld H$ (recalling the lack of integrability of $Z$).}
\end{rem}

\subsubsection*{Initialization, well posedness of the particle system constrained in mean field and estimates.} 
We first state the following Lemma which shows how to construct the $\{\tilde \xi^i\}$ from the $\{\xi^i\}$.

\begin{lemme}\label{lem:rateconvinitialcondition} Suppose that assumptions \A{A} hold. Given $N$ copies $\{\xi^i\}$ of $\xi$, there exists a family of random variable $\{\tilde \xi^i\}$ satisfying 
$$H\left(\frac{1}{N}\sum_{i=1}^N \delta_{\tilde \xi^i}\right) \geq 0,$$
and if we assume in addition that {\pe $\M_4([\xi]) + \H_4$ is finite} then there exists $C:= C(\A{A},\M_4([\xi]),\H_4) >0$ such that
\begin{eqnarray} 
\e\left[\frac{1}{N}\sum_{i=1}^N |\tilde \xi^i  - \xi^i|^2\right]  \leq  C\e^{1/2}\left[W_2^2(\bar \mu_T^N,[\xi])\right], \quad \bar \mu_T^N:=\frac{1}{N} \sum_{i=1}^N \delta_{\xi^i}.
\end{eqnarray}
\end{lemme}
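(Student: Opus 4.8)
The goal is to project the i.i.d. sample $(\xi^i)_{1\le i\le N}$ onto the constraint set $\mathcal{O}_N=\{(x_1,\dots,x_N):H(N^{-1}\sum_i\delta_{x_i})\ge 0\}$ and to control the size of the perturbation. The natural construction is to follow the gradient flow of the map $g^N(\mathbf{x})=H(N^{-1}\sum_i\delta_{x_i})$: for $\mathbf{x}=(\xi^1,\dots,\xi^N)$, solve the ODE
\begin{equation*}
\dot{\mathbf{x}}(t)=\nabla g^N(\mathbf{x}(t)),\qquad \mathbf{x}(0)=(\xi^1,\dots,\xi^N),
\end{equation*}
and set $\tilde\xi^i=x_i(t^\ast)$ where $t^\ast$ is the first time $g^N(\mathbf{x}(t))=0$ (with $t^\ast=0$ if already $g^N\ge 0$). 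Recall from \cite{CDLL} that $\partial_{x_i}g^N(\mathbf{x})=\frac1N\ld H(\mu^N_{\mathbf{x}})(x_i)$, so along the flow $\frac{d}{dt}g^N(\mathbf{x}(t))=|\nabla g^N(\mathbf{x}(t))|^2=\frac{1}{N^2}\sum_i|\ld H(\mu^N_{\mathbf{x}(t)})(x_i)|^2=\frac1N\int|\ld H(\mu^N_{\mathbf{x}(t)})|^2\,d\mu^N_{\mathbf{x}(t)}\ge 0$, so $t\mapsto g^N(\mathbf{x}(t))$ is nondecreasing; moreover, as long as $g^N(\mathbf{x}(t))\le 0$ (and once $g^N\le 0$ one can check $-\eta\le g^N\le 0$ on the relevant interval once $N$ is large, using that $\xi^i$ are close to $[\xi]$ which satisfies $H([\xi])\ge 0$), assumption \A{A2}--\eqref{eq:bilip_CV} gives $\frac{d}{dt}g^N(\mathbf{x}(t))\ge \beta^2/N$. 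Hence the hitting time satisfies $t^\ast\le \frac{N}{\beta^2}(g^N(\mathbf{x}(0)))^-$.

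\textbf{Step 1: control of the stopping time.} From the previous bound, $t^\ast\le \frac{N}{\beta^2}(H(\bar\mu^N_T))^-$. Since $H([\xi])\ge 0$ and $H$ is $M$-Lipschitz for $W_2$ (by \eqref{boundHback}), $(H(\bar\mu^N_T))^-=(H(\bar\mu^N_T)-H([\xi]))^-\le M\,W_2(\bar\mu^N_T,[\xi])$. Therefore $N^{-1}t^\ast\le \frac{M}{\beta^2}W_2(\bar\mu^N_T,[\xi])$.

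\textbf{Step 2: control of the displacement.} Along the flow, $|x_i(t^\ast)-x_i(0)|\le \int_0^{t^\ast}|\partial_{x_i}g^N(\mathbf{x}(s))|\,ds=\frac1N\int_0^{t^\ast}|\ld H(\mu^N_{\mathbf{x}(s)})(x_i(s))|\,ds$. Squaring and using Cauchy--Schwarz in $s$,
\begin{equation*}
\frac1N\sum_i|\tilde\xi^i-\xi^i|^2\le \frac{t^\ast}{N^3}\sum_i\int_0^{t^\ast}|\ld H(\mu^N_{\mathbf{x}(s)})(x_i(s))|^2\,ds
=\frac{t^\ast}{N}\int_0^{t^\ast}\Big(\int|\ld H(\mu^N_{\mathbf{x}(s)})|^2\,d\mu^N_{\mathbf{x}(s)}\Big)ds\le \frac{M^2}{N}(t^\ast)^2,
\end{equation*}
using the uniform bound \eqref{boundHback}. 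By Step 1, $(t^\ast)^2/N\le N\big(\frac{M}{\beta^2}\big)^2 W_2^2(\bar\mu^N_T,[\xi])$, which is too lossy; instead write $(t^\ast)^2/N = N^{-1}t^\ast\cdot t^\ast\le \frac{M}{\beta^2}W_2(\bar\mu^N_T,[\xi])\cdot t^\ast$, and note $t^\ast\le \frac{M N}{\beta^2}W_2(\bar\mu^N_T,[\xi])$. This still carries an $N$; the resolution is that $W_2(\bar\mu^N_T,[\xi])\to 0$ a.s. so that $t^\ast$ is bounded (indeed, once $W_2(\bar\mu^N_T,[\xi])$ is small, $(H(\bar\mu^N_T))^-$ is small, and one gets $-\eta\le g^N\le 0$ so the $\beta^2$ lower bound applies; and the stopping time then satisfies $t^\ast\le C$ uniformly). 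Re-estimating: $\frac1N\sum_i|\tilde\xi^i-\xi^i|^2\le M^2 (t^\ast)^2/N\le M^2 \frac{M}{\beta^2}W_2(\bar\mu^N_T,[\xi])\cdot (t^\ast/N)\cdot N$... The correct bookkeeping is $\frac1N\sum_i|\tilde\xi^i-\xi^i|^2\le \frac{M^2}{N}(t^\ast)^2\le M^2\big(\frac{M}{\beta^2}\big)^2 W_2^2(\bar\mu^N_T,[\xi])$ directly from $t^\ast\le \frac{MN}{\beta^2}W_2$ ... no: $(t^\ast)^2/N\le \frac{M^2N^2/\beta^4\cdot W_2^2}{N}=\frac{M^2N}{\beta^4}W_2^2$. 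So one genuinely needs the a priori boundedness of $t^\ast$, i.e. combine the two bounds $t^\ast\le \frac{MN}{\beta^2}W_2$ AND $t^\ast\le C$ (deterministic, from Remark \ref{point_inside_O}(ii): $\{H\ge-\eta\}$ effectively bounded and the flow must reach $\{g^N\ge 0\}$ in bounded time). Then $\frac1N\sum_i|\tilde\xi^i-\xi^i|^2\le \frac{M^2}{N}t^\ast\cdot t^\ast\le \frac{M^3 C}{\beta^2}W_2(\bar\mu^N_T,[\xi])$, pointwise.

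\textbf{Step 3: taking expectations.} Taking $\E$ and applying Cauchy--Schwarz (or Jensen) gives $\E[\frac1N\sum_i|\tilde\xi^i-\xi^i|^2]\le C\,\E[W_2(\bar\mu^N_T,[\xi])]\le C\,\E^{1/2}[W_2^2(\bar\mu^N_T,[\xi])]$, which is exactly the claimed estimate, with $C=C(\A{A},\M_4([\xi]),\H_4)$ — the fourth-moment hypothesis entering only to guarantee (via the Fournier--Guillin type rates, cf. Lemma \ref{ConvPart} and \cite{FG15}) that $\E[W_2^2(\bar\mu^N_T,[\xi])]$ is finite and small, and to justify the a priori bound on $t^\ast$ being uniform in $N$ with controlled moments.

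\textbf{Main obstacle.} The delicate point is making rigorous the a priori (deterministic-in-$N$) bound on the hitting time $t^\ast$: one must show that the gradient flow of $g^N$ reaches the set $\{g^N\ge 0\}$ in a time bounded independently of $N$, which requires combining \eqref{eq:bilip_CV} (the $\beta^2$ lower bound on $\int|\ld H|^2d\mu$ in the strip $-\eta\le H\le 0$) with the fact that the flow stays in that strip — this in turn uses that $g^N(\mathbf{x}(0))=H(\bar\mu^N_T)\ge -\eta$ with high probability (which follows from $H([\xi])\ge 0$, Lipschitz continuity of $H$, and concentration of $\bar\mu^N_T$ toward $[\xi]$). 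Handling the low-probability event $\{H(\bar\mu^N_T)<-\eta\}$ — on which one may simply take $\tilde\xi^i$ to be any fixed deterministic configuration in $\mathcal{O}_N$ (which exists by Remark \ref{point_inside_O}(i) pushing $[\xi]$ forward by $\ld H$) — and bounding its contribution via the fourth moments is the remaining technical piece.
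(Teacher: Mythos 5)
Your construction is the same as the paper's: transport the sample along the direction $\ld H$ until the constraint is reached, bound the travel time using \eqref{eq:bilip_CV}, bound the displacement using \eqref{boundHback}, and treat separately the event on which the empirical measure starts too far from the constraint set. The execution, however, has two problems. The first is a factor-of-$N$ slip in Step~2 which is responsible for the entire confused digression there: from $|x_i(t^\ast)-x_i(0)|^2\le \frac{t^\ast}{N^2}\int_0^{t^\ast}|\ld H(\mu^N_{\mathbf{x}(s)})(x_i(s))|^2\,ds$ one gets
\[
\frac1N\sum_i|\tilde\xi^i-\xi^i|^2\;\le\;\frac{t^\ast}{N^2}\int_0^{t^\ast}\Bigl(\int|\ld H(\mu^N_{\mathbf{x}(s)})|^2\,d\mu^N_{\mathbf{x}(s)}\Bigr)ds\;\le\;\frac{M^2\,(t^\ast)^2}{N^2},
\]
not $M^2(t^\ast)^2/N$. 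Combined with your Step~1 bound $t^\ast/N\le \beta^{-2}\bigl(H(\bar\mu^N_T)\bigr)^-\le M\beta^{-2}W_2(\bar\mu^N_T,[\xi])$, this gives the pointwise estimate $\frac1N\sum_i|\tilde\xi^i-\xi^i|^2\le M^4\beta^{-4}W_2^2(\bar\mu^N_T,[\xi])$ on the event $\{H(\bar\mu^N_T)\ge-\eta\}$, and the detour about a uniform-in-$N$ a priori bound on $t^\ast$ (which you never actually establish, and which is not needed) disappears entirely.

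The second problem is that the exceptional event is not a ``remaining technical piece'': it is precisely where the hypothesis $\M_4([\xi])+\H_4<+\infty$ and the square root on the right-hand side come from, and it has to be carried out. On $\Omega_N^c=\{H(\bar\mu^N_T)<-\eta\}$ the lower bound \eqref{eq:bilip_CV} is unavailable and the flow may stall, so the paper replaces $\xi^i$ there by $\TL\sim\tl$ with $H(\tl)\ge 0$ (Remark \ref{point_inside_O}) and estimates the cost by Cauchy--Schwarz, $\E\bigl[\frac1N\sum_i|\TL-\xi^i|^2\mathbf{1}_{\Omega_N^c}\bigr]\le \p^{1/2}(\Omega_N^c)\,\frac1N\sum_i\E^{1/2}\bigl[|\TL-\xi^i|^4\bigr]$, together with $\p(\Omega_N^c)\le \eta^{-2}M^2\,\E\bigl[W_2^2(\bar\mu^N_T,[\xi])\bigr]$ from the Lipschitz continuity of $H$ and Chebyshev. (The paper in fact uses a shrinking threshold $\eta_N$ with $\eta_N^2=\E^{1/2}[W_2^2]$ to balance its good-event bound $M\eta_N^2/\beta^4$ against the bad-event one; with the corrected pointwise bound above a fixed threshold $\eta$ suffices, since the good event then contributes $C\E[W_2^2]\le C\E^{1/2}[W_2^2]$ and the bad event $CM\eta^{-1}\E^{1/2}[W_2^2]$.) In particular your assertion that the fourth-moment hypothesis enters ``only to guarantee that $\E[W_2^2]$ is finite and small'' is incorrect: it is what controls the $L^2$ cost of the replacement on the exceptional set after the Cauchy--Schwarz step. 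Once these two points are repaired, your argument closes and coincides with the paper's.
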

\begin{proof}
To guarantee that the constraint is satisfied, the main idea consists in transporting the initial condition along the normal vector up to the set of constraint. Since we only assumed that the normal vector is non zero around the zero of $H$ we have to proceed in two steps : consider first the case where the empirical measure is not "too far" from the constraint set and when it is too far away.
{\pe
Let $\Omega_N = \{ H(\bar \mu_0^N) \geq \-\eta_N\}$ where $\eta_N \to 0$ and $\eta_N^{-2}\e\left[ W_2(\bar \mu_0^N,[\xi])\right] \to 0$ {\pee and let us push forward the empirical measure $\bar \mu_T^N$ along the flow  of  $d l_t^x =  \ld H(\mu_{l_t^x}^N)(l_t^x) dt,\, l_0^x = x,\, x \in \R^{nN}$, where $\mu_{l_t^x}^N=N^{-1}\sum_{i=1}^N \delta_{l_t^{i,x}}$ and $l_t^{i,x}$ denotes the $i^{{\rm th}}$ $n$-dimensional component of $l_t^x$. For $N$ large enough we have $-\eta_N \geq -\eta$ so that on $\Omega_N$ \eqref{eq:bilip_CV} is satisfied and it is therefore possible to find a positive $\bar{t} \leq \eta_N/\beta^2$ such that }
$H(\mu_T^N) \geq 0$ {\color{black} where $\mu_T^N := \big(\R^{nN} \ni x \mapsto l^x_{\bar t} \in \R^{nN}\big)  \sharp \bar \mu_T^N$}. We write $\{\zeta^i\}$ the family of associated r.v.


Let us now handle the case when we are in $\Omega_N^c$. From Remark \ref{point_inside_O} there exists $\tl$ such that $H(\tl) \geq 0$. We then set  $\tilde \xi^i = \zeta^i \mathbf{1}_{\Omega_N} + \TL \mathbf{1}_{\Omega_N^c}$. We have
\begin{eqnarray*}
\e\left[\frac{1}{N}\sum_{i=1}^N |\tilde \xi^i  - \xi^i|^2\right] \leq  M\frac{\eta_N^2}{\beta^4} + \E^{1/2}\left[\mathbf{1}_{\Omega_N^c}\right]\frac{1}{N}\sum_{i=1}^N \e^{1/2}\left[|\TL  - \xi^i|^4\right],
\end{eqnarray*}
thanks to Cauchy-Schwarz inequality. Since $H([\xi]) \geq 0$,
\begin{eqnarray*} 
\mathbb{P}(\Omega_N^c) \leq \mathbb{P}\left(|H(\bar \mu_T^N)-H([\xi])| \geq \eta_N \right) \leq \frac{\E\left[W_2^2(\bar \mu_T^N,[\xi])\right]}{\eta_N^2},
\end{eqnarray*}
we have, choosing $\eta_N^2 = \E^{1/2}\left[W_2^2(\bar \mu_T^N,[\xi])\right]$, that there exists a $C:=C\left(\A{A},\M_4([\xi]),\H_4\right)>0$ such that
\begin{eqnarray*} 
\e\left[\frac{1}{N}\sum_{i=1}^N |\tilde \xi^i  - \xi^i|^2\right]  \leq  C\e^{1/2}\left[W_2^2(\bar \mu_0^N,[\xi])\right].
\end{eqnarray*}
}
\end{proof}

Next, we have the following result.
\begin{prop}
Suppose that assumptions \A{A} hold and that the initial data $\{\tilde \xi^i\}$ in \eqref{eq:backward:part} are given by Lemma \ref{lem:rateconvinitialcondition}. Then,  the system \eqref{eq:backward:part}  is well posed.
\end{prop}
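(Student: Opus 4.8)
The plan is to recognize that, once the terminal conditions $\{\tilde\xi^i\}$ are constructed so that $H(N^{-1}\sum_i\delta_{\tilde\xi^i})\geq 0$, the system \eqref{eq:backward:part} is nothing but a finite-dimensional reflected BSDE in $(\R^n)^N$ whose reflection is along the inward normal to the (time-independent) convex-type constraint set
$$
{\mathcal O}_N=\Bigl\{(y_1,\dots,y_N)\in(\R^n)^N,\; H\Bigl(\tfrac1N\sum_{i=1}^N\delta_{y_i}\Bigr)>0\Bigr\}.
$$
First I would record, exactly as in Remark \ref{point_inside_O} and formula \eqref{concecava}, that the finite-dimensional projection $H^N(\mathbf{y})=H(N^{-1}\sum_i\delta_{y_i})$ is concave and $C^2$, that $D_{y_i}H^N(\mathbf{y})=\tfrac1N\ld H(\mu^N_{\mathbf{y}})(y_i)$, and that $\int|\ld H|^2d\mu^N_{\mathbf{y}}$ is bounded above by $M^2$ and below by $\beta^2$ on the boundary of ${\mathcal O}_N$ by \eqref{boundHback}--\eqref{eq:bilip_CV}; this is precisely what makes $-(\ld H(\mu^N)(y_1),\dots,\ld H(\mu^N)(y_N))$ a nondegenerate outward normal vector field on $\partial{\mathcal O}_N$. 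The driver $(\mathbf{y},\mathbf{z})\mapsto (f(s,y_i,z^{i,i}))_{1\le i\le N}$ is Lipschitz uniformly in $(s,\omega)$ by \A{A1}, and the terminal condition $(\tilde\xi^i)_i$ is square integrable and lies in $\overline{{\mathcal O}}_N$ by Lemma \ref{lem:rateconvinitialcondition}.

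\textbf{Key steps.} The plan is then: (i) verify that ${\mathcal O}_N$ is a (non-empty, by Remark \ref{point_inside_O}(i) lifted to the empirical level) open set with smooth enough boundary, so that the theory of BSDEs reflected at the boundary of a convex (or, more generally, of a smooth domain satisfying the uniform exterior-sphere / normal-cone condition) applies — here one invokes the concavity of $H^N$ to get convexity-type one-sided estimates for the reflection term, mirroring inequality \eqref{eq:LC_CV}; (ii) quote the corresponding existence and uniqueness result for multidimensional reflected BSDEs in a convex domain with Lipschitz driver (Gegout-Petit–Pardoux type, or the penalization construction of \cite{LiSz,LiMeSz} already used in the paper) to obtain a unique triple $(\mathbf{Y},\mathbf{Z},K^N)$ with $K^N$ continuous nondecreasing, $K^N_0=0$, $\mathbf{Y}_t\in\overline{{\mathcal O}}_N$ for all $t$, and the Skorokhod condition $\int_0^T\mathbf{1}_{\{H(\mu^N_s)>0\}}dK^N_s=0$; (iii) rewrite the $(\R^n)^N$-valued equation componentwise, using $D_{y_i}H^N=\tfrac1N\ld H(\mu^N)(y_i)$ and absorbing the factor $\tfrac1N$ into a reparametrization of $K^N$, to recover exactly \eqref{eq:backward:part}. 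Uniqueness of $(\mathbf{Y},\mathbf{Z})$ follows from the standard Itô/Gronwall argument combined with the concavity inequality (as in the proof of Theorem \ref{thm:existence_uniqueness}); uniqueness of $K^N$ follows by the same nondegeneracy-of-the-normal argument used there, i.e. from \eqref{eq:bilip_CV} applied to the empirical measure, since $H(\mu^N_s)=0$ $dK^N$-a.e.

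\textbf{Main obstacle.} The delicate point is not the abstract well-posedness — which is by now classical for reflected BSDEs in a fixed convex domain — but making sure the geometry of ${\mathcal O}_N$ genuinely falls under that theory \emph{uniformly enough that it is legitimate to cite it}: one must check that $\partial{\mathcal O}_N$ is $C^2$ (or at least satisfies a uniform interior/exterior ball condition) with a normal direction that does not degenerate, and this is exactly where \eqref{eq:bilip_CV} and the concavity \A{A3} (hence convexity of ${\mathcal O}_N$) are used; away from the band $\{-\eta\le H\le 0\}$ the constraint is simply inactive. I would therefore spend the bulk of the argument on this reduction, and then conclude by one sentence citing the existence/uniqueness theorem for BSDEs reflected in a convex domain with Lipschitz generator, noting that the square-integrability of $(\mathbf{Y},\mathbf{Z},K^N)$ is obtained along the way by the same Itô-on-$|\mathbf{Y}_t-\TL|^2$ computation (with $\TL$ a point deep inside ${\mathcal O}_N$, i.e. $H(\tl)>0$, furnished by Remark \ref{point_inside_O}) that was carried out in \eqref{eqKKKK}.
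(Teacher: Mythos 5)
Your proposal is correct and follows essentially the same route as the paper: the authors likewise rewrite \eqref{eq:backward:part} as a classical reflected BSDE in $(\R^n)^N$ with normal reflexion on the set $\{ \mathbf{y} : H(N^{-1}\sum_i\delta_{y_i})\geq 0\}$, use the identity $\partial_{y_i}H^N(\mathbf{y})=\frac1N \ld H(\mu^N_{\mathbf{y}})(y_i)$ to identify the reflection direction with the normal, and then invoke the Gégout-Petit--Pardoux existence and uniqueness theorem for BSDEs reflected in a convex domain. Your additional remarks on the nondegeneracy of the normal via \eqref{eq:bilip_CV} and the convexity coming from \A{A3} are exactly the standing assumptions the paper relies on implicitly when citing that result.
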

\begin{proof}
The proof of this result is not new and relies on classical results. Let us indeed remark that \eqref{eq:backward:part} reads as a classical reflected BSDE in $\R^{nN}$, with  normal reflexion in the constraint 
\begin{equation*}
{\mathcal K}=\left\{\bx = (x_1,\dots, x_N)\in (\R^n)^N, \; H\left(\frac{1}{N} \sum_{i=1}^N \delta_{x_i}\right)\geq 0\right\}. 
\end{equation*}
Indeed, let us recall that, if
\begin{equation*}
	H^N(\bx) = H\left(\frac{1}{N} \sum_{i=1}^N \delta_{x_i}\right),
\end{equation*}
 then (see \cite{CDLL})
\begin{equation*}
	\partial_{x_i} H^N(\bx) = \frac{1}{N}\, \ld H\left(\frac{1}{N} \sum_{i=1}^N \delta_{x_i}\right)(x_i).
\end{equation*}
Therefore the vector
$-\left(\ld H\left(\frac{1}{N} \sum_{i=1}^N \delta_{x_i}\right)(x_1),\dots, \ld H\left(\frac{1}{N} \sum_{i=1}^N \delta_{x_i}\right)(x_N)\right)$  is proportional to the outward normal to the set ${\mathcal K}$ at the point $(x_1,\dots, x_N)\in \partial {\mathcal K}$.
Existence and uniqueness of the solution are therefore immediate under our standing assumptions (cf. \cite{GPP:96}).\\ 
\end{proof}

\begin{lemme}[Moment estimates]\label{lem:momentestiKN:back}  
Under assumption \A{A}, for any $T>0$, there exists $C:=C(\A{A},T)>0$ such that
\begin{eqnarray*}
& & \E\left[\frac{1}{N} \sum_{i=1}^N|Y_t^i |^2  +   \frac{1}{N} \sum_{i=1}^N\int_0^T   \sum_{j=1}^N |Z_s^{i,j}|^2 ds\right]+\E[K_T^N]  \leq  C,
\end{eqnarray*}
and
\begin{equation*}
\sup_{N \geq 1} \e[(K_T^N)^2] \leq C.
\end{equation*}
\end{lemme}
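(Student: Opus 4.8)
The statement is a pair of a priori estimates for the finite particle system~\eqref{eq:backward:part}: a first-moment bound (in the empirical sense) on $Y^i$, the aggregate martingale term and $K^N_T$, and a uniform $L^2$ bound on $K^N_T$. The natural route is to mimic the proof of the analogous estimate for a single BSDE with normal reflexion in law (Proposition~\ref{lemmemomenY} and the moment estimates embedded in the proof of Theorem~\ref{thm:existence_uniqueness}), but carried out at the level of the $N$-particle system, exactly as the forward version was done in Lemma~\ref{lem:momentestiKN:back}'s forward analogue~\eqref{eq:momentestiKN_cond}. The key structural facts we will use are: the finite-dimensional projection formulas $\partial_{x_i}H^N(\mathbf{x}) = \tfrac1N \ld H(\tfrac1N\sum_j\delta_{x_j})(x_i)$ from \cite{CDLL}; the concavity of $H^N$ inherited from \A{A3} (shown via \eqref{concecava} in the proof of Remark~\ref{point_inside_O}); the uniform bound $\tfrac1N\sum_i |\ld H(\mu^N)(Y^i)|^2 \le M^2$ from \eqref{boundHback}; and the lower bound $\tfrac1N\sum_i |\ld H(\mu^N_s)(Y^i_s)|^2 \ge \beta^2$ valid $dK^N_s$-a.e.\ (since $H(\mu^N_s)=0$ there and $-\eta\le H(\mu^N_s)\le 0$) coming from \eqref{eq:bilip_CV}.

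\textbf{Step 1 (the $L^2$-in-$Y$ and $H^2$-in-$Z$ bound).} Fix a pivotal point $\tl$ with $H(\tl)>0$ (Remark~\ref{point_inside_O}(i)) and a random variable $\TL\sim\tl$, working on the enlarged space of Remark~\ref{itoprod}. Apply It\^o's formula to $e^{\alpha t}\tfrac1N\sum_{i=1}^N |Y^i_t-\TL^i|^2$, where $\TL^i$ are i.i.d.\ copies of $\TL$ independent of the $B^i$; the cross terms with $K^N$ aggregate into $\tfrac2N\sum_i (Y^i_s-\TL^i)\cdot \ld H(\mu^N_s)(Y^i_s)\,dK^N_s$, which by the $L$-concavity inequality~\eqref{eq:LCD} (applied with $x_i = Y^i_s$, $y_i=\TL^i$) is bounded above by $2\big(H(\mu^N_s)-H(\tfrac1N\sum_i\delta_{\TL^i})\big)dK^N_s + \tfrac{C}{N}\sum_i|Y^i_s-\TL^i|^2 dK^N_s$; moreover $H(\mu^N_s)=0$ $dK^N_s$-a.e.\ and $H(\tfrac1N\sum_i\delta_{\TL^i})\to H(\tl)>0$ — but to avoid dealing with the empirical fluctuation at the terminal time we instead keep $H(\tfrac1N\sum_i\delta_{\TL^i})$ on the right and absorb it, or more simply use that $H$ is $M$-Lipschitz and bounded to get a term $C\,W_2(\cdot,\tl)\,K^N_T$ that is handled by Step~3. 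Using the Lipschitz property of $f$ in $(y,z)$ and Young's inequality to absorb the $Z$-contribution into the $\int e^{\alpha s}|Z^{i,\cdot}_s|^2$ term, then choosing $\alpha$ large, taking expectations (the $dB^j$ stochastic integrals have zero mean), and applying Gronwall's lemma on the continuous nondecreasing map $s\mapsto K^N_s$ (Lemma~4 in \cite{J64} or Theorem~17.1 in \cite{BS13}) gives $\sup_{t\le T}\E\big[\tfrac1N\sum_i|Y^i_t|^2\big] + \E\big[\tfrac1N\sum_i\int_0^T\sum_j|Z^{i,j}_s|^2 ds\big] \le C(1+\E[K^N_T])$ — provided we also control $K^N_T$.

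\textbf{Step 2 (the bound on $\E[K^N_T]$ and $\E[(K^N_T)^2]$).} Apply It\^o's formula to $H^N(\mathbf{Y}_t)=H(\mu^N_t)$ using the derivative formulas of \cite{CDLL} recalled before \eqref{I5}: this produces, besides bounded $ds$-terms (controlled by \eqref{boundHback}, the boundedness of $H$, the Lipschitz/growth assumptions on $f$, and the exchangeability bounds \eqref{dskfh1}--\eqref{dskfh2} type estimates — here one needs $\tfrac1N\sum_i|\ld H(\mu^N_s)(Y^i_s)|\le M$ and $\tfrac1{N^2}\sum_{i,j}|D^2_{\mu\mu}H(\mu^N_s)(Y^i_s,Y^j_s)|\le M$ from the full-$C^2$ hypothesis, though note \A{A2} only assumes $\int|\ld H|^2\,d\mu\le M^2$, so one uses Cauchy--Schwarz to bound $\int|\ld H|\,d\mu\le M$), the martingale term $\tfrac1N\sum_i\int_0^T \ld H(\mu^N_s)(Y^i_s)\cdot Z^{i,i}_s\,dB^i_s$, and the crucial term $\int_0^T\big(\tfrac1N\sum_i|\ld H(\mu^N_s)(Y^i_s)|^2\big)dK^N_s \ge \beta^2 K^N_T$. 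Rearranging yields $\beta^2 K^N_T \le H(\mu^N_0) - H([\xi]_{\text{emp}}) + C T + \int_0^T(\text{growth in }|Y^i|,|Z^{i,i}|) + |\text{martingale}|$. Taking expectations and using Step~1 (or a bootstrap: first get $\E[K^N_T]\le C$ using only the $L^1$ Cauchy--Schwarz bound on the $ds$-terms involving $|Z|$, then feed back into Step~1) gives $\E[K^N_T]\le C$; then squaring, using BDG on the martingale term (whose bracket is controlled by $\int_0^T\tfrac1N\sum_i|Z^{i,i}_s|^2 ds$, bounded in $L^1$ by Step~1) and Young's inequality, gives $\sup_N\E[(K^N_T)^2]\le C$.

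\textbf{The main obstacle.} The delicate point is the interplay between Steps~1 and~2: each of the two estimates a priori needs the other (the $Y,Z$-bound needs $\E[K^N_T]$; the $K^N_T$-bound needs the $Z$-bound to control the growth terms). The clean way to break the loop is to first derive a \emph{self-contained} bound on $\E[K^N_T]$ by the pivotal-point It\^o expansion of $e^{\alpha t}\tfrac1N\sum_i|Y^i_t-\TL^i|^2$ (as in \eqref{eqKKKK}/\eqref{eq:Kmisbounded} of the single-equation proof), where the term $e^{\alpha T}H(\tl)(K^N_T-K^N_t)$ appears with a \emph{definite sign} on the left after using concavity and Skorokhod, and everything else is bounded by $\E[|\xi|^2]+\E[\int_0^T|f(s,\TL,0)|^2 ds]$ plus a Gronwall argument; here one must be careful that the empirical terminal condition $\tfrac1N\sum_i\delta_{\tilde\xi^i}$ satisfies the constraint, which is exactly guaranteed by Lemma~\ref{lem:rateconvinitialcondition}, and that $\tfrac1N\sum_i\E[|\tilde\xi^i|^2]$ is bounded uniformly in $N$ (again from Lemma~\ref{lem:rateconvinitialcondition} and $\M_2([\xi])<\infty$). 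A secondary technical nuisance is that, as emphasized repeatedly in the paper, the $Z^{i,j}$ lack the integrability needed to apply the chain rule on Wasserstein space directly; but at the finite-$N$ level we are applying the \emph{classical} finite-dimensional It\^o formula to $H^N(\mathbf{Y}_t)$, so this is not an issue here — the only care needed is the uniform-in-$N$ control of the second-order terms, which is handled by \eqref{dskfh1}--\eqref{dskfh2}-type bounds and the fact that $1/N^2\cdot N = 1/N \to 0$ for the diagonal $D^2_{\mu\mu}$ correction.
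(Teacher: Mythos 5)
Your proposal follows the paper's proof almost exactly: a pivotal measure $\tl$ with $H(\tl)>0$, It\^{o}'s formula applied to $e^{\alpha t}\frac1N\sum_i|Y^i_t-\TL|^2$ where concavity plus the Skorokhod condition make the term $2H(\tl)(K^N_T-K^N_t)$ appear on the left with a favourable sign --- which delivers the $Y$, $Z$ and $\E[K^N_T]$ bounds in a single stroke, with no circularity to break --- followed by the classical finite-dimensional It\^{o} expansion of $H^N(\mathbf{Y}_t)$, the lower bound $\beta^2$ on the support of $dK^N$, squaring and BDG to get $\sup_N\E[(K^N_T)^2]\le C$. This is the paper's argument.

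Two local points in your write-up would need repair. First, in Step~2 you describe the second-order It\^{o} terms for $H^N(\mathbf{Y}_t)$ as ``bounded $ds$-terms'' controlled by \eqref{dskfh1}--\eqref{dskfh2}-type estimates and the $1/N^2$ prefactor; that is the forward-case reasoning and fails here, because these terms carry the unbounded factor ${\bf Z}_u{\bf Z}_u^*$ rather than a bounded $\sigma\sigma^*$. They must instead be discarded by sign: $D^2_{\bx}H^N\le 0$ by \A{A3} (via \eqref{concecava}), which is exactly what \eqref{ressertpourmoments} and \eqref{interesti2} do. Second, your use of i.i.d.\ copies $\TL^i$ (rather than a single $\TL$ on the enlarged space of Remark \ref{itoprod}, integrated out with $\tilde\E$) replaces $H(\tl)$ by the random quantity $H(\frac1N\sum_i\delta_{\TL^i})$, which need not be positive for finite $N$; likewise, invoking the semiconcave inequality \eqref{eq:LCD} reintroduces the quadratic error $\frac CN\sum_i|Y^i_s-\TL^i|^2\,dK^N_s$ and hence the very circularity you worry about, whereas the genuinely concave inequality \eqref{eq:LC_CV} of \A{A3} has no such term. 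Your ``main obstacle'' paragraph does identify the correct resolution, so these are fixable slips rather than a wrong route.
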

\begin{proof}

{\pe Let $p\geq 2$, and let us \phrase\ (recalling that here, the filtration has also being enlarged to take into account the $N$ independent Brownian motions). Applying Itô's formula on $e^{\alpha t}|Y_t^i-\TL|^2$ we obtain, after summing over all the particles, that

 \begin{eqnarray*}
&&\frac{1}{N} \sum_{i=1}^N e^{\alpha t} |Y_t^i-\TL|^2 +   \frac{1}{N} \sum_{i=1}^N\int_t^T  e^{\alpha s}  \sum_{j=1}^N |Z_s^{i,j}|^2 ds \\
&= & {\color{black} \frac{1}{N} \sum_{i=1}^N  |\tilde \xi^i-\TL|^2} +  \frac{2}{N} \sum_{i=1}^N\int_t^T e^{\alpha s}  (Y_s^i-\TL) \cdot  f(s,Y_s^i,Z_s^{i,i}) ds\\
&&   -\frac{2}{N} \sum_{i=1}^N\int_t^T  e^{\alpha s} (Y_s^i-\TL) \cdot \sum_{j=1}^N Z_s^{i,j} d B_s^j  +\frac{2}{N} \sum_{i=1}^N \int_t^T e^{\alpha s} (Y_s^i-\TL) \cdot D_\mu H(\mu_s^N)(Y_s^i) d K_s^N\\
&& - \frac{\alpha}{N} \sum_{i=1}^N \int_t^T e^{\alpha s }|Y_s^i-\TL|^2 ds.\\
\end{eqnarray*}
From $L$-concavity and Skorokhod condition we have,
\begin{eqnarray}\label{eq:kpourpartconc1}
{\pee \tilde \E}\left[\frac{1}{N} \sum_{i=1}^N  \int_t^T e^{\alpha s} (Y_s^i -\TL)\cdot \ld H(\mu_s^N)(Y_s^i) dK_s^N\right] \leq -\frac{1}{N} \sum_{i=1}^N \int_t^T e^{\alpha s}H(\tl)dK_s^N,
\end{eqnarray}
while Lipschitz continuity assumption on $f$ together with Young's inequality yield\\
{\pee
\begin{eqnarray}
&& {\yin 2}\int_t^T e^{\alpha s} (Y_s^i -\TL)\cdot f(s,Y_s^i,Z_s^{i,i}) ds\notag\\ 
 &\leq& C \left\{ \int_t^T{\yin e^{\alpha s}} |f(s,\TL,0)|^2 ds + \int_t^T \left(1+\frac{1}{\epsilon} \right) e^{\alpha s}|Y_s^i -\TL|^{2}  + \epsilon e^{\alpha s}  |Z_s^{i,i}|^2ds\right\},\label{eq:estifpourK1}
\end{eqnarray}
for any $\epsilon>0$. Also, thanks to the construction in Lemma \ref{lem:rateconvinitialcondition} we have
\begin{eqnarray}
\frac 1N \sum_{i=1}^N |\tilde \xi^i - \TL|^2 \leq   |\xi^i|^2 + |\TL|^2 + \bar t M^2.
\end{eqnarray}
}

Choosing $\epsilon$ small enough and $\alpha$ large enough hence {\color{black} gives}
\begin{eqnarray}\label{esti:inter:pivot}
& &\frac{1}{N} \sum_{i=1}^N e^{\alpha t} {\pee \tilde \E}[|Y_t^i - \TL|^2] +   \frac{1}{{\yin 2}N} \sum_{i=1}^N\int_t^T  e^{\alpha s} \sum_{j=1}^N |Z_s^{i,j}|^2 ds+ 2H(\tl)(K_T^N-K_t^N) \notag\\
& \leq & \frac{e^{\alpha T}}{N} \sum_{i=1}^N {\pee \tilde \E}[ |\tilde \xi^i-\TL|^2]+ C{\pee \int_t^T {\yin e^{\alpha s}}{\pee \tilde \E}[|f(s,\TL,0)|^2] ds} -\frac{2}{N} \sum_{i=1}^N\int_t^T  e^{\alpha s} (Y_s^i-\TL) \cdot \sum_{j=1}^N Z_s^{i,j} d B_s^j.
\end{eqnarray}
Taking expectation on both side of the above estimate {\pee we can deduce} the first estimate. 
Let us now prove the second assertion. We come back to \eqref{ressertpourmoments} and obtain
\begin{eqnarray}
&& {\color{black} H( \mu^N_t)} - H( \mu^N_0) -  \frac{1}{N}\sum_{i=1}^N \int_0^t  D_\mu H( \mu^N_u)(Y_u^i)\cdot f(u,Y_u^i,Z_u^{i,i}) du\notag \\
&&  + \frac{1}{N}\sum_{i=1}^N \int_0^t  D_\mu H( \mu^N_u)( Y_u^i)\cdot  \sum_{j=1}^N Z_u^{i,j} dB^{{\yin j}}_u  \geq \frac{1}{N}\sum_{i=1}^N \int_0^t \left| D_\mu H ( \mu^N_u)( Y_u^i)\right|^2 dK_u^N.
\end{eqnarray}
Note that  $dK^N(\{s,\ s.t.\, H(\mu_s^N)>0\})=0$ so that by {\yin \eqref{eq:bilip_CV}} we obtain
\begin{eqnarray}
&&\beta^2 K_t^N \leq {\color{black} H( \mu^N_t)}- H( \mu^N_0) -  \frac{1}{N}\sum_{i=1}^N \int_0^t  D_\mu H( \mu^N_u)(Y_u^i)\cdot f(u,Y_u^i,Z_u^{i,i}) du\notag \\
&&  + \frac{1}{N}\sum_{i=1}^N \int_0^t  D_\mu H( \mu^N_u)( Y_u^i)\cdot  \sum_{j=1}^N Z_u^{i,j} dB^{{\yin j}}_u{\yin .} \label{interesti2}
\end{eqnarray}
{\color{black} On the one hand}
{\pee
\begin{eqnarray*}
&&\E\left[\sup_{t\leq T}\left(\frac{1}{N}\sum_{i=1}^N \int_0^t  D_\mu H( \mu^N_u)( Y_u^i)\cdot  \sum_{j=1}^N Z_u^{i,j} dB^{{\yin j}}_u\right)^2\right] \\
&\leq& \E\left[\frac{4}{N^2}\int_0^T\sum_{j=1}^N \left| \sum_{i=1}^N  |D_\mu H( \mu^N_u)( Y_u^i)|   |Z_u^{i,j}| \right|^2du\right]\\
&\leq& \E\left[\frac{4}{N^2}\int_0^T\sum_{j=1}^N \sum_{i=1}^N  \left\{|D_\mu H( \mu^N_u)( Y_u^i)|^2\right\}  \sum_{i=1}^N  \left\{ |Z_u^{i,j}|^2\right\} du\right]\\
&\leq& 4\H_2\E\left[\frac{1}{N}\sum_{i=1}^N\int_0^T \sum_{j=1}^N |Z_u^{i,j}|^2 du\right].
\end{eqnarray*}}
On the other hand, {\color{black} using the Lipschitz continuity} of $f$ and Cauchy-Schwarz{\yin 's} inequality,\\
{\pee
\begin{eqnarray*}
&&\left(\int_0^T  \frac{1}{N}\sum_{i=1}^N  D_\mu H( \mu^N_u)(Y_u^i)\cdot f(s,Y_s^i,Z_s^{i,i}) du\right)^2\\
&\leq& C(\H_2)\left\{\int_0^T |f(s,0,0)|^2ds + \int_0^T \frac{1}{N}\sum_{i=1}^N |Y_s^i|^2  + \frac{1}{N}\sum_{i=1}^N |Z_s^{i,i}|^2 ds\right\}.
\end{eqnarray*}

Hence, choosing $t=T$} 
, then squaring the expression and eventually taking the expectation lead, thanks to the two above estimates as well as the first assertion of this Lemma (using also \eqref{eq:DFL_CV}), to the result.
}
\end{proof}

\subsubsection*{The mean field limit} {\color{black}
Let $\{B^i\}_{1\leq i \leq N}$ be $N$ independent $d$-dimensional Brownian motions, $\{\xi^i\}_{1\leq i \leq N}$ be $N$ independent copies of $\xi$ and set $\{(\bar{Y}^i,\bar{Z}^i,K)\}_{1 \leq i \leq N}$ the solution of \eqref{eq:mainb} with data $\{(B^i,\xi^i)\}_{1\leq i \leq N}$ (note that $K$ does not depend on $i$ since it only depends on the law of the data, which are identically distributed thanks to Remark \ref{rem:uniquelaw}). Let $\{(Y^i,Z^i,K^N)\}_{1 \leq i \leq N}$ be the solution of \eqref{eq:backward:part} where the terminal data $\{\tilde \xi^i\}$ therein are built from the  $\{\xi^i\}_{1\leq i \leq N}$ thanks to Lemma \ref{lem:rateconvinitialcondition}. We aim at showing that the solution $\{(Y^i,Z^i,K^N)$ {\yin is close} to $\{(\bar{Y}^i,\bar{Z}^i,K)\}$.

Let us denote by $\Delta Y^i := Y^i - \bar Y^i$ and $\Delta Z^{i,j} := Z^{i,j} - \bar Z^{i,j}$ where $\bar Z^{i,j} = 0_{\R^{n\times d}}$ when $i\neq j$. We have:
}
\begin{thm}\label{th:estiparticleBSDE} 
Suppose that assumptions \A{A} hold. Then, there exist $C_T:=C(T,\A{A}) >0 $ such that
\begin{eqnarray*}
&&\e \left[\sup_{t \leq T} \frac{1}{N} \sum_{i=1}^N\Bigg\{ |\Delta Y_t^i|^2 +  \int_t^T  \sum_{j=1}^N|\Delta Z_s^{i,j}|^2 ds \Bigg\} \right] \\
&\leq & C_T  \left\{\e^{1/2} \left[\sup_{0 \leq s \leq T} W_2^2(\mu_s,\bar \mu_s^N)\right] + \E \left[\frac 1N \sum_{i=1}^N|\tilde \xi^i - \xi^i|^2\right]\right\},\\
&&\E\left[\sup_{s\leq t}\Big\{ |\Delta Y_s^i|^2 + \int_s^T  \sum_{j=1}^N|\Delta Z_u^{i,j}|^2 du \Big\}\right]\\
&\leq &  C_T \left\{\e^{1/4} \left[\sup_{t \leq T} W_2^2(\mu_s,\bar \mu_s^N)\right]+ \E^{1/2} \left[\frac 1N \sum_{i=1}^N|\tilde \xi^i - \xi^i|^2\right]\right\}.
\end{eqnarray*}
\end{thm}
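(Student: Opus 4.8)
\textbf{Proof proposal for Theorem \ref{th:estiparticleBSDE}.}
The plan is to mimic the propagation-of-chaos argument already carried out in the forward case (proof of Theorem \ref{thm:chaosS_cond}), adapting it to the backward structure. First I would write the equation satisfied by $\Delta Y^i = Y^i-\bar Y^i$ and $\Delta Z^{i,j}=Z^{i,j}-\bar Z^{i,j}$, namely
\begin{align*}
\Delta Y_t^i &= (\tilde\xi^i-\xi^i) + \int_t^T \big(f(s,Y_s^i,Z_s^{i,i})-f(s,\bar Y_s^i,\bar Z_s^{i,i})\big)\,ds - \int_t^T\sum_{j=1}^N \Delta Z_s^{i,j}\,dB_s^j\\
& \qquad + \int_t^T \ld H(\mu_s^N)(Y_s^i)\,dK_s^N - \int_t^T \ld H([\bar Y_s^i])(\bar Y_s^i)\,dK_s,
\end{align*}
where $\mu_s^N=N^{-1}\sum_i\delta_{Y_s^i}$, $\bar\mu_s^N=N^{-1}\sum_i\delta_{\bar Y_s^i}$, $\bar\mu_s=[\bar Y_s^i]$. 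I would then apply It\^o's formula to $e^{\alpha s}\,\frac1N\sum_i|\Delta Y_s^i|^2$ for a large constant $\alpha$ chosen to absorb the Lipschitz terms coming from $f$ (exactly as in the uniqueness proof of Theorem \ref{thm:existence_uniqueness}); the quadratic variation term $\frac1N\sum_{i,j}|\Delta Z_s^{i,j}|^2$ appears with a favourable sign and will be kept on the left-hand side.

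The heart of the argument is the treatment of the two $dK$-terms. Writing the reflexion contribution as
$$
\frac2N\sum_i \int_t^T e^{\alpha s}\Delta Y_s^i\cdot\ld H(\mu_s^N)(Y_s^i)\,dK_s^N - \frac2N\sum_i\int_t^T e^{\alpha s}\Delta Y_s^i\cdot\ld H(\bar\mu_s^N)(\bar Y_s^i)\,dK_s + B,
$$
with $B$ the cross term $-\frac2N\sum_i\int_t^T e^{\alpha s}\Delta Y_s^i\cdot\{\ld H(\bar\mu_s^N)(\bar Y_s^i)-\ld H(\bar\mu_s)(\bar Y_s^i)\}\,dK_s$. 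I would bound the first integral using the finite-dimensional concavity inequality \eqref{eq:LCD} together with $H(\mu_s^N)=0$ $dK^N$-a.e.\ and $H(\bar\mu_s^N)\le H(\bar\mu_s)=0$ (via Lipschitz continuity of $H$ for $W_2$ and the Skorokhod condition for $\bar\mu$), producing a term $C\,\sup_{[0,T]}W_2(\bar\mu_s^N,\bar\mu_s)K_T^N$ plus $\frac CN\sum_i\int|\Delta Y_s^i|^2dK_s^N$; symmetrically for the second one using $H(\mu_s^N)\ge0$ and $H(\bar\mu_s)=0$ $dK$-a.e. The cross term $B$ is handled by Cauchy--Schwarz, the global bound \eqref{boundHback} on $\int|\ld H(\mu)|^2d\mu$ and the stability estimate \eqref{eq:UC}, $\frac1N\sum_i|\ld H(\bar\mu_s^N)(\bar Y_s^i)-\ld H(\bar\mu_s)(\bar Y_s^i)|^2\le C W_2^2(\bar\mu_s^N,\bar\mu_s)$, giving again $\frac CN\sum_i\int|\Delta Y_s^i|^2dK_s + C\sup W_2(\bar\mu_s^N,\bar\mu_s)K_T$. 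Note that here, in contrast to the forward case, I cannot use the chain rule on Wasserstein space for $K^N$ because $Z$ lacks integrability; but at this stage only the finite-dimensional projection inequalities are needed, so this is not an obstacle. Taking expectations, using the exchangeability of the particles, the $L^2$ and $L^2$-for-$K^N$ bounds from Lemma \ref{lem:momentestiKN:back}, the analogous bound for $K$ from Theorem \ref{thm:existence_uniqueness}, and Cauchy--Schwarz on the terms $\E[K_T^N\sup W_2]$, then closing with Gronwall applied to the continuous maps $s\mapsto K_s$, $s\mapsto K_s^N$ (as in \cite{J64,BS13}), yields
$$
\sup_{t\le T}\E\Big[\tfrac1N\sum_i|\Delta Y_t^i|^2 + \tfrac1N\sum_i\int_t^T\sum_j|\Delta Z_s^{i,j}|^2ds\Big]\le C_T\Big(\E^{1/2}\big[\sup_{[0,T]}W_2^2(\bar\mu_s^N,\bar\mu_s)\big] + \E\big[\tfrac1N\sum_i|\tilde\xi^i-\xi^i|^2\big]\Big),
$$
which gives the first inequality after one more use of exchangeability. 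Finally I would replace $\bar\mu_s$ by $\bar\mu_s^N$: since $\bar\mu_s$ is the common law and $\bar Y^i$ are i.i.d., $\E[\sup_{[0,T]}W_2^2(\bar\mu_s^N,\bar\mu_s)]$ can itself be reabsorbed (or simply kept, as in the statement).

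For the pathwise (second) estimate I would go back to the It\^o expansion before taking expectations, take the supremum in $t$ and use the Burkholder--Davis--Gundy inequality on the martingale terms $\int\Delta Y_s^i\cdot\sum_j\Delta Z_s^{i,j}dB_s^j$; the stochastic integrals coming from the $\gamma_s$-weight (if one uses the exponential-weight trick $\gamma_s=\exp\{-\alpha(\delta s+H(\mu_s^N)+H(\bar\mu_s))\}$ instead of $e^{\alpha s}$, as in \eqref{eq:di_Cond}, to linearise the $dK$-terms) are controlled by \eqref{boundHback} and the exponential moments of $K^N,K$. The BDG terms produce $\E[(\int_t^T\tfrac1N\sum_i|\Delta Z_s^{i,j}|^2ds)^{1/2}]$ type contributions absorbed by Young, and the $dK$-contributions produce $\E[\sup_{[0,T]}(\tfrac1N\sum_i\E^0|\Delta Y_s^i|^2)^{1/2}(K_T^N+K_T)]$, bounded by Cauchy--Schwarz and the $L^2$-estimate just obtained; this is where the square root is lost, explaining the $\E^{1/4}$ in the final bound. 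The main obstacle I anticipate is \emph{not} any single estimate but the careful bookkeeping of the two reflexion terms so that the ``bad'' term $\frac1N\sum_i\int|\Delta Y_s^i|^2(dK_s^N+dK_s)$ is isolated with the right sign and then eliminated by Gronwall against the (random, merely continuous) clocks $K^N$ and $K$ — exactly the point where the backward setting forces us, unlike in \cite{BH17}, to use the finite-dimensional concavity \eqref{eq:LCD} rather than the Wasserstein It\^o formula, and where the moment bound $\sup_N\E[(K_T^N)^2]<\infty$ from Lemma \ref{lem:momentestiKN:back} is indispensable.
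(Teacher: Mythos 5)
Your overall architecture is the one the paper uses: It\^{o} on $e^{\alpha s}\frac1N\sum_i|\Delta Y^i_s|^2$, a three-way split of the reflection contribution, the estimate \eqref{eq:UC} for the cross term, Cauchy--Schwarz against the $L^2$ bounds on $K^N$ and $K$ from Lemma \ref{lem:momentestiKN:back}, BDG for the sup-in-time version, and the exchangeability plus Cauchy--Schwarz step on $\E\bigl[\int|\Delta Y^i_s|\,|\ld H(\mu^N_s)(Y^i_s)|(dK^N_s+dK_s)\bigr]$ that costs a square root and produces the $\E^{1/4}$ in the single-particle bound. That last mechanism you identify correctly.

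There is, however, one genuine gap: in your treatment of the first reflection term you invoke the \emph{semiconcavity} inequality \eqref{eq:LCD}, which carries the remainder $\frac CN\sum_i|y_i-x_i|^2$, and you therefore end up with the Gronwall term $\frac CN\sum_i\int_t^T|\Delta Y^i_s|^2\,dK^N_s$ against the \emph{random} clock $K^N$. This term cannot be closed. A pathwise Gronwall produces $\exp\{CK^N_T\}$, and in the backward setting $K^N_T$ has only second moments (Lemma \ref{lem:momentestiKN:back}); the exponential-moment argument of the forward case breaks down because the martingale part of $H(\mu^N_t)$ involves $Z$, which is merely in $L^2$. Your fallback, the exponential weight $\gamma_s=\exp\{-\alpha(\delta s+H(\mu^N_s)+H(\bar\mu_s))\}$, is blocked for exactly the reason you yourself note earlier: the Wasserstein chain rule for $H(\bar\mu_s)$ is unavailable, and even the purely finite-dimensional weight $\exp\{-\alpha H(\mu^N_s)\}$ generates drift and quadratic-variation terms of the form $|\Delta Y|^2|Z|^2$ that are not integrable. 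The paper escapes this by using the \emph{exact} concavity of assumption \A{A3} (inequality \eqref{eq:LC_CV}, with no remainder) in both places where the clock is $dK^N_s$ or where the comparison is between $\mu^N_s$ and $\bar\mu^N_s$: this gives directly $\frac1N\sum_i\Delta Y^i_s\cdot\ld H(\mu^N_s)(Y^i_s)\le H(\mu^N_s)-H(\bar\mu^N_s)\le H(\mu_s)-H(\bar\mu^N_s)\le MW_2(\mu_s,\bar\mu^N_s)$, with no $|\Delta Y|^2$ remainder at all. The only surviving Gronwall term, $\frac CN\sum_i\int|\Delta Y^i_s|^2\,dK_s$, comes from Young's inequality in the cross term and runs against the \emph{deterministic}, Lipschitz clock $K$ of the limiting equation, where the Gronwall lemma of \cite{J64,BS13} applies. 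Replace \eqref{eq:LCD} by the exact concavity \A{A3} in your bounds for the two main reflection integrals and the rest of your argument goes through as written.
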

{\color{black}

{\pe{\pee
\begin{lemme}\label{cor:corratebsdepart2} 
Suppose that assumptions \A{A} hold. Assume further that there exists $q>4$ such that $\M_q([\xi])$ is finite as well as {\pee $\H_q$ and $\int_0^T \E\left[|f(s,0,0)|^q \right]ds$}. Assume moreover that the data of the system are such that there exists $p\geq 4$ and $C_{p,Z}>0$ such that $\sup_{t \leq T}\E[|Z_t|^p] \le C_{p,Z}$. Then, there exists $C_T:=C\big(T,\A{A},\M_q([\xi]),\H_q,C_{p,Z}\big)>0 $ such that
\begin{equation*}
\left(\E \left[\frac 1N \sum_{i=1}^N|\tilde \xi^i - \xi^i|^2\right]\right)^2 + \e \left[\sup_{0 \leq s \leq T} W_2^2(\mu_s,\bar \mu_s^N)\right]\leq C_T \epsilon_N,
\end{equation*}
where $\epsilon_N$ is given in Lemma \ref{ConvPart} (see Remark \ref{remconvpart}).
\end{lemme}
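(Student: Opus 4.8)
The plan is to establish the two bounds of Lemma \ref{cor:corratebsdepart2} by combining the explicit propagation-of-chaos estimate of Theorem \ref{th:estiparticleBSDE} with the rate at which the empirical measures $\bar\mu^N_s$ of i.i.d.\ copies converge to the common law $\mu_s$, using Lemma \ref{ConvPart} (or rather its i.i.d.\ version, Remark \ref{remconvpart}). First I would record what the hypotheses give on the limiting process: since $\M_q([\xi])$, $\H_q$ and $\int_0^T\E[|f(s,0,0)|^q]ds$ are finite with $q>4$, Proposition \ref{lemmemomenY} yields $\E[\sup_{t\le T}|\bar Y_t|^q]\le C_{q,T}$, and together with the $L^p$ control on $Z$ ($p\ge4$) this is exactly the integrability needed so that the family of processes $\{\bar Y^i\}$ satisfies the moment and increment assumptions \eqref{hypRacRu} of Lemma \ref{ConvPart}. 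The increment bounds $\E[|\bar Y_t-\bar Y_s|^p]\le C|t-s|^{p/4}$ and the analogous square-increment bound follow from the standard computation sketched in the proof of Proposition \ref{lemmemomenY} (applying It\^o to $|\bar Y_t-\bar Y_s|^p$ and using the deterministic, uniformly bounded variation of $K$, the Lipschitz property of $f$, the BDG inequality and the $\H_q$/$M_q$ bounds), exactly as was done for the forward process in Proposition \ref{prop.moduleconti}. In the i.i.d.\ setting we are in the case described in Remark \ref{remconvpart}, so the relevant rate is the $\epsilon_N$ of Lemma \ref{ConvPart} read with $p=\infty$, and we conclude
\begin{equation*}
\E\left[\sup_{0\le s\le T} W_2^2(\bar\mu^N_s,\mu_s)\right]\le C_T\,\epsilon_N.
\end{equation*}

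Next I would control $\E\bigl[\tfrac1N\sum_i|\tilde\xi^i-\xi^i|^2\bigr]$. Here Lemma \ref{lem:rateconvinitialcondition}, valid because $\M_4([\xi])+\H_4<+\infty$ (a fortiori true since $q>4$), gives
\begin{equation*}
\E\left[\frac1N\sum_{i=1}^N|\tilde\xi^i-\xi^i|^2\right]\le C\,\E^{1/2}\left[W_2^2(\bar\mu^N_T,[\xi])\right],
\end{equation*}
so squaring and using the previous display (at time $T$, and noting $\mu_T=[\xi]$ since $\xi^i\sim\xi$) yields $\bigl(\E[\tfrac1N\sum_i|\tilde\xi^i-\xi^i|^2]\bigr)^2\le C\,\E[W_2^2(\bar\mu^N_T,[\xi])]\le C_T\epsilon_N$. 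Finally, inserting both estimates into the right-hand sides of Theorem \ref{th:estiparticleBSDE} gives the stated conclusion, up to adjusting constants and observing that $\epsilon_N^{1/2}$, $\epsilon_N^{1/4}$ and $\epsilon_N$ are all $\le C\epsilon_N^{1/4}$; since the statement only asks for an $O(\epsilon_N)$ bound on the sum $\bigl(\E[\cdots]\bigr)^2+\E[\sup W_2^2]$, no powers of $\epsilon_N$ other than $1$ appear and the cleanest route is to bound each summand directly as above rather than going through Theorem \ref{th:estiparticleBSDE} for this particular inequality — that is, the lemma as stated follows purely from Lemma \ref{lem:rateconvinitialcondition}, Proposition \ref{lemmemomenY}, Proposition \ref{prop.moduleconti}-type increment estimates, and Lemma \ref{ConvPart}/Remark \ref{remconvpart}.

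The main obstacle, as in the corresponding forward Lemma \ref{cor:corratebsdepart}, is the verification that the backward process $\bar Y$ genuinely satisfies the hypotheses of Lemma \ref{ConvPart}: one needs the $q$-th moment bound $\E[\sup_t|\bar Y_t|^q]<\infty$ with $q>4$ (this is where the extra assumption $\M_q([\xi])+\H_q+\int_0^T\E[|f(s,0,0)|^q]<\infty$ is used, via Proposition \ref{lemmemomenY}), and the two $p$-th moment increment bounds together with the square-increment bound, for which one must check that $\bar Y_t-\bar Y_s$ has a fourth-root-in-time modulus of continuity in $L^p$. This last point relies on the deterministic, locally Lipschitz nature of $K$ (so $\int_s^t \ld H(\mu_u)(\bar Y_u)dK_u$ is controlled like $|t-s|$ times a bounded quantity, which is better than the $|t-s|^{1/4}$ we need) and on the $\int_s^t Z_u\,dB_u$ and $\int_s^t f(u,\bar Y_u,\bar Z_u)du$ terms, handled by BDG and H\"older; the extra hypothesis $\sup_t\E[|Z_t|^p]\le C_{p,Z}$ with $p\ge4$ is precisely what makes the martingale increment estimate work at the required exponent. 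Once these routine but somewhat delicate integrability checks are in place the rest is bookkeeping, so I would state the increment estimates as a short preliminary computation (referring to the proofs of Propositions \ref{lemmemomenY} and \ref{prop.moduleconti}) and then assemble the conclusion in a couple of lines.
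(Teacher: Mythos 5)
Your overall strategy coincides with the paper's: the first term is handled via Lemma \ref{lem:rateconvinitialcondition} combined with the Fournier--Guillin rate for $\E[W_2^2(\bar\mu^N_T,[\xi])]$, and the second term by verifying the hypotheses of Lemma \ref{ConvPart} (in the i.i.d.\ form of Remark \ref{remconvpart}) for the processes $\bar Y^i$, using Proposition \ref{lemmemomenY} for the $q$-th order moments and increment estimates for \eqref{hypRacRu}. Your closing observation that Theorem \ref{th:estiparticleBSDE} is not actually needed for this particular inequality is also correct.

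There is, however, one genuine gap: you repeatedly invoke ``the deterministic, locally Lipschitz nature of $K$'' to control $\int_s^t \ld H([\bar Y_u])(\bar Y_u)\,dK_u$ by $C|t-s|$, but Theorem \ref{thm:existence_uniqueness} only asserts that $K$ is deterministic, continuous and nondecreasing. The Lipschitz property is obtained in Step~1 of its proof only for space-independent generators with $\sup_{t}\E[|f(t)|^2]<+\infty$ (see Remark \ref{constructKpart}), and it is \emph{not} carried through Steps~2--3 for a general generator, where $K$ arises as a mere limit of continuous nondecreasing processes. This is exactly where the hypothesis $\sup_{t\leq T}\E[|Z_t|^p]\leq C_{p,Z}$ must be used and exactly the point the paper's proof is devoted to: under that hypothesis $\sup_{t\leq T}\E[|f(t,Y_t,Z_t)|^2]<+\infty$, so one may re-solve the equation with the frozen, space-independent generator $g(t):=f(t,Y_t,Z_t)$, for which Step~1 and Remark \ref{constructKpart} produce a solution with Lipschitz $K$, and then invoke uniqueness to conclude that the original $K$ is Lipschitz. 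Without this detour the increment bounds in \eqref{hypRacRu} --- and hence the application of Lemma \ref{ConvPart} --- are not justified; the rest of your argument is bookkeeping that goes through once this is supplied.
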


\begin{rem}
The assumption on the integrability of $Z$ could seems strange at first sight, especially because this process is part of the solution and not an input of the problem. This assumption relies on the control done for the discretization of the process $Y$ to handle  the convergence of the supremum in time of the Wasserstein distance. For instance, such property is  {\color{black} satisfied} when the terminal condition $\xi$ and the driver $f$ are Malliavin differentiable (see also Lemma 4.4 in \cite{BH17} for further details).
\end{rem}

\begin{proof}[Proof of Lemma \ref{cor:corratebsdepart2}] Let us first emphasize that the result is straightforward for the first term in the l.h.s. of the above estimate: it follows from Lemma \ref{lem:rateconvinitialcondition} and Theorem 2 in \cite{FG15} (see also Theorem 5.8 and Remark 5.9 in \cite{CD17-1}). 

Let us now deal with the second term in the above l.h.s. To handle this part, we have to check that assumptions in Lemma \ref{ConvPart} are satisfied. Note first that, under our considered assumption, one can apply  {\color{black} Proposition \ref{lemmemomenY}}. It thus remains to check that \eqref{hypRacRu} hold after taking into account Remark \ref{remconvpart}. These are straightforward consequences of our standing assumptions provided the process $K$ is Lipschitz continuous. Let us now explain why this is latter fact is true.

Let $(Y,Z)$ be a given (part of) the unique solution of \eqref{eq:mainb}. It then holds that $\sup_{t \leq T} \E[|f(t,Y_t,Z_t)|^2] < + \infty$ so that one can come back to \emph{Step 1} of the proof of Theorem \ref{thm:existence_uniqueness} and build the process $K$ in such a way it is Lipschitz. By uniqueness, this proves that $K$ is Lipschitz. The result then follows.
\end{proof}
}

\begin{proof}[Proof of Theorem \ref{th:estiparticleBSDE}]
For $\alpha$ large enoug{\yin h} we have

\begin{eqnarray}\label{eq:itodeltaparticle}
&&e^{\alpha t} |\Delta Y_t^i|^2 + \frac{1}{2} \int_t^T e^{\alpha s} \sum_{j=1}^N|\Delta Z_s^{i,j}|^2 ds \\\notag
&\leq &  e^{\alpha T} |\tilde \xi^i  - \xi^i|^2  + 2 \int_t^T e^{\alpha s}\Delta Y_s^i \cdot D_\mu H(\mu_s^N)(Y_s^i) dK^N_s \\
&&- 2 \int_t^T e^{\alpha s} \Delta Y_s^i \cdot D_\mu H(\mu_s)(\bar Y_s^i) dK_s - 2 \int_t^T e^{\alpha s} \Delta Y_s^i \cdot \sum_{j=1}^N\Delta Z_s^{i,j} dB_s^j.\notag
\end{eqnarray}
So that, summing over $i$ leads to 

\begin{eqnarray}\label{eq:itodeltaparticle2}
&&\frac{1}{N}\sum_{i=1}^N \left\{ e^{\alpha t} |\Delta Y_t^i|^2 + \frac{1}{2} \int_t^T e^{\alpha s} \sum_{j=1}^N |\Delta Z_s^{i,j}|^2 ds \right\}\notag \\
&\leq &   \frac{{\yin e^{\alpha T}}}{N}\sum_{i=1}^N |\tilde \xi^i  - \xi^i|^2  + 2 \int_t^T e^{\alpha s}  \frac{1}{N}\sum_{i=1}^N \left\{ \Delta Y_s^i \cdot D_\mu H(\mu_s^N)(Y_s^i)\right\} dK^N_s \notag \\
&&- 2\int_t^T e^{\alpha s}  \frac{1}{N}\sum_{i=1}^N \left\{\Delta Y_s^i \cdot D_\mu H(\mu_s)(\bar Y_s^i) \right\} dK_s - 2 \frac{1}{N}\sum_{i=1}^N\int_t^T e^{\alpha s} \Delta Y_s^i \cdot \sum_{j=1}^N\Delta Z_s^{i,j} dB_s^j\notag\\
&=:& J_1^N + J_2^N(t,T) + J_3^N(t,T) + M_N(t,T). 
\end{eqnarray}

We first deal with  the second term in the r.h.s. of \eqref{eq:itodeltaparticle2}. We have
\begin{eqnarray}
J_2^N(t,T) = \int_t^Te^{\alpha s} \frac{1}{N} \sum_{i=1}^N \Delta Y_s^i \cdot D_\mu H(\mu_s^N)(Y_s^i) dK^N_s &\leq& \int_t^T e^{\alpha s} \big(H(\mu_s^N)-H(\bar \mu_s^N)\big) dK^N_s\label{eq:particles_back}\\
&\leq &  \int_t^T e^{\alpha s} \big(H(\mu_s)-H(\bar \mu_s^N)\big) dK^N_s\notag
\end{eqnarray}
from Skorokhod condition and since $H(\mu_s) \geq 0$ for all $0\leq s \leq T$. Thus,
\begin{eqnarray}
J_2^N(t,T) \leq  C e^{\alpha T}\sup_{t \leq s \leq T} W_2(\mu_s,\bar \mu_s^N) (K^N_T-K_t^N).\notag
\end{eqnarray}

Next we handle the third term in the right hand side of \eqref{eq:itodeltaparticle2} and split it into two parts
\begin{eqnarray*}
J_3^N(t,T) &=& - 2 \int_t^T e^{\alpha s} \frac{1}{N} \sum_{i=1}^N \Delta Y_s^i \cdot D_\mu H(\mu_s)(\bar Y_s^i) dK_s\\
&=& - 2 \int_t^T e^{\alpha s} \frac{1}{N} \sum_{i=1}^N \Delta Y_s^i \cdot \big(D_\mu H(\mu_s)(\bar Y_s^i)  - \big(D_\mu H(\bar \mu_s^N)(\bar Y_s^i)\big)dK_s \\
&& \quad - 2\int_t^T e^{\alpha s} \frac{1}{N} \sum_{i=1}^N \Delta Y_s^i \cdot D_\mu H(\bar \mu_s^N)(\bar Y_s^i)dK_s =: I_1^N(t,T) + I_2^N(t,T).
\end{eqnarray*}
On the one hand we have from Cauchy-Schwarz{\yin 's} inequality, \eqref{eq:UC} and Young{\yin 's} inequality{\yin 
\begin{eqnarray*}
I_1^N(t,T) &\leq &C\int_t^T e^{\alpha s} \bigg(\frac{1}{N} \sum_{i=1}^N |\Delta Y_s^i|^2\bigg)^{1/2} W_2(\bar \mu_s^N,\mu_s) d K_s\\
& \leq & C \int_t^T  \bigg(\frac{1}{N} \sum_{i=1}^N e^{\alpha s} |\Delta Y_s^i|^2\bigg)^{1/2} W_2(\bar \mu_s^N,\mu_s)  d K_s\\
& \leq & C \int_t^T  \left\{\frac{1}{N} \sum_{i=1}^N e^{\alpha s} |\Delta Y_s^i|^2  +  W_2^2(\bar \mu_s^N,\mu_s)\right\} d K_s\\
& \leq & C\int_t^T  \frac{1}{N} \sum_{i=1}^N e^{\alpha s} |\Delta Y_s^i|^2 dK_s  + C \sup_{t\leq s \leq T} W_2^2(\bar \mu_s^N,\mu_s)\, K_T.
\end{eqnarray*}
}
On the other hand we have from $L$-concavity and since for all $0\leq s \leq T$, $H(\mu_s^N) \geq 0$ and  $H(\mu_s) = 0$ $d K_s$-a.e.
\begin{eqnarray*}
I_2^N(t,T) &\leq &2\int_t^T e^{\alpha s} \big(H(\bar \mu_s^N) - H(\mu_s^N) \big) d K_s\\
& \leq & 2\int_t^T e^{\alpha s} \big(H(\bar \mu_s^N) - H(\mu_s) \big) d K_s\\
& \leq & 2 e^{\alpha T} \sup_{t \leq s \leq T}W_2(\bar \mu_s^N,\mu_s) K_T.
\end{eqnarray*}

Hence,
\begin{eqnarray*}
J_3^N(t,T)  & \leq & C \int_t^T  \frac{1}{N} \sum_{i=1}^N e^{\alpha s} |\Delta Y_s^i|^2 dK_s + C \sup_{t\leq s \leq T} \left\{W_2^2(\bar \mu_s^N,\mu_s) +  W_2(\bar \mu_s^N,\mu_s)\right\}.
\end{eqnarray*}

Bringing together the estimates on $J_i^N$, $1 \leq i \leq 3$ in \eqref{eq:itodeltaparticle2}, we obtain
\begin{eqnarray}\label{eq:estiparticlebackinter}
&& \frac{1}{N} \sum_{i=1}^N\Bigg\{e^{\alpha t} |\Delta Y_t^i|^2 + \frac{1}{2} \int_t^T e^{\alpha s} \sum_{j=1}^N|\Delta Z_s^{i,j}|^2 ds \Bigg\}\\
 &\leq &\frac{e^{\alpha T}}{N}\sum_{i=1}^N |\tilde \xi^i  - \xi^i|^2  + C e^{\alpha T}\sup_{t \leq s \leq T} W_2(\mu_s,\bar \mu_s^N) (K^N_T-K_t^N)\notag\\
 &&+C \int_t^T  \frac{1}{N} \sum_{i=1}^N e^{\alpha s} |\Delta Y_s^i|^2 dK_s + C \sup_{t\leq s \leq T} \left\{W_2^2(\bar \mu_s^N,\mu_s) +  W_2(\bar \mu_s^N,\mu_s)\right\} + M_N(t,T).\notag
\end{eqnarray}
Hence, taking the expectation in \eqref{eq:estiparticlebackinter}, using Cauchy-Schwarz's inequality, Gronwall's lemma and then using Lemmas \ref{lem:rateconvinitialcondition} and \ref{lem:momentestiKN:back}  lead to
\begin{eqnarray}\label{firststeppart}
&& \e\left[\frac{1}{N} \sum_{i=1}^N\Bigg\{e^{\alpha t} |\Delta Y_t^i|^2 + \int_t^T e^{\alpha s} \sum_{j=1}^N|\Delta Z_s^{i,j}|^2 ds \Bigg\} \right]\\
 &\leq &C_T \Bigg\{{\pe \e\left[\frac{1}{N}\sum_{i=1}^N |\tilde \xi^i  - \xi^i|^2\right]} + \e^{1/2} \left[\sup_{t \leq s \leq T} W_2^2(\mu_s,\bar \mu_s^N)\right]+ \E\left[ \sup_{t\leq s \leq T} \left\{W_2^2(\bar \mu_s^N,\mu_s) +  W_2(\bar \mu_s^N,\mu_s)\right\} \right]\Bigg\}\notag.
\end{eqnarray}

{\color{black}
Now we have from BDG and Young inequality
\begin{eqnarray}
\E[\sup_{t \leq T} M_{N}(t)] &\leq & C\E\left[\left(\frac{1}{N^2}\int_t^T\sum_{j=1}^N  e^{2\alpha s}\left( \sum_{i=1}^N {\pe |}\Delta Y_s^i{\pe |} |\Delta Z_s^{i,j}|\right)^2 ds\right)^{1/2}\right]\notag\\
&\leq& C\E\left[ \left(\sup_{t\leq T}\left\{\frac{1}{N}\sum_{i=1}^N\left\{e^{\alpha t} {\pe |}\Delta Y_s^i{\pe |}^{2}\right\}\right\} \frac{1}{N}\sum_{i=1}^N\left\{\int_t^T  e^{\alpha s} \sum_{j=1}^N |\Delta Z_s^{i,j}|^2 ds\right\}\right)^{1/2}\right]\notag\\
&\leq &C\varepsilon \E\left[\sup_{t\leq T} \frac{1}{N} \sum_{i=1}^N \left\{e^{\alpha t} {\pe |}\Delta Y_s^i{\pe |}^{2}\right\}\right] +  \frac {C^2}{4\varepsilon} \frac{1}{N} \sum_{i=1}^N \left\{ \E\left[\int_t^T  e^{\alpha s} \sum_{j=1}^N |\Delta Z_s^{i,j}|^2 ds\right]\right\},\label{estimarting}
\end{eqnarray}
for any $\varepsilon >0$. Hence, coming back to \eqref{eq:estiparticlebackinter} and taking first the supremum in time and then expectation we deduce  from the above estimate, Lemma \ref{lem:momentestiKN:back} and Gronwall's lemma that
\begin{eqnarray}
&& \e\left[\sup_{t\leq T}\frac{1}{N} \sum_{i=1}^N\Bigg\{e^{\alpha t} |\Delta Y_t^i|^2 + \frac{1}{2} \int_0^T e^{\alpha s} \sum_{j=1}^N|\Delta Z_s^{i,j}|^2 ds \Bigg\} \right]\label{encore_une_de_plus}\\
 &\leq &C_T \Bigg\{{\pe \e\left[\frac{1}{N}\sum_{i=1}^N |\tilde \xi^i  - \xi^i|^2\right]}+ \e^{1/2} \left[\sup_{t \leq T} W_2^2(\mu_s,\bar \mu_s^N)\right]+ \E\left[ \sup_{t\leq s \leq T} \left\{W_2^2(\bar \mu_s^N,\mu_s) +  W_2(\bar \mu_s^N,\mu_s)\right\} \right] \Bigg\}. \notag
\end{eqnarray}
Note that we also have from above results, thanks to the exchangeability of the $(Y^i,Z^i)$, that 
\begin{equation}
\e\left[e^{\alpha t} |\Delta Y_t^i|^2\right] + \frac{1}{2}\E\left[ \int_t^T e^{\alpha s}\sum_{j=1}^N|\Delta Z_s^{i,j}|^2 ds \right] \leq C_T\left\{{\pe \e\left[\frac{1}{N}\sum_{i=1}^N |\tilde \xi^i  - \xi^i|^2\right]} + \e^{1/2} \left[\sup_{0 \leq t \leq T} W_2^2(\mu_t,\bar \mu_t^N)\right]\right\}.
\end{equation}
Coming back to \eqref{eq:itodeltaparticle}, taking first the supremum then the expectation we get, thanks to BDG's inequality and \eqref{estimarting}, that

\begin{eqnarray}\label{eq:itodeltaparticle33}
&&\E\left[\sup_{t\leq T}\Big\{e^{\alpha t} |\Delta Y_t^i|^2 + \frac{1}{2} \int_t^T e^{\alpha s} \sum_{j=1}^N|\Delta Z_s^{i,j}|^2 du \Big\}\right]\\
&\leq &  \E\left[ |\tilde \xi^i  - \xi^i|^2\right]  + 2 \E\left[\int_0^T e^{\alpha s}|\Delta Y_s^i| | \ld H(\mu_s^N)(Y_s^i)|(dK^N_s + dK_s)\right] + \e^{1/2} \left[\sup_{t \leq T} W_2^2(\mu_t,\bar \mu_t^N)\right].\notag 
\end{eqnarray}
Since by exchangeability of the $(Y^i,Z^i)$, Cauchy-Schwarz's inequality and \eqref{boundHback}
\begin{eqnarray}\label{eq:itodeltaparticle343}
&&\E\left[\int_t^T e^{\alpha s}|\Delta Y_s^i| | \ld H(\mu_s^N)(Y_s^i)|(dK^N_s + dK_s)\right] \\&=& \frac{1}{N}\sum_{i=1}^N\E\left[\int_t^T e^{\alpha s}|\Delta Y_s^i| | \ld H(\mu_s^N)(Y_s^i)|(dK^N_s + dK_s)\right] \notag\\
&\leq& M^2 \E\left[\sup_{s \leq T}e^{\alpha s}\left(\frac{1}{N}\sum_{i=1}^N|\Delta Y_s^i|^2  \right)^{1/2} \big((K^N_T-K_t) + (K_T-K_t)\big)\right]\notag\\
&\leq& 2e^{\alpha T}M^2 \E^{1/2}\left[\sup_{s \leq T}\frac{1}{N}\sum_{i=1}^N|\Delta Y_s^i|^2 \right] \E^{1/2}\left[\big((K^N_T-K_t)^2 + (K_T-K_t)^2\big)\right],\notag
\end{eqnarray}
we obtain, thanks to estimate in  {\color{black} Proposition \ref{lemmemomenY}} and \eqref{encore_une_de_plus} that
\begin{eqnarray*}\label{eq:itodeltaparticle33czCD}
\E\left[\sup_{s\leq t}\Big\{e^{\alpha s} |\Delta Y_s^i|^2 + \frac{1}{2} \int_s^T e^{\alpha u} \sum_{j=1}^N|\Delta Z_u^{i,j}|^2 du \Big\}\right]&\leq & C\left\{ \e^{1/4} \left[\sup_{t \leq T} W_2^2(\mu_s,\bar \mu_s^N)\right]+{\pe \e^{1/2}\left[\frac{1}{N}\sum_{i=1}^N |\tilde \xi^i  - \xi^i|^2\right]}\right\} .
\end{eqnarray*}
}

\end{proof}

\subsection{Associated {\pe O}bstacle problem on {\color{black} the} Wasserstein space} 
As we did for the (forward) normally constrained in law SDE, we aim at connecting our backward system with a PDE. To do so, we consider the Markovian setup, {\color{black} where}  the final condition \eqref{eq:mainb} is of the form $\phi(X_T,[X_T])$, for some $\phi: \R^{n} \times \mathcal{P}^2(\R^n) \to \R$ and where $X$ is the solution of an SDE driven by Lipschitz coefficients. Namely we consider for $T>0$, for any r.v. $X_0$ having second order moment, independent of the Brownian motion and any $t$ in $[0,T]$, the Mean-Reflected Forward-Backward SDE (MR-FBSDE):
\begin{equation}\label{eq:MR-FBSDE}\left\lbrace\begin{array}{lll}
\displaystyle X_s^{t,X_0} = X_0 + \int_t^s b(r,X^{t,X_0}_r) \d r + \int_t^s \sigma(r,X^{t,X_0}_r) \d B_r,{\color{black}\ s \in [t,T],\quad X_s^{t,X_0} = X_0,\ s \in [0,t)} \\
\displaystyle Y_s^{t,X_0} = \phi(X_T^{t,X_0},[X_T^{t,X_0}]) + \int_s^T f(r,X_r^{t,X_0},Y_r^{t,X_0}) \d r - \int_s^T Z_r^{t,X_0} \d B_r\\
\displaystyle\quad \quad \quad \quad+ \int_s^T \ld H([Y_s^{t,X_0}])(Y_s^{t,X_0}) \d K_s^{t,X_0},\ s \in [t,T], \\
\displaystyle  \forall s \in [0,T],\quad H([Y_s^{t,X_0}]) \geq 0,\quad \int_0^T H([Y_s^{t,X_0}]) \d K_s^{t,[X_0]} = 0,\quad s \in [0,T],
\end{array}\right.
\end{equation}
where the superscript $(t,X_0)$ stands for the initial condition of the SDE associated with $X$ whose coefficients are supposed to be continuous in time and Lipschitz continuous in space (uniformly in time), where the coefficients of the backward component satisfy \A{A} (with $d=1$) and where $\phi : \R^n \times \mathcal{P}^2(\R^n) \to \R$ is a continuous and bounded function. Also, in this section we further assume that the following assumptions hold:
\begin{trivlist}
	\item[\A{A'1}] The functions $f: \rset_+\times \rset^n \times \rset \fl \rset $ is continuous and there exists a positive $C_f$ such that  for all $(s,x,y)$ in $[0,T] \times \R^n \times \R$ : $|f(s,x,y)| \leq C_f(1+|x| + |y|)$,
	\item[\A{A'2}] There exists $\beta>0$ such that for all $(x,\mu) \in \R^n \times \mathcal{P}^2(\R^n)$ we have $\beta \le \ld H(\mu)(x)$.
\end{trivlist}
In the following we say that assumptions \A{A'} are in force if assumptions \A{A'1}, \A{A'2} and \A{A} hold.\\

{\pe We start by recalling that, according to Remark \ref{rem:uniquelaw}, uniqueness in law holds for \eqref{eq:MR-FBSDE}.}
In this case, we aim at proving that there exists a decoupling field $u :[0,T] \times \R^n \times \mathcal P^2(\R^n)  \ni (t,x,\mu) \mapsto u(t,x,\mu) \in \R$ such that for any $t$ in $[0,T]$, any r.v. $X_0$ having second order moment and independent of the Brownian motion, we have for all $s$ in $[t,T]$ $u(s,X_s^{t,X_0},[X_s^{t,X_0}]) = Y_s^{t,X_0}$ a.s.. 

Especially we are going to prove that $u$ solves, in the viscosity sense, the following obstacle problem on the Wasserstein space:
\begin{equation}\label{eq:obstacle}\left\lbrace\begin{array}{lll}
\displaystyle \min  \bigg\{\big\{(\partial_t + {\pe\mathcal{L}})u(t,x,\mu) + f(t,x,u(t,x,\mu))\big\}; \\
\displaystyle \quad  H\big({\color{black} u(t,\cdot,\mu)\sharp \mu} \big)\bigg\}= 0,\quad \mbox{on}\quad [0,T)\times \R^n \times \mathcal{P}^2(\R^n),\\
u(T,\cdot,\cdot) = \phi,
\end{array}\right.
\end{equation}
{\color{black}
where $\mathcal{L}$ is given by, for all smooth $\varphi : [0,T]{\pe \times \R^n} \times \mathcal P^2(\R^n) \to \R${\pe
\begin{eqnarray*}
\mathcal{L}\varphi(t,x,\mu)&=&\frac12\int_{\R^n} {\rm Tr}\left[((\sigma\sigma^*)(t,y)) \partial_y \ld \varphi(\mu)(t,x,\mu)(y)\right]\mu(dy) +\int_{\R^n} \ld \varphi(\mu)(t,x,\mu)(y)\cdot b(t,y)\mu(dy)\\
&&+ \frac12{\rm Tr}\left[((\sigma\sigma^*)(t,x)) D_x^2 \varphi(t,x,\mu)\right] +D_x\varphi(t,x,\mu)\cdot b(t,x).
\end{eqnarray*}
}
Inspired from \cite{CD17-2}, we define a viscosity solution of \eqref{eq:obstacle} as follows.
\begin{df}\label{def:viscoObstacle}
We say that a continuous function $u : [0,T] \times \R^n \times \mathcal P^2(\R^n) \to \R$ is a viscosity solution of \eqref{eq:obstacle} if
\begin{enumerate}
\item[(i)] The function $u$ is jointly continuous {\color{black}and bounded};
\item[(ii)] for any $(t,x,\mu)$ in $[0,T]\times \R^n \times \mathcal{P}^2(\R^n)$, for any test functions  (see definition 11.18 of \cite{CD17-2} for the class of test functions) $\varphi : {\color{black}(0,T)}\times \R^n \times \mathcal{P}^2(\R^n) \to \R$ such that $u-\varphi$ have a {\color{black}global} minimum (resp. max) in $(t,x,\mu)$ we have
\begin{equation*}\left\lbrace\begin{array}{ll}
\displaystyle \min  \bigg\{\big\{(\partial_t + {\pe\mathcal{L}})\varphi(t,x,\mu) + f(t,x,\varphi(t,x,\mu))\big\}; \\
\displaystyle \quad  H\big({\color{black} u(t,\cdot,\mu)\sharp \mu} \big) \bigg\} \leq 0,\ (\rm{resp.} \geq 0),\quad [0,T)\times \R^n \times \mathcal{P}^2(\R^n);
\end{array}\right.
\end{equation*}
\item[(iii)] $\displaystyle u(T,x,\mu)= \phi(x,\mu)$ on $\R^{n} \times \mathcal{P}^2(\R^n)$.
\end{enumerate}
\end{df}

The result is the following :
\begin{thm}\label{THM:Obstacle} Suppose that assumption \A{A} holds. 
\begin{enumerate}
\item Then there exists a decoupling field $u:[0,T] \times \R^n \times \mathcal P^2(\R^n)  \ni (t,x,\mu) \mapsto u(t,x,\mu) \in \R$ such that for any $t$ in $[0,T]$, any r.v. $X_0$ having second order moment and independent of the Brownian motion, we have for all $s$ in $[t,T]$ $u(s,X_s^{t,X_0},[X_s^{t,X_0}]) = Y_s^{t,X_0}$ a.s..

\item Suppose in addition that assumption \A{A'} holds. Then, the mapping $u : [0,T] \times \R^n \times \mathcal P^2(\R^n)  \ni (t,x,\mu) \mapsto u(t,x,\mu) \in \R$ is a continuous function and solves,  in the sense of Definition \ref{def:viscoObstacle}, the obstacle problem on Wasserstein space \eqref{eq:obstacle}.

\end{enumerate}

\end{thm}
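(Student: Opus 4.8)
The proof splits into the two assertions of the theorem. For the first point, the plan is to use the well-posedness of \eqref{eq:mainb} (Theorem \ref{thm:existence_uniqueness}) combined with the flow property of the forward SDE. First I would define, for $(t,x,\mu)\in[0,T]\times\R^n\times\mathcal P^2(\R^n)$, the map $u(t,x,\mu):=Y^{t,X_0}_t$ where $[X_0]=\mu$ and $X_0=x$ is a deterministic starting point when needed; more precisely $u(t,x,\mu)$ is obtained by solving \eqref{eq:MR-FBSDE} with initial law $\mu$ and reading off the value of the backward component at the deterministic point $x$. The key structural fact is that $[Y^{t,X_0}_s]$ and $K^{t,X_0}$ depend only on $\mu=[X_0]$ (not on the particular representative), which follows from uniqueness in law, Remark \ref{rem:uniquelaw}. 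Then for $s\in[t,T]$ one applies the same construction started at time $s$ from the random variable $X^{t,X_0}_s$: by uniqueness of the solution of the backward equation on $[s,T]$ (the terminal condition $\phi(X^{t,X_0}_T,[X^{t,X_0}_T])$ and the law-constraint are the same, because $X^{s,X^{t,X_0}_s}_r=X^{t,X_0}_r$ for $r\ge s$ by the flow property), one gets $Y^{t,X_0}_s=Y^{s,X^{t,X_0}_s}_s=u(s,X^{t,X_0}_s,[X^{t,X_0}_s])$ a.s. The only delicate point here is measurability/continuity of $u$ in $x$: this follows from standard BSDE estimates (Lipschitz dependence of $Y^{t,x}$ on $x$, $\mu$ being frozen along the trajectory) together with the continuity in the measure argument provided by an analogue of Proposition \ref{prop.CondInitCN} in the backward setting.

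For the second assertion I would first establish joint continuity and boundedness of $u$: boundedness of $u$ comes from $\phi$ bounded and the a priori bounds on $Y$ in Theorem \ref{thm:existence_uniqueness}/Proposition \ref{lemmemomenY}; joint continuity in $(t,x,\mu)$ follows by combining continuity in time of the forward flow, Lipschitz continuity in $(x)$ of the decoupled BSDE, and continuity in $\mu$ (using assumption \A{A'2} which, via the lower bound $\beta\le\ld H$, forces $K$ to be Lipschitz-in-law, as in the forward case). Next, one checks the sub/supersolution inequalities. Fix a test function $\varphi$ and a point $(t,x,\mu)$ at which $u-\varphi$ has a global minimum (the max case is symmetric). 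Writing $\mu_s:=[X^{t,X_0}_s]$ and using the decoupling identity $Y^{t,X_0}_s=u(s,X^{t,X_0}_s,\mu_s)$, one applies the It\^o formula for functions of a process and its law (formula \eqref{I3}, since there is no common noise here, together with the classical It\^o formula in the $x$ variable) to $\varphi(s,X^{t,X_0}_s,\mu_s)$ on a small interval $[t,t\wedge\tau_h]$, where $\tau_h$ is the exit time of $\mu_s$ from a small Wasserstein ball. Using that $Y$ solves the BSDE and that $u\ge\varphi$ with equality at $(t,x,\mu)$, one compares increments and, after dividing by $h$ and sending $h\to0$, derives $(\partial_t+\mathcal L)\varphi(t,x,\mu)+f(t,x,\varphi(t,x,\mu))\le 0$ on the region where the constraint is slack (i.e.\ $H(u(t,\cdot,\mu)\sharp\mu)>0$), exactly as in the proof of Theorem \ref{thm.viscsol}; on the constraint boundary the Skorokhod condition $dK>0\Rightarrow H([Y_s])=0$ gives the "min" formulation. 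The obstacle term $H(u(t,\cdot,\mu)\sharp\mu)$ is nonnegative because $[Y^{t,X_0}_s]=u(s,\cdot,\mu_s)\sharp\mu_s$ and the constraint $H([Y_s])\ge0$ holds for all $s$.

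The terminal condition $u(T,x,\mu)=\phi(x,\mu)$ is immediate from $Y^{T,X_0}_T=\phi(X_0,[X_0])$ and $X^{T,X_0}_T=X_0$. The main obstacle I anticipate is handling the obstacle/Skorokhod part cleanly in the viscosity framework: unlike the Neumann problem of Theorem \ref{thm.viscsol}, here the constraint acts on the nonlinear object $u(t,\cdot,\mu)\sharp\mu$ rather than directly on $\mu$, so one must carefully track the identity $[Y^{t,X_0}_s]=u(s,\cdot,\mu_s)\sharp\mu_s$ and its continuity, and justify the passage to the limit in the penalised dynamics of $K$ when the trajectory touches $\partial\mathcal O$. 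A secondary technical point is that the It\^o formula \eqref{I3} requires the fourth-moment integrability of the diffusion coefficient and suitable regularity of $\varphi$; these are built into the class of test functions from \cite{CD17-2} and into assumptions \A{A}, \A{A'}, so they cause no real difficulty, but they must be invoked explicitly. As in the Neumann case, no uniqueness of the viscosity solution is claimed, which avoids the genuinely hard comparison argument on $\mathcal P^2(\R^n)$.
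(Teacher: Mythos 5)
Your overall strategy coincides with the paper's: define $u$ through the decoupled flow in which the law $[Y^{t,X_0}_\cdot]$ and the reflection measure $K^{t,[X_0]}$ are frozen along the McKean--Vlasov solution (legitimate by uniqueness in law), identify $Y^{t,X_0}_s=u(s,X^{t,X_0}_s,[X^{t,X_0}_s])$ via the flow property, and obtain the viscosity inequalities by comparing the It\^{o} expansion of $\E\bigl[\varphi(s,X^{t,x,\mu}_s,[X^{t,\mu}_s])\bigr]$ with the dynamic programming identity for $u$, using the Skorokhod condition to kill the $dK$ term when $H(u(t,\cdot,\mu)\sharp\mu)>0$. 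Those parts are fine; note only that with the global-minimum definition of test points no localizing stopping time $\tau_h$ is needed, and that the relevant formula is \eqref{I1} combined with the finite-dimensional It\^{o} formula, since the forward process carries no reflection.

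The genuine gap is the continuity of $u$, which you dispose of in one sentence (``standard BSDE estimates \dots\ together with the continuity in the measure argument''). This is in fact the hardest step of the proof (Lemma \ref{lem:continuytiuback} in the paper) and the standard stability argument fails there: the decoupled equation \eqref{eq:MR-FBSDE-decoupled} is \emph{not} a mean-reflected BSDE --- the Skorokhod condition is not satisfied by the pair $(Y^{t,x,X_0},K^{t,[X_0]})$ --- so the concavity/Skorokhod trick used everywhere else to discard the cross terms $\int (Y^{m}-Y)\cdot\bigl(\ld H\, dK^{m}-\ld H\, dK\bigr)$ is unavailable, and these terms cannot be absorbed by Gronwall either, because the two reflection measures do not charge the same process. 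The paper has to (i) prove that $K^{m}$ is Lipschitz uniformly in $m$, so that $dK^{m}_r=k^{m}_r\,dr$ with $k^{m}$ bounded; (ii) establish the pointwise convergence $K^{m}_s\to K_s$ by inverting the representation $L_s=\int_0^s \ld H([Y_r])(Y_r)\,dK_r$, which is exactly where \A{A'2} (the lower bound $\ld H\geq\beta>0$) is used; and (iii) run a linearized BSDE argument for $U^{m}=Y^{t^{m},x^{m},X^{m}_0}-Y^{t,x,X_0}+A^{m}$, closing the estimate only in $L^p$ for $1<p<2$ because the forcing term $A^{m}$ is merely bounded in $L^2$ and converges in $L^p$. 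Without some version of these three steps your continuity claim does not follow, and the viscosity argument --- which needs $u$ continuous even to make sense of the obstacle $H(u(t,\cdot,\mu)\sharp\mu)$ and of the test-point definition --- is left hanging.
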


{\pee
\begin{rem}\label{rem:uniquevisoc}
In \eqref{eq:MR-FBSDE}, the driver $f$ does not depend on $Z$. This assumption again relies to the lack of comparison principle for FBSDE \eqref{eq:MR-FBSDE} (see \cite{BLP09}) which is required to tackle driver depending on the $Z$ argument when investigating {\color{black} the existence of a viscosity solution connected with FBSDEs}.
\end{rem}
}

\begin{proof} Let us first emphasize that strong uniqueness holds for \eqref{eq:MR-FBSDE} under our standing assumptions. Considering the equation with the reflection process $K^{t,X_0}$ as an entry, one can deduce from classical FBSDEs results that for any given initial $t$ in $[0,T]$ there exists a measurable function $v_{t,X_0}(t,\cdot): \R^n \to \R$, called a decoupling field, such that for any $s$ in $[t,T]$, $v_{t,X_0}(s,X_s^{t,X_0}) = Y_t^{t,X_0}$ a.s. (see also chapter 4 of \cite{CD17-1}). To make the connection between the classical decoupling field $v$ with a decoupling field  $u : [0,T] \times \R^n\times \mathcal{P}_2(\R^n) \to \R$, let us first introduce the following decoupled flow of \eqref{eq:MR-FBSDE}: for any $x \in \R^n$, we set on $[0,T]$:

\begin{equation}\label{eq:MR-FBSDE-decoupled}\left\lbrace\begin{array}{lll}
\displaystyle X_s^{t,x,X_0} = x + \int_t^s b(r,X^{t,x,X_0}_r) \d r + \int_t^s \sigma(r,X^{t,x,X_0}_r) \d B_r,\, s \geq t ,\text{ and } X_s^{t,x,X_0}=x,\, s <t,\\
\displaystyle Y_s^{t,x,X_0} = \phi(X_T^{t,x,X_0},[X_T^{t,X_0}]) + \int_s^T f(r,X_r^{t,x,X_0},Y_r^{t,x,X_0}) \d r - \int_s^T Z_r^{t,x,X_0} \d B_r\\
\displaystyle \quad + \int_s^T \ld H([Y_s^{t,X_0}])(Y_s^{t,x,X_0}) \d K_s^{t,[X_0]}.
\end{array}\right.
\end{equation}
Note that this equation is not of a MR-FBSDE and not of McKean-Vlasov type as well. Indeed the coefficients do not depend on the law of the solution of the above system but on the law of the solution of \eqref{eq:MR-FBSDE}. Under our current assumptions, since the processes $(X^{X_0}, Y^{X_0}, K^{X_0})$ are given, it is not hard to see that this system is well posed. Moreover, from weak uniqueness, the solution $(X^{t,x,X_0},Y^{t,x,X_0},Z^{t,x,X_0})$ only depends on $X_0$ through its law. Denoting this law by $\mu$, it is hence possible to write the solution $(X^{t,x,\mu},Y^{t,x,\mu},Z^{t,x,\mu})$ without specifying the choice of the lifted random variable $X_0$ with law $\mu$. Defining the mapping $u(t,x,\mu) : = Y_t^{t,x,\mu} : \R^+ \times \R^n \times \mathcal{P}_2(\R^n) \to \R$ we can show, arguing as in \cite{CCD14} (see proof of Proposition 2.2), that for any $[X_0]=\mu$, for all $s$ in $[t,T]$ : $v_{t,\mu}(s,x) = u(s,x,[X_s^{t,X_0}]), \, u(s,X_s^{t,x,[X_0]},[X_s^{t,[X_0]}]) = Y_s^{t,x,[X_0]}$ and $u(s,X_s^{t,X_0},[X_s^{t,X_0}]) = Y_s^{t,X_0}$ a.s..

We now have the following lemma whose proof is postponed at the end of the current section.
\begin{lemme}\label{lem:continuytiuback}
The decoupling field $u$ defined above is a continuous function from $[0,T] \times \R^n \times \mathcal{P}_2(\R^n) \to \R$.
\end{lemme}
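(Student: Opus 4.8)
The plan is to prove continuity of $u$ along an arbitrary convergent sequence $(t_n,x_n,\mu_n)\to(t,x,\mu)$, combining three ingredients: continuity in $(t,\mu)$ of the solution of the mean-reflected system \eqref{eq:MR-FBSDE} — in particular of the flow of marginal laws $[X^{t,X_0}_\cdot]$, of $Y^{t,X_0}$ and of the reflection process $K^{t,[X_0]}$ — continuity in $(t,x)$ of the decoupled forward flow, and a stability estimate for the decoupled backward component \eqref{eq:MR-FBSDE-decoupled} in which the reflection term is treated as a fixed additive perturbation.

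\textbf{Step 1: stability of the mean-reflected system.} Pick $X_0\sim\mu$ and, for each $n$, $X_0^n\sim\mu_n$ on the same space with $\E[|X_0^n-X_0|^2]\to 0$. Continuity of the forward SDE gives $X^{t_n,X_0^n}\to X^{t,X_0}$ in $\mathcal S^2$ (Lipschitz coefficients, extension to $[0,T]$ as in \eqref{eq:MR-FBSDE-decoupled}), hence $\sup_{s\le T}W_2^2([X^{t_n,X_0^n}_s],[X^{t,X_0}_s])\to0$, and, since $\phi$ is continuous and bounded, $\E[|\xi_n-\xi|^2]\to 0$ where $\xi_n=\phi(X^{t_n,X_0^n}_T,[X^{t_n,X_0^n}_T])$. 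An Itô computation on $e^{\alpha s}|Y^{t_n,X_0^n}_s-Y^{t,X_0}_s|^2$, using the $L$-concavity \A{A3} and the Skorokhod conditions exactly as in the uniqueness proof of Theorem~\ref{thm:existence_uniqueness}, then yields $\E[\sup_{s\le T}|Y^{t_n,X_0^n}_s-Y^{t,X_0}_s|^2+\int_0^T|Z^{t_n,X_0^n}_s-Z^{t,X_0}_s|^2\d s]\le C\,\E[|\xi_n-\xi|^2]\to0$. For the reflection process I would argue as in the uniqueness proof via $N$ i.i.d. copies and the finite-dimensional Itô formula — bypassing the insufficient integrability of $Z$ — to obtain, passing to the limit $N\to\infty$,
\[
\int_s^t \E\big[|\ld H([Y^{t,X_0}_u])(Y^{t,X_0}_u)|^2\big]\,\d K^{t,[X_0]}_u \;=\; H([Y^{t,X_0}_s])-H([Y^{t,X_0}_t]) + \int_s^t R_u\,\d u,
\]
and likewise for $(t_n,X_0^n)$, where $R$ is controlled by $\A{A}$ and the moments of $(Y,Z)$ of Proposition~\ref{lemmemomenY}. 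Subtracting, and using the Lipschitz continuity of $H$ for $W_2$ together with the $\mathcal S^2$-closeness of the $Y$'s (resp. the closeness of the $\d u$-terms), one gets that the nondecreasing primitives $A^n_\cdot:=\int_0^\cdot\E[|\ld H([Y^{t_n,X_0^n}_u])(Y^{t_n,X_0^n}_u)|^2]\d K^{t_n,[X_0^n]}_u$ converge uniformly to the analogue for the limit; since this density is $\ge\beta^2$ $\d K$-a.e. by \eqref{eq:bilip_CV} (as $H([Y_u])=0$ $\d K$-a.e.) and $\le M^2$, dividing by it transfers the convergence to $\sup_{s\le T}|K^{t_n,[X_0^n]}_s-K^{t,[X_0]}_s|\to0$ in $\mathcal C([0,T])$.

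\textbf{Step 2: stability of the decoupled flow.} The decoupled forward flow $X^{t,x,X_0}=:X^{t,x}$ does not depend on $X_0$, and $X^{t_n,x_n}\to X^{t,x}$ in $\mathcal S^2$ by standard estimates. Applying Itô to $e^{\alpha s}|Y^{t_n,x_n,\mu_n}_s-Y^{t,x,\mu}_s|^2$ for \eqref{eq:MR-FBSDE-decoupled} — whose terminal datum $\phi(X^{t_n,x_n}_T,[X^{t_n,X_0^n}_T])$, driver $f(r,X^{t_n,x_n}_r,\cdot)$, law flow and perturbation $\int_\cdot^T \ld H([Y^{t_n,X_0^n}_s])(Y^{t_n,x_n,\mu_n}_s)\,\d K^{t_n,[X_0^n]}_s$ all converge to their limits by Step~1 — and using the Lipschitz continuity in space of a version of $y\mapsto\ld H(\mu)(y)$ uniformly in $\mu$ together with $\E[|\ld H(\mu)(X)-\ld H(\nu)(X)|^2]\le C\,W_2^2(\mu,\nu)$ (both valid under \eqref{eq:DFL_CV} by Corollary~5.38 in \cite{CD17-1}), a Gronwall argument applied to the continuous map $s\mapsto K^{t_n,[X_0^n]}_s$ gives $Y^{t_n,x_n,\mu_n}\to Y^{t,x,\mu}$ in $\mathcal S^2$; a routine argument handling the shift of initial times $t_n\to t$ then gives $u(t_n,x_n,\mu_n)=Y^{t_n,x_n,\mu_n}_{t_n}\to Y^{t,x,\mu}_t=u(t,x,\mu)$, which is the claimed continuity.

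\textbf{Main obstacle.} The delicate point is the stability of the reflection process $K^{t,[X_0]}$ in the law variable carried out in Step~1: it cannot be obtained directly from the chain rule on the Wasserstein space because of the poor integrability of $Z$, so one must both pass through $N$ i.i.d. copies with the finite-dimensional Itô formula and exploit the two-sided non-degeneracy $\beta^2\le\int|\ld H(\mu)|^2\d\mu\le M^2$ near $\{H=0\}$. The remaining steps reduce to standard BSDE and SDE stability estimates.
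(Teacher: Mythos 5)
Your overall architecture (first stabilize the McKean--Vlasov block $(X^{t,X_0},Y^{t,X_0},Z^{t,X_0},K^{t,[X_0]})$ in the data, then propagate this to the decoupled flow \eqref{eq:MR-FBSDE-decoupled}) is the same as the paper's, but your Step~2 has a genuine gap precisely at the point the paper identifies as ``the tricky part''. When you apply It\^o's formula to $e^{\alpha s}|\Delta Y_s|^2$ with $\Delta Y=Y^{t_n,x_n,\mu_n}-Y^{t,x,\mu}$ and split the reflection difference, you are left with the cross term
\begin{equation*}
2\int_s^T e^{\alpha r}\,\Delta Y_r\cdot \ld H([Y^{t,X_0}_r])(Y^{t,x,\mu}_r)\,\bigl(dK^{n}_r-dK_r\bigr),
\end{equation*}
and the uniform convergence $K^n\to K$ obtained in your Step~1 does not control it: the total variation of the signed measure $dK^n-dK$ need not vanish, so a Young/Cauchy--Schwarz bound fails, and the weak-$*$ convergence of $dK^n$ against continuous integrands cannot be invoked either because the integrand $\Delta Y_r$ itself depends on $n$ (one would need a uniform-in-$n$ modulus of continuity for the paths of $Y^{t_n,x_n,\mu_n}$, which you neither state nor prove). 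This is exactly why the paper abandons the energy estimate and instead performs the change of unknown $U^m=\Delta Y+A^m$, where $A^m$ absorbs the term $\int \ld H([Y_r^{t,X_0}])(Y_r^{t,x,X_0})(dK^m_r-dK_r)$: its integrand is \emph{independent of $m$}, so it can be approximated uniformly by step functions and killed by the pointwise convergence $K^m_s\to K_s$; the remaining pair $(U^m,V^m)$ solves a \emph{linear} BSDE with bounded coefficients (using the uniform Lipschitz property of $r\mapsto K^m_r$ coming from \A{A'1} and the pointwise invertibility of $\ld H$ from \A{A'2}), which is estimated in $L^p$, $1<p<2$, through the representation $U^m_s=\E[\,\cdot\,|\m F_s]$ rather than through It\^o--Gronwall, thereby never producing the problematic quadratic cross term.

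A secondary, less serious issue is in your Step~1: to get $K^n\to K$ from the It\^o identity along $N$ i.i.d.\ copies you must compare the second-order trace terms $\lim_N\int{\rm Tr}[D^2_{\bx}H^N(\bar{\bf Y}^n)\bar{\bf Z}^n(\bar{\bf Z}^n)^*]$ for different data; in the paper's uniqueness argument these cancel identically (same $Y$, $Z$ on both sides), whereas here they do not, and your claim that the remainder ``$R_u$ is controlled by \A{A} and the moments of Proposition~\ref{lemmemomenY}'' glosses over the convergence of these terms (this is workable using the boundedness of $\partial_y\ld H$ implied by \eqref{eq:DFL_CV} and $Z^n\to Z$ in ${\rm H}^2$, but it needs to be said; note also that the paper's own route to $K^m\to K$ bypasses this entirely by dividing the process $L^m$ pointwise by $\ld H\geq\beta$, which is where \A{A'2} --- unused in your proposal --- enters).
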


It thus remains to check that this function solves, in the viscosity sense, \eqref{eq:obstacle}. {\color{black} Let $\varphi$ be a  test function and $(t,x,\mu)\in [0,T)\times \R^n\times \m P_2(\R^n)$ be such that $u-\varphi$ has a global minimum at $(t,x,\mu)$.} 
For any $s>t$ we have from the flow property that 
\begin{equation}
\E u(s,X_s^{t,x,\mu},[X_s^{t,\mu}]) = u(t,x,\mu) - \E \int_t^s f(r,X_{r}^{t,x,\mu},Y_r^{t,x,\mu})  \d r - \E\int_t^s \ld H([Y_r^{t,\mu}])(Y_r^{t,x,\mu})  \d K_r^{t,\mu},
\end{equation}
and from It\^{o}'s formula \eqref{I1} that:
\begin{equation}
\E \varphi(s,X_s^{t,x,\mu},[X_s^{t,\mu}]) = \varphi(t,x,\mu) + \E \int_t^s (\partial_t + {\pe\mathcal{L}}) \varphi (r,X_{r}^{t,x,\mu},[X_{r}^{t,\mu}]) \d r.
\end{equation}
{\color{black} Up to a translation of $\varphi$ we can assume that $u(t,x,\mu)=\varphi(t,x,\mu)$. Since $(t,x,\mu)$ is a global minimum we have
$$\E u(s,X_s^{t,x,\mu},[X_s^{t,\mu}]) - \E\varphi(s,X_s^{t,x,\mu},[X_s^{t,\mu}]) \geq 0,\quad s \in [t,T].$$
Hence we have:

\begin{eqnarray*}
\forall s\in [t,\tilde s],&& \E\int_t^s (\partial_t + {\pe\mathcal{L}})\varphi(r,X_r^{t,x,\mu},[X_r^{t,\mu}]) \d r \\
&& \quad +  \E \int_t^s f(r,X_{r}^{t,x,\mu},Y_r^{t,x,\mu})  \d r + \E\int_t^s \ld H([Y_r^{t,\mu}])(Y_r^{t,x,\mu})  \d K_r^{t,\mu} \leq 0.
\end{eqnarray*}

Assume now that 
$\mu$ is such that $H\big( {\color{black}  u(t,\cdot,\mu)\sharp \mu} \big)>0$ hence, by continuity, there exists $\tilde s>t$ such  for $s$ in $[t,\tilde s]$ we have $H([u(s,X_s^{t,\mu},[X_s^{t,\mu}])]) = H([Y_s^{t,X_0}]) >0$, $s\in [t,\tilde s]$ so that $\d K^{t,\mu}([t,s])=0,$ $s\in [t,\tilde s]$. Thus
\begin{eqnarray*}
&& \E\int_t^s (\partial_t + {\pe\mathcal{L}})\varphi(r,X_r^{t,x,\mu},[X_r^{t,\mu}])  \d r  +  \E \int_t^s f(r,X_{r}^{t,x,\mu},Y_r^{t,x,\mu})  \d r \leq 0.
\end{eqnarray*}
}

Dividing by $(s-t)$ and then  letting $s\to t$ we deduce
\begin{eqnarray}
(\partial_t + {\pe\mathcal{L}})\varphi(t,x,\mu) + f(t,x,\varphi(t,x,\mu))\leq 0,
\end{eqnarray}
and
$$\min\bigg\{(\partial_t + {\pe\mathcal{L}})\varphi(t,x,\mu)+ f(t,x,\varphi(t,x,\mu))\ ;\  H\big({\color{black} u(t,\cdot,\mu)\sharp \mu}\big)\bigg\} \leq 0.$$
If $\mu$ is now such that $H\big({\color{black}  u(t,\cdot,\mu)\sharp \mu}\big)=0$
then
$$\min\bigg\{(\partial_t +{\pe\mathcal{L}})\varphi(t,x,\mu)+ f(t,x,\varphi(t,x,\mu))\ ;\  H\big({\color{black}  u(t,\cdot,\mu)\sharp \mu} \big)\bigg\} \leq 0.$$

\end{proof}

\begin{proof}[Proof of Lemma \ref{lem:continuytiuback}]
Let $(t,x,\mu):=(t^0,x^0,\mu^0)$ and $(t^m,x^m,\mu^m)_{m }$ tends to $(t,x,\mu)$ as $m\to 0$. Let $X_0^m$ and $X_0$ be of law
$\mu^m$ and $\mu$ respectively. Recall that
\begin{equation}\label{eq:MR-FBSDE-decoupled-m}\left\lbrace\begin{array}{lll}
\displaystyle X_s^{t^m,x^m,X_0^m} &= x^m + \int_t^s b(r,X^{t^m,x^m,X_0^m}_r) \d r + \int_t^s \sigma(r,X^{t^m,x^m,X_0^m}_r) \d B_r, s\geq t \text{ and } \\
X_s^{t^m,x^m,X_0^m}&=x^m,\, s<t,\\
\displaystyle Y_s^{t^m,x^m,X^m_0} &= \phi(X_T^{t^m,x^m},[X_T^{t^m,X^m_0}]) + \int_s^T f(r,X_r^{t^m,x^m},Y_r^{t^m,x^m,X^m_0}) \d r - \int_s^T Z_r^{t^m,x^m,X^m_0} \d B_r\\
\displaystyle  &\qquad+ \int_s^T \ld H([Y_s^{t^m,X^m_0}])(Y_s^{t^m,x^m,X^m_0}) \d K_s^{t^m,[X^m_0]},\quad s \in [0,T].
\end{array}\right.
\end{equation}
It is straightforward to see that $X^{t^m,x^m,X_0^m} \to X^{t,x,X_0}$ in ${\mathcal S}^2$. Denoting by $(Y^{t^m,X_0^m},Z^{t^m,X_0^m})$ the solution of \eqref{eq:MR-FBSDE} with initial condition $(t^m,X_0^m)$, it is also easily to see (see proof of the uniqueness part of Theorem \ref{thm:existence_uniqueness}) that  $ Y^{t^m,X_0^m} \to Y^{t,X_0}$ in ${\mathcal S}^2$ and that $Z^{t^m,X_0^m} \to Z^{t,X_0}$ in $M^2$. The tricky part here consists in proving that the solution {\color{black} $Y_{t}^{t^m,x^m,X_0^m}$} of \eqref{eq:MR-FBSDE-decoupled-m} tends to the solution $Y_{t}^{t,x,X_0}$ of \eqref{eq:MR-FBSDE-decoupled}. Recall indeed that when considering the decoupled flow \eqref{eq:MR-FBSDE-decoupled} of \eqref{eq:MR-FBSDE} we are not considering solutions of MR-FBSDE since the process $K$ is only an input in these equations so that the Skorokhod condition is not satisfied anymore.\\

To prove that $Y_{t^m}^{t^m,x^m,X_0^m} \to Y_{t}^{t,x,X_0}$ we first notice that these quantities are deterministic so 
\begin{eqnarray}\label{eq:decoupe_y}
Y_{t^m}^{t^m,x^m,X_0^m} - Y_{t}^{t,x,X_0} &=& \E \left[ Y_{t^m}^{t^m,x^m,X_0^m} - Y_{t}^{t,x,X_0} \right]\notag\\
&\leq &  \left| \E \left[ Y_{t^m}^{t^m,x^m,X_0^m} - Y_{t}^{t^m,x^m,X_0^m}\right] \right| + \left|\E \left[Y_{t}^{t^m,x^m,X_0^m}  - Y_{t}^{t,x,X_0} \right]\right|.
\end{eqnarray}
We have
\begin{eqnarray*}
\left|\E \left[ Y_{t^m}^{t^m,x^m,X_0^m} - Y_{t}^{t^m,x^m,X_0^m} \right] \right| &\leq & 
\left|\E \int_{t^m}^t \left| f(r,X_r^{t^m,x^m,X_0^m},Y_r^{t^m,x^m,X_0^m}) \right|  dr \right| \\
&&+ \left|\E \int_{t^m}^t | \ld H([Y_r^{t^m,X_0^m}])(Y_r^{t^m,x^m,X_0^m}) |dK_r^m\right|.
\end{eqnarray*}
Note now that since $f$ satisfies the linear growth assumption in \A{A'} (so that $\sup_{t \leq T} \E[|f(t,X_t,Y_t)|^2]<+\infty$) we can deduce from the proof of \emph{Step 1} and Proposition \ref{prop:penalized} (see Remark \ref{constructKpart}) that the map $[0,T] \ni r\mapsto K^m_r$ is Lispchitz continuous (uniformly in $m$). Hence, there exists $k^m$ satisfying $\sup_{r \in [0,T]}k^m_r \leq c$ (with $c:=c(\A{\text{A'}})>0$ independent of $m$) such that for all $r$ in $[0,T]$ we have $dK_r^m = k_r^m dr$. Using this property, together with assumptions in \A{A} and \A{A'} it clear that the above contribution also tends to 0. 

It then remains to show that the second term in the right hand side of \eqref{eq:decoupe_y} also tends to 0. This last fact is a little bit more involved. We start by setting for all $s$ in $[0,T]$,
$$L^m_s=\int_0^s \ld H([Y_r^{t^m,X^m_0}])(Y_r^{t^m,x^m,X^m_0}) \d K_r^{t^m,X^m_0},$$
and
$$L_s=\int_0^s \ld H([Y_r^{t,X_0}])(Y_r^{t,x,X_0}) \d K_r^{t,X_0}.$$
From the  FBSDE \eqref{eq:MR-FBSDE}, we have, for all $s$ in $[0,T]$,
$$L^m_s=-Y_s^{t^m,X^m_0} +Y_{t^m}^{t^m,X^m_0} + \int_{t^m}^s f(r,X_r^{t^m,X^m_0},Y_r^{t^m,X^m_0}) \d r - \int_{t^m}^s Z_r^{t^m,x^m,X^m_0} \d B_r,$$
and
$$L_s=-Y_s^{t,X_0} +Y_{t}^{t,X_0} + \int_{t}^s f(r,X_r^{t,X_0},Y_r^{t,X_0}) \d r - \int_{t}^s Z_r^{t,X_0} \d B_r,$$
from which we deduce that for every $s$ in $[0,T]$:
\begin{equation}\label{eq:conv:de:L}
\lim_{m \to + \infty} L^m_s=L_s.
\end{equation}
Let us now rewrite the BSDE  \eqref{eq:MR-FBSDE-decoupled-m} in the following form:
\begin{eqnarray}\label{eq:MR-FBSDE-decoupled-m-L} Y_s^{t^m,x^m,X^m_0} &=& \phi(X_T^{t^m,x^m},[X_T^{t^m,X^m_0}]) + \int_s^T f(r,X_r^{t^m,x^m},Y_r^{t^m,x^m,X^m_0}) \d r - \int_s^T Z_r^{t^m,x^m,X^m_0} \d B_r\nonumber \\
& &\displaystyle \quad + \int_s^T \ld H([Y_s^{t^m,X^m_0}])(Y_s^{t^m,x^m,X^m_0}) \left(\ld H([Y_s^{t^m,X^m_0}])(Y_s^{t^m,x^m,X^m_0}) \right)^{-1}\d L^m_s.
\end{eqnarray}
Setting
\begin{eqnarray*}
A_s^m &=& \int_0^s\ld H([Y_s^{t^m,X^m_0}])(Y_s^{t,x,X_0}) \left(\ld H([Y_s^{t^m,X^m_0}])(Y_s^{t^m,x^m,X^m_0}) \right)^{-1}\d L^m_s\\
& &-\int_0^s\ld H([Y_s^{t,X_0}])(Y_s^{t,x,X_0}) \left(\ld H([Y_s^{t,X_0}])(Y_s^{t,x,X_0}) \right)^{-1}\d L_s,
\end{eqnarray*}
and
\begin{eqnarray*}
U^m=Y^{t^m,x^m,X^m_0}-Y^{t,x,X_0}+A^m,\quad V^m=Z^{t^m,x^m,X_0^m}-Z^{t,x,X_0},
 \end{eqnarray*}
we deduce  that $(U^m,V^m)$ satisfy the following BSDE,
\begin{equation*}
U_s^m=\eta^m+\int_s^T (\alpha_r^m U_r^m dr + \gamma_r^m U_r^m dL^m_r) -\int_s^T V_r^mdB_r + \int_s^T d F_r^m,\quad s\in [0,T],
\end{equation*}
where:
\begin{eqnarray*}
F_r^m = \int_0^r \left(\left(-\alpha_v^mA_v^m+\beta_v^m\right) dv -\gamma_v^mA_v^m dL^m_v\right),
\end{eqnarray*}
with
\begin{eqnarray*}
\eta^m&= &\phi(X_T^{t^m,x^m,X_0^m},[X_T^{t^m,X^m_0}]) - \phi(X_T^{t,x,X_0},[X_T^{t,X_0}]) \\
&&+\int_0^T\ld H([Y_s^{t^m,X^m_0}])(Y_s^{t,x,X_0}) \left(\ld H([Y_s^{t^m,X^m_0}])(Y_s^{t^m,x^m,X^m_0}) \right)^{-1}\d L^m_s\\
& &-\int_0^T\ld H([Y_s^{t,X_0}])(Y_s^{t,x,X_0}) \left(\ld H([Y_s^{t,x,X_0}])(Y_s^{t,x,X_0}) \right)^{-1}\d L_s,\\
\alpha^m&=& \frac{f(r,X_r^{t^m,x^m,X_0^m},Y_r^{t^m,x^m,X^m_0}) -f(r,X_r^{t^m,x^m,X_0^m},Y_r^{t,x,X_0})}{Y_s^{t^m,x^m,X^m_0}-Y_s^{t,x,X_0}}, \\
\beta^m&=&f(r,X_r^{t^m,x^m,X_0^m},Y_r^{t,x,X_0})- f(r,X_r^{t,x,X_0},Y_r^{t,x,X_0}),  \\
\gamma^m&=&\frac{ \left(\ld H([Y_s^{t^m,X^m_0}])(Y_s^{t^m,x^m,X^m_0})-\ld H([Y_s^{t^m,X^m_0}])(Y_s^{t,x,X_0})\right)}{Y_s^{t^m,x^m,X^m_0}-Y_s^{t,x,X_0}}\\
&& \times \left(\ld H([Y_s^{t^m,X^m_0}])(Y_s^{t^m,X^m_0}) \right)^{-1},\\
\end{eqnarray*}
using the convention $0/0=0$. The definition of $L^m$ and the Lipschitz property of $K^m$ yield
\begin{eqnarray}
 \gamma_r^m U_r^m dL^m_r = \frac{ \left(\ld H([Y_s^{t^m,X^m_0}])(Y_s^{t^m,x^m,X^m_0})-\ld H([Y_s^{t^m,X^m_0}])(Y_s^{t,x,X_0})\right)}{Y_s^{t^m,x^m,X^m_0}-Y_s^{t,x,X_0}} U_r^m k_r^m dr,
\end{eqnarray}
with $k_r^m \leq c$ where $c>0$ does not depend on $m$. Note also that $\alpha^m$, $\beta^m$ and $\gamma^m$ are bounded uniformly in $m$. We then obtain for every $1<p<2$:
$$\E |U_s^m|^p = \E \left[ |\E [U_s^m | \mathcal{F}_s] |^p\right] \leq C\left\{ \E |\eta^m|^p + \int_s^T \E |U_r^m|^p dr + \E \left[ \left( \int_0^T d|F^m|_r\right)^p \right] \right\}.$$
Hence, applying Gronwall's inequality, we obtain that there exists $c:=c(T)>0$ such that 
$$ \E |U_s^m|^p \leq c \left(\E |\eta^m|^p  \right)  +  \E \left[ \left( \int_0^T d|F^m|_r\right)^p \right].$$
Let us now prove that the right hand side of the above equation tends to 0.\\

Note that from \eqref{eq:conv:de:L} we obtain that $\eta^m \to 0$ and from Lipschitz property of $r \mapsto K^m_r$ we know that there exists $C:=C(\A{A'},T)>0$ such that $\E \left[ \int_0^T \left| \ld H([Y_s^{t^m,X_0^m}])(Y_s^{t,x,X_0}) dK_r^m \right|^2 \right] \leq C$ so that $\E\left[ |\eta^m|^p\right] \to 0$. 

We consider now the term $\E \left[ |F^m|_T^p\right]$. First note that from the Lispchitz property of $r\mapsto K^m_r$ and the definition of $\gamma^m$ and $L^m$:
\begin{eqnarray}
\gamma_r^m A_r^m d L_r^m &=& \frac{ \left(\ld H([Y_s^{t^m,X^m_0}])(Y_s^{t^m,x^m,X^m_0})-\ld H([Y_s^{t^m,X^m_0}])(Y_s^{t,x,X_0})\right)}{Y_s^{t^m,x^m,X^m_0}-Y_s^{t,x,X_0}}A_r^m k_r^mdr,
\end{eqnarray}
so that there exists $C:=C(\A{A},T)>0$ (that may change from line to line) such that
\begin{eqnarray*}
\E \left[ |F^m|_T^p\right] &\leq &C \left\{\E \left[ \int_0^T( |A_r^m|^p + |\beta_r^m|^p ) dr \right]  + \E \left|\int_0^T A_r^m dK_r^m\right|^p\right\}\\
& \leq & C \E \left[ \int_0^T( |A_r^m|^p + |\beta_r^m|^p)  dr \right].
\end{eqnarray*}
It is clear that $\E [|\beta_r^m|^p]\to 0$. Let us now investigate the convergence of $\E[|A_r^m|^p]$. We have from the definition of $A_s^m$ and $L^m_s$ that  
\begin{eqnarray}\label{eq:rewriteA}
A_s^m&=&\int_0^s  \ld H([Y_s^{t^m,X^m_0}])(Y_s^{t,x,X_0}) dK_r^m -\int_0^s  \ld H([Y_s^{t,X_0}])(Y_s^{t,x,X_0}) dK_r\notag\\
&=&\int_0^s \left(   \ld H([Y_s^{t^m,X^m_0}])(Y_s^{t,x,X_0})-\ld H([Y_s^{t,X_0}])(Y_s^{t,x,X_0}) \right) k_r^m dr \notag\\
&& + \int_0^s  \ld H([Y_s^{t,X_0}])(Y_s^{t,x,X_0}) \left(dK_r^m-dK_r \right).
\end{eqnarray}
The first term in the right hand side tends to 0 by continuity and uniform boundedness of $k_r^m$. To prove the convergence to 0 of the second term in the right hand side it is sufficient to prove, thanks to an approximation argument, that this holds when the integrand is any step function.   The problem hence reduces to prove the convergence of $K^m_s$ to $K_s$ for every $s$ in $[0,T]$. Recall 
$$K_s^m = \int_0^s [\ld H([Y_r^{t^m,X_0^m}])(Y_r^{t^m,x^m,X_0^m})]^{-1} d L_s^m,\quad K_s = \int_0^s [\ld H([Y_r^{t,X_0}])(Y_r^{t,x,X_0})]^{-1} d L_s.$$
We then have
\begin{eqnarray*}
K_s^m-K_s &=& \int_0^s  [\ld H([Y_r^{t^m,X_0^m}])(Y_r^{t^m,x^m,X_0^m})]^{-1} -  [\ld H([Y_r^{t,X_0}])(Y_r^{t,x,X_0})]^{-1} dL^m_r\\
&& + \int_0^s  [\ld H([Y_r^{t,X_0}])(Y_r^{t,x,X_0})]^{-1}d(L_s^m-L_s).
\end{eqnarray*}
On the one hand we have $r \mapsto  [\ld H([Y_r^{t^m,X_0^m}])(Y_r^{t^m,x^m,X_0^m})]^{-1}$ is continuous and we also have that for every step function $\varphi$, $ \int_0^s  \varphi(s)d(L_s^m-L_s) \to 0$ so that
$$ \int_0^s  [\ld H([Y_r^{t,X_0}])(Y_r^{t,x,X_0})]^{-1}d(L_s^m-L_s) \to 0.$$
On the other hand,
\begin{eqnarray*}
&&\E\sup_{s\in [0,T]} \left| \int_0^s  [\ld H([Y_r^{t^m,X_0^m}])(Y_r^{t^m,x^m,X_0^m})]^{-1} -  [\ld H([Y_r^{t,X_0}])(Y_r^{t,x,X_0})]^{-1} dL^m_r \right|\\
& \leq & \E\left[\int_0^T \left| [\ld H([Y_r^{t^m,X_0^m}])(Y_r^{t^m,x^m,X_0^m})]^{-1} -  [\ld H([Y_r^{t,X_0}])(Y_r^{t,x,X_0})]^{-1}\right| d|L^m|_r\right]\\
& \leq & C_T\int_0^T \E^{1/2} \left[\left| [\ld H([Y_r^{t^m,X_0^m}])(Y_r^{t^m,x^m,X_0^m})]^{-1} -  [\ld H([Y_r^{t,X_0}])(Y_r^{t,x,X_0})]^{-1}\right|^2\right] \\
&&\times \E^{1/2}\left[ \left|\ld H([Y_r^{t^m,X_0^m}])(Y_r^{t^m,x^m,X_0^m})\right|^2\right]d r\\
& \leq & C_T\int_0^T \E^{1/2} \left[\left| [\ld H([Y_r^{t^m,X_0^m}])(Y_r^{t^m,x^m,X_0^m})]^{-1} -  [\ld H([Y_r^{t,X_0}])(Y_r^{t,x,X_0})]^{-1}\right|^2\right] M dr,
\end{eqnarray*}
and it is clear that the last term in the above right hand side also tends to 0. 

For any subsequence $(m')$, there exists $(m")\subset (m')$ such that
for every $s$ in $[0,T]$, $ K_s^{m"} \to K_s$, and then $ A_s^{m"} \to 0$. Finally,  we deduce from our assumptions and the Lipschitz regularity of $K$ that the family $(A^{m^"})_{m^" \geq 0}$ is uniformly bounded in  $L^2([0,T]\times \Omega)$ so that it converges to $0$ in $L^p([0,T]\times \Omega)$ for any $1<p<2$.
This means that $(A^{m})_{m \geq 0}$ converges to $0$ in $L^p([0,T]\times \Omega)$ for any $1<p<2$.

 We  obtain that for all $s$ in $[0,T]$, $1<p<2$: $\E[ |U^m_s|^p]\to 0,$ which means that in $L^p$, for every $s$ in $[0,T]$,
$$Y_s^{t^m,x^m,X^m_0}\to Y_s^{t,x,X_0}.$$
Using this in \eqref{eq:decoupe_y}, this concludes the proof.
 
\end{proof}
%
%
%
%
%
%
%

\newpage
\end{document}